\definecolor{darkred}{rgb}{0.7,0,0} % darkred color
\newcommand{\defn}[1]{{\color{darkred}\emph{#1}}} % emphasis of a definition
\definecolor{UQgold}{RGB}{196, 158, 54} % UQ gold
\definecolor{UQpurple}{RGB}{73, 7, 94} % UQ purple
\newcommand{\ds}{\displaystyle}
\newcommand{\xx}{\mathbf{x}}
\newcommand{\zz}{\mathbf{z}}
\newcommand{\tw}[1]{\widetilde{#1}}
\newcommand{\ov}[1]{\overline{#1}}
\newcommand{\wh}[1]{\widehat{#1}}
\newcommand{\mb}[1]{\mathbf{#1}}
\newcommand{\mc}[1]{\mathcal{#1}}
\newcommand{\vp}{\varphi}
\newcommand{\ve}{\varepsilon}
\newcommand{\la}{\lambda}
\newcommand{\Net}{\Lambda} % macro for the highway/underway path networks used in sections 5 and 6
\newcommand{\lx}[2]{x_{#1}^{(#2)}}
\newcommand{\lE}[2]{E_{#1}^{(#2)}}
\newcommand{\lQ}[2]{Q_{#1}^{(#2)}}
\newcommand{\lrQ}[2]{\tw{Q}_{#1}^{(#2)}}
\newcommand{\brP}[2]{\tw{\ov{P}}_{#1}^{(#2)}}
\newcommand{\ls}[2]{s_{#1}^{(#2)}}
\newcommand{\lH}[2]{H_{#1}^{(#2)}}
\newcommand{\bx}[2]{x_{(#1)}^{#2}}
\newcommand{\bE}[2]{\ov{E}_{#1}^{(#2)}}
\newcommand{\bP}[2]{\ov{P}_{#1}^{(#2)}}
\newcommand{\bs}[2]{\ov{s}_{#1}^{(#2)}}
\newcommand{\bH}[2]{\ov{H}_{#1}^{(#2)}}
\newcommand{\braceabove}[2]{\overbrace{\hspace{#1 cm}}^{#2}}
\newcommand{\bracebelow}[2]{\underbrace{\hspace{#1 cm}}_{#2}}
\DeclareMathOperator{\wind}{wind}
\DeclareMathOperator{\Mat}{Mat}
\DeclareMathOperator{\sh}{sh}
\DeclareMathOperator{\RSK}{RSK}
\DeclareMathOperator{\gRSK}{gRSK}
\DeclareMathOperator{\SSYT}{SSYT}
\DeclareMathOperator{\Cyl}{CylTab}
\DeclareMathOperator{\SL}{SL}
\DeclareMathOperator{\GL}{GL}
\DeclareMathOperator{\GT}{GT}
\DeclareMathOperator{\Trop}{Trop}
\DeclareMathOperator{\Frac}{Frac} % Fraction field
\DeclareMathOperator{\Sym}{Sym} % Symmetric functions
\DeclareMathOperator{\LSym}{LSym} % Loop Symmetric functions
\def\Inv{{\rm Inv}}
\DeclareMathOperator{\wt}{wt} % weight
\newcommand{\ZZ}{\mathbb{Z}}
\newcommand{\Zmod}[1]{\ZZ/{#1}\ZZ}
\newcommand{\RR}{\mathbb{R}}
\newcommand{\CC}{\mathbb{C}}
\newcommand{\abs}[1]{\lvert #1 \rvert}
\theoremstyle{plain}
\newtheorem{thm}{Theorem}[section]
\newtheorem{question}[thm]{Question}
\newtheorem{lemma}[thm]{Lemma}
\newtheorem{conj}[thm]{Conjecture}
\newtheorem{prop}[thm]{Proposition}
\newtheorem{cor}[thm]{Corollary}
\theoremstyle{definition}
\newtheorem{dfn}[thm]{Definition}
\newtheorem{ex}[thm]{Example}
\newtheorem{remark}[thm]{Remark}
\numberwithin{equation}{section}
\newcommand{\ThmInvEX}{Thm.~1.1}
\newcommand{\LemExplicitBasic}{Lem.~2.3}
\newcommand{\LemPhiPsi}{Lem.~3.3}
\newcommand{\LemShapePreserved}{Lem.~3.6}
\newcommand{\LemGTDec}{Lem.~3.9}
\newcommand{\LemExplicitGT}{Lem.~3.10}
\newcommand{\ThmGRSKIsom}{Thm.~4.10}
\newcommand{\CorBicrystal}{Cor.~4.11}
\newcommand{\GRSKAndDecorations}{Thm.~4.13}
\newcommand{\CorInvEEX}{Cor.~5.4(3)}
\newcommand{\PropWeylR}{Prop.~6.1}
\newcommand{\ThmFTLSF}{Thm.~6.6}
\newcommand{\vast}{\bBigg@{3}}
\newcommand{\Vast}{\bBigg@{4.5}}
\title{Crystal invariant theory II: pseudo-energies}
\author[B.~Brubaker]{Benjamin Brubaker}
\address[B. Brubaker]{School of Mathematics, University of Minnesota, 206 Church St. SE, Minneapolis, MN 55455}
\email{brubaker@math.umn.edu}
\urladdr{http://www-users.math.umn.edu/~brubaker/}
\author[G.~Frieden]{Gabriel Frieden}
\address[G. Frieden]{LaCIM, Universit\'e du Qu\'ebec \`a Montr\'eal, Montr\'eal, QC, Canada}
\email{gabriel.frieden@lacim.ca}
\urladdr{https://sites.google.com/a/umich.edu/gfrieden/}
\author[P.~Pylyavskyy]{Pavlo Pylyavskyy}
\address[P. Pylyavksyy]{School of Mathematics, University of Minnesota, 206 Church St. SE, Minneapolis, MN 55455}
\email{ppylyavs@math.umn.edu}
\urladdr{https://sites.google.com/site/pylyavskyy/}
\author[T.~Scrimshaw]{Travis Scrimshaw}
\address[T.~Scrimshaw]{OCAMI, Osaka Metropolitan University, 3--3--138 Sugimoto, Sumiyoshi-ku, Osaka 558-8585, Japan}
\curraddr{5 Ch\=ome Kita 8 J\=onishi, Kita Ward, Sapporo, Hokkaid\=o 060--0808, Japan}
\email{tcscrims@gmail.com}
\urladdr{https://tscrim.github.io/}
\begin{document}

\begin{abstract}
The geometric crystal operators and geometric $R$-matrices (or geometric Weyl group actions) give commuting actions on the field of rational functions in $mn$ variables. We study the invariants of various combinations of these actions, which we view as ``crystal analogues'' of the invariants of $S_m$, $\SL_m$, $S_n \times S_m$, $\SL_n \times \, S_m$, and $\SL_n \times \SL_m$ acting on the polynomial ring in an $m \times n$ matrix of variables. The polynomial invariants of the $S_m$-action generated by the $\GL_m$-geometric $R$-matrices were described by Lam and the third-named author as the ring of loop symmetric functions. In a previous paper of the authors, the polynomial invariants of the $\GL_m$-geometric crystal operators were described as a subring of the ring of loop symmetric functions.

In this paper, we give conjectural generating sets for the fields of rational invariants in the remaining cases, and we give formulas expressing a large class of loop symmetric functions in terms of these conjectural generators. Our results include new positive formulas for the central charge and energy function of a product of single-row geometric crystals, and a new derivation of Kirillov and Berenstein's piecewise-linear formula for cocharge. The formulas manifest the symmetries possessed by these functions.
\end{abstract}

\maketitle

\tableofcontents

%%%%%%%%%%%%%%%%%%%%%%%%%%%%%%%%%%%%%%%%%%%%%%%%%%%%%%%%%%%%%%%%%%%%%%%%%%
\section{Introduction}

\subsection{RSK as a crystal isomorphism}
\label{sec:intro RSK}

The irreducible polynomial representations of the general linear group $\GL_m$ (over $\CC$) are labeled by integer partitions with at most $m$ parts.
For such a representation $V(\lambda)$ indexed by the partition $\lambda$, its bases can be labeled by the set $\SSYT_{\leq m}(\lambda)$ of semistandard Young tableaux of shape $\lambda$, with entries in $\{1, \ldots, m\}$.
In particular, the single-row partition $(L)$ corresponds to $\Sym^L \CC^m$, the $L$-fold symmetric power of the defining representation of $\GL_m$. This representation has a basis consisting of the degree $L$ monomials $x_1^{a_1} \cdots x_m^{a_m}$ in $m$ variables; thus, the set of all exponent vectors $\mb{a} = (a_1, \ldots, a_m) \in (\ZZ_{\geq 0})^m$ labels a basis of the representation $\Sym \CC^m = \bigoplus_{L \geq 0} \Sym^L \CC^m$. We represent the monomial $x_1^{a_1} \cdots x_m^{a_m}$ as the unique semistandard Young tableau of a single row in which the number $i$ appears $a_i$ times.

Let $A = (a_i^j)_{i \in \{1, \ldots, m\}, j \in \{1, \ldots, n\}}$ be an $m \times n$ matrix of nonnegative integers. The matrix $A$ corresponds to a basis vector of the $n$-fold tensor product $(\Sym \CC^m)^{\otimes n}$, with the $j$th column $\mb{a}^j = (a_1^j, \ldots, a_m^j)$ representing\footnote{As in our previous paper, our row/column convention is the reverse of the usual convention in tensor calculus for covariant/contravariant components.} a basis vector of the $j$th factor. Alternatively, $A$ can be viewed as a basis vector of the $m$-fold tensor product $(\Sym \CC^n)^{\otimes m}$, with the $i$th row $\mb{a}_i = (a_i^1, \ldots, a_i^n)$ labeling a basis vector of the $i$th factor. Thus, Kashiwara's theory of crystal bases~\cite{K90,K91} endows the set of such matrices with two sets of crystal operators: $\GL_m$-crystal operators $e_1, \ldots, e_{m-1}$, with $e_i$ modifying the entries in rows $i$ and $i+1$, and $\GL_n$-crystal operators $\ov{e}_1, \ldots, \ov{e}_{n-1}$, with $\ov{e}_j$ modifying the entries in columns $j$ and $j+1$. The $\GL_m$-crystal operators commute with the $\GL_n$-crystal operators, so one can ask how the set of $m \times n$ non-negative integer matrices decomposes as a $\GL_n \times \GL_m$-crystal.

This is given combinatorially by the Robinson--Schensted--Knuth (RSK) correspondence~\cite{Schensted,Knuth}, a celebrated bijection between $m \times n$ matrices $A$ of nonnegative integers and pairs $(P,Q)$ of semistandard Young tableaux of the same shape, where the entries of $P$ lie in $\{1, \ldots, n\}$, and the entries of $Q$ lie in $\{1, \ldots, m\}$. Endow the set $\bigsqcup_{\lambda} \SSYT(\lambda)_{\leq n} \times \SSYT(\lambda)_{\leq m}$ with a $\GL_n \times \GL_m$-crystal structure in which the $\GL_n$-crystal operators $\ov{e}_j$ act on the first factor, and the $\GL_m$-crystal operators $e_i$ act on the second factor. This is the crystal version of $(\GL_n, \GL_m)$ Howe duality~\cite{Howe89,Howe95}.

\begin{thm}[{\cite{Lascoux03,DanilovKoshevoy05,vanLeeuwen06}}]
\label{thm_RSK_isom}
With respect to the $\GL_n \times \GL_m$-crystal structures defined above, the RSK correspondence $A \mapsto (P,Q)$ is an isomorphism of $\GL_n \times \GL_m$-crystals.
\end{thm}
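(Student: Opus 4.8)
The statement asserts that the bijection $\RSK\colon A \mapsto (P,Q)$ simultaneously intertwines both crystal structures. The plan is to strip away the formal parts, reduce to a single family of operators, and then locate all the real content in the compatibility of Schensted insertion with crystal operators.

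\textbf{Formal reductions.} That $\RSK$ is a bijection is classical, and the $\GL_m$-weight of $A$ is its vector of row sums, which equals the content of $Q$ (dually for $\GL_n$ and $P$); so weights match automatically. For the (semi)normal crystals arising from representations, a weight-preserving bijection is an isomorphism of crystals as soon as it commutes with all the raising operators, so it suffices to verify intertwining. Since the $\GL_n$- and $\GL_m$-operators commute on the matrix side (as noted above) and act on disjoint tensor factors on the tableau side, the two structures may be treated separately. Finally, the transpose symmetry of RSK, $\RSK(A^{\mathsf T}) = (Q,P)$, exchanges the roles of $m$ and $n$ and of $P$ and $Q$; applying the $\GL_m$-statement to $A^{\mathsf T}$ (whose row operators are the column operators $\ov{e}_j$ of $A$) yields the $\GL_n$-statement for $A$. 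Thus the entire theorem reduces to the single intertwining identity, for nonnegative integer matrices of \emph{arbitrary} size: if $e_i \cdot A \neq 0$ then $\RSK(e_i \cdot A) = (P,\, e_i \cdot Q)$, with $e_i$ on the right computed in the $\GL_m$-crystal $\SSYT(\la)_{\le m}$ containing $Q$, and moreover $e_i \cdot A = 0$ if and only if $e_i \cdot Q = 0$.

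\textbf{The engine.} Realize $A$ as the element $\mb{a}^1 \otimes \cdots \otimes \mb{a}^n$ of $(\Sym \CC^m)^{\otimes n}$, each column a single-row $\GL_m$-crystal element over $\{1,\dots,m\}$. By the tensor-product signature rule, $e_i$ acts on one distinguished active column $j^*$, increasing the number of $i$'s and decreasing the number of $(i{+}1)$'s by one; in biword terms this replaces a single biletter $\binom{i+1}{j^*}$ by $\binom{i}{j^*}$. The decisive observation is that the multiset of \emph{bottom} letters (the column indices) is untouched, so the content of the insertion tableau $P$ is preserved. I would then invoke the classical compatibility of row insertion with crystal operators --- that the map sending a word to its insertion tableau is a morphism of crystals, equivalently that Knuth equivalence is preserved --- to upgrade this to the exact statements $P(e_i \cdot A) = P(A)$ and $Q(e_i \cdot A) = e_i \cdot Q(A)$: the altered top letter reshuffles the biword only within a Knuth class of the bottom word, leaving $P$ literally fixed, while propagating through the recording tableau as precisely the crystal move $e_i$ on $Q$.

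\textbf{Main obstacle and assembly.} The crux is this last upgrade: matching the bracketing rule that defines $e_i$ on the $\GL_m$-reading word of the matrix with the local effect of a single top-letter change on the family of bumping paths, so as to prove \emph{simultaneously} that $P$ is invariant (not merely that its content is) and that $Q$ transforms by exactly one crystal step. Verifying that the biletter selected by the tensor signature rule is the same one whose modification leaves the bumping routes Knuth-invariant, and carefully tracking the resulting reordering of the biword, is the technical heart of the argument. Once this local analysis is in hand, induction on $n$ completes the proof: peeling off the last column expresses $\RSK$ as successive single-row insertions, each of which is a $\GL_m$-crystal morphism, and the inductive step glues the local identity into the global intertwining relation. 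The boundary case $e_i \cdot A = 0 \Leftrightarrow e_i \cdot Q = 0$ falls out of the same analysis, since the signature rule returns $0$ exactly when the corresponding lattice-word condition on $Q$ fails.
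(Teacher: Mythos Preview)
The paper does not give its own proof of this theorem: it is stated in the introduction as a background result with citations to Lascoux, Danilov--Koshevoy, and van Leeuwen, and is not revisited. (The paper does later prove a geometric lift, Theorem~\ref{thm:gRSK isom}, whose tropicalization recovers the combinatorial statement, but that is a different argument entirely.) So there is no in-paper proof to compare against.

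As a standalone sketch, your outline follows the standard route and the reductions are sound: weight preservation is automatic, the transpose symmetry of RSK legitimately halves the work (provided, as you note, one proves the $\GL_m$-intertwining for matrices of \emph{all} sizes before invoking it on $A^{\mathsf T}$), and reducing to the raising operators is valid for these seminormal crystals. Your identification of the action of $e_i$ on the biword is correct.

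The one honest gap is the one you flag yourself: you ``invoke the classical compatibility of row insertion with crystal operators'' and then describe the crux as matching the tensor signature rule to the effect on bumping paths, but you do not actually perform that matching. That step \emph{is} the theorem---everything else is packaging---so as written this is an accurate roadmap rather than a proof. If you want to close the gap, the cleanest route is not to track bumping paths directly but to show that each elementary Knuth relation is a crystal isomorphism on three-letter words (a finite check), whence Knuth-equivalent words lie in isomorphic positions in their respective crystals; combined with the fact that $P$ depends only on the Knuth class and $Q$ records the sequence of shapes, the intertwining $Q(e_i\cdot A)=e_i\cdot Q(A)$ then drops out. This is essentially the argument in the references the paper cites.
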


There is one further wrinkle that we will need. The crystal operators $\ov{e}_j, e_i$ can be used to define crystal reflection operators $\ov{s}_j, s_i$ which generate an action of the Weyl group $S_n \times S_m$ on either side of the RSK correspondence. On the $m \times n$ matrix side, these reflection operators are precisely the combinatorial $R$-matrices acting on adjacent rows or columns. That is, the $\GL_m$-reflection operator $s_i$, which acts on rows $i$ and $i+1$, agrees with the combinatorial $R$-matrix $R_i$ acting on rows $i$ and $i+1$, and $\ov{s}_j$ agrees with the combinatorial $R$-matrix $\ov{R}_j$ acting on columns $j$ and $j+1$.\footnote{Since $s_i$ acts only on the $Q$-tableau, one sees immediately from Theorem~\ref{thm_RSK_isom} that the map $R_i$ does not affect the $P$-tableau, which is perhaps a more well-known property of the combinatorial $R$-matrix.} Combinatorial $R$-matrices arise from the theory of Kirillov--Reshetikhin crystals as a $q \to 0$ limit of $R$-matrices for finite-dimensional representations of affine Lie algebras.
They have many applications in combinatorial representation theory and mathematical physics; see, e.g.,~\cite{KKMMNN91,HHIKTT01,KMO15,LS19,Shimozono02} and references therein.

\subsection{Geometric lifting and fields of invariants}
\label{sec:intro geometric lifting}

The setting for this paper is a ``birational'' or ``geometric'' version of the combinatorial story described above. All the maps from the previous section have been described as piecewise-linear maps~\cite{KB95,Kir01,HHIKTT01}. These piecewise-linear maps have subsequently been ``lifted,'' or ``de-tropicalized,'' to subtraction-free rational maps on algebraic varieties, essentially by replacing the operations $(\min, + , -)$ with $(+, \times, \div)$.

The geometric version of RSK was introduced by Kirillov~\cite{Kir01}, and extensively studied by Noumi and Yamada~\cite{NoumiYamada}. This map was originally called the tropical RSK correspondence, but following the convention of more recent work on this map, we will call it the \defn{geometric RSK correspondence} (gRSK). Geometric RSK is a birational automorphism of the variety of $m \times n$ matrices over $\CC^*$. For example, when $m=2$ and $n=3$, gRSK has the form
\[
\gRSK \colon \begin{pmatrix}
x_1^1 & x_1^2 & x_1^3 \medskip \\
x_2^1 & x_2^2 & x_2^3
\end{pmatrix}
\mapsto
\begin{pmatrix}
\textcolor{blue}{z_{2,2}} & \textcolor{blue}{z_{2,3}} = \textcolor{red}{z'_{2,2}} & \textcolor{red}{z'_{1,1}} \medskip \\
\textcolor{blue}{z_{1,1}} & \textcolor{blue}{z_{1,2}} & \textcolor{blue}{z_{1,3}} = \textcolor{red}{z'_{1,2}}
\end{pmatrix} \; ,
\]
where $z_{a,b}$ and $z'_{a,b}$ are rational functions in the $x_i^j$ with positive integer coefficients. The rational functions $z_{a,b}$ and $z'_{a,b}$ tropicalize to piecewise-linear formulas for the semistandard tableaux $P$ and $Q$ (more precisely, for the entries of the corresponding Gelfand--Tsetlin patterns) associated to the matrix $\mb{a}$ by RSK. In particular, the rational functions $z_{k,n} = z'_{k,m}$ tropicalize to formulas for the entries of the common shape of $P$ and $Q$. We refer to the arrays consisting of the $z_{a,b}$ and $z'_{a,b}$ as the $P$-pattern and $Q$-pattern, respectively.

Berenstein and Kazhdan developed the theory of \defn{geometric crystals}~\cite{BK00, BK07}, which defines geometric liftings of crystal operators. The variety of $m \times n$ matrices over $\CC^*$ has $\GL_m$- and $\GL_n$-geometric crystal structures, which were shown to commute in~\cite{LPaff}; we denote the resulting $\GL_n \times \GL_m$-geometric crystal by $X^{\Mat}$ (see~\ref{sec:basic}). The set of $P$-patterns (resp., $Q$-patterns) is endowed with a $\GL_n$- (resp., $\GL_m$-)geometric crystal structure (see~\ref{sec:GT}). These crystal structures are shape-preserving, so by viewing an $m \times n$ matrix as a $P$-pattern and $Q$-pattern glued along their common shape, one obtains another $\GL_n \times \GL_m$-geometric crystal structure on $\Mat_{m \times n}(\CC^*)$, which we denote by $X^{\GT}$. We proved in~\cite{BFPS} that geometric RSK is an isomorphism of $\GL_n \times \GL_m$-geometric crystals $X^{\Mat} \rightarrow X^{\GT}$.
 
Yamada introduced a geometric lifting of the combinatorial $R$-matrix, which we call the \defn{geometric $R$-matrix}~\cite{Yamada01}. The action of the geometric $R$-matrix on rows $i$ and $i+1$ (resp., columns $j$ and $j+1$) of an $m \times n$ matrix agrees with the geometric analogue of the $\GL_m$- (resp., $\GL_n$-) crystal reflection operator $s_i$ (resp., $\ov{s}_j$) on $X^{\Mat}$. Thus, we have a geometric lifting of the full combinatorial picture described in~\S\ref{sec:intro RSK}.

The geometric crystal operators and geometric $R$-matrices can be viewed as birational maps acting on the field of rational functions in $mn$ variables $x_i^j$. In this paper, we continue our study of the invariants of various combinations of these birational maps. Following the convention of the previous section, we denote the maps acting on rows by $e_i$ and $R_i$ ($i \in \{1, \ldots, m-1\}$), and the maps acting on columns by $\ov{e}_j$ and $\ov{R}_j$ ($j \in \{1, \ldots, n-1\}$). We write $\Inv_R$ (resp., $\Inv_{\ov{R}}, \Inv_e, \Inv_{\ov{e}}$) for the subfields of $\CC(x_i^j)$ consisting of fixed points of the $R_i$ (resp., $\ov{R}_j, e_i, \ov{e}_j$), and we refer to these fixed points as \defn{$R$-invariants} (resp., \defn{$\ov{R}$-invariants}, \defn{$e$-invariants}, \defn{$\ov{e}$-invariants}). We also consider intersections of these fields, which we denote by $\Inv_{ab} = \Inv_a \cap \Inv_b$ for $a,b \in \{R,\ov{R},e,\ov{e}\}$, and call \defn{$ab$-invariants}. Since $R_i$ (resp., $\ov{R}_j$) is a special case of $e_i$ (resp., $\ov{e}_j$), the non-trivial intersections are $\Inv_{R\ov{R}}, \Inv_{R \ov{e}}, \Inv_{\ov{R} e},$ and $\Inv_{e \ov{e}}$. Altogether, there are eight subfields of invariants, which are depicted in Figure~\ref{fig:nine}.

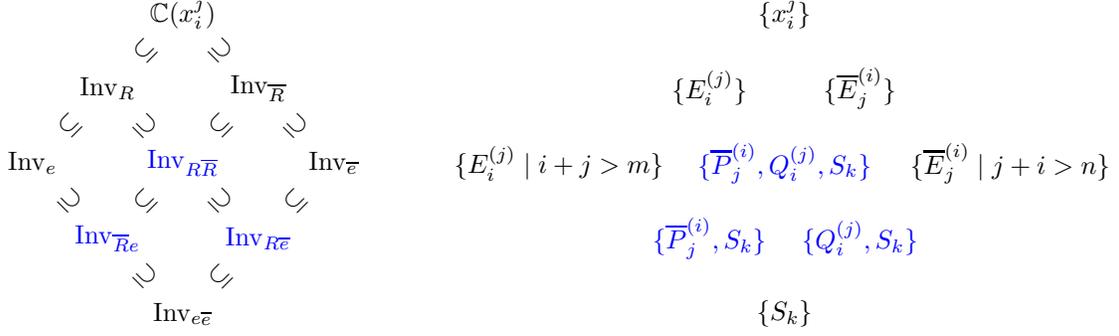
\begin{figure}
\begin{tikzpicture}

\draw (3,3) node{$\CC(x_i^j)$};
\draw (2,2) node{$\Inv_R$};
\draw (4,2) node{$\Inv_{\ov{R}}$};
\draw (1,1) node{$\Inv_e$};
\draw[blue] (3,1) node{$\Inv_{R\ov{R}}$};
\draw (5,1) node{$\Inv_{\ov{e}}$};
\draw[blue] (2,0) node{$\Inv_{\ov{R}e}$};
\draw[blue] (4,0) node{$\Inv_{R\ov{e}}$};
\draw (3,-1) node{$\Inv_{e\ov{e}}$};

\draw (2.5,2.5) node{\rotatebox{45}{$\subseteq$}};
\draw (1.5,1.5) node{\rotatebox{45}{$\subseteq$}};
\draw (3.5,2.5) node{\rotatebox{-45}{$\supseteq$}};
\draw (4.5,1.5) node{\rotatebox{-45}{$\supseteq$}};
\draw (3.5,1.5) node{\rotatebox{45}{$\subseteq$}};
\draw (2.5,1.5) node{\rotatebox{-45}{$\supseteq$}};
\draw (2.5,-0.5) node{\rotatebox{-45}{$\supseteq$}};
\draw (1.5,0.5) node{\rotatebox{-45}{$\supseteq$}};
\draw (3.5,-0.5) node{\rotatebox{45}{$\subseteq$}};
\draw (4.5,0.5) node{\rotatebox{45}{$\subseteq$}};
\draw (3.5,0.5) node{\rotatebox{-45}{$\supseteq$}};
\draw (2.5,0.5) node{\rotatebox{45}{$\subseteq$}};

\begin{scope}[xshift=8cm,scale=1]
\draw (3,3) node{$\{ x_i^j \}$};
\draw (2,2) node{$\{ \lE{i}{j} \}$};
\draw (4,2) node{$\{ \bE{j}{i} \}$};
\draw (0,1) node{$\{ \lE{i}{j} \mid i+j > m \}$};
\draw[blue] (3,1) node{$\{ \bP{j}{i}, \lQ{i}{j}, S_k \}$};
\draw (6,1) node{$\{ \bE{j}{i} \mid j+i > n \}$};
\draw[blue] (2,0) node{$\{ \bP{j}{i}, S_k \}$};
\draw[blue] (4,0) node{$\{ \lQ{i}{j}, S_k \}$};
\draw (3,-1) node{$\{ S_k \}$};
\end{scope}

\end{tikzpicture}
\caption{Various combinations of the geometric crystal operators and geometric $R$-matrices acting on $\CC(x_i^j)$ yield eight invariant subfields, shown in the diagram on the left. The diagram on the right shows an algebraically independent subset of polynomials in each subfield. These subsets are known to generate the subfield in the case of $R$-, $\ov{R}$-, $e$-, $\ov{e}$-, and $e\ov{e}$-invariants, and are conjectured to generate the subfield in the other three cases.}
\label{fig:nine}
\end{figure}

\subsection{Previous results}
\label{sec:intro prev results}

The invariants of the birational $S_m$-action generated by the geometric $R$-matrices $R_1, \ldots, R_{m-1}$ have been previously studied~\cite{LP12,LP13,LPS15, LPaff, Lamloop}, and it is known that the ring of polynomial invariants is generated by a family of polynomials
\[
\{\lE{i}{j}(\mb{x}_1, \ldots, \mb{x}_m) \mid i \in [m], j \in [n]\}
\]
($[k]$ denotes the set $\{1, \ldots, k\}$). Here $\mb{x}_i = (\lx{i}{1}, \ldots, \lx{i}{n})$ is a certain reordering of the variables in the $i$th row of the matrix $\mb{x} = (x_i^j)$, and we view $\lx{i}{j}$ as the $j$th ``color'' of the ``variable'' $\mb{x}_i$. For each $j \in [n]$, the polynomial $\lE{i}{j}$ is an analogue of the elementary symmetric polynomial of degree $i$ in $m$ variables. The $\lE{i}{j}$ are called \defn{loop elementary symmetric functions}. The ring generated by the $\lE{i}{j}$ is called the ring of \defn{loop symmetric functions} in $m$ variables and $n$ colors, and is denoted $\LSym_m(n)$. When $n=1$, the geometric $R$-matrices are simply adjacent transpositions of the variables $\lx{1}{1}, \ldots, \lx{m}{1}$, and $\LSym_m(1)$ is the ring of symmetric polynomials in these variables.

We proved in~\cite{BFPS} that $\Inv_e$ (and its polynomial subring) is generated by the subset
\[
\{\lE{i}{j}(\mb{x}_1, \ldots, \mb{x}_m) \mid i \in [m], j \in [n], i+j > m\}
\]
of \defn{$P$-type loop elementary symmetric functions}. We give a brief review of the proof of this result. The loop elementary symmetric functions are naturally organized into an infinite matrix $\widetilde{M} = \widetilde{M}(\mb{x}_1, \ldots, \mb{x}_m)$, whose entries are given by
\[
\widetilde{M}_{i,j} = \lE{m+j-i}{i},
\]
with the color in the superscript interpreted modulo $n$. For example, when $n = 3$ and $m=4$, we have
\[
\widetilde{M} =
\left(\begin{array}{ccc | ccc | ccc}
\lE{4}{1} & 0 & 0 & 0 & 0 & 0 & \hdots \\
\lE{3}{2} & \lE{4}{2} & 0 & 0 & 0 & 0 \\
\lE{2}{3} & \lE{3}{3} & \lE{4}{3} & 0 & 0 & 0 \\ \hline
\lE{1}{1} & \lE{2}{1} & \lE{3}{1} & \lE{4}{1} & 0 & 0 \\
1 & \lE{1}{2} & \lE{2}{2} & \lE{3}{2} & \lE{4}{2} & 0 \\
0 & 1 & \lE{1}{3} & \lE{2}{3} & \lE{3}{3} & \lE{4}{3} \\ \hline
0 & 0 & 1 & \lE{1}{1} & \lE{2}{1} & \lE{3}{1} \\
0 & 0 & 0 & 1 & \lE{1}{2} & \lE{2}{2} \\
0 & 0 & 0 & 0 & 1 & \lE{1}{3} \\ \hline
\vdots &  &  & & & & \ddots
\end{array}\right).
\]
As this example illustrates, $\widetilde{M}$ consists of $n \times n$ blocks which repeat along (block) diagonals. The $P$-type loop elementary symmetric functions are the $\lE{i}{j}$ appearing in the top right nonzero block $M$.

By definition, the entries of the gRSK $P$-pattern associated to the matrix $\xx$ are ratios of minors of the $n \times n$ matrix $M$. We showed that these ratios of minors generate the same field as the entries of $M$. Since the operators $e_i$ act only on the $Q$-pattern, it follows that the $P$-type $\lE{i}{j}$-s are $e$-invariant. We then showed that the action of the $e_i$ on the set of $Q$-patterns with fixed shape has a Zariski dense orbit. Finally, we used a result from algebraic geometry about dimensions of fibers to complete the proof.

The geometric RSK correspondence, like RSK, has the symmetry
\[
\gRSK(\mb{x}) = (P,Q) \iff \gRSK(\mb{x}^t) = (Q,P),
\]
where $\mb{x}^t$ is the transpose of the matrix $\mb{x}$. This allows us to transport properties of $\Inv_R$ and $\Inv_e$ to $\Inv_{\ov{R}}$ and $\Inv_{\ov{e}}$. In particular, $\Inv_{\ov{R}}$ is generated by the \defn{barred loop elementary symmetric functions} $\bE{j}{i}(\mb{x}^1, \ldots, \mb{x}^n)$ (for $j \in [n], i \in [m]$), where $\mb{x}^j$ is the $j$-th column of $\mb{x}$; this invariant field is isomorphic to $\Frac(\LSym_n(m))$. In keeping with our conventions for maps, we write $\LSym$ for the ring of $R$-invariants and $\ov{\LSym}$ for the ring of $\ov{R}$-invariants.

The ring of loop symmetric functions possesses distinguished elements called \defn{loop (skew) Schur functions}, which are determined by a skew shape $\la/\mu$ and a ``color'' $r \in [n]$. We call the pair $(\la/\mu, r)$ a \defn{colored skew shape}, and we depict this pair as the Young diagram of $\la/\mu$ with the cell in row $a$ and column $b$ assigned the color $(r+a-b) \mod n$. In other words, the cells on the main diagonal (in English notation) have color $r$, the cells on the diagonal to the right of the main diagonal have color $r-1$, etc. The loop skew Schur function $s_{\la/\mu}^{(r)}(\mb{x}_1, \ldots, \mb{x}_m)$ is the generating function for semistandard Young tableaux of shape $\la/\mu$ with respect to a weight that takes into account the color of the cells.

We defined the \defn{shape invariants} $S_1, \ldots, S_{\min(m,n)}$ to be the loop Schur functions
\[
S_k = s_{(n-k+1)^{m-k+1}}^{(n)}(\mb{x}_1, \ldots, \mb{x}_m).
\]
The colored shapes associated to $S_1, S_2, S_3$ in the case $m=4,n=3$ are shown below; red cells have color 1, green cells have color 2, and blue cells have color 3.

\begin{center}
\begin{tikzpicture}[baseline=-40,scale=0.5]

\begin{scope}[xshift=16cm]
\foreach \x/\y in {2/1} {\fill[blue!40] (\x,-\y) rectangle (\x+1,-\y-1); \draw (\x+0.5,-\y-0.5) node {$B$};}
\foreach \x/\y in {2/2} {\fill[red!40] (\x,-\y) rectangle (\x+1,-\y-1); \draw (\x+0.5,-\y-0.5) node {$R$};}
\draw[thick] (2,-1) -- (3,-1) -- (3,-3) -- (2,-3) -- (2,-1);
\end{scope}

\begin{scope}[xshift=8cm]
\foreach \x/\y in {2/1,3/2} {\fill[blue!40] (\x,-\y) rectangle (\x+1,-\y-1); \draw (\x+0.5,-\y-0.5) node {$B$};}
\foreach \x/\y in {3/1,2/3} {\fill[green!40] (\x,-\y) rectangle (\x+1,-\y-1); \draw (\x+0.5,-\y-0.5) node {$G$};}
\foreach \x/\y in {2/2,3/3} {\fill[red!40] (\x,-\y) rectangle (\x+1,-\y-1); \draw (\x+0.5,-\y-0.5) node {$R$};}
\draw[thick] (2,-1) -- (4,-1) -- (4,-4) -- (2,-4) -- (2,-1);
\end{scope}

\begin{scope}[xshift=0cm]
\foreach \x/\y in {2/1,3/2,4/3,2/4} {\fill[blue!40] (\x,-\y) rectangle (\x+1,-\y-1); \draw (\x+0.5,-\y-0.5) node {$B$};}
\foreach \x/\y in {3/1,2/3,4/2,3/4} {\fill[green!40] (\x,-\y) rectangle (\x+1,-\y-1); \draw (\x+0.5,-\y-0.5) node {$G$};}
\foreach \x/\y in {4/1,2/2,3/3,4/4} {\fill[red!40] (\x,-\y) rectangle (\x+1,-\y-1); \draw (\x+0.5,-\y-0.5) node {$R$};}
\draw[thick] (2,-1) -- (5,-1) -- (5,-5) -- (2,-5) -- (2,-1);
\end{scope}

\end{tikzpicture}\ 
\end{center}

\noindent The shape invariants are related to the common shape of the $P$- and $Q$-patterns by
\[
(z_{1,n}, \ldots, z_{\min(m,n),n}) = (z'_{1,m}, \ldots, z'_{\min(m,n),m}) = \left(\frac{S_1}{S_2}, \ldots, \frac{S_{n-1}}{S_n}, S_n\right).
\]
This, together with the result about the generators of $\Inv_e$ and the corresponding result for $\Inv_{\ov{e}}$, implies that $\Inv_{e \ov{e}}$ is generated by the shape invariants.

\subsection{New results}
\label{sec:intro main}

In this paper, the main object of study is the field $\Inv_{R\ov{e}}$. On the one hand, we can view this as the field of $\ov{e}$-invariant (ratios of) loop symmetric functions. On the other hand, the results described in the previous section imply that $\Inv_{\ov{e}}$ is the field of rational functions in the entries $z'_{j,i}$ of the $Q$-pattern, so $\Inv_{R\ov{e}}$ can be viewed as the subfield of rational functions in the $z'_{j,i}$ that are invariant under the action of $S_m$ on the $Q$-pattern.

It turns out that the former perspective is more useful for producing elements of $\Inv_{R \ov{e}}$. This is due to a result from~\cite{LPaff} that says that the loop skew Schur function associated to a colored skew shape is $\ov{e}$-invariant if all its northwest corners have color $n$ and all its southeast corners have color (congruent to) $m$. The shape invariants $S_k$ have this property, as the reader can see in the above example. Another example is the ``stretched staircase'' loop Schur function $s^{(n)}_{(n-1)\delta_{m-1}}$, where $\delta_{m-1} = (m-1, \ldots, 2, 1)$. It was shown in~\cite{LP13} that this polynomial tropicalizes to the \defn{energy function}, an important function on tensor products of certain affine crystals appearing in statistical mechanics~\cite{HKOTY99,HKOTT02,KKMMNN91,KSS02} and representation theory~\cite{FSS07,ST12}. Because of this example, we call a loop skew Schur function satisfying the corner color condition a \defn{pseudo-energy}.

Since $\Inv_{R \ov{e}}$ is a subfield of $\Inv_{\ov{e}}$ fixed by a finite group of automorphisms, its transcendence degree over $\CC$ is the same as that of $\Inv_{\ov{e}}$, namely, the number of entries in the $Q$-pattern. This is equal to the number of shape invariants plus the number of $\lE{i}{j}$ such that $i + j \leq m$. The loop elementary symmetric function $\lE{i}{j}$ is the loop Schur function associated to a column of height $i$, with top cell of color $j$, so most of these functions are not pseudo-energies. By attaching the colored shapes of shape invariants to both sides of the column (if necessary), we obtain pseudo-energies $\lQ{i}{j}$, which we call \defn{$Q$-invariants}. The shapes of these loop skew Schur functions in the case $m=4,n=3$ are shown below:

\begin{center}
\begin{tikzpicture}[baseline=-40,scale=0.5]

\begin{scope}[xshift=0cm]
\foreach \x/\y in {2/1,3/2} {\fill[blue!40] (\x,-\y) rectangle (\x+1,-\y-1); \draw (\x+0.5,-\y-0.5) node {$B$};}
\foreach \x/\y in {3/1,2/3} {\fill[green!40] (\x,-\y) rectangle (\x+1,-\y-1); \draw (\x+0.5,-\y-0.5) node {$G$};}
\foreach \x/\y in {4/1,2/2,3/3} {\fill[red!40] (\x,-\y) rectangle (\x+1,-\y-1); \draw (\x+0.5,-\y-0.5) node {$R$};}
\draw[thick] (2,-1) -- (4,-1) -- (4,-4) -- (2,-4) -- (2,-1);
\end{scope}

\begin{scope}[xshift=9cm,yshift=-1cm]
\foreach \x/\y in {2/1,4/0} {\fill[blue!40] (\x,-\y) rectangle (\x+1,-\y-1); \draw (\x+0.5,-\y-0.5) node {$B$};}
\foreach \x/\y in {3/1} {\fill[green!40] (\x,-\y) rectangle (\x+1,-\y-1); \draw (\x+0.5,-\y-0.5) node {$G$};}
\foreach \x/\y in {4/1,2/2} {\fill[red!40] (\x,-\y) rectangle (\x+1,-\y-1); \draw (\x+0.5,-\y-0.5) node {$R$};}
\draw[thick] (2,-1) -- (3,-1) -- (3,-3) -- (2,-3) --  (2,-1);
\draw[thick] (4,0) -- (5,0) -- (5,-2) -- (4,-2) -- (4,0);
\end{scope}

\begin{scope}[xshift=19cm]
\foreach \x/\y in {2/1,1/3,3/2} {\fill[blue!40] (\x,-\y) rectangle (\x+1,-\y-1); \draw (\x+0.5,-\y-0.5) node {$B$};}
\foreach \x/\y in {3/1,2/3} {\fill[green!40] (\x,-\y) rectangle (\x+1,-\y-1); \draw (\x+0.5,-\y-0.5) node {$G$};}
\foreach \x/\y in {2/2,3/3} {\fill[red!40] (\x,-\y) rectangle (\x+1,-\y-1); \draw (\x+0.5,-\y-0.5) node {$R$};}
\draw[thick] (2,-1) -- (4,-1) -- (4,-4) -- (2,-4) -- (2,-1);
\end{scope}

\begin{scope}[xshift=9cm,yshift=-6cm]
\foreach \x/\y in {2/1,5/1,3/2,4/0} {\fill[blue!40] (\x,-\y) rectangle (\x+1,-\y-1); \draw (\x+0.5,-\y-0.5) node {$B$};}
\foreach \x/\y in {3/1,4/2,5/0} {\fill[green!40] (\x,-\y) rectangle (\x+1,-\y-1); \draw (\x+0.5,-\y-0.5) node {$G$};}
\foreach \x/\y in {4/1,2/2,5/2} {\fill[red!40] (\x,-\y) rectangle (\x+1,-\y-1); \draw (\x+0.5,-\y-0.5) node {$R$};}
\draw[thick] (2,-1) -- (3,-1) -- (3,-3) -- (2,-3) -- (2,-1);
\draw[thick] (4,0) -- (6,0) -- (6,-3) -- (4,-3) -- (4,0);
\end{scope}

\begin{scope}[xshift=0cm,yshift=-5cm]
\foreach \x/\y in {2/1,5/1,3/2} {\fill[blue!40] (\x,-\y) rectangle (\x+1,-\y-1); \draw (\x+0.5,-\y-0.5) node {$B$};}
\foreach \x/\y in {3/1,4/2,2/3} {\fill[green!40] (\x,-\y) rectangle (\x+1,-\y-1); \draw (\x+0.5,-\y-0.5) node {$G$};}
\foreach \x/\y in {4/1,2/2,5/2,3/3} {\fill[red!40] (\x,-\y) rectangle (\x+1,-\y-1); \draw (\x+0.5,-\y-0.5) node {$R$};}
\draw[thick] (2,-1) -- (4,-1) -- (4,-4) -- (2,-4) -- (2,-1);
\draw[thick] (5,-1) -- (6,-1) -- (6,-3) -- (5,-3) -- (5,-1);
\end{scope}

\begin{scope}[xshift=16cm,yshift=-5cm]
\foreach \x/\y in {2/1,5/1,3/2,6/2,4/3} {\fill[blue!40] (\x,-\y) rectangle (\x+1,-\y-1); \draw (\x+0.5,-\y-0.5) node {$B$};}
\foreach \x/\y in {3/1,4/2,2/3,6/1,5/3} {\fill[green!40] (\x,-\y) rectangle (\x+1,-\y-1); \draw (\x+0.5,-\y-0.5) node {$G$};}
\foreach \x/\y in {4/1,2/2,5/2,3/3,6/3} {\fill[red!40] (\x,-\y) rectangle (\x+1,-\y-1); \draw (\x+0.5,-\y-0.5) node {$R$};}
\draw[thick] (2,-1) -- (4,-1) -- (4,-4) -- (2,-4) -- (2,-1);
\draw[thick] (5,-1) -- (7,-1) -- (7,-4) -- (5,-4) -- (5,-1);
\end{scope}

\end{tikzpicture}\ 
\end{center}

We conjecture that the $Q$-invariants and shape invariants generate $\Inv_{R \ov{e}}$. By symmetry, we have a conjectural generating set for $\Inv_{\ov{R} e}$ consisting of shape invariants and \defn{barred $P$-invariants} $\bP{j}{i}$. We prove that the set of $Q$-invariants, barred $P$-invariants, and shape invariants is algebraically independent, which makes it natural to conjecture that this set generates $\Inv_{R \ov{R}}$.

Looking at the (conjectural) picture of generating sets in Figure~\ref{fig:nine}, two questions immediately present themselves:
\begin{enumerate}
\item How does one express a pseudo-energy in terms of the $Q$-invariants and shape invariants?
\item How does one express a pseudo-energy in terms of the generators $\{\bE{j}{i} \mid j + i > n\}$ of $\Inv_{\ov{e}}$? In particular, how does one express the $Q$-invariants in terms of these generators?
\end{enumerate}

The answers to these questions are the two main results of this paper. To answer the first question, we express each pseudo-energy as the determinant of a matrix consisting of $Q$-invariants, shape invariants, and inverses of shape invariants (Theorem~\ref{thm:new det formula}). This formula is closely related to the Jacobi--Trudi formula, a classical determinantal formula for Schur functions that generalizes to $\LSym$. We view this result as strong evidence that the $Q$-invariants and shape invariants do in fact generate $\Inv_{R \ov{e}}$, and as justification for choosing these elements as a distinguished (conjectural) generating set.

The second question is resolved by Theorem~\ref{thm:unfolded sum of minors}, which expresses each pseudo-energy as a polynomial with nonnegative coefficients in barred loop skew Schur functions. The barred loop skew Schur functions that appear in these expressions are products of determinants of an $m \times m$ matrix consisting of the generators $\{\bE{j}{i} \mid j + i > n\}$ of $\Inv_{\ov{e}}$. The proof of Theorem~\ref{thm:unfolded sum of minors} is bijective: by representing the relevant tableaux as non-intersecting paths in a planar network, we are able to equate polynomials in $\LSym$ with polynomials in $\ov{\LSym}$.

\subsection{Pseudo-energies as functions on the $Q$-pattern}
\label{sec:intro cocharge}

Theorem~\ref{thm:unfolded sum of minors} implies that each pseudo-energy is a Laurent polynomial with positive integer coefficients in the entries $z'_{j,i}$ of the $Q$-pattern (this is explained in detail in \S\ref{sec:back to tableau}). Because of the positivity, we may tropicalize these Laurent polynomials and obtain $S_m$-invariant piecewise-linear functions on a Gelfand--Tsetlin pattern. The most important example of an $S_m$-invariant function on tableaux is the \defn{cocharge} function, which was introduced by Lascoux and Sch\"utzenberger in order to give a combinatorial rule for the transition coefficients between the bases of Schur functions and Hall--Littlewood symmetric functions~\cite{LS78}. It was shown by Nakayashiki and Yamada that the energy of a tensor product of one-row $\GL_n$-crystals is sent, under RSK, to the cocharge of the $Q$-tableau~\cite{NY97}.

Kirillov and Berenstein~\cite{KB95} defined a Laurent polynomial in the $z'_{j,i}$ that conjecturally tropicalizes to cocharge. By using Nakayashiki and Yamada's result and a new formula for energy coming from Theorem~\ref{thm:folded sum of minors} (a ``cylindric analogue'' of Theorem~\ref{thm:unfolded sum of minors}), we are able to prove Kirillov and Berenstein's conjecture. (An outline of a different proof was provided by Kirillov in~\cite{Kir01}.)

Kirillov and Berenstein also posed the question of finding a generating set for all $S_m$-invariant piecewise-linear functions on a Gelfand--Tsetlin pattern~\cite{KB95}. If our conjecture about the generators of $\Inv_{R \ov{e}}$ holds, it would follow that the field of $S_m$-invariant rational functions in the $z'_{j,i}$ is generated by the $Q$-invariants and shape invariants (viewed as functions of the $z'_{j,i}$). It seems possible that the tropicalizations of the $Q$-invariants and shape invariants generate the semifield of $S_m$-invariant piecewise-linear functions on a Gelfand--Tsetlin pattern. Even if our conjecture is true, however, it may well be necessary to include the tropicalizations of additional pseudo-energies, since psuedo-energies cannot in general be expressed as positive rational functions in the $Q$-invariants and shape invariants.

Another important $S_m$-invariant function on tableaux is the \defn{decoration} (or \defn{potential}), which was introduced by Berenstein and Kazhdan~\cite{BK07} as a mechanism for ``cutting out'' the vertex sets of combinatorial crystals in the tropicalization of a geometric crystal. The decoration also plays a role in writing Whittaker functions as integrals over geometric crystals (see~\cite{Chhaibi13, Lam13, OSZ}). Berenstein and Kazhdan introduced a related function called the \defn{central charge} on the product of geometric crystals, which they used to define the notion of a strongly positive geometric crystal~\cite{BK07}. We show that both of these functions have simple expressions in terms of $Q$-invariants and shape invariants.

\subsection{Outline of paper}
\label{sec:intro outline}

In \S\ref{sec:geometric crystals}, we recall the definitions and main properties of the geometric crystals $X^{\Mat}$ and $X^{\GT}$, the geometric $R$-matrices, and geometric RSK (the paper~\cite{BFPS} gives a much more comprehensive treatment of these topics). In \S\ref{sec:single}, we review the definition and basic properties of the ring of loop symmetric functions, and the above-mentioned results about $\Inv_e$, $\Inv_{\ov{e}}$, and $\Inv_{e\ov{e}}$.

In \S\ref{sec:double}, we introduce $Q$-invariants and prove our first main result (Theorem~\ref{thm:new det formula}), which gives a determinantal formula for pseudo-energies in terms of $Q$-invariants and shape invariants. In \S\ref{sec:unfolded sum of minors}, we prove our second main result (Theorem~\ref{thm:unfolded sum of minors}), which expresses pseudo-energies in terms of barred loop skew Schur functions. In \S\ref{sec:back to tableau}, we consider pseudo-energies as functions on the $Q$-pattern. In \S\ref{sec:cylindric}, we prove analogues of our two main theorems for the class of $\ov{e}$-invariant \defn{cylindric loop Schur functions}, which are generating functions for tableaux embedded in a cylinder. The second of these analogues (Theorem~\ref{thm:folded sum of minors}) is the key to our application to energy and cocharge in \S\ref{sec:applications}.

In \S\ref{sec:applications}, we apply our results to obtain new formulas for the energy and cocharge functions, and for the decoration and central charge. Finally, in \S\ref{sec:final}, we briefly discuss an affine generalization of our questions, and review what is known about the invariants of $\CC[x_i^j]$ under the usual actions of $\SL_n,\SL_m,S_n,$ and $S_m$ by left or right matrix multiplication, and permutation of rows or columns.

\subsection*{Note on citations}
In our previous paper~\cite{BFPS}, we sought to give a largely self-contained presentation of geometric crystals, geometric $R$-matrices, and geometric RSK. To this end, we included proofs of several key results that appeared previously in the literature. When such results are cited below, we include, for the convenience of the reader, a reference to our previous paper in addition to an original (or at least earlier) reference.

\subsection*{Notation}
For integers $a,b$, we write $[a,b]$ for the interval $\{k \in \ZZ \mid a \leq k \leq b\}$. We often abbreviate $[1,b]$ to $[b]$. For a set $S$, we write $\binom{S}{k}$ for the set of $k$-element subsets of $S$.

\subsection*{Acknowledgements}

We would like to thank Thomas Lam, during discussions with whom some of the ideas in this paper were born.

This work benefited from computations using {\sc SageMath}~\cite{sage,combinat}.
B.B.~was supported in part by NSF grant DMS-2101392.
G.F.~was supported in part by the Canada Research Chairs program.
P.P.~was supported in part by NSF grant DMS-1949896.
T.S.~was supported in part by Grant-in-Aid for JSPS Fellows 21F51028.
This work was partly supported by Osaka City University Advanced Mathematical Institute (MEXT Joint Usage/Research Center on Mathematics and Theoretical Physics JPMXP0619217849).

%%%%%%%%%%%%%%%%%%%%%%%%%%%%%%%%%%%%%%%%%%%%%%%%%%%%%%%%%%%%%%%%%%%%%%%%%
\section{Background on geometric crystals and geometric RSK}
\label{sec:geometric crystals}

\subsection{Basic geometric crystal}
\label{sec:basic}

Let $T = (\CC^*)^m$ be the maximal torus of $\GL_m$. For $i \in [m-1]$, let $\alpha_i \colon T \rightarrow \CC^*$ and $\alpha_i^\vee \colon \CC^* \rightarrow T$ denote the character and co-character
\[
\alpha_i(x_1, \ldots, x_m) = \dfrac{x_i}{x_{i+1}} \qquad \text{ and } \qquad \alpha_i^\vee(c) = (1, \ldots, c, c^{-1}, \ldots, 1),
\]
where $c$ and $c^{-1}$ are in positions $i$ and $i+1$.

\begin{dfn}
\label{defn:geom crystal}
A \defn{$\GL_m$-geometric crystal} is an irreducible complex algebraic variety $X$, together with a rational map $\gamma \colon X \rightarrow T$, and for each $i \in [m-1]$, rational functions\footnote{Berenstein and Kazhdan's definition~\cite{BK00,BK07} differs from ours by replacing $\ve_i, \vp_i$ with $1/\ve_i, 1/\vp_i$. Our convention has the advantage that $\ve_i, \vp_i$ tropicalize to the corresponding functions $\tw{\ve}_i, \tw{\vp}_i$ on combinatorial crystals, rather than their negatives.} $\ve_i, \vp_i \colon X \rightarrow \CC$ and a rational $\CC^*$-action $e_i : \CC^* \times X \rightarrow X$. We write $e_i^c(x)$ for the action of $c \in \CC^*$ on $x \in X$. These maps must satisfy the following identities (when they are defined):
\begin{enumerate}
\item $\dfrac{\vp_i(x)}{\ve_i(x)} = \alpha_i\bigl(\gamma(x)\bigr)$;
\item
$\gamma\bigl(e_i^c(x)\bigr) = \alpha_i^\vee(c) \gamma(x), \qquad \ve_i\bigl(e_i^c(x)\bigr) = c^{-1} \ve_i(x), \qquad \vp_i\bigl(e_i^c(x)\bigr) = c \vp_i(x);$
\item
\begin{enumerate}
\item if $\abs{i-j} > 1$, then $e_i^c e_j^{c'} = e_j^{c'} e_i^c$;
\item if $\abs{i-j} = 1$, then $e_i^c e_j^{cc'} e_i^{c'} = e_j^{c'} e_i^{cc'} e_j^c$.
\end{enumerate}
\end{enumerate}
The maps $e_i^c$ are called \defn{geometric crystal operators}.

An \defn{isomorphism} of $\GL_m$-geometric crystals is a birational isomorphism of underlying varieties that commutes with the geometric crystal structures.
\end{dfn}

The geometric analogue of the $\GL_m$-representation $\Sym \CC^m$ is the \defn{basic geometric crystal} $X_m$ introduced in~\cite{KOTY03}, which has underlying variety $X_m  = (\CC^*)^m$, and the following geometric crystal structure: for $\xx = (x_1, \ldots, x_m) \in X_m$,
\begin{equation}
\label{eq:basic geometric crystal}
\gamma(\mb{x}) = \mb{x},
\qquad\quad
\varepsilon_i(\mb{x}) = x_{i+1},
\qquad\quad
\varphi_i(\mb{x}) = x_i,
\qquad\quad
e_i^c(\mb{x}) = (x_1, \dotsc, c x_i, c^{-1} x_{i+1}, \dotsc, x_m).
\end{equation}

Given two $\GL_m$-geometric crystals $X$ and $X'$, Berenstein and Kazhdan~\cite{BK00} define on $X \times X'$
\begin{equation}
\label{eq:functions prod}
\gamma(x, x') = \gamma(x) \gamma(x'),
\quad
\varepsilon_i(x, x') = \frac{\varepsilon_i(x) \varepsilon_i(x')}{\varepsilon_i(x) + \varphi_i(x')},
\quad
\varphi_i(x, x') = \frac{\varphi_i(x) \varphi_i(x')}{\varepsilon_i(x) + \varphi_i(x')},
\end{equation}
and
\begin{equation}
\label{eq:e prod}
e_i^c(x, x') = \bigl( e_i^{c^+}(x), e_i^{c/c^+}(x') \bigr),
\qquad \text{ where } \,
c^+ = \frac{c \varphi_i(x') + \varepsilon_i(x)}{\varphi_i(x') + \varepsilon_i(x)}.
\end{equation}
This definition makes the product $(X_m)^n$ into a $\GL_m$-geometric crystal.\footnote{It is not at all obvious that axiom (3) of Definition~\ref{defn:geom crystal} holds for $(X_m)^n$. The proof of this fact relies on the notion of \defn{unipotent crystal}~\cite{BK00}; see~\cite[\S 2.2]{BFPS} for an overview of unipotent crystals.} Using this definition, one can derive explicit closed formulas for the geometric crystal structure on $(X_m)^n$ (see \cite[\LemExplicitBasic]{BFPS}). It is usually more convenient, however, to work with a matrix formulation of the geometric crystal structure, which we now recall.

For $(x_1, \ldots, x_m) \in X_m$, define an $m \times m$ matrix
\[
W(x_1, \ldots, x_m) = \begin{pmatrix}
x_1 & 0 & 0 & \cdots & 0 & 0 \\
1 & x_2 & 0 & \cdots & 0 & 0 \\
0 & 1 & x_3 & \cdots & 0 & 0 \\
 &&& \ddots && \\
0 & 0 & 0 & \cdots & x_{m-1} & 0 \\
0 & 0 & 0 & \cdots & 1 & x_m
\end{pmatrix}.
\]
Given points $\mb{x}^j \in X_m$ for $j = 1, \ldots, n$, define
\[
M(\mb{x}^1, \ldots, \mb{x}^n) = W(\mb{x}^1) \cdots W(\mb{x}^n).
\]
The following result is a consequence of Berenstein and Kazhdan's general theory of unipotent crystals.

\begin{lemma}[{\cite[Lem.~3.9]{BK00}}]
\label{lem:unipotent}
For $\xx = (\xx^1, \ldots, \xx^n) \in (X_m)^n$, the maps $\gamma, \ve_i, \vp_i$ are given by
\[
\gamma(\xx) = (M(\xx)_{1,1}, \ldots, M(\xx)_{m,m}),
\qquad\quad
\varepsilon_i(\xx) = \dfrac{M(\xx)_{i+1,i+1}}{M(\xx)_{i+1,i}},
\qquad\quad
\varphi_i(\xx) = \dfrac{M(\xx)_{i,i}}{M(\xx)_{i+1,i}},
\]
and the geometric crystal operators satisfy
\begin{equation}
\label{eq:unipotent}
M(e_i^c(\mb{x})) = x_i\left((c-1)\varphi_i(\mb{x})\right) \cdot M(\mb{x}) \cdot x_i\left((c^{-1} - 1)\varepsilon_i(\mb{x})\right),
\end{equation}
where $x_i(a) = I + aE_{i,i+1}$ is the $m \times m$ matrix with 1's on the diagonal, $a$ in position $(i,i+1)$, and zeroes elsewhere.
\end{lemma}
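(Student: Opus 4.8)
The plan is to establish both assertions together by induction on $n$, starting from the single-factor structure \eqref{eq:basic geometric crystal} and propagating it through the product rules \eqref{eq:functions prod}--\eqref{eq:e prod}. The one structural fact I will use repeatedly is that each $W(\xx^j)$ is lower triangular, so every partial product $M(\xx') := W(\xx^1)\cdots W(\xx^{n-1})$ is lower triangular; in particular $M(\xx')_{k,k+1} = 0$. The base case $n = 1$ is immediate: $M(\xx^1) = W(\xx^1)$ has diagonal entries $x_k$ and sub-diagonal entries $1$, which gives $\gamma, \ve_i, \vp_i$ exactly as in \eqref{eq:basic geometric crystal}. For the operator identity at $n=1$ I would compute $x_i((c-1)x_i)\,W(\xx^1)\,x_i((c^{-1}-1)x_{i+1})$ directly: right multiplication by $x_i(b)$ adds $b$ times column $i$ to column $i+1$, left multiplication by $x_i(a)$ adds $a$ times row $i+1$ to row $i$, and a bookkeeping computation confined to rows/columns $i,i+1$ turns the $(i,i)$ entry into $cx_i$, the $(i+1,i+1)$ entry into $c^{-1}x_{i+1}$, and cancels the stray $(i,i+1)$ entry to $0$; the result is $W(x_1,\dots,cx_i,c^{-1}x_{i+1},\dots,x_m) = M(e_i^c(\xx^1))$.

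For the inductive step, write $\xx = (\xx',\xx^n)$ so that $M(\xx) = M(\xx')\,W(\xx^n)$. Because $W(\xx^n)_{p,l}$ is nonzero only for $p=l$ (value $x_l^n$) and $p=l+1$ (value $1$), lower triangularity of $M(\xx')$ yields the recursions
\[
M(\xx)_{k,k} = M(\xx')_{k,k}\,x_k^n, \qquad M(\xx)_{i+1,i} = M(\xx')_{i+1,i}\,x_i^n + M(\xx')_{i+1,i+1}.
\]
Feeding the inductive hypothesis for $M(\xx')$ into the product formulas \eqref{eq:functions prod} and clearing denominators then recovers $\gamma(\xx)_k = M(\xx)_{k,k}$, $\vp_i(\xx) = M(\xx)_{i,i}/M(\xx)_{i+1,i}$, and $\ve_i(\xx) = M(\xx)_{i+1,i+1}/M(\xx)_{i+1,i}$; this is a routine algebraic check.

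The operator identity is the heart of the matter. By \eqref{eq:e prod}, $e_i^c(\xx) = \bigl(e_i^{c^+}(\xx'),\, e_i^{c/c^+}(\xx^n)\bigr)$ with $c^+ = (c\,\vp_i(\xx^n) + \ve_i(\xx'))/(\vp_i(\xx^n) + \ve_i(\xx'))$, so $M(e_i^c(\xx)) = M(e_i^{c^+}(\xx'))\,W(e_i^{c/c^+}(\xx^n))$. Applying the inductive hypothesis to the first factor, the base case to the second, and the relation $x_i(p)x_i(q) = x_i(p+q)$ (valid since $E_{i,i+1}^2 = 0$), the right-hand side collapses to
\[
x_i\bigl((c^+-1)\vp_i(\xx')\bigr)\, M(\xx')\, x_i(\mu)\, W(\xx^n)\, x_i\bigl(((c/c^+)^{-1}-1)\ve_i(\xx^n)\bigr),
\]
where $\mu = ((c^+)^{-1}-1)\ve_i(\xx') + (c/c^+-1)\vp_i(\xx^n)$. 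I would then check three identities. Using $c^+-1 = (c-1)\vp_i(\xx^n)/(\vp_i(\xx^n)+\ve_i(\xx'))$ together with the product rule for $\vp_i$ gives $(c^+-1)\vp_i(\xx') = (c-1)\vp_i(\xx)$, so the leftmost factor is $x_i((c-1)\vp_i(\xx))$; the symmetric computation shows the rightmost factor is $x_i((c^{-1}-1)\ve_i(\xx))$. The crucial point is that the middle exponent vanishes: the two summands of $\mu$ evaluate to $\pm (c-1)\vp_i(\xx^n)\ve_i(\xx')/(c\,\vp_i(\xx^n)+\ve_i(\xx'))$ with opposite signs, so $\mu = 0$, $x_i(\mu) = I$, and the product telescopes to exactly \eqref{eq:unipotent}.

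I expect the cancellation $\mu = 0$ to be the only real obstacle, and it is the step where the precise form of $c^+$ in \eqref{eq:e prod} is indispensable: for any other factorization $c = c^+ \cdot (c/c^+)$ the middle factor would survive, and it cannot be absorbed, since $M(\xx')\,(x_i(\mu)-I)\,W(\xx^n) = \mu\,M(\xx')E_{i,i+1}W(\xx^n)$ is the (generically nonzero) outer product of column $i$ of $M(\xx')$ with row $i+1$ of $W(\xx^n)$. Conceptually this vanishing is the matrix trace of the compatibility that makes $(X_m)^n$ a unipotent --- hence geometric --- crystal, so the lemma amounts to the statement that the abstract product crystal structure is faithfully encoded by the concrete matrix product $M$.
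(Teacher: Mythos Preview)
Your inductive proof is correct. The paper does not supply its own proof of this lemma, citing it instead from \cite{BK00} as a consequence of the general theory of unipotent crystals; however, for the affine strengthening (Lemma~\ref{lem:affine unipotent}) the authors remark that it ``is easily proved by induction on $m$,'' which is precisely your approach --- so you have written out the argument the paper implicitly has in mind.
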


Let $\xx = (x_i^j)_{i \in [m], j \in [n]} \in \Mat_{m \times n}(\CC^*)$ be an $m \times n$ matrix with nonzero complex entries. Write $\mb{x}_i = (x_i^1, \ldots, x_i^n)$ for the $i$th row of $\xx$, and $\mb{x}^j = (x_1^j, \ldots, x_m^j)$ for the $j$th column of $\xx$. We view $\xx$ as a point $(\mb{x}^1, \ldots, \mb{x}^n)$ in the $\GL_m$-geometric crystal $(X_m)^n$, and as a point $(\mb{x}_1, \ldots, \mb{x}_m)$ in the $\GL_n$-geometric crystal $(X_n)^m$. Thus, we have a $\GL_m$-geometric crystal structure and a $\GL_n$-geometric crystal structure acting on the space $\Mat_{m \times n}(\CC^*)$. To distinguish the two structures, we write bars over the maps associated with the $\GL_n$-geometric crystal: $\ov{\gamma}, \ov{\varepsilon}_j, \ov{\varphi}_j, \ov{e}_j^c$. Note that the unbarred geometric crystal operators $e_i^c$ modify the entries in rows $i$ and $i+1$, and the barred geometric crystal operators $\ov{e}_j^c$ modify the entries in columns $j$ and $j+1$.

\begin{thm}[{\cite{LP13II}, \cite[\CorBicrystal]{BFPS}}]
\label{thm:bicrystal}
Taken together, the barred and unbarred geometric crystal structures make $\Mat_{m \times n}(\CC^*)$ into a $\GL_n \times \GL_m$-geometric crystal.\footnote{A $\GL_n \times \GL_m$-geometric crystal is a variety with a $\GL_n$-geometric crystal structure and a $\GL_m$-geometric crystal structure, such that the two structures commute with each other; that is, $\ve_i \ov{e}^c_j = \ve_i$, $\ov{\ve}_j e_i^c = \ov{\ve}_j$, $e_i^{c_1} \ov{e}_j^{c_2} = \ov{e}_j^{c_2} e_i^{c_1}$, etc.}
\end{thm}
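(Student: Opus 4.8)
The only substantive content of the statement is the commutation of the two structures recorded in the footnote; that the barred data and the unbarred data each already define a $\GL_n$- and a $\GL_m$-geometric crystal on $\Mat_{m\times n}(\CC^*)$ is part of the product constructions that make $(X_n)^m$ and $(X_m)^n$ geometric crystals. My plan is therefore to prove, simultaneously, the three families of identities: invariance of the unbarred $\gamma,\ve_i,\vp_i$ under every $\ov{e}_j^c$, invariance of the barred $\ov{\gamma},\ov{\ve}_j,\ov{\vp}_j$ under every $e_i^c$, and the commutation $e_i^{c_1}\ov{e}_j^{c_2}=\ov{e}_j^{c_2}e_i^{c_1}$.

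First I would record the barred analogue of Lemma~\ref{lem:unipotent}, describing $\ov{\gamma},\ov{\ve}_j,\ov{\vp}_j,\ov{e}_j^c$ through the $n\times n$ matrix $\ov{M}(\xx)=\ov{W}(\mb{x}_1)\cdots\ov{W}(\mb{x}_m)$ built from the rows, exactly as $\gamma,\ve_i,\vp_i,e_i^c$ are described through the $m\times m$ matrix $M(\xx)$. The structural fact I would extract from the tensor-product rule (\ref{eq:functions prod})--(\ref{eq:e prod}) is \emph{locality}: since $\ve_i,\vp_i$ on the basic crystal $X_m$ involve only the $i$th and $(i+1)$st coordinates, the operator $e_i^c$ depends only on rows $i,i+1$ of $\xx$ and rescales only those rows, sending $x_i^k\mapsto c_k x_i^k$ and $x_{i+1}^k\mapsto c_k^{-1}x_{i+1}^k$ with $\prod_k c_k=c$; dually $\ov{e}_j^c$ depends only on, and rescales only, columns $j,j+1$, by factors $\ov{d}_l$.

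The difficulty concentrates in the interaction of these two rescalings, and I expect this to be the main obstacle. The obstruction is a \emph{complementary} non-locality: the factors $c_k$ of $e_i^c$ are rational functions of the entire rows $i,i+1$ — including the entries that $\ov{e}_j^c$ has just altered — while the factors $\ov{d}_l$ of $\ov{e}_j^c$ depend on the entire columns $j,j+1$, which $e_i^c$ alters. Hence neither operator is literally preserved by the other, and there is no honest reduction to a bounded corner of the matrix. My plan to control this is a simultaneous induction on the number of tensor factors: writing $(X_m)^n=(X_m)^{n-1}\times X_m$ and peeling off the last column via (\ref{eq:e prod}) splits $e_i^c$ as $e_i^{c^+}$ on the first $n-1$ columns and $e_i^{c/c^+}$ on the last, where $c^+$ depends on $\ve_i$ of the first block. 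For $\ov{e}_j^c$ with $j\le n-2$ the inductive commutation on the smaller block applies, \emph{provided} $c^+$ is unchanged — which is precisely the invariance of $\ve_i$ under $\ov{e}_j^c$ that the same induction supplies. This is why commutation and invariance must be carried together; the one genuinely new input at each stage is a single ``crossing'' lemma for $\ov{e}_{n-1}$ acting across the split. I would verify that base configuration in the matrix language of Lemma~\ref{lem:unipotent} — conjugating $M$ by the elementary unipotent $x_i(\cdot)$ implementing $e_i^c$ and checking compatibility with the column rescaling implementing $\ov{e}_{n-1}^c$ — rather than by brute-force expansion of the closed formulas, and confirm the resulting subtraction-free identity symbolically in {\sc SageMath}.

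The invariance families then fall out of the same analysis. The maps $\ov{\ve}_j,\ov{\vp}_j$ are built from entries of $\ov{M}$ in columns $j,j+1$, so their invariance under $e_i^c$ is the $\ve,\vp$-shadow of the operator commutation; and the invariance of the weights $\gamma,\ov{\gamma}$ reduces, via locality, to the crossing statement that $e_i^c$ leaves the diagonal of the two-factor product $\ov{W}(\mb{x}_i)\ov{W}(\mb{x}_{i+1})$ unchanged (and dually), which is part of the base computation. The persistent delicacy throughout is the transformation of the non-local splitting factors, which is exactly why I would route the crossing case through left/right unipotent multiplications as in (\ref{eq:unipotent}) rather than through a direct expansion.
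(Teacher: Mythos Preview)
The paper does not give its own proof of this theorem: it is stated with citations to \cite{LP13II} and to \cite[\CorBicrystal]{BFPS} and then used as a black box. So there is no in-paper argument to compare against.

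That said, it is worth noting that the proof indicated by the citation \cite[\CorBicrystal]{BFPS} is structurally very different from your plan. There the bicrystal statement is a \emph{corollary} of the gRSK isomorphism (Theorem~\ref{thm:gRSK isom} here): one first proves, separately, that $\gRSK$ intertwines the $\GL_m$-structure on $X^{\Mat}$ with the $\GL_m$-structure on $X^{\GT}$, and likewise for $\GL_n$. On $X^{\GT}$ the barred operators act only on the $P$-pattern and the unbarred operators only on the $Q$-pattern, so the two structures commute trivially; pulling back along the birational isomorphism $\gRSK$ gives the commutation on $X^{\Mat}$. This sidesteps entirely the non-local splitting factors that you (correctly) identify as the obstruction to a direct argument. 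Your inductive peeling-off-one-factor approach, by contrast, is in the spirit of the earlier direct proof in \cite{LP13II}; it is a reasonable outline, but note that the ``crossing lemma'' you defer to a SageMath check is precisely where all the work lies, and writing it as a clean identity in the unipotent-conjugation language of Lemma~\ref{lem:unipotent} is not as automatic as the sketch suggests.
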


%%%%%%%%%%
\subsection{Weyl group action and geometric \texorpdfstring{$R$}{R}-matrix}
\label{sec:geometric R}

Let $X$ be a $\GL_m$-geometric crystal. For $i \in [m-1]$, define a rational map $s_i \colon X \to X$ by
\[
s_i(x) = e_i^{\frac{1}{\alpha_i(\gamma(x))}}(x) = e_i^{\frac{\ve_i(x)}{\vp_i(x)}}(x).
\]
Berenstein and Kazhdan proved that the maps $s_i$ generate a birational action of the Weyl group $S_m$ on $X$~\cite[Prop. 2.3]{BK00} (in fact, the purpose of axiom (3) in Definition~\ref{defn:geom crystal} is to ensure that the $s_i$ satisfy the braid relations). It turns out that the birational Weyl group action on $(X_m)^n$ agrees with the action of the geometric $R$-matrix, a map coming from the representation theory of quantum groups of type $A_{n-1}^{(1)}$.

The \defn{geometric $R$-matrix (of type $A_{n-1}^{(1)}$)} is the rational map
\begin{align*}
R \colon (\CC^*)^n \times (\CC^*)^n &\rightarrow (\CC^*)^n \times (\CC^*)^n \\
\bigl((x_1, \ldots, x_n), (y_1, \ldots, y_n)\bigr) &\mapsto \bigl((y'_1, \ldots, y'_n), (x'_1, \ldots, x'_n)\bigr)
\end{align*}
given by
\[
y'_j = y_j \dfrac{\kappa_{j+1}}{\kappa_j}, \quad x'_j = x_j\dfrac{\kappa_j}{\kappa_{j+1}}, \qquad \kappa_r = \kappa_r(\mb{x},\mb{y}) = \sum_{k=0}^{n-1} y_r \cdots y_{r+k-1} x_{r+k+1} \cdots x_{r+n-1},
\]
with subscripts interpreted modulo $n$. For $i \in [m-1]$, define $R_i \colon (X_n)^m \rightarrow (X_n)^m$ by
\[
R_i(\mb{x}_1, \ldots, \mb{x}_i, \mb{x}_{i+1}, \ldots, \mb{x}_m) = (\mb{x}_1, \ldots, \mb{x}'_{i+1}, \mb{x}'_i, \ldots, \mb{x}_m),
\]
where $(\mb{x}'_{i+1}, \mb{x}'_i) = R(\mb{x}_i, \mb{x}_{i+1})$.

\begin{prop}[{\cite{KajNouYam}, \cite[\PropWeylR]{BFPS}}]
\label{prop:R=Weyl}
If $(\mb{x}^1, \ldots, \mb{x}^n) \in (X_m)^n$ are the columns of a matrix $\xx \in \Mat_{m \times n}(\CC^*)$, and $(\mb{x}_1, \ldots, \mb{x}_m) \in (X_n)^m$ are the rows of $\xx$, then for $i \in [m-1]$ we have
\[
s_i(\mb{x}^1, \ldots, \mb{x}^n) = R_i(\mb{x}_1, \ldots, \mb{x}_m).
\]
\end{prop}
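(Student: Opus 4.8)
The plan is to exploit the \emph{locality} of both maps to reduce to the rank-two case, and then to match the two resulting rational maps by a direct computation. Both $s_i$ and $R_i$ alter only rows $i$ and $i+1$ of $\xx$, and their effect on those two rows depends only on the entries in rows $i,i+1$. For $R_i$ this is immediate from its definition. For $s_i=e_i^{\ve_i/\vp_i}$ it follows from Lemma~\ref{lem:unipotent}: the entries $M(\xx)_{i,i}=\prod_j x_i^j$, $M(\xx)_{i+1,i+1}=\prod_j x_{i+1}^j$, and $M(\xx)_{i+1,i}=\sum_{r}\prod_{j<r}x_{i+1}^j\prod_{j>r}x_i^j$ that determine $\ve_i,\vp_i$ involve only rows $i,i+1$, while the tensor-product rule~\eqref{eq:e prod} for $e_i^c$ references only $\ve_i,\vp_i$ of the individual columns, which by~\eqref{eq:basic geometric crystal} read off the entries in rows $i,i+1$ alone. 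Hence it suffices to treat $m=2$, $i=1$; write the two rows as $\mb{x}=(x_1,\ldots,x_n)$ and $\mb{y}=(y_1,\ldots,y_n)$, so that column $j$ is the pair $(x_j,y_j)$.

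\textbf{The rank-two computation.} By Lemma~\ref{lem:unipotent} the $2\times 2$ matrix $M(\xx)$ satisfies $M(\xx)_{1,1}=\prod_j x_j$, $M(\xx)_{2,2}=\prod_j y_j$, so the Weyl-group parameter is $c=\ve_1/\vp_1=M(\xx)_{2,2}/M(\xx)_{1,1}=\prod_j y_j/\prod_j x_j$, and $s_1(\xx)=e_1^{c}(\xx)$. Iterating the two-factor rule~\eqref{eq:e prod} (equivalently, invoking~\cite[\LemExplicitBasic]{BFPS}), the operator $e_1^c$ sends column $j$ to $(c_j x_j,\,c_j^{-1}y_j)$, where the scalars $c_j$ obey the forward recursion $c_j=(c^{(j)}\vp(T_{j+1})+y_j)/(\vp(T_{j+1})+y_j)$ with $c^{(j)}=c/(c_1\cdots c_{j-1})$ and $T_{j+1}=(x_{j+1},y_{j+1})\otimes\cdots\otimes(x_n,y_n)$. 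The goal is to show that these satisfy the closed form
\[
c_j=\frac{y_j\,\kappa_{j+1}}{x_j\,\kappa_j},
\]
with the cyclic quantities $\kappa_r=\kappa_r(\mb{x},\mb{y})$ from the definition of the geometric $R$-matrix. Granting this, column $j$ of $s_1(\xx)$ is $(c_j x_j,\,c_j^{-1}y_j)=(y_j\kappa_{j+1}/\kappa_j,\;x_j\kappa_j/\kappa_{j+1})$, which is exactly column $j$ of $R_1(\xx)$, completing the proof.

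\textbf{The main obstacle.} Establishing the closed form is the crux. I would argue by induction on $j$: assuming the form for $c_1,\dots,c_{j-1}$, the telescoping identity $c^{(j)}=(\prod_{k\ge j}y_k/\prod_{k\ge j}x_k)(\kappa_1/\kappa_j)$ holds (this already uses the cyclic relation $\prod_r\kappa_{r+1}=\prod_r\kappa_r$). Writing $\vp(T_{j+1})=\prod_{k>j}x_k/D_{j+1}$, the recursion for $c_j$ then collapses to the polynomial identity
\[
y_j\,\kappa_{j+1}\,D_j=Y_j\,\kappa_1+x_j\,y_j\,\kappa_j\,D_{j+1},
\]
where $Y_j=\prod_{k\ge j}y_k$ and $D_j=\sum_{r=j}^{n}\prod_{j\le k<r}y_k\prod_{r<k\le n}x_k$ is the \emph{non-cyclic} tail sum, so that $D_1=M(\xx)_{2,1}=\kappa_1$ and $D_j=\prod_{k>j}x_k+y_j D_{j+1}$. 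This identity — relating the cyclic sums $\kappa_r$ to the non-cyclic tail sums $D_j$ — is the hard part, and I would prove it either by downward induction on $j$ (base case $j=n$, where $D_n=1$, $D_{n+1}=0$) or by a direct monomial/path bijection between the two sides, both of which are homogeneous of degree $2n-j$. Useful consistency checks are that $\prod_j c_j=c$, that $R_1$ preserves each product $x_j y_j$, and that $R_1$ swaps the diagonal entries of $M(\xx)$ while fixing $M(\xx)_{2,1}$.

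\textbf{Conceptual reformulation.} It is worth recording the organizing principle behind the calculation: $s_1(\xx)$ is the unique point of the one-parameter orbit $\{e_1^c(\xx):c\in\CC^*\}$ at which $\ve_1$ and $\vp_1$ are interchanged. Since $R_1$ preserves each $x_jy_j$ and (as the last consistency check shows) interchanges $\ve_1,\vp_1$, proving $R_1=s_1$ amounts to showing that $R_1(\xx)$ lies on this orbit. Unwinding that condition reproduces exactly the closed form for $c_j$, so this route does not avoid the central identity but does explain why it must hold.
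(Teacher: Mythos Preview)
The paper does not supply its own proof of this proposition; it is quoted as a background result with references to \cite{KajNouYam} and \cite[Prop.~6.1]{BFPS}, so there is no argument in the present paper to compare your proposal against.

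That said, your outline is correct and follows the standard route. Locality genuinely reduces the claim to $m=2$, and the closed form $c_j = y_j\kappa_{j+1}/(x_j\kappa_j)$ for the partial crystal parameters is exactly what must be verified. Your identification of the polynomial identity relating the cyclic sums $\kappa_r$ to the non-cyclic tails $D_j$ as the technical core is accurate, and the downward induction on $j$ you sketch (using $D_j=\prod_{k>j}x_k+y_jD_{j+1}$ together with the analogous one-step shift for $\kappa_r$) does go through. The conceptual reformulation at the end --- that $s_1$ is characterized as the unique point on the $e_1$-orbit where $\ve_1$ and $\vp_1$ are swapped, and that $R_1$ visibly has this property on the level of $M(\xx)$ --- is also correct, and is close in spirit to how the result is organized in \cite{BFPS}.
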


Following the convention of the previous section, we write $R_i$ (resp., $\ov{R}_j$) for the birational automorphism of $\Mat_{m \times n}(\CC^*)$ which acts by the geometric $R$-matrix on rows $i$ and $i+1$ (resp., columns $j$ and $j+1$). It follows from Theorem~\ref{thm:bicrystal} and Proposition~\ref{prop:R=Weyl} that these maps generate a rational action of $S_n \times S_m$ on $\Mat_{m \times n}(\CC^*)$.

\begin{remark}
The \defn{combinatorial $R$-matrix} is the unique crystal isomorphism that permutes adjacent factors in a tensor product of single row (affine) crystals (see, e.g.,~\cite{HKOTY99,HKOTT02}). As mentioned in the introduction, this map corresponds, on the other side of RSK, to Lascoux and Sch\"utzenberger's symmetric group action~\cite{LS81} on the recording tableau (see, e.g.,~\cite{Shimozono02}).

Yamada~\cite{Yamada01} defined the geometric (or birational) $R$-matrix as a geometric lift of the combinatorial $R$-matrix. Several different proofs of the fact that the maps $R_i$ generate a birational $S_m$-action have appeared in the literature~\cite{Yamada01,KajNouYam,Etingof,LP12,LP13II}. The fact that the maps $R_i$ commute with the maps $\ov{R}_j$ was proved in~\cite{KajNouYam}.
\end{remark}

%%%%%%%%%%%%%
\subsection{Gelfand--Tsetlin geometric crystal}
\label{sec:GT}

A \defn{Gelfand--Tsetlin (GT) pattern of height $n$} is a triangular array of nonnegative integers $(a_{i,j}), 1 \leq i \leq j \leq n,$ satisfying the inequalities 
\begin{equation}
\label{eq:GT def}
a_{i,j+1} \geq a_{i,j} \geq a_{i+1,j+1}
\end{equation}
whenever $j < n$. We will represent GT patterns as triangles whose entries weakly increase from right to left along every diagonal, as illustrated below in the case $n = 4$:
\[
\begin{array}{ccccccc}
&&& a_{11} \\
&& a_{12} && a_{22} \\
& a_{13} && a_{23} && a_{33} \\
a_{14} && a_{24} && a_{34} && a_{44}
\end{array} \, .
\]

GT patterns of height $n$ are in bijection with semistandard tableaux consisting of entries at most $n$: the partition $(a_{1,j}, \ldots, a_{j,j})$ in the $j$th row corresponds to the shape formed by the entries less than or equal to $j$ in the tableau. In particular, the bottom row $(a_{1,n}, \ldots, a_{n,n})$ is the shape of the tableau, so we define the \defn{shape} of the GT pattern to be this $n$-tuple. Another description of the bijection is that the number of $j$'s in the $i$th row of the tableau is $a_{i,j} - a_{i,j-1}$ (with $a_{i,i-1} = 0$).

\ytableausetup{centertableaux}
\begin{ex}
Here is a GT pattern of height $4$ and the corresponding semistandard tableau:
\[
\begin{array}{ccccccc}
&&& 3 \\
&& 6 && 1 \\
& 6 && 4 && 1 \\
8 && 5 && 3 && 0
\end{array}
\quad \longleftrightarrow \quad
\ytableaushort{11122244,23334,344} \;.
\]
\end{ex}

More generally, define a \defn{Gelfand--Tsetlin pattern of height $n$ and width $m$} to be a trapezoidal array of nonnegative integers $(a_{i,j})_{1 \leq i \leq m, i \leq j \leq n}$ which satisfy the inequalities~\eqref{eq:GT def} whenever both sides of an inequality are in the array. For example, a GT pattern of height 5 and width 3 looks like this:
\[
\begin{array}{ccccccc}
&&&& a_{11} \\
&&& a_{12} && a_{22} \\
&& a_{13} && a_{23} && a_{33} \\
& a_{14} && a_{24} && a_{34} \\
a_{15} && a_{25} && a_{35}
\end{array} \; .
\]
These patterns are in bijection with semistandard tableaux with entries at most $n$ and at most $m$ rows; again, we define the shape of the GT pattern to be the bottom row $(a_{1,n}, \ldots, a_{\min(m,n),n})$, since this is the shape of the corresponding tableau. Note that if $m \geq n$, then a GT pattern of height $n$ and width $m$ is simply a GT pattern of height $n$.

At the geometric level, we consider the torus
\[
\GT_n^{\leq m} = \{\mb{z} = (z_{i,j}) \mid z_{i,j} \in \CC^*, 1 \leq i \leq m, i \leq j \leq n\}.
\]
We refer to points of $\GT_n^{\leq m}$ as \defn{patterns}, and we define the \defn{shape} of $\mb{z} \in \GT_n^{\leq m}$ to be the vector
\[
\sh(\mb{z}) = (z_{1,n}, \ldots, z_{p,n}) \in (\CC^*)^p,
\]
where $p = \min(m,n)$.

We now give the $\GL_n$-geometric crystal structure on $\GT_n^{\leq m}$ following~\cite{BFPS}. For $1 \leq i \leq n$, define an $n \times n$ matrix
\begin{equation}
\label{eq:row matrix}
W^i(z_i, \ldots, z_n) = \sum_{k = 1}^{i-1} E_{kk} + \sum_{k = i}^n z_k E_{kk} + \sum_{k = i}^{n-1} E_{k+1,k},
\end{equation}
where $E_{ij}$ is the matrix unit with $1$ in position $(i,j)$, and zeroes elsewhere. Given $\mb{z} = (z_{i,j}) \in \GT_n^{\leq m}$, define $\Phi_n^{\leq m}(\mb{z}) \in \GL_n$ by
\[
\Phi_n^{\leq m}(\mb{z}) = W^p\left(z_{p,p}, \frac{z_{p,p+1}}{z_{p,p}}, \ldots, \frac{z_{p,n}}{z_{p,n-1}}\right) \cdots W^1\left(z_{1,1}, \frac{z_{1,2}}{z_{1,1}}, \ldots, \frac{z_{1,n}}{z_{1,n-1}}\right).
\]

\begin{ex}
\label{ex_Phi}
For $n=4$ and $m = 2$, we have
\[
\Phi_4^{\leq 2}(\zz) =
\begin{pmatrix}
1 & 0 & 0 & 0 \\
0 & z_{22} & 0 & 0 \\
0 & 1 & \frac{z_{23}}{z_{22}} & 0 \\
0 & 0 & 1 & \frac{z_{24}}{z_{23}}
\end{pmatrix}
\begin{pmatrix}
z_{11} & 0 & 0 & 0 \\
1 & \frac{z_{12}}{z_{11}} & 0 & 0 \\
0 & 1 & \frac{z_{13}}{z_{12}} & 0 \\
0 & 0 & 1 & \frac{z_{14}}{z_{13}}
\end{pmatrix}
=
\begin{pmatrix}
z_{11} & 0 & 0 & 0 \\
z_{22} & \frac{z_{12}z_{22}}{z_{11}} & 0 & 0 \\
1 & \frac{z_{12}}{z_{11}} + \frac{z_{23}}{z_{22}} & \frac{z_{13}z_{23}}{z_{12}z_{22}} & 0 \\
0 & 1 & \frac{z_{13}}{z_{12}} + \frac{z_{24}}{z_{23}} & \frac{z_{14}z_{24}}{z_{13}z_{23}}
\end{pmatrix}.
\]
\end{ex}

\begin{figure}
\begin{center}
\begin{tikzpicture}

\foreach \a/\b/\c/\d in {0/1/1/0, 0/2/2/0, 0/3/3/0, 0/4/4/0, 1/3/1/0, 2/2/2/0, 3/1/3/0} {\draw (\a,\b) -- (\c,\d);}
\foreach \a/\b in {0/1,0/2,0/3,0/4} {\filldraw (\a,\b) circle[radius=.04cm] node[left]{$\b$}; \filldraw (\b,\a) circle[radius=.04cm] node[below]{$\b'$};}
\foreach \a in {1,2,3,4} {\draw (0.6,\a-0.2) node{$z_{\a\a}$};}
\foreach \a/\b in {1/2,2/3,3/4} {\draw (1.6,\a-0.2) node{$\frac{z_{\a\b}}{z_{\a\a}}$};}
\foreach \a/\b/\c in {1/2/3,2/3/4} {\draw (2.6,\a-0.2) node{$\frac{z_{\a\c}}{z_{\a\b}}$};}
\draw (3.6,0.8) node{$\frac{z_{14}}{z_{13}}$};

\begin{scope}[xshift=7cm]
\foreach \a/\b/\c/\d in {0/1/1/0, 0/2/2/0, 0/3/3/0, 0/4/4/0, 1/4/5/0, 2/4/6/0, 3/4/7/0, 1/4/1/0, 2/4/2/0, 3/4/3/0, 4/3/4/0, 5/2/5/0, 6/1/6/0} {\draw (\a,\b) -- (\c,\d);}
\foreach \a/\b in {0/1,0/2,0/3,0/4} {\filldraw (\a,\b) circle[radius=.04cm] node[left]{$\b$}; \filldraw (\b,\a) circle[radius=.04cm] node[below]{$\b'$};}
\foreach \a/\b/\c in {1/4/5, 2/4/6, 3/4/7} {\filldraw (\a,\b) circle[radius=.04cm] node[above]{$\c$};}
\foreach \a/\b in {0/4, 0/5, 0/6, 0/7} {\filldraw (\b,\a) circle[radius=.04cm] node[below]{$\b'$};}
\foreach \a in {1,2,3,4} {\draw (0.6,\a-0.2) node{$z_{\a\a}$};}
\foreach \a/\b in {1/2,2/3,3/4,4/5} {\draw (1.6,\a-0.2) node{$\frac{z_{\a\b}}{z_{\a\a}}$};}
\foreach \a/\b/\c in {1/2/3,2/3/4,3/4/5,4/5/6} {\draw (2.6,\a-0.2) node{$\frac{z_{\a\c}}{z_{\a\b}}$};}
\foreach \a/\b/\c in {1/3/4,2/4/5,3/5/6,4/6/7} {\draw (3.6,\a-0.2) node{$\frac{z_{\a\c}}{z_{\a\b}}$};}
\foreach \a/\b/\c in {1/4/5,2/5/6,3/6/7} {\draw (4.6,\a-0.2) node{$\frac{z_{\a\c}}{z_{\a\b}}$};}
\foreach \a/\b/\c in {1/5/6,2/6/7} {\draw (5.6,\a-0.2) node{$\frac{z_{\a\c}}{z_{\a\b}}$};}
\draw (6.6,0.8) node{$\frac{z_{17}}{z_{16}}$};
\end{scope}

\end{tikzpicture}
\end{center}
\caption{Examples of the planar network $\Gamma_n^{\leq m}$. The network on the left is $\Gamma_4$ (i.e., $\Gamma_4^{\leq m}$ for any $m \geq 4$), and the network on the right is $\Gamma_7^{\leq 4}$. Edges are directed to the south and southeast, and all vertical edges have weight 1.}
\label{fig:network}
\end{figure}

Our main tool for computing with the map $\Phi_n^{\leq m}$ is the planar network $\Gamma_n^{\leq m}$ illustrated in Figure~\ref{fig:network} (a formal description of this network is given in \cite[\S 3.2]{BFPS}). Given a planar, edge-weighted, directed network $\Gamma$ with distinguished sources $1, \ldots, n$ and sinks $1', \ldots, n'$, we associate an $n \times n$ matrix $A(\Gamma)$ as follows: the weight of a path in the network is defined to be the sum of the weights of the edges in the path, and the entry $A(\Gamma)_{i,j}$ is defined to be the sum of the weights of all paths from source $i$ to sink $j'$. It is easy to see that $\Phi_n^{\leq m}(\zz) = A(\Gamma_n^{\leq m})$.

We now recall the explicit formula for the (birational) inverse of $\Phi_n^{\leq m}$. Define
\[
(B^-)^{\leq m} = \{A \in \GL_n \mid A_{ij} = 0 \text{ if } i < j \text{ or } i-j > m, \text{ and } A_{ij} = 1 \text{ if } i-j = m\}.
\]
For example, the matrix in Example~\ref{ex_Phi} is in $(B^-)^{\leq 2}$. For an $n \times n$ matrix $A$ and subsets $I,J \subset [n]$ of the same size, let $\Delta_{I,J}(A)$ be the determinant of the submatrix of $A$ consisting of the rows in $I$ and the columns in $J$. When $J$ consists of the first several columns, we will often omit it from the notation, so that $\Delta_I(A) = \Delta_{I,[1,\abs{I}]}(A)$ and we call it a \defn{flag minor}. We use the convention that $\Delta_{\emptyset}(A) = 1$. Define a rational map $\Psi_n^{\leq m} \colon (B^-)^{\leq m} \rightarrow \GT_n^{\leq m}$ by $A \mapsto \mb{z} = (z_{i,j})$, where
\begin{equation}
\label{eq_Psi_def}
z_{i,j} = \frac{\Delta_{[i,j]}(A)}{\Delta_{[i+1,j]}(A)}
\end{equation}
for $1 \leq i \leq m, i \leq j \leq n$.

When $m \geq n$, the dependence on $m$ in the above definitions disappears. In this case, we will sometimes omit the superscript ``$\leq m$'' and write $B^-, \GT_n, \Phi_n, \Psi_n, \Gamma_n$.

\begin{lemma}[{\cite{BFZ}, \cite[\LemPhiPsi]{BFPS}}]
\label{lem:Phi inverse}
The map $\Phi_n^{\leq m}$ is a birational isomorphism from $\GT_n^{\leq m}$ to $(B^-)^{\leq m}$, with birational inverse $\Psi_n^{\leq m}$. In particular, $\Phi_n$ is a birational isomoprhism from $\GT_n$ to $B^-$, with birational inverse $\Psi_n$.
\end{lemma}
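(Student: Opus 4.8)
The plan is to run everything through the planar network $\Gamma_n^{\leq m}$, using the already-noted identity $\Phi_n^{\leq m}(\zz) = A(\Gamma_n^{\leq m})$ and the Lindstr\"om--Gessel--Viennot (LGV) lemma. Two facts must be proved: that $A := \Phi_n^{\leq m}(\zz)$ lies in $(B^-)^{\leq m}$, and that $\Psi_n^{\leq m}$ inverts $\Phi_n^{\leq m}$. Rather than check both composites, I would establish the single identity $\Psi_n^{\leq m}\circ\Phi_n^{\leq m} = \id$ and deduce the full birational statement from a dimension count.

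First I would verify the image lands in $(B^-)^{\leq m}$. As a product $W^p\cdots W^1$ of lower bidiagonal matrices (with $p = \min(m,n)$), $A$ is lower triangular, so $A_{ij} = 0$ for $i < j$. Each factor shifts the subdiagonal by exactly one, so a product of $p\le m$ of them can reach subdiagonal offset at most $p$, giving $A_{ij} = 0$ for $i - j > m$ (a vacuous condition when $m \ge n$). For the normalization $A_{i,i-m} = 1$, needed only when $m < n$: attaining offset exactly $m$ forces each of the $m$ factors to contribute its (weight-$1$) subdiagonal step and no diagonal step, and the network exhibits a unique such lowest path; hence that entry is $1$.

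The heart of the argument is the monomial formula for flag minors,
\[
\Delta_{[i,j]}(A) = \prod_{k=i}^{\min(j,m)} z_{k,j},
\]
with empty products read as $1$ (consistent with $\Delta_\emptyset(A) = 1$). By LGV, $\Delta_{[i,j]}(A)$ is the sum, over families of non-intersecting paths in $\Gamma_n^{\leq m}$ from the sources $\{i,i+1,\dots,j\}$ to the sinks $\{1',\dots,(j-i+1)'\}$, of the product of their edge weights. The main point---and the step I expect to be the real obstacle---is that planarity together with the band structure of $\Gamma_n^{\leq m}$ pins down this family uniquely, so the minor collapses to a single monomial even though the individual entries of $A$ are genuine sums (e.g.\ $A_{3,2} = z_{12}/z_{11} + z_{23}/z_{22}$ in Example~\ref{ex_Phi}). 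Isolating the unique family and multiplying its edge weights is the crux; the cancellations this produces are exactly why the full minors are monomial. Granting the formula, the defining ratios of $\Psi_n^{\leq m}$ telescope,
\[
\frac{\Delta_{[i,j]}(A)}{\Delta_{[i+1,j]}(A)} = \frac{\prod_{k=i}^{\min(j,m)} z_{k,j}}{\prod_{k=i+1}^{\min(j,m)} z_{k,j}} = z_{i,j},
\]
for all $1 \le i \le m$ and $i \le j \le n$, which is precisely $\Psi_n^{\leq m}\bigl(\Phi_n^{\leq m}(\zz)\bigr) = \zz$.

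Finally I would conclude birationality formally. The torus $\GT_n^{\leq m}$ and the variety $(B^-)^{\leq m}$ are irreducible of the same dimension (both equal to the number of free parameters, namely $\binom{n+1}{2}$ when $m \ge n$ and $mn - \binom{m}{2}$ when $m \le n$). The identity $\Psi_n^{\leq m}\circ\Phi_n^{\leq m} = \id$ forces $\Phi_n^{\leq m}$ to be generically injective, hence dominant by equality of dimensions; a dominant rational map admitting a rational left inverse is a birational isomorphism, with inverse $\Psi_n^{\leq m}$, and in particular $\Phi_n^{\leq m}\circ\Psi_n^{\leq m} = \id$ follows automatically. The last sentence of the statement is the special case $m \ge n$, in which the superscript ``$\leq m$'' carries no information and may be dropped.
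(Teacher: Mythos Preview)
The paper does not itself prove this lemma: it is stated with citations to \cite{BFZ} and \cite[\LemPhiPsi]{BFPS}, and no argument is given in the present text. So there is no in-paper proof to compare against.

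Your approach is correct and is essentially the standard one from those references. Verifying that the image lies in $(B^-)^{\leq m}$ is straightforward from the bidiagonal factors, as you note. The key identity $\Delta_{[i,j]}(\Phi_n^{\leq m}(\zz)) = \prod_{k=i}^{\min(j,m)} z_{k,j}$ via the Lindstr\"om--Gessel--Viennot lemma on $\Gamma_n^{\leq m}$ is exactly the mechanism used in \cite{BFPS}; the uniqueness of the non-intersecting family follows by the obvious induction (the path from source $i$ is forced, then the path from source $i+1$ is forced to avoid it, etc.), and the telescoping of the diagonal edge weights $z_{k,\ell}/z_{k,\ell-1}$ along each path yields the monomial. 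The only comment is that your ``real obstacle'' is not really one: once you observe that source $i$ has a unique path to sink $1'$ and proceed inductively, the uniqueness and the weight computation are both immediate. The dimension-count conclusion is also fine, though one can equally well verify the other composite directly (as in \cite{BFPS}) by expanding $\Phi_n^{\leq m}$ and reading off the flag minors.
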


\begin{dfn}
\label{defn:GT geom cryst}
Define $\ov{\gamma} \colon \GT_n^{\leq m} \rightarrow (\CC^*)^n$ and $\ov{\ve}_j, \ov{\vp}_j \colon \GT_n^{\leq m} \rightarrow \CC$ (for $j \in [n-1]$) by
\[
\ov{\gamma}(\mb{z}) = (M_{1,1}, \ldots, M_{n,n}), \qquad\qquad \ov{\ve}_j(\mb{z}) = \frac{M_{j+1,j+1}}{M_{j+1,j}}, \qquad\qquad \ov{\vp}_j(\mb{z}) = \frac{M_{j,j}}{M_{j+1,j}},
\]
where $M = \Phi_n^{\leq m}(\mb{z})$. Define $\ov{e}_j \colon \CC^* \times \GT_n^{\leq m} \rightarrow \GT_n^{\leq m}$ by
\begin{equation*}
\ov{e}_j^c(\mb{z}) =  \Psi_n^{\leq m}\bigg(x_j\left((c-1)\ov{\vp}_j(\mb{z})\right) \cdot \Phi_n^{\leq m}(\mb{z}) \cdot x_j\left((c^{-1}-1) \ov{\ve}_j(\mb{z})\right)\bigg).
\end{equation*}
\end{dfn}

As explained in \cite[\S 3.2]{BFPS}, these maps make $\GT_n^{\leq m}$ into a $\GL_n$-geometric crystal, which we call the \defn{Gelfand--Tsetlin geometric crystal}. Explicit formulas for this geometric crystal structure in the case $m \geq n$ are given in \cite[\LemExplicitGT]{BFPS}.

\begin{remark}
The geometric crystal $\GT_n^{\leq 1}$ is isomorphic to the basic $\GL_n$-geometric crystal $X_n$ defined in \S\ref{sec:basic} via the map $(z_{1,1}, \ldots, z_{1,n}) \mapsto (z_{1,1}, z_{1,2}/z_{1,1}, \ldots, z_{1,n}/z_{1,n-1})$.
\end{remark}

The following result shows that a fundamental property of the combinatorial crystal operators on semistandard tableaux carries over to the geometric setting.

\begin{lemma}[{\cite{BK07}, \cite[\LemShapePreserved]{BFPS}}]
\label{lem:shape preserved}
The geometric crystal operators on $\GT_n^{\leq m}$ preserve the shape of a pattern.
\end{lemma}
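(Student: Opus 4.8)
The plan is to show that the shape $\sh(\mb{z}) = (z_{1,n}, \ldots, z_{p,n})$ is unchanged by each operator $\ov{e}_j^c$, working through the matrix description of the crystal structure. Recall that by definition
\[
\ov{e}_j^c(\mb{z}) = \Psi_n^{\leq m}\bigl( x_j((c-1)\ov{\vp}_j(\mb{z})) \cdot \Phi_n^{\leq m}(\mb{z}) \cdot x_j((c^{-1}-1)\ov{\ve}_j(\mb{z})) \bigr),
\]
and that the shape coordinate $z_{i,n}$ is recovered from the matrix $A = \Phi_n^{\leq m}(\mb{z}) \in (B^-)^{\leq m}$ via the flag-minor formula \eqref{eq_Psi_def}, namely $z_{i,n} = \Delta_{[i,n]}(A)/\Delta_{[i+1,n]}(A)$. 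So the entire statement reduces to a single claim about how conjugation by the elementary matrices $x_j(a) = I + a E_{j,j+1}$ affects the \emph{full-size} flag minors $\Delta_{[i,n]}(A)$ (those whose row set is a terminal interval $[i,n]$ and whose column set is the complementary initial interval $[1, n-i+1]$).

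First I would set $A' = x_j(a) \cdot A \cdot x_j(b)$ for the relevant scalars $a = (c-1)\ov{\vp}_j$ and $b = (c^{-1}-1)\ov{\ve}_j$, and analyze the two conjugating factors separately. Left multiplication by $x_j(a) = I + aE_{j,j+1}$ adds $a$ times row $j+1$ to row $j$; right multiplication by $x_j(b) = I + bE_{j,j+1}$ adds $b$ times column $j$ to column $j+1$. The key observation is that the minor $\Delta_{[i,n]}(A)$ uses the row set $[i,n]$ and the column set $[1,n-i+1]$. For the left factor: if $j \geq i$ then both rows $j$ and $j+1$ lie in the row set $[i,n]$, and adding a multiple of one row already present in the minor to another row present in the minor leaves the determinant unchanged; if $j = i-1$ then row $j = i-1$ is not in the row set, but the operation only modifies rows, specifically it would modify row $i-1$ which is absent, so the minor $\Delta_{[i,n]}$ is untouched; and since the row set is always a terminal interval ending at $n$, the only way for row $j+1$ to be present but row $j$ absent is exactly $j = i-1$, which does not alter the submatrix on rows $[i,n]$. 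Thus left multiplication fixes every $\Delta_{[i,n]}$. For the right factor: the column operation adds a multiple of column $j$ to column $j+1$; since the column set $[1, n-i+1]$ is always an \emph{initial} interval, either both columns $j, j+1$ lie in it (no change to the determinant, by the same row/column-operation principle), or column $j+1$ is the first column outside the set (i.e. $j+1 = n-i+2$), in which case the operation modifies a column not appearing in the minor and again leaves $\Delta_{[i,n]}$ fixed. Hence every full-size flag minor $\Delta_{[i,n]}(A)$ is invariant under both conjugating factors, so $\Delta_{[i,n]}(A') = \Delta_{[i,n]}(A)$, and consequently $z_{i,n}$ is preserved for all $i$.

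I expect the main subtlety to be the boundary cases where one of the two indices $j, j+1$ falls just outside the relevant interval — that is, verifying that a row (or column) operation whose \emph{source} lies inside the minor but whose \emph{target} lies outside, or vice versa, genuinely leaves the determinant unchanged rather than merely contributing a lower-order correction. The clean resolution is the standard fact that adding a multiple of one line of a matrix to another line changes a fixed maximal minor only when the target line is inside the chosen line set and the source line is outside it; because our row sets are terminal intervals $[i,n]$ and our column sets are the complementary initial intervals, the single index-pair $(j, j+1)$ is consecutive and therefore can never straddle such an interval in the problematic direction (target in, source out). One should also confirm that $x_j(a)$ and $x_j(b)$ preserve the variety $(B^-)^{\leq m}$ so that $\Psi_n^{\leq m}$ may legitimately be applied — but this is guaranteed by Lemma~\ref{lem:Phi inverse} together with the fact that $\ov{e}_j^c$ is a well-defined self-map of $\GT_n^{\leq m}$. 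Finally, since the $\ov{e}_j^c$ generate the $\GL_n$-geometric crystal structure, preservation of $\sh$ by each generator gives the result.
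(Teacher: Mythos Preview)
Your argument is correct. The paper does not give its own proof of this lemma (it is cited from \cite{BK07} and \cite{BFPS}), but the row/column-operation argument you use is exactly the one the paper deploys later in the proof of Proposition~\ref{prop:corner color condition}: multiplying by $x_j(a)$ on the left adds a multiple of row $j+1$ to row $j$, multiplying by $x_j(b)$ on the right adds a multiple of column $j$ to column $j+1$, and since the flag minors $\Delta_{[i,n]}$ use a terminal row interval and an initial column interval, the problematic ``target in, source out'' configuration never occurs.
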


%%%%%%%%%%%%%
\subsection{Tropicalization}
\label{sec:trop}

Let $x_1, \ldots, x_d$ be variables. For $I = (i_1, \ldots, i_d) \in (\ZZ_{\geq 0})^d$, let $x^I = x_1^{i_1} \cdots x_d^{i_d}$. A rational function $f \in \CC(x_1, \ldots, x_d)$ is \defn{positive} (or \defn{subtraction-free}) if it is nonzero, and has an expression of the form
\begin{equation}
\label{eq:positive expression}
f(x_1, \ldots, x_d) = \dfrac{\sum_I a_I x^I}{\sum_I b_I x^I}
\end{equation}
with $a_I,b_I \in \RR_{\geq 0}$, and $a_I,b_I = 0$ for all but finitely many $I$. If $f$ is positive, define the \defn{tropicalization} of $f$ to be the piecewise-linear function given by
\[
\Trop(f)(x_1, \ldots, x_d) = \min_{I \colon a_I \neq 0} (i_1x_1 + \cdots + i_dx_d) - \min_{I \colon b_I \neq 0} (i_1x_1 + \cdots + i_dx_d).
\]
More generally, a rational map $f = (f_1, \ldots, f_k) \colon (\CC^*)^d \rightarrow (\CC^*)^k$ is positive if each $f_i$ is positive, and in this case its tropicalization is defined coordinate-wise. We say that a positive rational function $f$ is a \defn{geometric lift} of a piecewise-linear function $F$ if $\Trop(f) = F$; note that every piecewise-linear function has many geometric lifts.

\begin{ex}
The rational function $f = \dfrac{3x^2 + xy^3 + 5}{2x^5y^3 + 6xz}$ has tropicalization
\[
\Trop(f) = \min(2x, x + 3y, 0) - \min(5x + 3y, x+z).
\]
\end{ex}

It is straightforward to verify (see, e.g.,~\cite[Lem.~2.1.6]{BFZ}) that tropicalization is independent of the choice of positive expression~\eqref{eq:positive expression} and that it satisfies
\begin{equation}
\label{eq:trop is a homom}
\Trop(f+g) = \min(\Trop(f),\Trop(g)), \qquad \Trop(fg^{\pm 1}) = \Trop(f) \pm \Trop(g).
\end{equation}
In other words, tropicalization is a homomorphism from the semi-field of positive rational functions over $\CC$ with operations $(+,\cdot,\div)$ to the semi-field of piecewise-linear functions over $\ZZ$ with operations $(\min,+,-)$.

The geometric crystal operators on $(X_m)^n$ (resp., $\GT_n^{\leq m}$) tropicalize to piecewise-linear formulas for the combinatorial crystal operators on tensor products of one-row tableaux (resp., Gelfand--Tsetlin patterns or, equivalently, semistandard tableaux). See \cite[\S 2.5 and \S 3.3]{BFPS} for a detailed explanation.

%%%%%%%%%%%%%
\subsection{Geometric RSK}
\label{sec_gRSK}

Before defining geometric RSK, we recall how the RSK correspondence can be viewed as a bijection from the set of all $m \times n$ nonnegative integer matrices to the set of $m \times n$ nonnegative integer matrices with weakly increasing rows and columns.

Let $\mb{a} = (a_i^j)_{i \in [m], j \in [n]}$ be an $m \times n$ matrix of nonnegative integers. We interpret the $i$th row $\mb{a}_i = (a_i^1, \ldots, a_i^n)$ of $\mb{a}$ as a weakly increasing word in the alphabet $[n]$ consisting of $a_i^1$ 1's, $a_i^2$ 2's, etc. The RSK correspondence sends $\mb{a}$ to a pair $(P,Q)$ of semistandard tableaux of the same shape such that $P$ has entries in $[n]$, and $Q$ has entries in $[m]$. Specifically, one takes $P_0$ to be the empty tableau and then recursively defines $P_i$ to be the tableau formed by row inserting the word $\mb{a}_i$ into $P_{i-1}$. The \defn{insertion tableau} $P$ is defined to be $P_m$, and the \defn{recording tableau} $Q$ is the tableau such that for each $i \in [m]$, the subtableau consisting of entries less than or equal to $i$ has the same shape as $P_i$. (We refer the reader to~\cite{EC2} for more details.)

To go from the pair $(P,Q)$ to an $m \times n$ matrix with weakly increasing rows and columns, one forms the GT patterns of width $\min(m,n)$ and heights $n$ and $m$ associated with $P$ and $Q$ (see \S\ref{sec:GT}) and glues the $P$-pattern to the transpose of the $Q$-pattern along the diagonal specifying their common shape. In this context, we represent GT patterns as left-justified arrays, with the $i$th row from the bottom containing the entries $a_{i,i}, \ldots, a_{i,n}$.

\ytableausetup{centertableaux}
\begin{ex}
Consider $n = 3$ and $m = 2$.
Under the RSK correspondence, we computer
\[
\mb{a} = \begin{pmatrix}
1 & 4 \\
2 & 1 \\
1 & 0
\end{pmatrix}
\;
\xrightarrow{\displaystyle \RSK}
\;
(P,Q) = \left( \, \ytableaushort{111122,222} \; , \; \ytableaushort{111112,223} \, \right).
\]
By transforming the pair of semistandard tableaux $(P, Q)$ into Gelfand--Tsetlin patterns of width $2$, we glue the patterns together along their common shape $(6,3)$ to form the matrix
\[
(P,Q) \longleftrightarrow \left(\textcolor{blue}{\begin{matrix}
3 & \\
4 & 6
\end{matrix}} \quad
, \quad
\textcolor{red}{\begin{matrix}
2 & 3 & \\
5 & 6 & 6
\end{matrix}}\right)
\xrightarrow{\displaystyle \text{glue}}
\begin{pmatrix}
\textcolor{red}{2} & \textcolor{red}{5} \\
\textcolor{purple}{3} & \textcolor{red}{6} \\
\textcolor{blue}{4} & \textcolor{purple}{6}
\end{pmatrix}
\]
with weakly increasing rows and columns.
\end{ex}

A fundamental property of RSK is that transposing the matrix $\mb{a}$ corresponds to interchanging the tableaux $P$ and $Q$. In other words, $Q$ is the tableau formed by inserting the words associated to the columns $\mb{a}^j = (a_1^j, a_2^j, \ldots, a_m^j)$, and $P$ records this insertion procedure. If we view RSK as a map of $m \times n$ matrices, this property says that RSK commutes with transposition.

The \defn{geometric RSK correspondence (gRSK)} is a birational isomorphism
\[
\gRSK \colon \Mat_{m \times n}(\CC^*) \rightarrow \Mat_{m \times n}(\CC^*).
\]
By analogy with classical RSK, we identify the output matrix with a pair of patterns $(P,Q) \in \GT_n^{\leq m} * \GT_m^{\leq n}$, where
\[
\GT_n^{\leq m} * \GT_m^{\leq n} = \{(P,Q) \in \GT_n^{\leq m} \times \GT_m^{\leq n} \mid \sh(P) = \sh(Q)\}.
\]
As in the combinatorial setting, our convention is that $P$ sits in the bottom-left corner of the matrix, and the transpose of $Q$ sits in the top-right corner.

\begin{dfn}
\label{defn:gRSK}
Suppose $\xx \in \Mat_{m \times n}(\CC^*)$. Let $\mb{x}_a = (x_a^1, \ldots, x_a^n)$ denote the $a$th row of $\xx$, and let $M(\mb{x}_1, \ldots, \mb{x}_k)$ be the $n \times n$ matrix defined in \S\ref{sec:basic}. Define $\gRSK(\xx) = (P,Q)$, where $P = (z_{i,j}) \in \GT_n^{\leq m}$ and $Q = (z'_{j',i'}) \in \GT_m^{\leq n}$ are given by
\[
z_{i,j} = \dfrac{\Delta_{[i,j]}\bigl( M(\mb{x}_1, \ldots, \mb{x}_m) \bigr)}{\Delta_{[i+1,j]}\bigl( M(\mb{x}_1, \ldots, \mb{x}_m) \bigr)},
\qquad\qquad z'_{j',i'} = \dfrac{\Delta_{[j',n]}\bigl( M(\mb{x}_1, \ldots, \mb{x}_{i'}) \bigr)}{\Delta_{[j'+1,n]}\bigl( M(\mb{x}_1, \ldots, \mb{x}_{i'}) \bigr)},
\]
for $1 \leq i \leq m, \, i \leq j \leq n$ and $1 \leq j' \leq n, \, j' \leq i' \leq m$.
\end{dfn}

It is clear that the shape $(z_{1,n}, \ldots, z_{p,n})$ of $P$ is equal to the shape $(z'_{1,m}, \ldots, z'_{p,m})$ of $Q$, so we do in fact have $(P,Q) \in \GT_n^{\leq m} * \GT_m^{\leq n} = \Mat_{m \times n}(\CC^*)$ as required. Moreover, if we define
\[
P_k = \Psi_n^{\leq k}(M(\mb{x}_1, \ldots, \mb{x}_k)) \in \GT_n^{\leq k},
\]
then it follows from Lemma~\ref{lem:Phi inverse} that $P = P_m$ and the $k$th diagonal $(z'_{1,k}, z'_{2,k}, \ldots, z'_{\min(k,n),k})$ of $Q$ is equal to the shape of the pattern $P_k$. We interpret this as saying that $P$ is the result of ``geometrically inserting'' the rows of $\xx$ (starting with the top row $\xx_1$) and $Q$ ``records the growth'' of $P$.

\begin{ex}
\label{ex:gRSK insertion}
When $m=3$ and $n=2$, the geometric RSK correspondence is given by
\[
\begin{pmatrix}
x_1^1 & x_1^2 \medskip \\
x_2^1 & x_2^2 \medskip \\
x_3^1 & x_3^2
\end{pmatrix}
\xrightarrow{\displaystyle \gRSK}
\begin{pmatrix}
\textcolor{red}{x_1^2 + x_2^1} & \textcolor{red}{x_1^1x_1^2} \bigskip \\
\textcolor{purple}{x_1^2 x_2^2 + x_1^2 x_3^1 + x_2^1x_3^1} & \textcolor{red}{\dfrac{x_1^1 x_2^1 x_1^2 x_2^2}{x_1^2 + x_2^1}} \bigskip \\
\textcolor{blue}{x_1^1 x_2^1 x_3^1} & \textcolor{purple}{\dfrac{x_1^1 x_2^1 x_3^1 x_1^2 x_2^2 x_3^2}{x_1^2 x_2^2 + x_1^2 x_3^1 + x_2^1x_3^1}}
\end{pmatrix}.
\]
\end{ex}

\begin{thm}[\cite{NoumiYamada,OSZ}]
\label{thm:gRSK props}
\
\begin{enumerate}
\item gRSK is a birational isomorphism.
\item gRSK and its inverse are positive. This implies that gRSK restricts to a homeomorphism from $\Mat_{m \times n}(\RR_{> 0})$ to itself.
\item gRSK satisfies the symmetry property
\[
\gRSK(\xx) = (P,Q) \iff \gRSK(\xx^t) = (Q,P),
\]
where $\xx^t$ is the transpose of $\xx$. Thus, the patterns $P = (z_{i,j})$ and $Q = (z'_{j',i'})$ are given by the alternative formulas
\begin{equation}
\label{eq:gRSK transposed}
z_{i,j} = \dfrac{\Delta_{[i,m]}(M(\mb{x}^1, \ldots, \mb{x}^j))}{\Delta_{[i+1,m]}(M(\mb{x}^1, \ldots, \mb{x}^j))},
\qquad\qquad z'_{j',i'} = \dfrac{\Delta_{[j',i']}(M(\mb{x}^1, \ldots, \mb{x}^n))}{\Delta_{[j'+1,i']}(M(\mb{x}^1, \ldots, \mb{x}^n))},
\end{equation}
for $1 \leq i \leq m, \, i \leq j \leq n$ and $1 \leq j' \leq n, \, j' \leq i' \leq m$, where $\mb{x}^j = (x_1^j, \ldots, x_m^j)$ is the $j$th column of~$\xx$.
\end{enumerate}
\end{thm}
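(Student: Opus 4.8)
The plan is to reduce the symmetry in (3) to an explicit minor identity, then use that identity and the recursive structure of the insertion to obtain (1) and (2). I would organize the argument around the following observation: applying Definition~\ref{defn:gRSK} directly to the transposed matrix $\xx^t$, whose rows are the columns $\mb{x}^1,\ldots,\mb{x}^n$ of $\xx$ regarded as points of $X_m$, the assertion $\gRSK(\xx^t)=(Q,P)$ is \emph{equivalent} to the two alternative formulas in \eqref{eq:gRSK transposed}. Hence all of (3) reduces to the single minor identity
\[
\Delta_{[i,j],[1,j-i+1]}\bigl(M(\mb{x}_1,\ldots,\mb{x}_m)\bigr) = \Delta_{[i,m],[1,m-i+1]}\bigl(M(\mb{x}^1,\ldots,\mb{x}^j)\bigr),
\]
together with its companion for the $z'$, where the left-hand matrix is the $n\times n$ product of bidiagonal factors built from the rows of $\xx$ and the right-hand matrix is the $m\times m$ product built from the first $j$ columns.

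Proving this identity is the step I expect to be the main obstacle, and my approach is Lindström--Gessel--Viennot. Each bidiagonal factor $W(\cdot)$ is the transfer matrix of an elementary planar chip, so $M(\mb{x}_1,\ldots,\mb{x}_m)$ is the weighted path matrix of the planar network (in the sense of \S\ref{sec:GT}) obtained by concatenating $m$ chips, realized as an $n\times m$ grid in which wire $r$ of chip $a$ carries weight $x_a^r$; likewise $M(\mb{x}^1,\ldots,\mb{x}^j)$ is the path matrix of the $m\times j$ grid with weight $x_r^b$ on wire $r$ of chip $b$. By LGV each flag minor is the generating function for families of non-intersecting paths between prescribed source and sink sets. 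The two grids are transposes of one another as edge-weighted graphs, so reflecting across the main diagonal is a weight-preserving bijection of paths; the genuine work is to check that the source/sink data $\{i,\ldots,j\}\to\{1,\ldots,j-i+1\}$ on the row side corresponds, after reflection, to $\{i,\ldots,m\}\to\{1,\ldots,m-i+1\}$ on the truncated column side, i.e. that the staircase regions cut out by the two non-intersecting families match. This boundary bookkeeping is the heart of the proof.

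For (1), I would exhibit the inverse as geometric reverse row-insertion. Writing $M_k=M(\mb{x}_1,\ldots,\mb{x}_k)\in(B^-)^{\le k}$ and $P_k=\Psi_n^{\le k}(M_k)$, we have $P=P_m$, and, as noted after Definition~\ref{defn:gRSK}, the $k$th diagonal of $Q$ records $\sh(P_k)=\sh(M_k)$. By Lemma~\ref{lem:Phi inverse} the pattern $P$ determines $M_m=\Phi_n^{\le m}(P)$. From the factorization $M_k=M_{k-1}W(\mb{x}_k)$, with $M_{k-1}\in(B^-)^{\le k-1}$ and $W(\mb{x}_k)$ lower bidiagonal with unit subdiagonal, one recovers the row $\mb{x}_k$ and the smaller matrix $M_{k-1}$ rationally, using the shape $\sh(M_{k-1})$ recorded in $Q$ to pin down the (otherwise non-unique) bidiagonal factorization. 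Peeling off rows from $k=m$ down to $k=1$ reconstructs $\xx$, so $\gRSK$ is birational.

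For (2), forward positivity is immediate from LGV: the entries of $M(\mb{x}_1,\ldots,\mb{x}_m)$ are polynomials with nonnegative coefficients, so every flag minor is a subtraction-free sum over non-intersecting path families, whence the ratios defining $z_{i,j}$ and $z'_{j',i'}$ are positive. The remaining point is positivity of the inverse, which I expect to be the second technical step, comparable in difficulty to the minor identity: here I would verify that the reverse insertion of (1), recovering $\mb{x}_k$ and $M_{k-1}$ from $M_k$ and $\sh(M_{k-1})$, is expressible using only the operations $+,\times,\div$, so that $\gRSK^{-1}$ is again positive. Granting that both directions are positive rational maps defined and nonvanishing on $\Mat_{m\times n}(\RR_{>0})$, they restrict to mutually inverse continuous self-maps there, giving the claimed homeomorphism.
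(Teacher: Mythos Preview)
The paper does not give its own proof of this theorem; it is quoted from \cite{NoumiYamada,OSZ}, and Remark~\ref{rem:gRSK} singles out the approach of O'Connell--Sepp\"al\"ainen--Zygouras as the one that ``makes all the assertions of Theorem~\ref{thm:gRSK props} completely transparent.'' In that approach, $\gRSK$ is realized as a composition of local birational toggles on the matrix entries (a geometric growth-diagram recursion); each toggle is manifestly positive with a positive inverse, and the recursion is symmetric in rows and columns, so (1), (2), and (3) all fall out at once with no minor identities and no reverse insertion.

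Your plan is instead in the spirit of Noumi--Yamada. The LGV/grid-reflection argument you outline for (3) is correct and is essentially how that paper proceeds; the ``boundary bookkeeping'' you flag is indeed the only content. Where your route diverges most from the OSZ approach is in (1) and (2). Your dimension intuition is right: the factorization $M_k=M_{k-1}W(\mb{x}_k)$ with $M_{k-1}\in(B^-)^{\le k-1}$ is \emph{not} unique---a count gives a $(k-1)$-parameter ambiguity, exactly matched by the $k-1$ shape coordinates of $P_{k-1}$ stored in $Q$. But turning this into a proof requires showing that those shape coordinates really do pin down the factorization and that the resulting formulas for $\mb{x}_k$ and $M_{k-1}$ are subtraction-free; this is genuine work (carried out in \cite{NoumiYamada}) and is precisely what the local-moves formulation sidesteps. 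So your argument is sound in outline, but the OSZ route the paper points to buys you (1), (2), and (3) simultaneously for the price of checking a single local identity, whereas your route front-loads (3) and leaves the harder positivity of the inverse as a separate verification.
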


\begin{remark}
\label{rem:gRSK}
The definition of gRSK given above is essentially due to Noumi and Yamada~\cite{NoumiYamada}, although our version differs from theirs by the transformation $f(x) \mapsto 1/f(x^{-1})$. The result is that our map tropicalizes to RSK under the min-plus tropicalization defined in~\S\ref{sec:trop}, whereas theirs tropicalizes to RSK under max-plus tropicalization (see~\cite[\S 4.4]{BFPS} for more details).

O'Connell, Sepp\"al\"ainen, and Zygouras~\cite{OSZ} gave an alternative definition of gRSK in terms of local transformations. This definition is closely related to the growth diagram formulation of RSK and makes all the assertions of Theorem~\ref{thm:gRSK props} completely transparent. The min-plus version of this alternative definition is presented in~\cite[\S 4.2]{BFPS}.
\end{remark}

In \S\ref{sec:basic}, we defined a $\GL_n \times \GL_m$-geometric crystal structure on $\Mat_{m \times n}(\CC^*)$. Using the identification of $\Mat_{m \times n}(\CC^*)$ with $\GT_n^{\leq m} * \GT_m^{\leq n}$, we obtain another $\GL_n \times \GL_m$-geometric crystal structure on $\Mat_{m \times n}(\CC^*)$ in which the $\GL_n$-geometric crystal operators act on the $P$-pattern, and the $\GL_m$-geometric crystal operators act on the $Q$-pattern:
\[
\ov{e}_j^c(P,Q) = (\ov{e}_j^c(P), Q), \qquad e_i^c(P,Q) = (P, e_i^c(Q)).
\]
This is well-defined because the geometric crystal operators are shape-preserving. We denote the former geometric crystal on $\Mat_{m \times n}(\CC^*)$ by $X^{\Mat}$ and the latter by $X^{\GT}$.

\begin{thm}[{\cite[\ThmGRSKIsom]{BFPS}}]
\label{thm:gRSK isom}
The map $\gRSK \colon X^{\Mat} \rightarrow X^{\GT}$ is an isomorphism of $\GL_n \times \GL_m$-geometric crystals.
\end{thm}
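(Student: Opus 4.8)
The plan is to prove that gRSK intertwines the $\GL_n$- and $\GL_m$-geometric crystal structures \emph{separately}, and then to combine these using the fact that on both sides the two structures commute. Concretely, I would first establish
\[
\gRSK(\ov{e}_j^c(\xx)) = (\ov{e}_j^c(P), Q), \qquad \text{where } (P,Q) = \gRSK(\xx),
\]
for every $\GL_n$-operator $\ov{e}_j^c$ (acting on columns $j,j+1$ of $\xx$), and then obtain the companion identity $\gRSK(e_i^c(\xx)) = (P, e_i^c(Q))$ for the $\GL_m$-operators for free from the transpose symmetry of gRSK (Theorem~\ref{thm:gRSK props}(3)), under which $\xx \leftrightarrow \xx^t$, the patterns are swapped $P \leftrightarrow Q$, and the roles of $\GL_n$ and $\GL_m$ are exchanged. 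Since gRSK is already known to be a birational isomorphism (Theorem~\ref{thm:gRSK props}(1)), and a $\GL_n \times \GL_m$-geometric crystal is by definition a variety carrying commuting $\GL_n$- and $\GL_m$-structures (commuting on $X^{\Mat}$ by Theorem~\ref{thm:bicrystal}, and commuting on $X^{\GT}$ because $\ov{e}_j^c$ touches only $P$ while $e_i^c$ touches only $Q$), intertwining each factor will yield the full isomorphism, together with the agreement of the structure maps $\ov{\gamma}, \ov{\ve}_j, \ov{\vp}_j$ and $\gamma, \ve_i, \vp_i$.

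The $\GL_n$-statement splits into a $P$-part and a $Q$-part. For the $P$-part, the crucial observation is that, by Definition~\ref{defn:gRSK} and Lemma~\ref{lem:Phi inverse}, the $P$-pattern satisfies $\Phi_n^{\leq m}(P) = M(\mb{x}_1, \ldots, \mb{x}_m)$, that is, $P = \Psi_n^{\leq m}(M(\mb{x}_1,\ldots,\mb{x}_m))$. Applying Lemma~\ref{lem:unipotent} to the $\GL_n$-geometric crystal $(X_n)^m$ gives
\[
M(\ov{e}_j^c(\xx)) = x_j\big((c-1)\ov{\vp}_j(\xx)\big) \cdot M(\xx) \cdot x_j\big((c^{-1}-1)\ov{\ve}_j(\xx)\big),
\]
where $\ov{\vp}_j(\xx), \ov{\ve}_j(\xx)$ are read off from $M(\xx) = M(\mb{x}_1,\ldots,\mb{x}_m)$ exactly as in that lemma. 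Because $\Phi_n^{\leq m}(P) = M(\xx)$, the functions $\ov{\vp}_j, \ov{\ve}_j, \ov{\gamma}$ computed on $\xx$ coincide verbatim with those computed on $P$ in Definition~\ref{defn:GT geom cryst}; applying $\Psi_n^{\leq m}$ to the displayed conjugation formula then reproduces precisely the definition of $\ov{e}_j^c(P)$ given there. Hence $P(\gRSK(\ov{e}_j^c \xx)) = \ov{e}_j^c(P)$ and simultaneously $\ov{\gamma}, \ov{\ve}_j, \ov{\vp}_j$ are intertwined. This part is essentially a matter of recognizing that the target geometric-crystal structure on $\GT_n^{\leq m}$ was \emph{defined} by the very same matrix conjugation formula.

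The $Q$-part --- showing that the $\GL_n$-operators fix $Q$ --- is where the real work lies, and I expect it to be the main obstacle. The idea is that $Q$ is completely determined by the sequence of intermediate shapes: the $k$th diagonal of $Q$ equals $\sh(P_k)$, where $P_k = \Psi_n^{\leq k}(M(\mb{x}_1,\ldots,\mb{x}_k))$. It therefore suffices to show that each $\sh(P_k)$ is invariant under $\ov{e}_j^c$. The subtlety is that $\ov{e}_j^c$ acts on the whole product $(X_n)^m$, not rowwise, so one must control its effect on the prefix $(\mb{x}_1,\ldots,\mb{x}_k)$. Here I would invoke the associativity of the geometric-crystal tensor product together with the product-operator formula~\eqref{eq:e prod}: writing $(X_n)^m = (X_n)^k \otimes (X_n)^{m-k}$, the operator $\ov{e}_j^c$ restricts on the first factor to a single geometric crystal operator $\ov{e}_j^{c^+}$ applied to $(\mb{x}_1,\ldots,\mb{x}_k)$. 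By the $P$-part argument applied at level $k$, this sends $P_k$ to $\ov{e}_j^{c^+}(P_k)$ inside $\GT_n^{\leq k}$, and Lemma~\ref{lem:shape preserved} guarantees that any such operator preserves $\sh(P_k)$. As this holds for every $k$, the entire $Q$-pattern is fixed. Combining the $P$- and $Q$-parts establishes the $\GL_n$-intertwining; transpose symmetry then delivers the $\GL_m$-intertwining, and the commuting-structures remark completes the proof that gRSK is an isomorphism of $\GL_n \times \GL_m$-geometric crystals.
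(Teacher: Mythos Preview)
This theorem is not proved in the present paper; it is quoted from~\cite{BFPS}, so there is no in-paper proof to compare against. Your outline is nonetheless correct and matches the natural strategy suggested by the ingredients assembled in \S\ref{sec:geometric crystals}. The $P$-part is immediate from Definition~\ref{defn:GT geom cryst}, which was set up precisely so that $\Psi_n^{\leq m}$ carries the conjugation identity of Lemma~\ref{lem:unipotent} to the $\GL_n$-action on $\GT_n^{\leq m}$, and the same observation matches the structure maps $\ov{\gamma}, \ov{\ve}_j, \ov{\vp}_j$ on the two sides. For the $Q$-part, your reduction via~\eqref{eq:e prod} and Lemma~\ref{lem:shape preserved} is sound; the one fact you invoke that is not made explicit here is associativity of the geometric crystal product (so that $(X_n)^m$ really does split as $(X_n)^k \times (X_n)^{m-k}$ with~\eqref{eq:e prod} applying to that grouping), but this is standard and follows from the unipotent-crystal framework alluded to in the footnote after~\eqref{eq:e prod}. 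The transpose symmetry of Theorem~\ref{thm:gRSK props}(3) then handles the $\GL_m$-side exactly as you describe.
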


%%%%%%%%%%%%%%%%%%%%%%%%%%%%%%%%%%%%%%%%%%%%%%%%%%%%%%%%%%%%%%%%%%%%%%%%%%

\section{The fields \texorpdfstring{$\Inv_R, \Inv_{\ov{R}}, \Inv_e, \Inv_{\ov{e}}$, and $\Inv_{e \ov{e}}$}{of R, Rbar, e, ebar, and e-ebar invariants}}
\label{sec:single}

For the remainder of the paper, we work in the field $\CC(x_i^j)$ of rational functions in the $mn$ indeterminates $\{x_i^j \mid i \in [m], j \in [n]\}$, which is the fraction field of the variety $\Mat_{m \times n}(\CC^*)$. The geometric crystal operators $e_i$ and $\ov{e}_j$ on $X^{\Mat} = \Mat_{m \times n}(\CC^*)$ induce $\CC^*$-actions on $\CC(x_i^j)$, and the geometric $R$-matrices $R_i$ and $\ov{R}_j$ induce automorphisms of $\CC(x_i^j)$. Slightly abusing notation, we denote the induced maps on $\CC(x_i^j)$ by the same symbols as the maps on $X^{\Mat}$.

For $a \in \{R, \ov{R}, e, \ov{e}\}$, the \defn{$a$-invariants} is the subfield $\Inv_a$ of $\CC(x_i^j)$ consisting of fixed points of $\{a_1, a_2, \ldots\}$. For example, $\Inv_R$ denotes the $R$-invariants. We also consider intersections of these fields, which we denote by $\Inv_{ab} = \Inv_a \cap \Inv_b$ for $a,b \in \{R,\ov{R},e,\ov{e}\}$ and call \defn{$ab$-invariants}. Since $R_i$ (resp., $\ov{R}_j$) is a special case of $e_i$ (resp., $\ov{e}_j$) by Proposition~\ref{prop:R=Weyl}, the distinct intersections are $\Inv_{R\ov{R}}, \Inv_{R \ov{e}}, \Inv_{\ov{R} e}, \Inv_{e \ov{e}}$.

In this section, we describe algebraically independent generating sets for each of the fields $\Inv_R$, $\Inv_{\ov{R}}$, $\Inv_e$, $\Inv_{\ov{e}}$, and $\Inv_{e \ov{e}}$; we consider the remaining three fields in~\S\ref{sec:double}.

%%%%%%%%%%%%%
\subsection{Loop symmetric functions}
\label{sec:LSym}

The geometric $R$-matrices $R_1, \ldots, R_{m-1}$ generate a group of automorphisms of the field $\CC(x_i^j)$ isomorphic to $S_m$. The fixed field $\Inv_R$ of this group of automorphisms has been studied in several papers, starting with~\cite{LP12}, under the name of loop symmetric functions. It is more convenient to describe loop symmetric functions in terms of variables $\lx{i}{r} = x_i^{r-i+1}$, where the superscript $r$ is considered modulo $n$, or equivalently $x_i^j = \lx{i}{j+i-1}$. In other words, we make the identification
\[
\mb{x}_i = (x_i^1, x_i^2, \ldots, x_i^n) = (\lx{i}{i}, \lx{i}{i+1}, \ldots \lx{i}{i+n-1}).
\]

\begin{dfn}
For $k \in [m]$ and $r \in [n]$, define the \defn{loop elementary symmetric function}
\[
\lE{k}{r} = \lE{k}{r}(\mb{x}_1, \ldots, \mb{x}_m) = \sum_{1 \leq i_1 < i_2 < \cdots < i_k \leq m} \lx{i_1}{r} \lx{i_2}{r+1} \cdots \lx{i_k}{r+k-1}.
\]
It is convenient to allow arbitrary integers $r$ in the superscript by setting $\lE{k}{r} = \lE{k}{r \mod n}$, and to define $\lE{0}{r} = 1$ and $\lE{k}{r} = 0$ if $k < 0$ or $k > m$.

Define $\LSym = \LSym_m(n)$ to be the ring generated over $\CC$ by the $\lE{k}{r}$. We call this the \defn{ring of loop symmetric functions in $m$ variables and $n$ colors}.
\end{dfn}

\begin{thm}
\label{thm:LSym}
\
\begin{enumerate}
\item \cite[Thm.~4.1]{LPaff} The loop elementary symmetric functions are algebraically independent, and they generate the field $\Inv_R$. That is, we have
\[
\Inv_R = \Frac(\LSym) = \CC(\lE{k}{r}).
\]
\item \cite[\ThmFTLSF]{BFPS} The ring $\CC[x_i^j] \cap \Inv_R$ of polynomial $R$-invariants is equal to $\LSym$.
\end{enumerate}
\end{thm}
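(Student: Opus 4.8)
I would prove the two parts separately. For (1) I would squeeze $\CC(\lE{k}{r})$ and $\Inv_R$ between the same field degree: show the $\lE{k}{r}$ are $R$-invariant (so $\CC(\lE{k}{r})\subseteq\Inv_R$), show $[\CC(x_i^j):\Inv_R]=m!$, and show $[\CC(x_i^j):\CC(\lE{k}{r})]=m!$, which together force $\CC(\lE{k}{r})=\Inv_R$ and, by a transcendence-degree count, the algebraic independence of the $\lE{k}{r}$. For (2) I note first that one cannot argue by integral closure: $\CC[x_i^j]$ is \emph{not} integral over $\LSym$, since the map $\mb{x}\mapsto(\lE{k}{r}(\mb{x}))$ has positive-dimensional fibers (already for $m=n=2$ the line $(\lx{1}{1},\lx{1}{2},\lx{2}{1},\lx{2}{2})=(0,t,0,-t)$ is a common zero of all positive-degree $\lE{k}{r}$). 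Instead I would prove a ``fundamental theorem'' by a leading-term straightening argument.

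\textbf{Invariance (part 1).} Because $R_i$ alters only rows $i$ and $i+1$, I would reduce the invariance of $\lE{k}{r}$ to the case $m=2$. Grouping the defining sum $\lE{k}{r}=\sum_{i_1<\cdots<i_k}\lx{i_1}{r}\cdots\lx{i_k}{r+k-1}$ according to whether the index set meets $\{i,i+1\}$ in $0$, $1$, or $2$ elements, one sees that $\lE{k}{r}$ is a sum of products of three factors: a part supported on rows $<i$, a part supported on rows $>i+1$ (both untouched by $R_i$), and a middle factor equal to $1$, to $\lx{i}{s}+\lx{i+1}{s}$, or to $\lx{i}{s}\lx{i+1}{s+1}$ for the appropriate color $s$. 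Hence it suffices to check that the two-row functions $\lx{i}{s}+\lx{i+1}{s}$ and $\lx{i}{s}\lx{i+1}{s+1}$ are preserved by the two-row geometric $R$-matrix. The product is immediate from $x'_j y'_j = x_j y_j$, and the sum reduces to the single identity $y_s\kappa_{s+1}+x_{s-1}\kappa_{s-1}=(x_s+y_{s-1})\kappa_s$, a direct computation from the definition of $\kappa_r$ in Section~\ref{sec:geometric R}.

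\textbf{Degrees and generation (part 1).} Each $R_i$ is a birational isomorphism, hence induces a field automorphism of $\CC(x_i^j)$, and by Proposition~\ref{prop:R=Weyl} together with the braid relations these generate an action of (a quotient of) $S_m$. To finish I would analyze the generic fiber of $\phi\colon\mb{x}\mapsto(\lE{k}{r}(\mb{x}))$ and show it is a single free $S_m$-orbit of size $m!$. Invariance gives that each fiber contains the orbit; the reverse inclusion says the $\lE{k}{r}$ determine the rows $\mb{x}_1,\dots,\mb{x}_m$ up to the $R$-action, which follows from the uniqueness (up to adjacent $R$-matrix moves) of the factorization of the associated loop-group element $\Omega=\prod_i M(\mb{x}_i)$ into one whirl per row; genericity makes the orbit free. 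This simultaneously shows the $S_m$-action is faithful, so Artin's theorem gives $[\CC(x_i^j):\Inv_R]=m!$, and it gives $[\CC(x_i^j):\CC(\lE{k}{r})]=m!$. Since $\CC(\lE{k}{r})\subseteq\Inv_R$, equality of degrees forces $\CC(\lE{k}{r})=\Inv_R$; finiteness of the extension forces $\operatorname{trdeg}\CC(\lE{k}{r})=mn$, so the $mn$ functions $\lE{k}{r}$ are algebraically independent. I expect the orbit-separation (whirl-factorization uniqueness) to be the main obstacle in part (1).

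\textbf{Part (2).} By part (1), $\LSym=\CC[\lE{k}{r}]$ is a polynomial ring and $\LSym\subseteq\CC[x_i^j]\cap\Inv_R$ is clear, so the content is the reverse inclusion. I would argue by leading terms, in analogy with the fundamental theorem of symmetric functions. Fix a monomial order for which $\operatorname{lm}(\lE{k}{r})=\lx{1}{r}\lx{2}{r+1}\cdots\lx{k}{r+k-1}$ and for which distinct exponent arrays $(a_{k,r})$ yield distinct leading monomials of the products $\prod(\lE{k}{r})^{a_{k,r}}$ (a multiplicative-independence statement for the leading monomials). Given $f\in\CC[x_i^j]\cap\Inv_R$, the crucial point is that $\operatorname{lm}(f)$ must lie in the monoid generated by the $\operatorname{lm}(\lE{k}{r})$; this is where $R$-invariance enters, via the fact that the geometric $R$-matrix is subtraction-free and tropicalizes to the combinatorial $R$-matrix, a genuine bijection, so that the leading monomial of an $R$-invariant is forced into the ``dominant'' region. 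Granting this, subtract the appropriate scalar multiple of $\prod(\lE{k}{r})^{a_{k,r}}$ to cancel $\operatorname{lm}(f)$; the difference is again an $R$-invariant polynomial with strictly smaller leading monomial, and induction on the well-ordered set of monomials terminates with $f\in\LSym$. The dominance claim, i.e.\ controlling leading-term behaviour under the \emph{birational} (not polynomial) $R$-action, is the subtlest step and is where I would concentrate the effort; I regard it as the main obstacle overall.
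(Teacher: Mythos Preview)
The paper does not prove this theorem; both parts are stated with citations to prior work (\cite{LPaff} for (1), \cite{BFPS} for (2)) and no argument is given in the present paper. So there is no in-paper proof to compare against.

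That said, your outline is essentially the strategy used in those cited references, and it is sound. For part~(1), your reduction to $m=2$ for invariance is fine, though the more conceptual route in \cite{LP12,LPaff} is to observe that the geometric $R$-matrix is characterized by the whirl identity $\widetilde{W}(\mb{x})\widetilde{W}(\mb{y})=\widetilde{W}(\mb{y}')\widetilde{W}(\mb{x}')$, which makes invariance of $\widetilde{M}=\prod_i\widetilde{W}(\mb{x}_i)$---and hence of every $\lE{k}{r}$---automatic. Your degree-squeeze via generic fiber size is exactly the Lam--Pylyavskyy argument; the whirl-factorization uniqueness you flag is indeed the substantive step.

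For part~(2), your leading-term straightening is precisely the approach of \cite[\S 6]{BFPS}: one fixes a monomial order, shows the $\lE{k}{r}$ form a SAGBI basis for $\LSym$, and proves that the leading exponent matrix of any $R$-invariant polynomial lies in the ``dominant'' cone generated by the leading exponents of the $\lE{k}{r}$. The mechanism you identify---positivity of the geometric $R$-matrix, so that its tropicalization is the combinatorial $R$-matrix, a genuine bijection that must fix the leading exponent of an invariant---is exactly the key lemma there. Your observation that $\CC[x_i^j]$ is not integral over $\LSym$ (with the correct $m=n=2$ example) is a useful warning that the naive Noether-style averaging or integral-closure argument is unavailable.
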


\begin{ex}
\
\begin{enumerate}
\item[(a)] When $n = 1$, $\lE{i}{1}$ is the elementary symmetric polynomial in the $m$ variables $\lx{1}{1}, \ldots, \lx{m}{1}$, and $R_i$ simply swaps the variables $\lx{i}{1}$ and $\lx{i+1}{1}$. Theorem~\ref{thm:LSym}(2) reduces in this case to Newton's fundamental theorem of symmetric functions, which says that the ring of symmetric polynomials in $m$ variables is the polynomial ring in the elementary symmetric polynomials $e_1, \ldots, e_m$.

\item[(b)] When $m = 1$, there are no maps $R_i$, and $\LSym$ is the polynomial ring in the $n$ variables $\lx{1}{r} = \lE{1}{r}$, $r \in [n]$, so Theorem~\ref{thm:LSym} is trivial in this case.
\end{enumerate}
\end{ex}

We now recall some notions from tableau combinatorics. A \defn{partition} is a finite, weakly decreasing sequence $\la = (\la_1, \ldots, \la_d)$ of nonnegative integers. The \defn{length} of $\la$, denoted $\ell(\la)$, is the number of parts $\la_i$ which are nonzero. We identify partitions which differ only by trailing zeroes. We also identity a partition $\la$ with its \defn{Young diagram}, which we view, following the English convention, as a left-justified collection of unit cells in the plane, with $\la_1$ cells in the top row, $\la_2$ in the next row, etc. Let $\lambda'$ denote the \defn{conjugate} or \defn{transpose} of $\la$, the partition obtained by reflecting the Young diagram of $\la$ over the line $y=-x$.

For partitions $\mu$ and $\la$, we write $\mu \subseteq \la$ if the Young diagram of $\mu$ is contained in that of $\la$. For $\mu \subseteq \la$, we define the \defn{skew shape} $\la/\mu$ to be the collection of cells in $\la$ which are not in $\mu$. For a cell $s = (i, j)$ in the $i$th row and $j$th column (using matrix coordinates), the \defn{content} of $s$ is $c(s) = i - j$ (this is the opposite of the usual definition). A \defn{semistandard Young tableau (SSYT) of shape $\la/\mu$} is a filling $T \colon \lambda/\mu \to \ZZ_{>0}$ of the cells of $\la/\mu$ with positive integers such that rows are weakly increasing from left to right and columns are strictly increasing from top to bottom. Let $\SSYT_{\leq m}(\la/\mu)$ denote the set of semistandard Young tableaux of shape $\la/\mu$ with entries at most $m$.

\begin{dfn}
\label{defn:loop Schur}
For partitions $\mu \subseteq \la$ and a color $r \in \ZZ/n\ZZ$, define the \defn{loop skew Schur function}
\[
\ls{\lambda/\mu}{r} = \ls{\la/\mu}{r}(\mb{x}_1, \ldots, \mb{x}_m) = \sum_{T \in \SSYT_{\leq m}(\la)} \xx^T,
\]
where $\xx^T = \prod_{s \in \lambda/\mu} \lx{T(s)}{c(s)+r}$. (By convention, $s^{(r)}_{\la/\mu} = 1$ if $\la = \mu$.) If $\mu = \emptyset$, then $\ls{\la/\emptyset}{r} = \ls{\la}{r}$ is a \defn{loop Schur function}. We refer to $c(s)+r$ (considered modulo $n$) as the \defn{color} of the cell $s$.
\end{dfn}

It is clear that $\ls{(1^k)}{r} = \lE{k}{r}$. Define the \defn{loop homogeneous symmetric function}
\[
\lH{k}{r} = \ls{(k)}{r} = \sum_{1 \leq i_1 \leq \cdots \leq i_k \leq m} \lx{i_1}{r} \lx{i_2}{r-1} \cdots \lx{i_k}{r-k+1}.
\]

The following analogue of the Jacobi--Trudi formula shows that the loop skew Schur functions are in $\LSym$ and plays a crucial role throughout this paper.

\begin{prop}[{\cite[Thm.~7.6]{LP12}}]
\label{prop:JT}
Suppose $\mu \subseteq \la$ and the conjugate partition $\la'$ has length at most $\ell$. Then
\[
\ls{\lambda/\mu}{r} = \det\left( \lE{\lambda'_i - \mu'_j + j - i}{r + \mu'_j - j + 1} \right)_{i,j=1}^{\ell}.
\]
\end{prop}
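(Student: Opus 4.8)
The plan is to prove this as a colored analogue of the (dual) Jacobi--Trudi identity, using a Lindström--Gessel--Viennot (LGV) non-intersecting lattice path argument in which the color $c(s)+r$ of each cell is carried as extra edge data. First I would reduce to the case $\ell = \ell(\la') = \la_1$ (the number of columns of $\la$): enlarging $\ell$ only borders the matrix with a block that is lower-unitriangular, so by a standard triangularity argument, together with the conventions $\lE{0}{r}=1$ and $\lE{k}{r}=0$ for $k<0$, the determinant is independent of $\ell \geq \ell(\la')$. The crucial observation is that each matrix entry is a \emph{single-path} generating function: I would build a planar acyclic network $\Gamma$ with sources $A_1,\dots,A_\ell$ and sinks $B_1,\dots,B_\ell$ such that a directed path from $A_i$ to $B_j$ enumerates the strictly increasing value-sequences $v_{(1)}<\cdots<v_{(k)}$ in $[m]$ of length $k=\lambda'_i-\mu'_j+j-i$, recording the weight $\lx{v_{(1)}}{s_j}\lx{v_{(2)}}{s_j+1}\cdots\lx{v_{(k)}}{s_j+k-1}$ with $s_j = r+\mu'_j-j+1$. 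By the definition of the loop elementary symmetric function, this path generating function is exactly $M_{ij}=\lE{\lambda'_i-\mu'_j+j-i}{r+\mu'_j-j+1}$. In particular the $t$-th selected value is assigned color $s_j+t-1 = r + (\mu'_j+t) - j$, which is precisely $r$ plus the content of the cell in row $\rho=\mu'_j+t$ and column $j$ of $\la/\mu$.

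Next I would invoke the LGV lemma. Expanding $\det(M)$ over $S_\ell$ and grouping path families by their permutation, the standard crossing (sign-reversing) involution cancels every family whose paths share a vertex; since the sources and sinks are in non-crossing position along the boundary of $\Gamma$, only the identity permutation survives. Hence $\det(M)=\sum \prod_{j} \mathrm{wt}(P_j)$, summed over families $(P_1,\dots,P_\ell)$ of pairwise vertex-disjoint paths with $P_j\colon A_j\to B_j$, all counted with positive sign. The final step is a weight-preserving bijection between such vertex-disjoint families and $\SSYT_{\leq m}(\la/\mu)$: the path $P_j$ reads off the $j$-th column of $T$ (its selected values, increasing down the column), the disjointness of $P_j$ and $P_{j+1}$ translates exactly into the weak-increase-along-rows condition (while strict increase down columns is built into the single-path selection), and by the color computation above $\prod_j \mathrm{wt}(P_j) = \xx^T=\prod_{s\in\la/\mu}\lx{T(s)}{c(s)+r}$. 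Summing over $T$ then gives $\det(M)=\ls{\la/\mu}{r}$.

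The main obstacle is the color bookkeeping, specifically its compatibility with the LGV cancellation. The superscript $s_j+t-1$ attached to a selected value is ordinal-dependent and \emph{a priori} also depends on the column $j$, whereas the tail-swapping involution only works cleanly when all weights, colors included, are intrinsic to the edges of $\Gamma$ (so that swapping two strands at a shared vertex merely permutes a fixed multiset of edges and hence preserves the total weight). The reconciling fact is that this color equals $r + c(s)$, which is constant along diagonals of $\la/\mu$; I would therefore coordinatize $\Gamma$ so that each value-selection edge sits at a position recording its anti-diagonal (content), allowing $\lx{v}{r+c} = x_v^{(r+c)-v+1}$ to be assigned as a genuine edge weight depending only on $v$ and on the content coordinate $c$, not on the endpoint $B_j$. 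With the colors thus realized as intrinsic edge data, the involution is automatically weight-preserving and the argument goes through. Finally, because every cell of $\la/\mu$ has an honest integer content $\rho-j$ and only its residue mod $n$ matters, the periodicity convention $\lE{k}{r}=\lE{k}{r\bmod n}$ is respected with no further work; this is the only place the construction departs from the classical dual Jacobi--Trudi identity.

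As an alternative I would keep in reserve an inductive proof: peeling off the last column via a loop-Pieri expansion of $\lE{k}{s}\cdot \ls{\la/\mu}{r'}$ and matching it against Laplace expansion of the determinant along its last column. I expect the LGV route to be cleaner, and it also fits the planar-network methods used elsewhere in the paper.
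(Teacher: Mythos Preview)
The paper does not supply its own proof of this proposition; it is cited from \cite{LP12}. Your LGV approach is correct and is essentially the standard proof. The point you isolate---that the color $r+c(s)$ depends only on the diagonal and can therefore be realized as an intrinsic vertex weight in a suitably coordinatized network, so that the tail-swapping involution preserves total weight automatically---is exactly what makes the argument go through in the loop setting.

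In fact the paper later constructs precisely such a network: in \S\ref{sec:unfolded sum of minors} the network $\Net_{m,n}$ assigns each interior vertex a weight $\lx{a}{b}$ with the color $b$ determined by the vertex position alone. Lemma~\ref{lem:A=M} identifies its path matrix with $\widetilde{M}$, and Lemma~\ref{lem:highway Lind} is the LGV step; combined with Proposition~\ref{prop:JT matrix version} these give an independent derivation along your lines. The only difference is the direction of the logical dependency: the paper deduces Proposition~\ref{prop:JT matrix version} \emph{from} Proposition~\ref{prop:JT}, whereas your argument establishes the latter directly.
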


\begin{ex}
\label{ex:loop Schur}
We compute the loop Schur function $\ls{(4,2)}{1}$ in the case $m=2, n=4$. The colors $c(s) + 1$ of the cells of $(4,2)$ are shown below, where Orange is $1$, Blue is $2$, Red is $3$, and Green is $4$:
\[
\begin{tikzpicture}[baseline=-40,scale=0.5]
\foreach \x/\y in {3/1} {
  \fill[green!40] (\x,-\y) rectangle (\x+1,-\y-1);
  \draw (\x+0.5,-\y-0.5) node {$G$};
}
\foreach \x/\y in {2/1,3/2} {
  \fill[orange!40] (\x,-\y) rectangle (\x+1,-\y-1);
  \draw (\x+0.5,-\y-0.5) node {$O$};
}
\foreach \x/\y in {4/1} {
  \fill[red!40] (\x,-\y) rectangle (\x+1,-\y-1);
  \draw (\x+0.5,-\y-0.5) node {$R$};
}
\foreach \x/\y in {2/2,5/1} {
  \fill[blue!40] (\x,-\y) rectangle (\x+1,-\y-1);
  \draw (\x+0.5,-\y-0.5) node {$B$};
}
\draw[thick] (2,-1) -- (6,-1) -- (6,-2) -- (4,-2) -- (4,-3) -- (2,-3) -- (2,-1);
\draw (3,-1) -- (3,-3);
\draw (4,-1) -- (4,-2);
\draw (5,-1) -- (5,-2);
\draw (2,-2) -- (6,-2);
\end{tikzpicture}\ .
\]
There are three semistandard tableaux of shape $\lambda$ with entries in $\{1,2\}$:
\[
\begin{tikzpicture}[baseline=-40,scale=0.5]
\foreach \x/\y in {3/1} {
  \fill[green!40] (\x,-\y) rectangle (\x+1,-\y-1);
  \draw (\x+0.5,-\y-0.5) node {$1$};
}
\foreach \x/\y in {2/1} {
  \fill[orange!40] (\x,-\y) rectangle (\x+1,-\y-1);
  \draw (\x+0.5,-\y-0.5) node {$1$};
}
\foreach \x/\y in {3/2} {
  \fill[orange!40] (\x,-\y) rectangle (\x+1,-\y-1);
  \draw (\x+0.5,-\y-0.5) node {$2$};
}
\foreach \x/\y in {4/1} {
  \fill[red!40] (\x,-\y) rectangle (\x+1,-\y-1);
  \draw (\x+0.5,-\y-0.5) node {$$1};
}
\foreach \x/\y in {2/2} {
  \fill[blue!40] (\x,-\y) rectangle (\x+1,-\y-1);
  \draw (\x+0.5,-\y-0.5) node {$2$};
}
\foreach \x/\y in {5/1} {
  \fill[blue!40] (\x,-\y) rectangle (\x+1,-\y-1);
  \draw (\x+0.5,-\y-0.5) node {$1$};
}
\draw[thick] (2,-1) -- (6,-1) -- (6,-2) -- (4,-2) -- (4,-3) -- (2,-3) -- (2,-1);
\draw (3,-1) -- (3,-3);
\draw (4,-1) -- (4,-2);
\draw (5,-1) -- (5,-2);
\draw (2,-2) -- (6,-2);

\begin{scope}[xshift=8cm]
\foreach \x/\y in {3/1} {
  \fill[green!40] (\x,-\y) rectangle (\x+1,-\y-1);
  \draw (\x+0.5,-\y-0.5) node {$1$};
}
\foreach \x/\y in {2/1} {
  \fill[orange!40] (\x,-\y) rectangle (\x+1,-\y-1);
  \draw (\x+0.5,-\y-0.5) node {$1$};
}
\foreach \x/\y in {3/2} {
  \fill[orange!40] (\x,-\y) rectangle (\x+1,-\y-1);
  \draw (\x+0.5,-\y-0.5) node {$2$};
}
\foreach \x/\y in {4/1} {
  \fill[red!40] (\x,-\y) rectangle (\x+1,-\y-1);
  \draw (\x+0.5,-\y-0.5) node {$1$};
}
\foreach \x/\y in {2/2} {
  \fill[blue!40] (\x,-\y) rectangle (\x+1,-\y-1);
  \draw (\x+0.5,-\y-0.5) node {$2$};
}
\foreach \x/\y in {5/1} {
  \fill[blue!40] (\x,-\y) rectangle (\x+1,-\y-1);
  \draw (\x+0.5,-\y-0.5) node {$2$};
}
\draw[thick] (2,-1) -- (6,-1) -- (6,-2) -- (4,-2) -- (4,-3) -- (2,-3) -- (2,-1);
\draw (3,-1) -- (3,-3);
\draw (4,-1) -- (4,-2);
\draw (5,-1) -- (5,-2);
\draw (2,-2) -- (6,-2);
\end{scope}

\begin{scope}[xshift=16cm]
\foreach \x/\y in {3/1} {
  \fill[green!40] (\x,-\y) rectangle (\x+1,-\y-1);
  \draw (\x+0.5,-\y-0.5) node {$1$};
}
\foreach \x/\y in {2/1} {
  \fill[orange!40] (\x,-\y) rectangle (\x+1,-\y-1);
  \draw (\x+0.5,-\y-0.5) node {$1$};
}
\foreach \x/\y in {3/2} {
  \fill[orange!40] (\x,-\y) rectangle (\x+1,-\y-1);
  \draw (\x+0.5,-\y-0.5) node {$2$};
}
\foreach \x/\y in {4/1} {
  \fill[red!40] (\x,-\y) rectangle (\x+1,-\y-1);
  \draw (\x+0.5,-\y-0.5) node {$2$};
}
\foreach \x/\y in {2/2} {
  \fill[blue!40] (\x,-\y) rectangle (\x+1,-\y-1);
  \draw (\x+0.5,-\y-0.5) node {$2$};
}
\foreach \x/\y in {5/1} {
  \fill[blue!40] (\x,-\y) rectangle (\x+1,-\y-1);
  \draw (\x+0.5,-\y-0.5) node {$2$};
}
\draw[thick] (2,-1) -- (6,-1) -- (6,-2) -- (4,-2) -- (4,-3) -- (2,-3) -- (2,-1);
\draw (3,-1) -- (3,-3);
\draw (4,-1) -- (4,-2);
\draw (5,-1) -- (5,-2);
\draw (2,-2) -- (6,-2);
\end{scope}

\end{tikzpicture}\ .
\]
Thus, we have
\[
\ls{(4,2)}{1} = \lx{1}{1}\lx{1}{2}\lx{1}{3}\lx{1}{4}\lx{2}{1}\lx{2}{2} + \lx{1}{1}\lx{1}{3}\lx{1}{4}\lx{2}{1}(\lx{2}{2})^2 + \lx{1}{1}\lx{1}{4}\lx{2}{1}(\lx{2}{2})^2\lx{2}{3}.
\]
The reader may verify that
\[
\ls{(4,2)}{1} = \det \begin{pmatrix}
\lE{2}{1} & 0 & 0 & 0 \\
\lE{1}{1} & \lE{2}{4} & 0 & 0\\
0 & 1 & \lE{1}{3} & \lE{2}{2} \\
0 & 0 & 1 & \lE{1}{2}
\end{pmatrix}.
\]
\end{ex}

It follows from Theorem~\ref{thm:LSym} that the ring $\CC[x_i^j] \cap \Inv_{\ov{R}}$ of polynomial $\ov{R}$-invariants is isomorphic to $\LSym_n(m)$---the ring of loop symmetric functions in $n$ variables and $m$ colors---and the field $\Inv_{\ov{R}}$ is isomorphic to $\Frac(\LSym_n(m))$. To distinguish $\ov{R}$-invariants from $R$-invariants, we express elements of $\LSym_n(m)$ in terms of variables $\bx{i}{j}$ for $j \in [n]$ and $i \in \Zmod{m}$; that is, we put the color in the subscript rather than the superscript. These variables are identified with the $x_i^j$ by
\[
\mb{x}^j = (x_1^j, \ldots, x_m^j) = (\bx{j}{j}, \ldots, \bx{j+m-1}{j}).
\]
In this context, we call $\LSym_n(m)$ the ring of \defn{barred loop symmetric functions}, and we denote it by $\ov{\LSym}$. This ring is generated by the \defn{barred loop elementary symmetric functions} $\bE{k}{r}, k \in [n], r \in [m]$, and has distinguished elements $\bH{k}{r}, \bs{\lambda/\mu}{r}$, etc.

\begin{ex}
When $m=3$ and $n=2$, we have
\[
\lE{2}{2}(\mb{x}_1, \mb{x}_2, \mb{x}_3) = \lx{1}{2}\lx{2}{1} + \lx{1}{2}\lx{3}{1} + \lx{2}{2}\lx{3}{1},
\]
\[
\bH{2}{3}(\mb{x}^1,\mb{x}^2) = \bx{3}{1}\bx{2}{1} + \bx{3}{1}\bx{2}{2} + \bx{3}{2}\bx{2}{2}.
\]
Using the identifications
\[
\begin{pmatrix}
\lx{1}{1} & \lx{1}{2} \medskip \\
\lx{2}{2} & \lx{2}{1} \medskip \\
\lx{3}{1} & \lx{3}{2}
\end{pmatrix}
=
\begin{pmatrix}
x_1^1 & x_1^2 \medskip \\
x_2^1 & x_2^2 \medskip \\
x_3^1 & x_3^2
\end{pmatrix}
=
\begin{pmatrix}
\bx{1}{1} & \bx{2}{2} \medskip \\
\bx{2}{1} & \bx{3}{2} \medskip \\
\bx{3}{1} & \bx{1}{2}
\end{pmatrix},
\]
we see that both of these polynomials are equal to $x_1^2x_2^2 + x_1^2x_3^1 + x_2^1x_3^1$.
\end{ex}

\subsection{Matrix interpretation of \texorpdfstring{$\LSym$}{LSym}}
\label{sec:LSym matrix}

We now explain how loop symmetric functions arise from a certain infinite matrix. An \defn{$n$-periodic matrix} is a $\mathbb{Z} \times \mathbb{Z}$ array $(A_{ij})_{i,j \in \mathbb{Z}}$ such that $A_{ij} = A_{i+n,j+n}$ for all $i,j$, and $A_{ij} = 0$ if $j-i$ is sufficiently large. The second hypothesis ensures that multiplication of these matrices is well-defined. Given an $n$-tuple $x = (x^1, \ldots, x^n)$, let $\widetilde{W}(x)$ be the $n$-periodic matrix with $\widetilde{W}(x)_{i+1,i} = 1$, $\widetilde{W}(x)_{i,i} = x^i$ (the superscript of $x^i$ is interpreted mod $n$), and all other entries zero. This matrix is called a \defn{whirl} in~\cite{LP12}.
For example, when $n=3$,
\begin{equation*}
\widetilde{W}(x^{(1)}, x^{(2)}, x^{(3)}) =
\begin{pmatrix}
x^{(1)} & 0 & 0 & 0 \\
1 & x^{(2)} & 0 & 0  \\
0 & 1 & x^{(3)} & 0 & \hdots \\
0 & 0 & 1 & x^{(1)}  \\
0 & 0 & 0 & 1 \\
&\vdots &&& \ddots
\end{pmatrix}.
\end{equation*}
Note that we are depicting only the quadrant of the matrix with coordinates $i,j \geq 1$.

As in the previous section, let $\mb{x}_i = (\lx{i}{i}, \lx{i}{i+1}, \ldots, \lx{i}{i+n-1})$ for $i = 1, \ldots, m$. Define $\widetilde{M}(\mb{x}_1, \ldots, \mb{x}_m) = \widetilde{W}(\mb{x}_1) \cdots \widetilde{W}(\mb{x}_m)$. It is straightforward to prove by induction that the entries of this matrix are loop elementary symmetric functions:
\begin{equation}
\label{eq:tilde M entries}
\widetilde{M}(\mb{x}_1, \ldots, \mb{x}_m)_{ij} = \lE{m+j-i}{i}(\mb{x}_1, \ldots, \mb{x}_m).
\end{equation}
An example of the matrix $\widetilde{M}(\mb{x}_1, \ldots, \mb{x}_m)$ appears in Figure~\ref{fig:M unfolded}.

\begin{figure}
\[
\left( \begin{array}{c@{\;}c|c@{\;}c|c@{\;}c|c}
\lx{1}{1}\lx{2}{2}\lx{3}{1}  = \lE{3}{1} & 0 & 0 & 0 & 0 & 0 & \hdots \\
\lx{1}{2}\lx{2}{1} + \lx{1}{2}\lx{3}{1} + \lx{2}{2}\lx{3}{1} = \lE{2}{2} & \lx{1}{2}\lx{2}{1}\lx{3}{2} = \lE{3}{2} & 0 & 0 & 0 & 0 \\ \hline
\lx{1}{1} + \lx{2}{1} + \lx{3}{1} = \lE{1}{1} & \lx{1}{1}\lx{2}{2} + \lx{1}{1}\lx{3}{2} + \lx{2}{1}\lx{3}{2} = \lE{2}{1} & \lE{3}{1} & 0 & 0 & 0 \\
1 & \lx{1}{2} + \lx{2}{2} + \lx{3}{2} = \lE{1}{2} & \lE{2}{2} & \lE{3}{2} & 0 & 0 \\ \hline
0 & 1 & \lE{1}{1} & \lE{2}{1} & \lE{3}{1} & 0 \\
0 & 0 & 1 & \lE{1}{2} & \lE{2}{2} & \lE{3}{2} \\ \hline
0 & 0 & 0 & 1 & \lE{1}{1} & \lE{2}{1} \\
0 & 0 & 0 & 0 & 1 & \lE{1}{2} \\ \hline
\vdots &  &  & & & & \ddots
\end{array} \right)
\]
\caption{The $2$-periodic matrix $\widetilde{M}(\mb{x}_1, \mb{x}_2, \mb{x}_3)$, with $2 \times 2$ blocks indicated. We depict only the quadrant of the matrix with coordinates $i,j \geq 1$.}
\label{fig:M unfolded}
\end{figure}

The Jacobi--Trudi formula (Proposition~\ref{prop:JT}) implies that loop skew Schur functions are precisely the minors of the unfolded matrix $\widetilde{M}(\mb{x}_1, \ldots, \mb{x}_m)$. For later use, we identify a specific submatrix of $\widetilde{M}(\mb{x}_1, \ldots, \mb{x}_m)$ whose determinant is $s_{\la/\mu}^{(r)}(\mb{x}_1, \ldots, \mb{x}_m)$.

\begin{prop}
\label{prop:JT matrix version}
The loop skew Schur function $s_{\la/\mu}^{(r)}(\mb{x}_1, \ldots, \mb{x}_m)$ is given by
\begin{equation}
\label{eq:JT matrix version}
s_{\la/\mu}^{(r)}(\mb{x}_1, \ldots, \mb{x}_m) = \Delta_{I(\mu,r),J(\la,r)}(\widetilde{M}(\mb{x}_1, \ldots, \mb{x}_m)),
\end{equation}
where
\begin{equation}
\label{eq:subsets JT matrix version}
\begin{array}{l}
I(\mu,r) = \{\mu'_\ell-\ell+1+r, \mu'_{\ell-1}-\ell+2 + r, \ldots, \mu'_1+r\}, \medskip \\
J(\la,r) = \{\la'_\ell-\ell+1+r-m, \la'_{\ell-1}-\ell+2 + r-m, \ldots, \la'_1+r-m\},
\end{array}
\end{equation}
and $\ell$ is the length of $\la'$.
\end{prop}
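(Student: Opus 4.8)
The plan is to prove this purely by bookkeeping: I will match the Jacobi--Trudi determinant of Proposition~\ref{prop:JT} entry-by-entry against a submatrix of $\widetilde{M}(\mb{x}_1, \ldots, \mb{x}_m)$ using the explicit entry formula~\eqref{eq:tilde M entries}, and then read off which rows and columns are involved. Write $N$ for the $\ell \times \ell$ Jacobi--Trudi matrix, so that $N_{p,q} = \lE{\la'_p - \mu'_q + q - p}{r + \mu'_q - q + 1}$ and $\ls{\la/\mu}{r} = \det N$. Since $\widetilde{M}_{a,b} = \lE{m+b-a}{a}$, a single entry $N_{p,q}$ equals $\widetilde{M}_{a,b}$ exactly when colors and degrees agree: matching the colors forces $a = r + \mu'_q - q + 1$, and then matching the degrees forces $b = r + 1 - m + \la'_p - p$. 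Thus $N_{p,q} = \widetilde{M}_{a_q,\, b_p}$ with $a_q := r + \mu'_q - q + 1$ and $b_p := r + 1 - m + \la'_p - p$.

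The crucial structural point is that the \emph{column} index $q$ of $N$ controls the \emph{row} $a_q$ of $\widetilde{M}$, whereas the \emph{row} index $p$ of $N$ controls the \emph{column} $b_p$ — this is a transpose relationship. Next I would check that $\{a_q : q \in [\ell]\}$ is exactly $I(\mu,r)$ and $\{b_p : p \in [\ell]\}$ is exactly $J(\la,r)$, which is immediate from~\eqref{eq:subsets JT matrix version}, and that $q \mapsto a_q$ and $p \mapsto b_p$ are strictly order-\emph{reversing} bijections. The latter holds because $\mu'$ and $\la'$ are weakly decreasing, so each difference $a_{q+1}-a_q = \mu'_{q+1}-\mu'_q - 1$ and $b_{p+1}-b_p = \la'_{p+1}-\la'_p - 1$ is at most $-1$. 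In particular $I(\mu,r)$ and $J(\la,r)$ as listed in~\eqref{eq:subsets JT matrix version} are already in increasing order and consist of $\ell$ distinct integers, so $\Delta_{I(\mu,r),J(\la,r)}(\widetilde{M})$ really is the determinant of the submatrix with those rows and columns.

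To finish, I would compare determinants directly. Listing the rows $a_q$ and columns $b_p$ in increasing order means taking $q$ and $p$ from $\ell$ down to $1$; writing $w(k) = \ell - k + 1$, the $(s,t)$ entry of the submatrix is $N_{w(t),w(s)} = (N^T)_{w(s),w(t)}$, i.e.\ the submatrix is $N^T$ with the same permutation $w$ applied to both its rows and its columns. Transposition leaves the determinant unchanged, and the permutation $w$ contributes a factor $\operatorname{sgn}(w)^2 = 1$, so $\Delta_{I(\mu,r),J(\la,r)}(\widetilde{M}) = \det N^T = \det N = \ls{\la/\mu}{r}$, as claimed. The computation is routine; the main obstacle I anticipate is keeping the two conventions straight — the transpose swap between the roles of $(p,q)$ and the order-reversal needed to sort $I(\mu,r), J(\la,r)$ into increasing order — since a sign error is the obvious pitfall, and I would double-check that these two effects genuinely cancel. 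A minor point to confirm is that~\eqref{eq:tilde M entries} applies at the integer indices $a_q, b_p$, which may be negative or large; this causes no trouble because $\widetilde{M}$ is $n$-periodic and~\eqref{eq:tilde M entries} holds over all of $\ZZ \times \ZZ$, with $\lE{k}{r}$ understood to vanish for $k < 0$ or $k > m$ and colors taken modulo $n$.
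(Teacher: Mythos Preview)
Your proposal is correct and follows essentially the same approach as the paper: both identify the submatrix of $\widetilde{M}$ on rows $I(\mu,r)$ and columns $J(\la,r)$ as the anti-diagonal reflection of the Jacobi--Trudi matrix (the paper phrases this as ``reflecting over the anti-diagonal,'' which is precisely your transpose combined with the order-reversal $w$), and conclude that the determinants agree. Your version is more explicit about the sign bookkeeping and the periodicity of $\widetilde{M}$, but the argument is the same.
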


Note that if we replace $r$ by $r + dn$ ($d \in \ZZ$), the minor on the right-hand side of~\eqref{eq:JT matrix version} does not change due to the $n$-periodicity of $\widetilde{M}$. (The sets $I(\mu, r)$ and $J(\la, r)$ are shifted versions of the Maya diagrams of the partitions $\mu'$ and $\la'$.)

\begin{proof}
Let $A$ denote the $\ell \times \ell$ submatrix of $\widetilde{M} = \widetilde{M}(\mb{x}_1, \ldots, \mb{x}_m)$ using the rows in $I(\mu,r)$ and the columns in $J(\la,r)$. By~\eqref{eq:tilde M entries}, the $(\ell-j+1,\ell-i+1)$-entry of this submatrix is
\[
A_{\ell-j+1,\ell-i+1} = \widetilde{M}_{r+\mu'_j-j+1, r-m+\la'_i-i+1} = \lE{\la'_{i}-\mu'_{j}+j-i}{r+\mu'_{j}-j+1}.
\]
We see that $A$ is related to the matrix appearing in the Jacobi--Trudi formula by reflecting over the anti-diagonal, so it has the same determinant.
\end{proof}

There is a useful correspondence between $n$-periodic matrices and $n \times n$ matrices with entries in the field of formal Laurent series in a parameter $t$. Given an $n$-periodic matrix $A$, define an $n \times n$ matrix $\wh{A}(t)$ by
\[
\wh{A}(t)_{ij} = \sum_{d \in \ZZ} t^d A_{i+dn,j}.
\]
The map $A \mapsto \wh{A}(t)$ is an isomorphism of rings. We say that $\wh{A}(t)$ is the \defn{folded version} of $A$ and $A$ is the \defn{unfolded version} of $\wh{A}(t)$. We will denote the folded version of $\widetilde{M}(\mb{x}_1, \ldots, \mb{x}_m)$ by $\wh{M}(t)$. By~\eqref{eq:tilde M entries}, the entries of this matrix are given by
\begin{equation}
\label{eq:hat M entries}
\wh{M}(t)_{ij} = \sum_{d \geq 0} t^d \lE{m+j-i-nd}{i}(\mb{x}_1, \ldots, \mb{x}_m).
\end{equation}
For example, the folded version of the matrix $\widetilde{M}(\mb{x}_1, \mb{x}_2, \mb{x}_3)$ shown in Figure~\ref{fig:M unfolded} is
\[
\wh{M}(t) = \begin{pmatrix}
\lE{3}{1} + t\lE{1}{1} & t\lE{2}{1} + t^2 \medskip \\
\lE{2}{2} + t & \lE{3}{2} + t \lE{1}{2}
\end{pmatrix}.
\]
For each integer $d$, let $M_d$ be the $n \times n$ matrix such that $(M_d)_{ij}$ is the coefficient of $t^d$ in $\wh{M}(t)_{ij}$. As Figure~\ref{fig:M unfolded} illustrates, the unfolded matrix $\widetilde{M}(\mb{x}_1, \ldots, \mb{x}_m)$ consists of the blocks $M_d$, repeated along (block) diagonals.

Minors of the folded matrix $\wh{M}(t)$ also provide an interesting class of loop symmetric functions; these are studied in \S\ref{sec:cylindric}.

\subsection{The \texorpdfstring{$P$}{P}-pattern and \texorpdfstring{$e$}{e}-invariants}
\label{sec:P}

Recall the $n \times n$ matrix $M = M(\mb{x}_1, \ldots, \mb{x}_m) = W(\mb{x}_1) \cdots W(\mb{x}_m)$ introduced in \S\ref{sec:basic}. Each factor $W(\mb{x}_i)$ is obtained from the folded whirl $\wh{W(\mb{x}_i)}(t)$ by setting $t = 0$, so we have $M = \wh{M}(0)$, and it follows from~\eqref{eq:hat M entries} that
\begin{equation}
\label{eq:M entries}
M_{ij} = \lE{m+j-i}{i}(\mb{x}_1, \ldots, \mb{x}_m).
\end{equation}

We say that a loop elementary symmetric function $\lE{a}{b}$ is \defn{$P$-type} if $a + b \geq m+1$ and \defn{$Q$-type} otherwise. It follows from~\eqref{eq:M entries} that the $P$-type loop elementary symmetric functions (``$P$-type $E$-s'') are precisely the non-trivial entries of $M$. In the unfolded matrix $\widetilde{M}(\mb{x}_1, \ldots, \mb{x}_m)$, the $P$-type $E$-s appear in the top nonzero $n \times n$ block $M_0$, and the $Q$-type $E$-s appear in the lower $n \times n$ blocks $M_d, d > 0$ (see Figure~\ref{fig:M unfolded}). This terminology is explained by the following simple but important result.

\begin{lemma}
\label{lem:P}
The $P$-type loop elementary symmetric functions generate the same subfield of $\CC(x_i^j)$ as the entries $z_{i,j}$ of the $\gRSK$ $P$-pattern. The elements of this subfield are $e_i$-invariant for each $i \in [m-1]$.
\end{lemma}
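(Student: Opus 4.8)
The plan is to derive both assertions from the matrix description of the $P$-pattern, using the birational inverse of $\Phi_n^{\leq m}$ for the statement about fields, and the fact that gRSK is a crystal isomorphism for the statement about $e$-invariance.

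First I would record that the $n \times n$ matrix $M = M(\mb{x}_1, \ldots, \mb{x}_m)$ belongs to $(B^-)^{\leq m}$. Indeed, by~\eqref{eq:M entries} its entry is $M_{ij} = \lE{m+j-i}{i}$, which vanishes when $i - j > m$ and equals $\lE{0}{i} = 1$ when $i - j = m$; moreover its diagonal entries $\lE{m}{i}$ are nonzero, so $M$ is generically invertible. The nontrivial entries of $M$ (those not forced to be $0$ or $1$) are exactly the $P$-type loop elementary symmetric functions, so $\CC$ adjoined the entries of $M$ is the same as $\CC$ adjoined the $P$-type $\lE{a}{b}$.

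Next I would prove the equality of fields by establishing two inclusions. By Definition~\ref{defn:gRSK}, each $z_{i,j}$ is the ratio $\Delta_{[i,j]}(M)/\Delta_{[i+1,j]}(M)$ of flag minors of $M$, hence a rational function of the entries of $M$; this gives one inclusion. Conversely, comparing Definition~\ref{defn:gRSK} with the defining formula~\eqref{eq_Psi_def} shows that the $P$-pattern $\zz = (z_{i,j})$ is precisely $\Psi_n^{\leq m}(M)$. Since $M \in (B^-)^{\leq m}$, Lemma~\ref{lem:Phi inverse} applies and $\Psi_n^{\leq m}$ is the birational inverse of $\Phi_n^{\leq m}$, so $M = \Phi_n^{\leq m}(\zz)$ and every entry of $M$ is a rational function of the $z_{i,j}$. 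This yields the reverse inclusion, and the two fields coincide.

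Finally, for $e_i$-invariance I would invoke Theorem~\ref{thm:gRSK isom}: gRSK is an isomorphism $X^{\Mat} \to X^{\GT}$ of $\GL_n \times \GL_m$-geometric crystals, and in $X^{\GT}$ the operator $e_i^c$ acts only on the $Q$-pattern, fixing the $P$-pattern, i.e. $e_i^c(P,Q) = (P, e_i^c(Q))$. Consequently $\gRSK(e_i^c(\xx))$ and $\gRSK(\xx)$ have the same $P$-pattern, so $z_{i,j}(e_i^c(\xx)) = z_{i,j}(\xx)$ for every $c \in \CC^*$ and every $i \in [m-1]$. Thus each $z_{i,j}$ lies in $\Inv_e$, and since these generate the field in question, all of its elements are $e_i$-invariant. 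I expect no genuine obstacle, as the argument is an assembly of earlier results; the one point requiring care is checking that $M \in (B^-)^{\leq m}$ and that the $\Psi_n^{\leq m}$-formula matches the gRSK definition of the $P$-pattern, which is exactly what licenses the application of Lemma~\ref{lem:Phi inverse}.
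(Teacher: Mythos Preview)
Your proposal is correct and follows essentially the same approach as the paper: both use that the $z_{i,j}$ are ratios of minors of $M$ for one inclusion, invoke Lemma~\ref{lem:Phi inverse} (that $\Phi_n^{\leq m}$ and $\Psi_n^{\leq m}$ are mutually inverse) for the reverse inclusion, and then appeal to Theorem~\ref{thm:gRSK isom} for $e_i$-invariance. Your version is slightly more explicit in verifying $M \in (B^-)^{\leq m}$ and identifying the $P$-pattern with $\Psi_n^{\leq m}(M)$, but the substance is the same.
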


\begin{proof}
Each $z_{i,j}$ is by definition a ratio of minors of the matrix $M(\mb{x}_1, \ldots, \mb{x}_m)$. On the other hand, it follows from Lemma~\ref{lem:Phi inverse} that the entries of this matrix are rational functions (in fact, Laurent polynomials) in the $z_{i,j}$. This proves the first assertion. The second assertion follows from Theorem~\ref{thm:gRSK isom}, which shows that the $\GL_m$-geometric crystal operators $e_i$ preserve the $P$-pattern.
\end{proof}

The converse of Lemma~\ref{lem:P} is the main result of the first paper in this series.

\begin{thm}[{\cite[\ThmInvEX]{BFPS}}]
\label{thm:e}
The $P$-type loop elementary symmetric functions (or, equivalently, the entries $z_{i,j}$ of the $\gRSK$ $P$-pattern) generate the subfield $\Inv_e \subset \CC(x_i^j)$. Moreover, the $P$-type loop elementary symmetric functions generate the ring $\CC[x_i^j] \cap \Inv_e$ of polynomial $e$-invariants.
\end{thm}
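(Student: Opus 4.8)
The plan is to establish the field statement
\[
\Inv_e = \CC(z_{i,j}) = \CC\bigl(\lE{a}{b} \mid a+b > m\bigr)
\]
first, and then bootstrap to the polynomial statement. One containment is already in hand: Lemma~\ref{lem:P} gives $\CC(z_{i,j}) = \CC(\lE{a}{b} \mid a+b>m) \subseteq \Inv_e$, since the entries $z_{i,j}$ of the $P$-pattern are $e_i$-invariant. So the real content is the reverse containment $\Inv_e \subseteq \CC(z_{i,j})$. To attack it, I would transport the problem to $X^{\GT}$ via the geometric crystal isomorphism $\gRSK \colon X^{\Mat} \to X^{\GT}$ of Theorem~\ref{thm:gRSK isom}. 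On $X^{\GT} = \GT_n^{\leq m} * \GT_m^{\leq n}$ the operators $e_i$ act only on the $Q$-pattern and fix the $P$-pattern entirely; thus every $f \in \Inv_e$, viewed on $X^{\GT}$, is a rational function invariant under the $\GL_m$-geometric crystal operators acting on the $Q$-factor.

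Next I would consider the dominant rational projection $\pi \colon X^{\GT} \dashrightarrow \GT_n^{\leq m}$, $(P,Q) \mapsto P$. Its generic fiber is the set of $Q \in \GT_m^{\leq n}$ of a fixed shape $\la = \sh(P)$, an irreducible (rational) variety, and by Lemma~\ref{lem:shape preserved} the operators $e_i$ preserve each fiber. The crux is to show that on the generic fiber the operators $e_1, \dots, e_{m-1}$ have a \emph{Zariski-dense orbit}. Granting this, any $f \in \Inv_e$ is constant along the generic orbit, hence by irreducibility constant on the generic fiber of $\pi$; and a rational function that is constant on the generic fibers of a dominant map descends to the base, so $f \in \pi^*\CC(\GT_n^{\leq m}) = \CC(z_{i,j})$, which finishes the field statement. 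The dense-orbit claim is the main obstacle. I would prove it by a dimension count: the fixed-shape fiber has dimension equal to the number of non-shape entries of a $\GT_m^{\leq n}$-pattern, and I would show the orbit has that same dimension by computing the rank of the differential of the orbit map (which applies a suitable word of operators $e_{i}^{c}$ to a generic point and varies the parameters) at a generic point, using the explicit matrix description of the $e_i$ from Definition~\ref{defn:GT geom cryst} (equivalently Lemma~\ref{lem:unipotent}). This full-rank Jacobian computation is where the genuine work lies; a fiber-dimension theorem from algebraic geometry is then what lets one pass from full orbit dimension to density in the irreducible fiber.

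For the polynomial statement, I would combine the field statement with two facts. First, $e$-invariance implies $R$-invariance, because each $R_i = s_i$ is a specialization of $e_i$ by Proposition~\ref{prop:R=Weyl}, so $\CC[x_i^j] \cap \Inv_e \subseteq \CC[x_i^j] \cap \Inv_R = \LSym$ by Theorem~\ref{thm:LSym}(2). Second, the $\lE{a}{b}$ are algebraically independent by Theorem~\ref{thm:LSym}(1). Hence a polynomial $e$-invariant $f$ is simultaneously a polynomial in all the $\lE{a}{b}$ and, by the field statement, a rational function in the $P$-type ones $\{\lE{a}{b} \mid a+b>m\}$ alone. Splitting the algebraically independent generators into $P$-type and $Q$-type variables, an element of $\CC(\lE{a}{b} \mid a+b>m)$ that is a polynomial in \emph{all} the $\lE{a}{b}$ cannot involve the $Q$-type variables, and therefore lies in $\CC[\lE{a}{b} \mid a+b>m]$. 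This shows that $\CC[x_i^j]\cap \Inv_e$ is generated by the $P$-type loop elementary symmetric functions, as claimed.
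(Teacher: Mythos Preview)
Your proposal is correct and follows essentially the same route as the paper's own argument (as sketched in \S\ref{sec:intro prev results} and the remark following Theorem~\ref{thm:e}): transport to $X^{\GT}$ via $\gRSK$, establish a Zariski-dense orbit of the $e_i$ on the fixed-shape $Q$-fiber, and invoke a fiber-dimension result to descend; then for the polynomial statement combine $\Inv_e \subseteq \Inv_R$ with Theorem~\ref{thm:LSym}(2) and the algebraic independence of the $\lE{a}{b}$. The only point left unspecified in the paper's sketch is the precise mechanism for the dense-orbit claim, where your Jacobian-rank idea is a reasonable way to carry it out.
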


\begin{remark}
The proof of the assertion about the field of $e$-invariants is based on the geometric RSK correspondence and uses no results about loop symmetric functions or $R$-invariants. The assertion about the ring of polynomial $e$-invariants, on the other hand, relies on the fundamental theorem of loop symmetric functions (Theorem~\ref{thm:LSym}(2)).
\end{remark}

We say that a barred loop elementary symmetric function $\bE{a}{b}$ is \defn{$Q$-type} if $a + b \geq n+1$ and \defn{$P$-type} otherwise. From the symmetry of $\gRSK$, we obtain the following corollary of Lemma~\ref{lem:P} and Theorem~\ref{thm:e}.

\begin{cor}
\label{cor:ov e}
The field $\Inv_{\ov{e}} \subset \CC(x_i^j)$ is generated by the $Q$-type barred loop elementary symmetric functions or, equivalently, by the entries $z'_{j,i}$ of the $\gRSK$ $Q$-pattern.
\end{cor}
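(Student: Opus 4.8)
The plan is to deduce the statement from Theorem~\ref{thm:e} and Lemma~\ref{lem:P} by transporting everything through the transpose symmetry of $\gRSK$ recorded in Theorem~\ref{thm:gRSK props}(3). The key observation is that transposing an $m \times n$ matrix interchanges the roles of $m$ and $n$, of rows and columns, and hence of the unbarred and barred data: the $\GL_n$-operators $\ov{e}_j$ (which act on columns $j, j+1$ of $\xx$) become the $\GL_n$-operators $e_j$ acting on rows $j, j+1$ of $\xx^t$; the barred loop elementary symmetric functions $\bE{k}{r}(\xx)$ become the ordinary loop elementary symmetric functions $\lE{k}{r}(\xx^t)$ computed in the $n \times m$ setting; and, crucially, by Theorem~\ref{thm:gRSK props}(3) the $Q$-pattern of $\gRSK(\xx)$ coincides with the $P$-pattern of $\gRSK(\xx^t)$.

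Concretely, I would let $\tau \colon \Mat_{m \times n}(\CC^*) \to \Mat_{n \times m}(\CC^*)$ denote the transpose map and let $\tau^*$ be the induced isomorphism of function fields. First I would check that $\tau$ intertwines the two geometric crystal structures, i.e. that $\tau \circ \ov{e}_j^c = e_j^c \circ \tau$ for each $j$ and $c$; this is immediate from the definitions in \S\ref{sec:basic}, since the barred structure on $\xx$ is by construction the unbarred structure applied to the columns of $\xx$, which are precisely the rows of $\xx^t$. Consequently $\tau^*$ carries the field $\Inv_e$ of $e$-invariants on $\Mat_{n \times m}(\CC^*)$ isomorphically onto the field $\Inv_{\ov{e}}$ of $\ov{e}$-invariants on $\Mat_{m \times n}(\CC^*)$.

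Now I would apply Theorem~\ref{thm:e} in the $n \times m$ setting: there $\Inv_e$ is generated by the $P$-type loop elementary symmetric functions of $\xx^t$, equivalently (by Lemma~\ref{lem:P}) by the entries of the $P$-pattern of $\gRSK(\xx^t)$. Pulling back along $\tau^*$ and using the two dictionary entries above, the former generators become the barred loop elementary symmetric functions $\bE{a}{b}(\xx)$ with $a + b \geq n+1$ --- that is, exactly the $Q$-type ones, by the deliberately crossed naming convention for barred functions --- while the latter generators become the entries $z'_{j,i}$ of the $Q$-pattern of $\gRSK(\xx)$. This yields both halves of the claim simultaneously.

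The argument is almost entirely bookkeeping, so the only genuine obstacle is ensuring that every convention lines up: the color-in-subscript versus color-in-superscript distinction between $\bE{}{}$ and $\lE{}{}$, the threshold $n+1$ (rather than $m+1$) governing the $P$/$Q$-type dichotomy for the barred functions, and the $P \leftrightarrow Q$ swap built into the $\gRSK$ symmetry. I expect these naming conventions to have been engineered precisely so that the transpose of Theorem~\ref{thm:e} reads off as the stated corollary with no residual discrepancy.
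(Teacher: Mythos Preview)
Your proposal is correct and is exactly the argument the paper has in mind: the corollary is presented as an immediate consequence of Lemma~\ref{lem:P} and Theorem~\ref{thm:e} via the transpose symmetry of $\gRSK$ (Theorem~\ref{thm:gRSK props}(3)), which swaps the barred and unbarred data. You have simply spelled out the bookkeeping that the paper leaves implicit.
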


\subsection{Shape invariants}
\label{sec:shape}

We observed in \S\ref{sec:LSym matrix} that the minors of the unfolded matrix $\widetilde{M}$ are loop skew Schur functions. In particular, the entries of the $\gRSK$ $P$-pattern are ratios of loop Schur functions of rectangular shape. For $1 \leq i \leq m$ and $i \leq j \leq n$, let
\[
\Box(i, j) = \ls{(\underbrace{j-i+1, \ldots, j-i+1}_{m-i+1 \text{ times}})}{j}(\mb{x}_1, \ldots, \mb{x}_m)
\]
be the loop Schur function associated with an $(m-i+1) \times (j-i+1)$ rectangle, where the unique northwest corner has color $j$. Set $\Box(i+1,j) = 1$ if $i = j$ or $i = m$. It follows from~\eqref{eq:M entries} and Proposition~\ref{prop:JT matrix version} that
\begin{equation}
\label{eq:P loop formula}
z_{i,j} = \dfrac{\Delta_{[i,j],[1,j-i+1]}(M)}{\Delta_{[i+1,j],[1,j-i]}(M)} = \frac{\Box(i, j)}{\Box(i+1, j)}.
\end{equation}
It is clear from~\eqref{eq:P loop formula} that the $\Box(i,j)$ generate the same field as the $z_{i,j}$, and hence the same field as the $P$-type loop elementary symmetric functions.

For $i = 1, \ldots, p = \min(m,n)$, define the \defn{shape invariant} $S_i = \Box(i,n)$. It is convenient to set $S_{p+1} = \Box(p+1,n) = 1$. Note that $S_i$ is the determinant of the bottom-left justified minor of $M$ using the last $n-i+1$ rows and the first $n-i+1$ columns. We see from~\eqref{eq:P loop formula} that the shape of the $P$- and $Q$-patterns is given by
\begin{equation}
\label{eq:z shape}
(z_{1,n}, \ldots, z_{p-1,n}, z_{p,n}) = \left(\frac{S_1}{S_2}, \ldots, \frac{S_{p-1}}{S_p}, S_p\right).
\end{equation}
This shows that the shape invariants generate the same subfield of $\CC(x_i^j)$ as the $z_{i,n}$ and are therefore both $e_i$ and $\ov{e}_j$-invariant.

\begin{thm}[{\cite[\CorInvEEX]{BFPS}}]
\label{thm:shape}
The field $\Inv_{e\ov{e}} = \Inv_e \cap \Inv_{\ov{e}}$ is generated by the shape invariants.
\end{thm}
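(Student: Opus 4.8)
The plan is to establish the nontrivial inclusion $\Inv_{e\ov{e}} \subseteq \CC(S_1, \ldots, S_p)$; the reverse inclusion is already in hand, since the shape invariants were shown to generate the same field as $z_{1,n}, \ldots, z_{p,n}$ and hence to be both $e_i$- and $\ov{e}_j$-invariant. The whole argument is best carried out after transporting the picture through $\gRSK$. First I would use the birational isomorphism $\gRSK \colon \Mat_{m \times n}(\CC^*) \to \GT_n^{\leq m} * \GT_m^{\leq n}$ of Theorem~\ref{thm:gRSK props} to identify $\CC(x_i^j)$ with the fraction field of $\GT_n^{\leq m} * \GT_m^{\leq n}$. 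On the target I would organize the coordinates into three groups: the shape coordinates $z_{1,n}, \ldots, z_{p,n}$ (equivalently $S_1, \ldots, S_p$, by~\eqref{eq:z shape}); the $P$-interior coordinates $\{z_{i,j} : i \leq j \leq n-1\}$; and the $Q$-interior coordinates $\{z'_{j,i} : j \leq i \leq m-1\}$.

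Because $\GT_n^{\leq m} * \GT_m^{\leq n}$ is the subvariety of $\GT_n^{\leq m} \times \GT_m^{\leq n}$ cut out precisely by the identifications $z_{k,n} = z'_{k,m}$ for $1 \leq k \leq p$, these three groups together form a system of $mn$ algebraically independent coordinates. Thus, writing $L = \CC(S_1, \ldots, S_p)$, the field $\CC(x_i^j)$ is purely transcendental over $L$ with the $P$- and $Q$-interior variables as a transcendence basis. I would then rewrite the two relevant fields in these coordinates: by Theorem~\ref{thm:e}, $\Inv_e$ is generated by all entries $z_{i,j}$ of the $P$-pattern, i.e.\ by the shape coordinates together with the $P$-interior ones, so $\Inv_e = L(P\text{-int})$; by Corollary~\ref{cor:ov e}, $\Inv_{\ov{e}}$ is generated by all entries $z'_{j,i}$ of the $Q$-pattern, so $\Inv_{\ov{e}} = L(Q\text{-int})$.

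Finally I would invoke the elementary field-theoretic fact that if $K = L(v, w)$ with $v, w$ algebraically independent over $L$, then $L(v) \cap L(w) = L$. In characteristic $0$ this is immediate: an element of $L(v)$ is annihilated by every formal partial derivative $\partial/\partial w_k$, an element of $L(w)$ is annihilated by every $\partial/\partial v_k$, and the common field of constants of these commuting derivations on $K$ is exactly $L$. Applying this with $v = P\text{-int}$ and $w = Q\text{-int}$ gives $\Inv_e \cap \Inv_{\ov{e}} = L = \CC(S_1, \ldots, S_p)$, which is the assertion of the theorem.

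The main obstacle is the coordinate bookkeeping in the second step: one must verify that gluing along the common shape imposes no algebraic relation between the $P$-interior and $Q$-interior coordinates, so that $(S_\bullet, P\text{-int}, Q\text{-int})$ really is an algebraically independent system of the expected size $mn$. This is exactly what is guaranteed by $\gRSK$ being a birational isomorphism onto $\GT_n^{\leq m} * \GT_m^{\leq n}$ together with the observation that the gluing identifies only the $p$ shape coordinates; once this is secured, the field lemma finishes the proof with no further computation.
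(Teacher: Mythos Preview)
Your proposal is correct and follows essentially the same route that the paper indicates: the paper does not give an in-text proof of this theorem (it is cited from~\cite{BFPS}), but in \S\ref{sec:intro prev results} it explains that the result follows from the descriptions of $\Inv_e$ and $\Inv_{\ov{e}}$ together with~\eqref{eq:z shape}, which is exactly your strategy of transporting through $\gRSK$, separating the $P$-interior, $Q$-interior, and shape coordinates, and then intersecting $L(P\text{-int})$ with $L(Q\text{-int})$. The field-theoretic step you supply (that $L(v)\cap L(w)=L$ for algebraically independent families, via vanishing of the partial derivations in characteristic zero) is the one detail the paper leaves implicit, and your justification of the coordinate bookkeeping via the birationality of $\gRSK$ and the form of the gluing is precisely what is needed.
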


We conjecture that the shape invariants also generate the ring $\CC[x_i^j] \cap \Inv_{e\ov{e}}$.

%%%%%%%%%%%%%%%%%%%%%%%%%%%%%%%%%%%%%%%%%%%%%%%%%%%%%%%%%%%%%%%%%%%%%%%%%%

\section{The fields \texorpdfstring{$\Inv_{R \ov{e}}$, $\Inv_{\ov{R} e}$, and $\Inv_{R \ov{R}}$}{of R-ebar, Rbar-e, and R-Rbar invariants}}
\label{sec:double}

We now turn our attention to the fields $\Inv_{R \ov{e}}$, $\Inv_{\ov{R} e}$, and $\Inv_{R \ov{R}}$. The field $\Inv_{R \ov{e}}$ can be viewed in two different ways: as the $\ov{e}$-invariant subfield of $\Inv_R$, and as the $R$-invariant subfield of $\Inv_{\ov{e}}$. In the latter perspective, $\Inv_{R \ov{e}}$ is the fixed field of a group of automorphisms of $\Inv_{\ov{e}}$ isomorphic to $S_m$, so by basic Galois theory, $\Inv_{\ov{e}}$ is a finite extension of $\Inv_{R \ov{e}}$ of degree $m!$. This means that the transcendence degree of $\Inv_{R \ov{e}}$ over $\CC$ is the same as that of $\Inv_{\ov{e}}$, which is equal to the number of entries in the $Q$-pattern by Corollary~\ref{cor:ov e}. Denote this number by $\abs{Q}$.

In this section, we consider $\Inv_{R \ov{e}}$ as a subfield of $\Inv_R = \Frac(\LSym)$. In \S\ref{sec:corner color}, we present a sufficient condition, due to Lam and the third-named author~\cite{LPaff}, for a loop skew Schur function to be $\ov{e}$-invariant. As discussed in the introduction, we call a loop skew Schur function satisfying this condition a \defn{pseudo-energy}. In \S\ref{sec:Q}, we identify a set of $\abs{Q}$ algebraically independent pseudo-energies, which consists of the shape invariants, and a collection of polynomials $\lQ{i}{j}$ that we call \defn{$Q$-invariants}. In \S\ref{sec:conj gen sets}, we discuss our conjectural sets for $\Inv_{R \ov{e}}$, $\Inv_{\ov{R} e}$, and $\Inv_{R \ov{R}}$. In \S\ref{sec:det formula}, we show that every pseudo-energy can be expressed as the determinant of a matrix whose entries are monomials in $\lQ{i}{j}$ and $S_i^{\pm 1}$, providing strong evidence that the $Q$-invariants and shape invariants generate $\Inv_{R \ov{e}}$.

\subsection{The corner color condition}
\label{sec:corner color}

Given a skew shape $\rho$ and a color $r \in \ZZ/n\ZZ$, let $\rho^{(r)}$ denote the coloring of $\rho$ in which cell $s = (i,j)$ is assigned the color $r + c(s) = r + i-j$ (considered modulo $n$). We call $\rho^{(r)}$ a \defn{colored skew shape}. Define a \defn{NW corner} of a skew shape $\rho$ to be a cell $(i,j) \in \rho$ such that $(i-1,j)$ and $(i,j-1)$ are not in $\rho$, and define a \defn{SE corner} to be a cell $(i,j) \in \rho$ such that $(i+1,j)$ and $(i,j+1)$ are not in $\rho$. These are the ``inner'' and ``outer'' removable corners, respectively; see Figure~\ref{fig:NW SE corners}. We say that a colored skew shape $\rho^{(r)}$ satisfies the \defn{corner color condition} if each NW corner has color $n$, and each SE corner has color $m$.

\begin{figure}
\[
\begin{tikzpicture}[scale=0.4]
\foreach \x/\y in {0/-5,1/-4,2/-2,4/0} {
  \fill[black!30] (\x,\y) rectangle (\x+1,\y-1);
}
\foreach \x/\y in {0/-4,1/-1,2/0} {
  \draw (\x+0.5,\y-0.5) node {$\times$};
}
\foreach \start/\end [count=\i from 0] in {2/5,1/5,1/3,1/3,0/2,0/2,0/1} {
  \draw[-] (\start,-\i) -- (\end,-\i);
}
\foreach \start/\end [count=\i from 0] in {-6/-4,-6/-1,-5/0,-3/0,-1/0,-1/0} {
  \draw[-] (\i,\start) -- (\i,\end);
}
\end{tikzpicture}
\]
\caption{The skew shape $(5,3,3,2,2,1) / (2,1,1,1)$, with NW corners marked with an $\times$ and SE corners shaded gray.}
\label{fig:NW SE corners}
\end{figure}
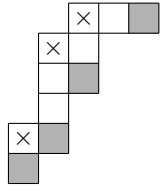

\begin{prop}[{\cite[Thm.~5.3]{LPaff}}]
\label{prop:corner color condition}
If $\rho^{(r)}$ satisfies the corner color condition, then the loop skew Schur function $s^{(r)}_{\rho}(\mb{x}_1, \ldots, \mb{x}_m)$ is $\ov{e}$-invariant.
\end{prop}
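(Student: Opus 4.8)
The plan is to realize $s^{(r)}_\rho$ as a fixed minor of the $n$-periodic matrix $\widetilde{M} = \widetilde{M}(\mb{x}_1, \ldots, \mb{x}_m)$ and to show that each operator $\ov{e}_j$ ($j \in [n-1]$) acts on $\widetilde{M}$ by left- and right-multiplication by \emph{periodic elementary matrices} under which this particular minor is unchanged. By Proposition~\ref{prop:JT matrix version} I may write $s^{(r)}_\rho = \Delta_{I,J}(\widetilde{M})$ with $I = I(\mu,r)$ and $J = J(\la,r)$ the shifted Maya diagrams of $\mu'$ and $\la'$. Since $\ov{e}$-invariance means invariance under every $\ov{e}_j^c$, it suffices to track how this minor transforms under a single $\ov{e}_j^c$.

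The key input, which I expect to be the main obstacle, is a periodic (loop-group) lift of Lemma~\ref{lem:unipotent} for the $\GL_n$-structure. I claim that for $j \in [n-1]$,
\[
\widetilde{M}\bigl(\ov{e}_j^c(\mb{x})\bigr) = \widetilde{x}_j\bigl((c-1)\ov{\vp}_j\bigr)\cdot \widetilde{M}(\mb{x})\cdot \widetilde{x}_j\bigl((c^{-1}-1)\ov{\ve}_j\bigr), \qquad \widetilde{x}_j(s) = I + s\sum_{d \in \ZZ} E_{j+dn,\, j+1+dn}.
\]
For the top $n \times n$ block $M = \wh{M}(0) = \Phi_n^{\leq m}(P)$ this is exactly the defining formula of the $\GL_n$-geometric crystal structure on $\GT_n^{\leq m}$ (Definition~\ref{defn:GT geom cryst}), since $\ov{e}_j$ fixes $Q$ and acts only on the $P$-pattern. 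To promote this to the full periodic matrix I would observe that $\widetilde{x}_j(s)$ is the unfolding of the \emph{$t$-independent} element $x_j(s) \in \GL_n$ acting on the folded matrix $\wh{M}(t)$; because $j$ is a finite simple root (not the affine node), the unipotent-crystal computation underlying Lemma~\ref{lem:unipotent} should carry over verbatim at the level of the loop group, with the \emph{same} scalars $\ov{\vp}_j, \ov{\ve}_j$ in every block. Making this uniformity precise, rather than merely plausible, is the crux of the argument.

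Granting the claim, the rest is a multilinearity computation matching corners to Maya-diagram descents. Left-multiplication by $\widetilde{x}_j(a)$ adds $a\cdot(\text{row }p+1)$ to each row $p \equiv j \pmod n$; expanding $\Delta_{I,J}$ by multilinearity, the only surviving correction terms come from indices $p \in I$ with $p \equiv j$ and $p+1 \notin I$, and these are independent of the column set. A short translation shows that the $p \in I$ with $p+1 \notin I$ are exactly the NW corners of $\rho$, with $p \equiv (\text{color of that corner}) \pmod n$; hence the condition ``NW corners have color $n$'' is equivalent to saying no such $p$ satisfies $p \not\equiv 0 \pmod n$, i.e. there is no left-multiplication correction for any $j \in [n-1]$. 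Dually, right-multiplication by $\widetilde{x}_j(b)$ adds $b\cdot(\text{column }q-1)$ to each column $q \equiv j+1 \pmod n$, and the surviving corrections come from $q \in J$ with $q-1 \notin J$; these $q$ are exactly the SE corners of $\rho$, and one computes the matrix-row color of such $q$ to be $(\text{corner color}) + 1 - m \pmod n$, so ``SE corners have color $m$'' is equivalent to every such $q$ being $\equiv 1 \pmod n$, i.e. no right-multiplication correction.

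Finally I would assemble these. Writing $N = \widetilde{x}_j(a)\widetilde{M}$, the left-multiplication analysis gives $\Delta_{I,J'}(N) = \Delta_{I,J'}(\widetilde{M})$ for \emph{every} column set $J'$, since the corrections depend only on $I$. Then applying the right-multiplication analysis to $\Delta_{I,J}\bigl(N\,\widetilde{x}_j(b)\bigr)$ yields $\Delta_{I,J}\bigl(N\,\widetilde{x}_j(b)\bigr) = \Delta_{I,J}(N) = \Delta_{I,J}(\widetilde{M})$. Thus $s^{(r)}_\rho(\ov{e}_j^c(\mb{x})) = s^{(r)}_\rho(\mb{x})$ for all $j \in [n-1]$ and all $c$, which is exactly $\ov{e}$-invariance. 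The conceptual content is that the corner color condition is tuned so that the Maya-diagram descents of $\mu'$ and $\la'$ avoid every finite simple-root position of the two one-parameter unipotent actions induced by $\ov{e}_j$.
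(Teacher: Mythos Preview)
Your approach is essentially the same as the paper's. The periodic unipotent formula you call ``the claim'' is stated as Lemma~\ref{lem:affine unipotent} and proved by a direct induction on $m$ (rather than via gRSK and the $P$-pattern); your per-corner descent analysis on $I$ and $J$ is packaged in the paper as the condition that $I(\mu,r)$ be ``$n$-final'' and $J(\la,r)$ be ``$n$-initial'' (Lemma~\ref{lem:initial final condition}), after first reducing to the case where $\la/\mu$ has no empty columns.
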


\begin{remark}
The converse of Proposition~\ref{prop:corner color condition} is false. Indeed, the colored shape $(n^m)^{(r)}$ satisfies the corner color condition if and only if $r = n$, but the loop Schur function $s_{(n^m)}^{(r)}$ is equal to $\lE{m}{1} \cdots \lE{m}{n}$ for any $r$, and is thus $\ov{e}$-invariant. We suspect that this is essentially the only way the converse can fail; that is, we suspect that if $s_{\rho}^{(r)}$ is $\ov{e}$-invariant, then $s_{\rho}^{(r)} = s_{\kappa}^{(r')}$ for some $\kappa^{(r')}$ which does satisfy the corner color condition.
\end{remark}

Proposition~\ref{prop:corner color condition}---and the ingredients needed to prove it---are of fundamental importance to this paper, so we will supply our own proof.

Let $\widetilde{M}(\mb{x}_1, \ldots, \mb{x}_m)$ be the $n$-periodic introduced in \S\ref{sec:LSym matrix}. We begin by describing the action of the barred geometric crystal operators on this matrix. For $j \in [n-1]$ and $z \in \CC$, let
\[
\wh{x}_j(z) = I + \sum_{d \in \ZZ} z E_{dn+j,dn+j+1},
\]
where $E_{ab}$ is the $n$-periodic matrix with ones in positions $(dn+a,dn+b)$ and zeroes elsewhere. The matrix $\wh{x}_j(z)$ is the unfolded version of the matrix $x_j(z)$ in the one-parameter subgroup of $\GL_n$ associated to the $j$-th simple root. The following result is a strengthening of~\eqref{eq:unipotent}, and is easily proved by induction on $m$.

\begin{lemma}
\label{lem:affine unipotent}
For $j \in [n-1]$, the action of the barred geometric crystal operator $\ov{e}_j^c$ on $\mb{x} = (\mb{x}_1, \ldots, \mb{x}_m)$ satisfies
\begin{equation}
\label{eq:affine unipotent}
\widetilde{M}(\ov{e}_j^c(\mb{x})) = \wh{x}_j\bigl( (c-1)\varphi_j(\mb{x}) \bigr) \cdot \widetilde{M}(\mb{x}) \cdot \wh{x}_j\bigl( (c^{-1}-1) \varepsilon_j(\mb{x}) \bigr).
\end{equation}
\end{lemma}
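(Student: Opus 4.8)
The plan is to induct on the number $m$ of whirls, peeling off the last factor and invoking the Berenstein--Kazhdan product formulas \eqref{eq:functions prod} and \eqref{eq:e prod}; throughout, $\vp_j,\ve_j$ denote the functions of the $\GL_n$-structure on $(X_n)^m$. For the base case $m=1$ we have $\tw{M}(\mb{x}_1)=\tw{W}(\mb{x}_1)$, the operator $\ov{e}_j^c$ acts through the basic $\GL_n$-crystal by scaling the $j$-th diagonal entry of the whirl (and its periodic translates) by $c$ and the $(j+1)$-st by $c^{-1}$, and $\vp_j(\mb{x}_1),\ve_j(\mb{x}_1)$ are exactly these two entries. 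I would verify \eqref{eq:affine unipotent} here by direct computation: left multiplication by $\wh{x}_j\bigl((c-1)\vp_j\bigr)$ adds $(c-1)\vp_j$ times row $j+1$ to row $j$, right multiplication by $\wh{x}_j\bigl((c^{-1}-1)\ve_j\bigr)$ adds $(c^{-1}-1)\ve_j$ times column $j$ to column $j+1$, and one checks that the two diagonal entries become $c\vp_j$ and $c^{-1}\ve_j$ while the off-diagonal entry created in position $(j,j+1)$ cancels. Since $j\in[n-1]$, everything happens away from the periodic wraparound, so it suffices to check a single fundamental block.

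For the inductive step, set $\mb{x}_{<m}=(\mb{x}_1,\ldots,\mb{x}_{m-1})$ and factor $\tw{M}(\mb{x})=\tw{M}(\mb{x}_{<m})\cdot\tw{W}(\mb{x}_m)$, regarding $(X_n)^m=(X_n)^{m-1}\times X_n$. By \eqref{eq:e prod}, $\ov{e}_j^c$ acts as $\ov{e}_j^{c^+}$ on $\mb{x}_{<m}$ and as $\ov{e}_j^{c/c^+}$ on $\mb{x}_m$, where $c^+=\frac{c\,\vp_j(\mb{x}_m)+\ve_j(\mb{x}_{<m})}{\vp_j(\mb{x}_m)+\ve_j(\mb{x}_{<m})}$. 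Applying the inductive hypothesis to $\tw{M}(\ov{e}_j^{c^+}(\mb{x}_{<m}))$ and the base case to $\tw{W}(\ov{e}_j^{c/c^+}(\mb{x}_m))$ expresses $\tw{M}(\ov{e}_j^c(\mb{x}))$ as a product of six factors. The crux is that the two inner unipotent factors, $\wh{x}_j\bigl(((c^+)^{-1}-1)\ve_j(\mb{x}_{<m})\bigr)$ and $\wh{x}_j\bigl((c/c^+-1)\vp_j(\mb{x}_m)\bigr)$, collapse: since $\wh{x}_j(a)\wh{x}_j(b)=\wh{x}_j(a+b)$ (the $\wh{x}_j$ form a one-parameter unipotent subgroup) and since the defining identity for $c^+$ forces $((c^+)^{-1}-1)\ve_j(\mb{x}_{<m})+(c/c^+-1)\vp_j(\mb{x}_m)=0$, their product is the identity. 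This restores the middle to $\tw{M}(\mb{x}_{<m})\tw{W}(\mb{x}_m)=\tw{M}(\mb{x})$.

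It then remains to match the two surviving outer exponents with the desired ones. Substituting the formula for $c^+$ gives $(c^+-1)\vp_j(\mb{x}_{<m})=(c-1)\,\frac{\vp_j(\mb{x}_{<m})\vp_j(\mb{x}_m)}{\ve_j(\mb{x}_{<m})+\vp_j(\mb{x}_m)}$ and $((c/c^+)^{-1}-1)\ve_j(\mb{x}_m)=(c^{-1}-1)\,\frac{\ve_j(\mb{x}_{<m})\ve_j(\mb{x}_m)}{\ve_j(\mb{x}_{<m})+\vp_j(\mb{x}_m)}$, which by the product formulas \eqref{eq:functions prod} are exactly $(c-1)\vp_j(\mb{x})$ and $(c^{-1}-1)\ve_j(\mb{x})$. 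I expect the only real difficulty to be organizational: correctly tracking the six factors and applying \eqref{eq:functions prod}--\eqref{eq:e prod} consistently with the chosen tensor factorization. There is no conceptual obstacle, since both the cancellation of the inner factors and the matching of the outer ones are forced by the single rational identity defining $c^+$.
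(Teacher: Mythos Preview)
Your argument is correct and is exactly the induction on $m$ that the paper indicates (the paper states only that the lemma ``is easily proved by induction on $m$'' and gives no further details). The cancellation of the inner unipotent factors and the matching of the outer ones via \eqref{eq:functions prod}--\eqref{eq:e prod} are precisely the computations one needs, and you have carried them out correctly.
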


Say that a subset $I \subset \ZZ$ is \defn{$n$-initial} (resp., \defn{$n$-final}) if for every integer $d$, the intersection $I \cap [nd+1,nd+n]$ is a (possibly empty) initial interval $[nd+1,nd+k_d]$ (resp., final interval $[nd+k_d, nd+n]$).

\begin{lemma}
\label{lem:initial final condition}
Suppose $\mu \subseteq \la$ and $\la'_i - \mu'_i > 0$ for $i = 1, \ldots, \ell$, where $\ell$ is the length of $\la'$. The colored skew shape $(\la/\mu)^{(r)}$ satisfies the corner color condition if and only if $I(\mu,r)$ is $n$-final and $J(\la,r)$ is $n$-initial, where $I(\mu,r), J(\la,r)$ are defined by~\eqref{eq:subsets JT matrix version}.
\end{lemma}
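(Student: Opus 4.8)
The plan is to translate both directions of the equivalence into a single pointwise condition on the elements of the two sets, working column by column in the skew shape. First I would record the column description of $\la/\mu$: under the standing hypothesis $\la'_i - \mu'_i > 0$, column $i$ (for $1 \le i \le \ell$) is the nonempty contiguous vertical strip consisting of the cells $(\mu'_i + 1, i), \ldots, (\la'_i, i)$. Writing $b_i = \mu'_i - i + 1 + r$ and $c_i = \la'_i - i + 1 + r - m$ for the elements of $I(\mu,r)$ and $J(\la,r)$ in~\eqref{eq:subsets JT matrix version}, I note that $b_1 > \cdots > b_\ell$ and $c_1 > \cdots > c_\ell$ are strictly decreasing, with consecutive gaps $b_{i-1} - b_i = \mu'_{i-1} - \mu'_i + 1$ and $c_i - c_{i+1} = \la'_i - \la'_{i+1} + 1$.

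The key observation for the \textbf{NW corners} is that, because each column is a single vertical strip, every NW corner is the top cell $(\mu'_i + 1, i)$ of some column. A short case analysis (using $\la'_{i-1} \ge \la'_i > \mu'_i$ to discard the degenerate possibility $\la'_{i-1} \le \mu'_i$) shows that this top cell is a NW corner precisely when $i = 1$ or $\mu'_{i-1} > \mu'_i$, which by the gap formula is exactly the condition $b_i + 1 \notin I(\mu,r)$. Since the color of $(\mu'_i + 1, i)$ is $r + (\mu'_i + 1) - i = b_i \bmod n$, the NW half of the corner color condition becomes: $b_i \equiv 0 \pmod n$ for every $i$ with $b_i + 1 \notin I(\mu,r)$. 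I would then prove the elementary characterization that a set $I$ is $n$-final iff every $x \in I$ with $x + 1 \notin I$ satisfies $x \equiv 0 \pmod n$ (the top of each maximal run of consecutive elements sits at the top of its length-$n$ block), which identifies the NW color condition with $I(\mu,r)$ being $n$-final.

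The \textbf{SE corners} are handled by the mirror-image argument: every SE corner is the bottom cell $(\la'_i, i)$ of some column, and this cell is a SE corner precisely when $i = \ell$ or $\la'_i > \la'_{i+1}$, i.e.\ exactly when $c_i - 1 \notin J(\la,r)$. The color of $(\la'_i, i)$ is $r + \la'_i - i = c_i + m - 1 \bmod n$, so requiring color $m$ amounts to $c_i \equiv 1 \pmod n$; combined with the dual characterization that $J$ is $n$-initial iff every $x \in J$ with $x - 1 \notin J$ satisfies $x \equiv 1 \pmod n$, this matches the SE color condition with $J(\la,r)$ being $n$-initial. Intersecting the two halves yields the stated biconditional.

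I expect the only real obstacle to be bookkeeping rather than anything conceptual: aligning the indexing of $I(\mu,r)$ and $J(\la,r)$ with the columns, keeping the reductions modulo $n$ consistent (color $n \equiv 0$ for NW corners versus color $m$ for SE corners), and correctly treating the boundary columns $i = 1$ and $i = \ell$, where one neighboring column is absent. The hypothesis $\la'_i - \mu'_i > 0$ is used essentially throughout — it guarantees each column is a nonempty strip with a well-defined top and bottom cell and eliminates the degenerate overlap cases in both corner analyses — so I would be careful to flag exactly where it enters.
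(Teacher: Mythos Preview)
Your proposal is correct and follows essentially the same approach as the paper's proof: both arguments work column by column, identify which columns carry NW (resp.\ SE) corners as those where the gap $b_{i-1}-b_i>1$ (resp.\ $c_i-c_{i+1}>1$), compute the color of the top/bottom cell of each column in terms of the corresponding element of $I(\mu,r)$ or $J(\la,r)$, and then translate the corner color requirement into the $n$-final/$n$-initial conditions. Your version is slightly more explicit about the characterization of $n$-final and $n$-initial sets in terms of maximal/minimal runs, and you are more careful about the role of the no-empty-column hypothesis in ruling out degenerate overlaps, but the underlying argument is the same.
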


\begin{remark}
\label{rem:no empty columns}
The condition $\la'_i - \mu'_i > 0$ for $i = 1, \ldots, \ell(\la')$ means that $\la/\mu$ has no empty columns. This restriction is necessary for Lemma~\ref{lem:initial final condition} to hold, and it is also necessary for some of our later results. Given a skew shape with empty columns, we can remove the empty columns without changing the associated loop skew Schur function by sliding connected components to the northwest. Thus, we do not lose any generality with this restriction.
\end{remark}

\begin{ex}
Suppose $m=5$ and $n=4$, and let $\la = (4,4,4,1)$, $\mu = (2,2)$, $r = 6$. The colored skew shape $(\la/\mu)^{(r)}$ satisfies the corner color condition (this can be seen in Example~\ref{ex:Q invariant}), and the sets
\[
I(\mu,r) = \{3,4,7,8\}, \qquad J(\la,r) = \{1,2,3,5\}
\]
are 4-final and 4-initial, respectively.
\end{ex}

\begin{proof}[Proof of Lemma~\ref{lem:initial final condition}]
Suppose $I(\mu,r) = \{i_\ell < \cdots < i_1\}$ and $J(\la,r) = \{j_\ell < \cdots < j_1\}$. The $k$th column of $(\la/\mu)^{(r)}$ has height $\la'_k - \mu'_k = m+j_k-i_k$, top cell of color $i_k$, and bottom cell of color $m+j_k-1$. The SE corners of $\la/\mu$ occur in column $\ell$, and the columns $k<\ell$ such that $j_k - j_{k+1} > 1$ (here we use the assumption that no column is empty). Thus, all the SE corners have color $m$ if and only if $j_k \equiv 1 \mod n$ whenever $k=\ell$ or $j_k$ and $j_{k+1}$ are not consecutive, i.e., if and only if $J(\la,r)$ is $n$-initial. Similarly, the NW corners occur in column 1, and the columns $k > 1$ such that $i_k-i_{k-1} > 1$. These corners all have color $n$ if and only if $i_k \equiv n \mod n$ whenever $k = 1$ or $i_k$ and $i_{k-1}$ are not consecutive, i.e., if and only if $I(\mu,r)$ is $n$-final.
\end{proof}

\begin{proof}[Proof of Proposition~\ref{prop:corner color condition}]
Suppose $\rho^{(r)}$ satisfies the corner color condition. As explained in Remark \ref{rem:no empty columns}, we may assume that $\rho = \la/\mu$, where $\mu_i' < \la_i'$ for $i = 1, \ldots, \ell(\la')$, so that $I(\mu,r)$ is $n$-final and $J(\la,r)$ is $n$-initial by Lemma~\ref{lem:initial final condition}. Lemma~\ref{lem:affine unipotent} implies that $\widetilde{M}(\ov{e}_j^c(\mb{x}_1, \ldots, \mb{x}_m))$ is obtained from $\widetilde{M}(\mb{x}_1, \ldots, \mb{x}_m)$ by adding a multiple of row $dn+j+1$ to row $dn+j$, and a multiple of column $dn+j$ to column $dn+j+1$, for all $d$. These operations do not affect minors consisting of an $n$-final set of rows and an $n$-initial set of columns, so by Proposition~\ref{prop:JT matrix version}, $s_{\rho}^{(r)}$ is $\ov{e}_j$-invariant for each $j \in [n-1]$.
\end{proof}

\begin{remark}
\label{rem:affine geometric crystal}
The basic geometric crystal of type $A_{n-1}$ introduced in \S\ref{sec:basic} extends to an affine geometric crystal of type $A_{n-1}^{(1)}$ by defining
\[
\ov{\varepsilon}_0(\mb{x}) = x_1,
\qquad\quad
\ov{\varphi}_0(\mb{x}) = x_n,
\qquad\quad
\ov{e}_0^c(\mb{x}) = (c^{-1}x_1, x_2, \dotsc, x_{n-1}, cx_n).
\]
The affine operator $\ov{e}_0^c$ also satisfies~\eqref{eq:affine unipotent}, where we extend the definition of $\wh{x}_j(z)$ by allowing $j = 0$. Thus, the matrix $\widetilde{M}(\mb{x}_1, \ldots, \mb{x}_m)$ and the ring $\LSym$ can be viewed as ``affine objects.'' However, since $\gRSK$ respects only the classical $(j \neq 0)$ part of the crystal structure, we do not consider affine crystal operators in this paper (aside from a brief discussion of affine $R$-matrix invariants in \S\ref{sec:affine actions}).
\end{remark}

\begin{remark}
The \defn{(intrinsic) energy function} is an important function on the tensor product of one-row crystals of type $A_{n-1}^{(1)}$. It was shown in~\cite{LP13} that the loop Schur function $s_{(n-1)\delta_{m-1}}^{(n)}(\mb{x}_1, \ldots, \mb{x}_m)$ tropicalizes to the intrinsic energy function, where $a\delta_b$ is the \defn{stretched staircase partition} $(ab,a(b-1), \ldots, 2a, a)$. The energy function is known to be invariant under the classical crystal operators $\widetilde{e}_j, j \in [n-1]$, and under permutations of tensor factors by combinatorial $R$-matrices, so it should come as no surprise that the colored shape $((n-1)\delta_{m-1})^{(n)}$ satisfies the corner color condition.
\end{remark}

In light of the previous remark, we make the following definition.

\begin{dfn}
A \defn{pseudo-energy} is a loop skew Schur function $s_{\rho}^{(r)}$ such that the colored skew shape $\rho^{(r)}$ satisfies the corner color condition.
\end{dfn}

\subsection{\texorpdfstring{$Q$}{Q}-invariants}
\label{sec:Q}

The number of entries in the $Q$-pattern is equal to the number of shape invariants, plus the number of $Q$-type loop elementary symmetric functions (recall that $\lE{i}{j}$ is $Q$-type if $i+j \leq m$). We have already seen that the shape invariants are $\ov{e}$-invariant. We would like to assign an $\ov{e}$-invariant to each $Q$-type $\lE{i}{j}$. The column of length $i$ with top cell of color $j$ does not in general satisfy the corner color condition; to get around this problem, we sandwich the column between two rectangles that do satisfy the corner color condition.

Suppose $i \in [m], j \in [n]$, and $i + j \leq m$. Let $0 \leq K \leq n-1$ be such that $K \equiv j+i-m-1 \mod n$. Define two partitions as follows:
\begin{gather*}
\lambda_i^j = (\underbrace{n-j+K+1, \ldots, n-j+K+1}_{m-n+K \text{ times}}, \underbrace{n-j, \ldots, n-j}_{m-i-j \text{ times}}),
\\
\mu_i^j = (\underbrace{n-j+1, \ldots, n-j+1}_{m-n+K-i \text{ times}}).
\end{gather*}
(Note that $K = j+i-m-1+dn$ for some $d > 0$, so we have $m-n+K-i \geq j-1 \geq 0$.) It is clear that $\mu_i^j \subseteq \la_i^j$, so we may consider the skew shape $\lambda_i^j/\mu_i^j$. In particular, we will be interested in the colored skew shape $(\lambda_i^j/\mu_i^j)^{(n-j+1)}$, which can be described as follows: we attach a column of length $i$, with top cell colored $j$, to the end of the first $i$ rows of a rectangle of size $(m-j) \times (n-j)$, and to the beginning of the last $i$ rows of a rectangle of size $(m-n+K) \times K$. These rectangles are the colored shapes of the shape invariants $S_{j+1}$ and $S_{n+1-K}$ (if $j = n$ or $K = 0$, then $S_{n+1} = 1$ corresponds to the empty rectangle).

\begin{lemma}
For $i,j,K$ as above, $\ls{\lambda_i^j/\mu_i^j}{n-j+1}$ is nonzero, and we have $j+1, n+1-K \in [2,\min(m,n)+1]$.
\end{lemma}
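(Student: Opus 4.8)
The plan is to prove the two assertions separately. Both ultimately rest on a single preliminary observation about $K$: since $i+j\le m$ forces $j+i-m-1\le -1<0$, the requirement $K\equiv j+i-m-1\pmod n$ with $0\le K\le n-1$ pins down $K=j+i-m-1+dn$ for a unique integer $d\ge 1$. I would establish this first, because it is exactly what makes the row counts $m-n+K=(d-1)n+j+i-1$, its shift $m-n+K-i=(d-1)n+j-1$, and $m-i-j$ all nonnegative, so that $\lambda_i^j$ and $\mu_i^j$ are genuine partitions with $\mu_i^j\subseteq\lambda_i^j$.

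For the membership $j+1,\,n+1-K\in[2,\min(m,n)+1]$, I would then argue by elementary inequalities. The lower bounds are immediate ($j\ge 1$ and $K\le n-1$). For $j+1$, the bound $i+j\le m$ with $i\ge 1$ gives $j\le m-1$, hence $j\le\min(m,n)$. For $n+1-K$, the upper bound is equivalent to $K\ge n-\min(m,n)=\max(0,n-m)$; this is automatic when $n\le m$, and when $n>m$ it follows from $K-(n-m)=j+i-1+(d-1)n\ge j+i-1\ge 1$, using $d\ge 1$ from the preliminary step.

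For the nonvanishing of $\ls{\lambda_i^j/\mu_i^j}{n-j+1}$, the key point is that by Definition~\ref{defn:loop Schur} this loop skew Schur function is a sum of monomials with coefficient $1$ indexed by $\SSYT_{\leq m}(\lambda_i^j/\mu_i^j)$, so there is no cancellation and it is nonzero as soon as this tableau set is nonempty. Such a filling with entries in $[m]$ exists precisely when every column of the skew shape has at most $m$ cells, the sufficiency being witnessed by the flush-top filling that writes $1,2,3,\dots$ down each column (columns are then strictly increasing, and rows weakly increasing because $\mu_i^j$ is a partition). So the task reduces to reading off the column heights. Splitting the rows into the three bands described in the text---the $(m-n+K)\times K$ rectangle (the shape of $S_{n+1-K}$), the $(m-j)\times(n-j)$ rectangle (the shape of $S_{j+1}$), and the length-$i$ bridging column---gives exactly three distinct heights: $m-j$ in the first $n-j$ columns, $i$ in the single bridging column, and $m-n+K$ in the last $K$ columns. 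Each is at most $m$, since $m-j\le m-1$ (as $j\ge 1$), $i\le m-1$ (as $i+j\le m$), and $m-n+K\le m-1$ (as $K\le n-1$); in particular there are no empty columns either, consistent with Remark~\ref{rem:no empty columns}.

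The one genuinely delicate step is the column-height bookkeeping: one must locate the cells removed by $\mu_i^j$ precisely enough to see that the bridging column has height exactly $i$ and is not absorbed into either rectangle. I would guard against an off-by-one error here by checking the computation against the case $m=5$, $n=4$, $i=1$, $j=3$, $K=2$, where $\lambda_i^j/\mu_i^j=(4,4,4,1)/(2,2)$ has column heights $2,1,3,3$---matching $m-j=2$, $i=1$, and $m-n+K=3$, all below $m=5$. Everything else is the elementary inequalities above.
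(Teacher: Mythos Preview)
Your proof is correct and follows essentially the same approach as the paper's: both verify nonvanishing by checking that the three column heights $m-j$, $i$, and $m-n+K$ are each at most $m-1$, and both handle the only nontrivial bound $n+1-K\le m+1$ (when $m<n$) via the observation that $K=j+i-m-1+dn$ with $d\ge 1$. Your write-up is more explicit about the column-height bookkeeping and the existence of a witness tableau, but the underlying argument is identical.
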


\begin{proof}
For a loop skew Schur function in $m$ variables to be nonzero, it is necessary and sufficient for each column of the skew shape to have length no larger than $m$. The lengths of the columns of $\lambda_i^j/\mu_i^j$ are $m-j, i$, and $m-n+K$, each of which is no larger than $m-1$. For the second assertion, the only inequality that is not immediately clear is $n+1-K \leq m+1$, in the case where $m < n$. In this case, we have $1 \leq m+1-j-i \leq m-1 < n-1$, so $K = n-(m+1-j-i)$, which means $n+1-K = m+2-j-i \leq m$.
\end{proof}

\begin{dfn}
\label{defn:Q invariant}
For $i \in [m]$ and $j \in [n]$ such that $i+j \leq m$, define the \defn{$Q$-invariant} $\lQ{i}{j}$ and \defn{reduced $Q$-invariant} $\lrQ{i}{j}$ by
\[
\lQ{i}{j} = \ls{\lambda_i^{(j)}/\mu_i^{(j)}}{n-j+1}, \qquad \lrQ{i}{j} = \frac{\ls{\lambda_i^{(j)}/\mu_i^{(j)}}{n-j+1}} {S_{j+1} S_{n+1-K}}.
\]
\end{dfn}

It is easy to verify that $(\lambda_i^j/\mu_i^j)^{(n-j+1)}$ satisfies the corner color condition, so $\lQ{i}{j}$ and $\lrQ{i}{j}$ are $\ov{e}$-invariant.

\begin{ex}
\label{ex:Q invariant}
Suppose $m = 5,n=4$ and $i=1,j=3$. From the above definitions, we have $K = 2$, $\lambda_1^3 = (4,4,4,1)$, $\mu_1^3 = (2,2)$, so
\[
\lQ{1}{3} = s_{(4,4,4,1)/(2,2)}^{(2)}, \qquad \lrQ{1}{3} = \frac{s_{(4,4,4,1)/(2,2)}^{(2)}}{S_4 S_3} = \frac{s_{(4,4,4,1)/(2,2)}^{(2)}}{s_{(1,1)}^{(4)}s_{(2,2,2)}^{(4)}}.
\]
The colors of the cells of the skew shape $(\lambda_1^3/\mu_1^3)^{(2)}$ are shown below, where Orange is $1$, Blue is $2$, Red is $3$, and Green is $4$:
\[
\begin{tikzpicture}[baseline=-40,scale=0.5]
\foreach \x/\y in {2/0,3/1,0/2} {
  \fill[green!40] (\x,-\y) rectangle (\x+1,-\y-1);
  \draw (\x+0.5,-\y-0.5) node {$G$};
}
\foreach \x/\y in {2/1,3/2,0/3} {
  \fill[orange!40] (\x,-\y) rectangle (\x+1,-\y-1);
  \draw (\x+0.5,-\y-0.5) node {$O$};
}
\foreach \x/\y in {3/0,1/2} {
  \fill[red!40] (\x,-\y) rectangle (\x+1,-\y-1);
  \draw (\x+0.5,-\y-0.5) node {$R$};
}
\fill[blue!40] (2,-2) rectangle (3,-3);
\draw (2.5,-2.5) node {$B$};
\draw[thick] (0,-2) rectangle (1,-4);
\draw[thick] (2,-0) rectangle (4,-3);
\end{tikzpicture}\ .
\]
Note that the NW corners have color $n=4$ (Green), while the SE corners have color $m=5 \equiv 1 \mod 4$ (Orange).
\end{ex}

We saw in Lemma~\ref{lem:P} that the $P$-type loop elementary symmetric functions contain the same information as the gRSK $P$-pattern. If we combine the $Q$-invariants with the information contained in the $P$-pattern, we can recover all loop elementary symmetric functions.

\begin{prop}
\label{prop:Q}
The $Q$-type loop elementary symmetric functions can be recovered from the $Q$-invariants and the $P$-type loop elementary symmetric functions. Thus, the $Q$-invariants and $P$-type loop elementary symmetric functions (or, equivalently, the $Q$-invariants and the entries $z_{i,j}$ of the $P$-pattern) form an algebraically independent generating set for $\Frac(\LSym)$.
\end{prop}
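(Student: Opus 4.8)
The plan is to prove the generation statement first and deduce algebraic independence from a dimension count. By Theorem~\ref{thm:LSym}(1) we have $\Frac(\LSym) = \CC(\lE{i}{j})$ with all $mn$ functions $\lE{i}{j}$ algebraically independent, so $\operatorname{trdeg}_{\CC} \Frac(\LSym) = mn$. The proposed set $\{\lQ{i}{j} : i+j \leq m\} \cup \{\lE{i}{j} : i+j > m\}$ also has exactly $mn$ elements, since its two index sets partition $[m] \times [n]$; a generating set whose cardinality equals the transcendence degree is automatically a transcendence basis, hence algebraically independent. Thus it suffices to prove generation, and (using Lemma~\ref{lem:P} for the equivalence with $\{z_{i,j}\}$) this reduces to expressing each $Q$-type $\lE{i}{j}$ (those with $i+j \leq m$) as a rational function of the $Q$-invariants and the $P$-type $\lE{i}{j}$.

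First I would write $\lQ{i}{j} = \Delta_{I,J}(\widetilde{M})$ as a minor of the $n$-periodic matrix $\widetilde{M} = \widetilde{M}(\mb{x}_1, \ldots, \mb{x}_m)$ via Proposition~\ref{prop:JT matrix version}, with $I = I(\mu_i^j, n-j+1)$ $n$-final and $J = J(\lambda_i^j, n-j+1)$ $n$-initial (Lemma~\ref{lem:initial final condition}). Recall an entry $\lE{a}{b}$ of $\widetilde{M}$ is $Q$-type exactly when it lies in an off-diagonal block $M_d$, $d \geq 1$. The heart of the argument is the following structural claim about this particular minor: the single inserted column of $\lambda_i^j/\mu_i^j$ contributes the \emph{unique} $Q$-type entry on the main diagonal of the submatrix, namely $\lE{i}{j}$; every other $Q$-type entry of the submatrix lies in the same row or the same column as this distinguished $\lE{i}{j}$; and deleting that row and column leaves a block-triangular matrix whose two diagonal blocks are exactly the Jacobi--Trudi matrices of the flanking rectangular shape invariants $S_{j+1}$ and $S_{n+1-K}$. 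Granting this and expanding the determinant linearly in the entry $\lE{i}{j}$ gives
\[
\lQ{i}{j} = \lE{i}{j} \cdot S_{j+1} S_{n+1-K} + D_{i,j},
\]
where the cofactor $S_{j+1} S_{n+1-K}$ is a nonzero polynomial in $P$-type $\lE$'s (each $S_k$ is a minor of $M$, whose entries are $P$-type by~\eqref{eq:M entries}), and $D_{i,j}$ collects the remaining terms. Because $\widetilde{M}_{ab} = \lE{m+b-a}{a}$, subscripts strictly increase along rows (as $b$ grows) and strictly decrease down columns (as $a$ grows); hence the $Q$-type entries in the row and column of $\lE{i}{j}$ — which sit to its left or below it — all have subscript strictly less than $i$. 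Thus $D_{i,j}$ is a polynomial in the $P$-type $\lE$'s and in $Q$-type $\lE{i'}{j'}$ with $i' < i$.

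These identities form a triangular system: ordering the $Q$-type pairs $(i,j)$ by the subscript $i$, the equation for $(i,j)$ involves only $\lE{i}{j}$ itself, with an invertible $P$-type coefficient, together with $Q$-type functions of strictly smaller subscript. I would then induct on $i$. In the base case $i=1$ the remainder $D_{1,j}$ is purely $P$-type, so $\lE{1}{j} = (\lQ{1}{j} - D_{1,j})/(S_{j+1} S_{n+1-K})$ already lies in $\CC(\{\lQ{i}{j}\}, \{P\text{-type } \lE\})$; in the inductive step every $Q$-type function occurring in $D_{i,j}$ has already been recovered. This recovers each $Q$-type $\lE{i}{j}$, establishing generation, and algebraic independence then follows from the count in the first paragraph.

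The main obstacle is establishing the structural claim about the minor $\Delta_{I,J}(\widetilde{M})$: that $\lE{i}{j}$ is the unique $Q$-type diagonal entry, that all other $Q$-type entries lie in its row or column, and that the complementary minor factors as $S_{j+1} S_{n+1-K}$. I expect this to follow from a careful analysis of the explicit sets $I(\mu_i^j, r)$ and $J(\lambda_i^j, r)$ from~\eqref{eq:subsets JT matrix version}: the $n$-final/$n$-initial structure of Lemma~\ref{lem:initial final condition}, together with the very specific geometry of $\lambda_i^j/\mu_i^j$ (a single column of length $i$ wedged between the two shape-invariant rectangles), should pin down precisely where the $Q$-region of $\widetilde{M}$ meets the submatrix and force the constants $0$ and $1$ that produce the block-triangular splitting of the complementary minor. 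Verifying this in small cases — for instance $m=5$, $n=4$, $(i,j)=(2,2)$, where the two flanking invariants coincide and the cofactor is $S_3^2$ — strongly suggests the pattern holds in general.
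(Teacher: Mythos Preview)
Your approach is essentially the paper's: write $\lQ{i}{j}$ as a minor of $\widetilde{M}$, identify the cofactor of the entry $\lE{i}{j}$ as $S_{j+1}S_{n+1-K}$, show the remainder involves only $Q$-type $\lE{i'}{j'}$ with $i'<i$, and induct. The paper makes this explicit by computing
\[
I=[n-K+1,n]\cup[dn+j,dn+n],\qquad J=[1,K+1]\cup[dn+1,dn+n-j]
\]
(with $d$ the positive integer satisfying $K=i+j-m-1+dn$) and observing that the resulting submatrix has block form $\bigl(\begin{smallmatrix} B & 0 \\ C & A \end{smallmatrix}\bigr)$, where $A,B$ sit inside $M_0$ (hence are $P$-type), $C$ sits inside $M_d$, and $\lE{i}{j}$ is the top-right entry of $C$.

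Your structural claim (ii) --- that every other $Q$-type entry of the submatrix lies in the row or column of $\lE{i}{j}$ --- is false once $i\geq 3$ and both flanking rectangles are nonempty. For instance with $m=7$, $n=4$, $(i,j)=(3,2)$ one finds $K=1$, $d=1$, and $C$ is the $3\times 2$ block with rows $\{6,7,8\}$ and columns $\{1,2\}$; its $(2,1)$-entry is $\lE{1}{3}$, a $Q$-type entry sharing neither the row nor the column of $\lE{3}{2}$. Fortunately neither of your two uses of (ii) actually requires it. The cofactor of $\lE{i}{j}$ is block lower-triangular because of the $0$ block in the upper right, so its determinant equals $\det(B')\det(A')=S_{n+1-K}S_{j+1}$ regardless of what survives of $C$. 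And for the remainder $D_{i,j}$, the relevant fact is that $\lE{i}{j}$ sits at the top-right corner of $C$, so \emph{every} other entry of $C$ lies weakly left and weakly below it; your own row/column monotonicity observation then forces all of them to have subscript $<i$. With claim (ii) replaced by this block description your argument matches the paper's.
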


\begin{proof}
Suppose $\lE{i}{j}$ is a $Q$-type loop elementary symmetric function. By Proposition~\ref{prop:JT matrix version}, $s_{\lambda_i^j/\mu_i^j}^{(n-j+1)}$ is equal to the determinant of the submatrix of $\widetilde{M}(\mb{x}_1, \ldots, \mb{x}_m)$ using rows $I$ and columns $J$, where
\begin{align*}
I &= [n-K+1, \ldots, n] \cup [dn + j, \ldots, dn + n], \\
J &= [1, \ldots, K+1] \cup [dn+1, dn+n-j],
\end{align*}
and $d$ is the (necessarily positive) integer such that $K = i+j-m-1+dn$. This submatrix has block form
\[
\begin{tikzpicture}
\draw (0,0) rectangle (2,2);
\draw (0,1.1) -- (2,1.1);
\draw (0.95,0) -- (0.95,2);
\draw (0.475,0.55) node{$C$};
\draw (0.475,1.55) node{$B$};
\draw (1.475,0.55) node{$A$};
\draw (1.475,1.55) node{$0$};
\draw (-0.9,0.55) node{$n-j+1 \Bigg\{$};
\draw (-0.35,1.55) node{$K \bigg\{$};
\draw (0.475,2.3) node{$\braceabove{0.85}{K+1}$};
\draw (1.475,2.3) node{$\braceabove{0.95}{n-j}$};
\end{tikzpicture}
\]
where $A$ and $B$ are bottom-left justified submatrices of the top nonzero block $M_0$ (which consists of $P$-type loop elementary symmetric functions), and $C$ is a bottom-left justified submatrix of the lower block $M_d$. Moreover, the entry in the top-right corner of $C$ is $\lE{i}{j}$, and the other entries of $C$ are $\lE{i'}{j'}$ with $i' < i$. Thus, we have
\[
\lQ{i}{j} = \lE{i}{j} S_{n+1-K}S_{j+1} + \xi,
\]
where $\xi$ is a polynomial in $P$-type loop elementary symmetric functions and $\lE{i'}{j'}$ with $i' < i$. It follows by induction on $i$ that we can solve for $\lE{i}{j}$ in terms of $Q$-invariants and $P$-type loop elementary symmetric functions.

The second assertion follows from the fact that the set of loop elementary symmetric functions is algebraically independent, and has the same cardinality as the set of $Q$-invariants and $P$-type loop elementary symmetric functions.
\end{proof}

\begin{remark}
The $Q$-invariants and $P$-type loop elementary symmetric functions do not generate $\LSym$ as a ring. Indeed, when $m=n=2$, $\lE{1}{2}, \lE{2}{1},\lE{2}{2}$ are $P$-type, $\lE{1}{1}$ is $Q$-type, and
\[
\lE{1}{1} = \dfrac{\lQ{1}{1} + \lE{1}{2}\left(\lE{2}{1} + \lE{2}{2}\right)}{\left(\lE{1}{2}\right)^2}.
\]
In this example, the denominator is equal to $(S_2)^2$. In general, it is clear from the above proof that every loop elementary symmetric function (and thus every loop symmetric function) can be expressed as a polynomial in $Q$-invariants, $P$-type loop elementary symmetric functions, and inverses of shape invariants.
\end{remark}

We will show in \S\ref{sec:back to tableau} that the $Q$-invariants are positive Laurent polynomials in the entries of the $Q$-pattern. We conjecture that these Laurent polynomials, together with the shape, determine the $Q$-pattern up to the geometric Weyl group action of $S_m$.

\subsection{Conjectural generating sets}
\label{sec:conj gen sets}

\begin{conj}
\label{conj:QS}
\
\begin{enumerate}
\item The field $\Inv_{R\ov{e}} = \Inv_{\ov{e}} \cap \Frac(\LSym)$ is generated by the $Q$-invariants and shape invariants.
\item Every element of $\CC[x_i^j] \cap \Inv_{R\ov{e}}$ is a polynomial in reduced $Q$-invariants $\lrQ{i}{j}$ and ratios of consecutive shape invariants $S_i/S_{i+1}$ (by definition, $S_{\min(m,n) + 1} = 1$).
\end{enumerate}
\end{conj}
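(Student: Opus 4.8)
The plan is to prove part (1) by a transcendence-degree and field-degree count, and then to deduce part (2) from part (1) together with the fundamental theorem of loop symmetric functions. Write $F = \CC(\{\lQ{i}{j}\}, \{S_k\})$ for the subfield generated by the $Q$-invariants and shape invariants. First I would record the easy facts. The generators of $F$ are loop (skew) Schur functions, hence lie in $\Inv_R = \Frac(\LSym)$, and they satisfy the corner color condition, hence lie in $\Inv_{\ov{e}}$ by Proposition~\ref{prop:corner color condition}; thus $F \subseteq \Inv_{R\ov{e}}$. Moreover $F$ is purely transcendental of degree $\abs{Q}$: by Proposition~\ref{prop:Q} the $Q$-invariants are algebraically independent from the $P$-type loop elementary symmetric functions, the shape invariants lie in the field generated by the latter and have transcendence degree $p = \min(m,n)$, and the two counts (the number of $Q$-invariants, which is the number of $Q$-type $\lE{i}{j}$, and $p$) sum to $\abs{Q}$. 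As recalled at the start of \S\ref{sec:double}, $\Inv_{R\ov{e}}$ has the same transcendence degree $\abs{Q}$ as $\Inv_{\ov{e}}$ and $[\Inv_{\ov{e}} : \Inv_{R\ov{e}}] = m!$. Hence $\Inv_{R\ov{e}}/F$ is a finite algebraic extension, and part (1) reduces entirely to the single assertion
\[
[\Inv_{\ov{e}} : F] = m!,
\]
since the tower $F \subseteq \Inv_{R\ov{e}} \subseteq \Inv_{\ov{e}}$ then forces $[\Inv_{R\ov{e}} : F] = 1$.

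To attack this degree equality I would pass to geometry. By Corollary~\ref{cor:ov e}, $\Inv_{\ov{e}}$ is the function field of the torus $\GT_m^{\leq n}$ of $Q$-patterns, and the generators of $F$, viewed as Laurent polynomials in the $z'_{j,i}$, define a dominant rational map $\pi \colon \GT_m^{\leq n} \dashrightarrow \mathbb{A}^{\abs{Q}}$. Since $\pi$ is dominant onto a variety of dimension $\abs{Q}$, the degree $[\Inv_{\ov{e}} : F]$ equals the cardinality of a generic fibre of $\pi$, i.e.\ the number of $Q$-patterns sharing a generic tuple of invariant values. The birational $S_m$-action (by the $R_i$) fixes every generator of $F$ and so preserves each fibre; therefore $[\Inv_{\ov{e}} : F] = m!$ is \emph{equivalent} to the geometric statement that the $Q$-invariants and shape invariants separate generic $S_m$-orbits on $\GT_m^{\leq n}$ and that the action is generically free. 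This is precisely the separation property conjectured at the end of \S\ref{sec:Q}, and it is the main obstacle: the geometric $R$-matrices are complicated subtraction-free birational maps, so controlling their orbits directly is hard. My plan for this step is to leverage the determinantal formula of Theorem~\ref{thm:new det formula}, which writes every pseudo-energy as a determinant in the $\lQ{i}{j}$ and $S_k^{\pm1}$, to manufacture enough invariant functions to reconstruct a $Q$-pattern up to $S_m$; concretely, I would try to invert the positive formulas of \S\ref{sec:back to tableau} on a Zariski-dense set, mirroring the dense-orbit and fibre-dimension argument used for $\Inv_e$ in~\cite{BFPS}.

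For part (2), assume (1). If $f \in \CC[x_i^j] \cap \Inv_{R\ov{e}}$, then $f$ is in particular a polynomial $R$-invariant, so $f \in \LSym$ by Theorem~\ref{thm:LSym}(2); and by (1) it is a rational function in the $\lQ{i}{j}$ and $S_k$, equivalently in the reduced invariants $\lrQ{i}{j}$ and the $S_k$. The remaining task is purely one of controlling denominators. Using the remark following Proposition~\ref{prop:Q}---every loop symmetric function is a polynomial in $Q$-invariants, $P$-type $\lE{i}{j}$, and inverses of shape invariants---together with $\ov{e}$-invariance to eliminate the genuinely $P$-type variables, I would show that the only denominators forced on $f$ are powers of shape invariants. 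The determinantal formula of Theorem~\ref{thm:new det formula} should pin down exactly which shape-invariant factors occur, so that after normalising each $\lQ{i}{j}$ to $\lrQ{i}{j}$ and rewriting each quotient $S_i/S_j$ as a product of consecutive ratios $S_i/S_{i+1}$, the invariant $f$ becomes an honest polynomial in the $\lrQ{i}{j}$ and the $S_i/S_{i+1}$.

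The decisive difficulty throughout is the orbit-separation statement in part (1): the transcendence-degree count, the Galois-theoretic tower, and the reduction to a generic fibre count are all routine, whereas separating generic orbits of the birational $S_m$-action by this specific, relatively small family of invariants is exactly the content that the paper leaves conjectural, with the Jacobi--Trudi-type formulas of \S\ref{sec:det formula} and \S\ref{sec:unfolded sum of minors} serving as the principal tools and as the strongest available evidence.
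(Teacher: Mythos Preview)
The statement is a \emph{conjecture} in the paper; the paper does not prove it. Immediately after stating it, the paper notes only that $\Inv_{R\ov{e}}$ differs from $\CC(\lQ{i}{j}, S_k)$ by at most a finite extension, and it offers Theorems~\ref{thm:new det formula} and~\ref{thm:unfolded sum of minors} as supporting evidence, not as a proof. So there is no proof in the paper to compare your attempt against.

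Your reduction of part (1)---via the tower $F \subseteq \Inv_{R\ov{e}} \subseteq \Inv_{\ov{e}}$ and the Galois equality $[\Inv_{\ov{e}} : \Inv_{R\ov{e}}] = m!$---to the statement that the $Q$-invariants and shape invariants separate generic $S_m$-orbits on $\GT_m^{\leq n}$ is correct, and it is exactly the framework the paper already sets up. The gap is that you do not prove orbit separation: ``inverting the positive formulas of \S\ref{sec:back to tableau}'' and ``mirroring the dense-orbit and fibre-dimension argument used for $\Inv_e$'' are hopes rather than arguments, and you concede as much in your final paragraph. The analogy with $\Inv_e$ is not straightforward, since there the gRSK isomorphism supplied the generators a priori, whereas here the candidate generators are a proper subset of the $Q$-pattern coordinates. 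Part (2) has independent gaps even granting (1): knowing $f \in \LSym \cap F$ does not by itself control denominators; the $P$-type $\lE{i}{j}$ appearing in the expression from the remark after Proposition~\ref{prop:Q} are not individually $\ov{e}$-invariant, so ``using $\ov{e}$-invariance to eliminate the genuinely $P$-type variables'' while preserving polynomiality in the $\lrQ{i}{j}$ and $S_i/S_{i+1}$ is an assertion, not an argument.
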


Note that the collection of $Q$-invariants and loop Schur functions $\Box(i,j)$ is algebraically independent by Lemma~\ref{lem:P},~\eqref{eq:P loop formula}, and Proposition~\ref{prop:Q}, so in particular the collection of $Q$-invariants and shape invariants is algebraically independent. Since we know that the size of this collection is equal to the transcendence degree of $\Inv_{R\ov{e}}$ (see the discussion at the beginning of~\S \ref{sec:double}), $\Inv_{R\ov{e}}$ differs from $\CC(\lQ{i}{j}, S_k)$ by at most a finite extension. We provide strong supporting evidence for Conjecture~\ref{conj:QS} in \S\ref{sec:det formula}.

\begin{remark}
The $Q$-invariants and shape invariants do not generate the ring $\CC[x_i^j] \cap \Inv_{R\ov{e}}$. For example, when $m=n=2$, one has
\[
\lx{1}{1}\lx{1}{2} + \lx{2}{1}\lx{2}{2} = \dfrac{\lH{3}{2}}{\lE{1}{2}} = \dfrac{\lQ{1}{1}}{S_2}.
\]
On the other hand, a consequence of Conjecture~\ref{conj:QS} is that the larger ring $\CC[x_i^j,S_k^{-1}] \cap \Inv_{R\ov{e}}$ is generated by the $Q$-invariants, shape invariants, and inverses of shape invariants.
\end{remark}

Define \defn{barred $P$-invariants} $\bP{j}{i} \in \ov{\LSym}$ and \defn{reduced barred $P$-invariants} $\brP{j}{i} \in \Frac(\ov{\LSym})$ for $i \in [m], j \in [n]$ such that $i+j \leq n$ in the same way that the $Q$-invariants were defined, but with the roles of $m,n$ (and $i,j$) interchanged. Conjecture~\ref{conj:QS} is equivalent to the analogous conjecture for $\Inv_{\ov{R}e}$, with $Q$-invariants replaced by barred $P$-invariants.

Finally, we consider $\Inv_{R\ov{R}} = \Frac(\LSym) \cap \Frac(\ov{\LSym})$. The $Q$-invariants are $\ov{e}$-invariant, so in particular they are $\ov{R}$-invariant, which means they lie in $\ov{\LSym}$. Similarly, the barred $P$-invariants lie in $\LSym$.

\begin{conj}
\label{conj:QPS}
\
\begin{enumerate}
\item The field $\Inv_{R\ov{R}} = \Frac(\LSym) \cap \Frac(\ov{\LSym})$ is generated by the $Q$-invariants, barred $P$-invariants, and shape invariants.
\item Every element of $\LSym \cap \ov{\LSym}$ is a polynomial in reduced $Q$-invariants, reduced barred $P$-invariants, and ratios of consecutive shape invariants $S_i/S_{i+1}$.
\end{enumerate}
\end{conj}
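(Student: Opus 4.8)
The plan is to reduce part (1) to the triviality of a single finite field extension, and then to attack that via an orbit-separation (generic fibre) argument; part (2) I would deduce from part (1) together with a positivity argument. I begin by recording that all proposed generators lie in $\Inv_{R\ov{R}}$. Each $Q$-invariant $\lQ{i}{j}$ is a loop skew Schur function, hence lies in $\LSym\subseteq\Inv_R$ by Proposition~\ref{prop:JT}, and its colored shape satisfies the corner color condition, so it is $\ov{e}$-invariant (in particular $\ov{R}$-invariant) by Proposition~\ref{prop:corner color condition}; thus $\lQ{i}{j}\in\Inv_R\cap\Inv_{\ov{R}}=\Inv_{R\ov{R}}$. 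The symmetric argument places each $\bP{j}{i}$ in $\Inv_{R\ov{R}}$, and the shape invariants $S_k$ are both $e$- and $\ov{e}$-invariant, hence lie in $\Inv_{R\ov{R}}$ as well. Set $F=\CC(\lQ{i}{j},\bP{j}{i},S_k)\subseteq\Inv_{R\ov{R}}$.

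Next I would prove simultaneously that these $mn$ functions are algebraically independent and that $K:=\CC(x_i^j)$ is a finite extension of $F$. Since the $\lQ{i}{j}$ are $\ov{e}$-invariant and the $S_k$ are $e\ov{e}$-invariant, Proposition~\ref{prop:Q} together with the shape relation~\eqref{eq:z shape} shows that $\{\lQ{i}{j},S_k\}$ is a transcendence basis of $\Inv_{\ov{e}}=\CC(z'_{j,i})$; symmetrically $\{\bP{j}{i},S_k\}$ is a transcendence basis of $\Inv_e=\CC(z_{i,j})$. Hence both $\Inv_e$ and $\Inv_{\ov{e}}$ are algebraic over $F$. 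As $\gRSK$ is birational (Theorem~\ref{thm:gRSK props}), $K$ is the compositum $\Inv_e\cdot\Inv_{\ov{e}}$, so $K$ is algebraic, hence finite, over $F$; because $F$ is generated by exactly $mn$ elements and $\operatorname{trdeg}K=mn$, the generators are algebraically independent and $\operatorname{trdeg}F=mn$. Thus $F\subseteq\Inv_{R\ov{R}}\subseteq K$ with $K/F$ finite, and all of part (1) is the equality $[\Inv_{R\ov{R}}:F]=1$.

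To compute this I would pass to the full Galois picture. The maps $R_i,\ov{R}_j$ generate a faithful birational action of $G=S_n\times S_m$ on $K$ (Theorem~\ref{thm:bicrystal}, Proposition~\ref{prop:R=Weyl}), so $\Inv_{R\ov{R}}=K^{G}$ and $[K:K^{G}]=m!\,n!$ by Artin's theorem; therefore $[K:F]=m!\,n!\cdot[\Inv_{R\ov{R}}:F]$, and it suffices to show $[K:F]=m!\,n!$. In characteristic zero this degree equals the cardinality of the generic fibre of the dominant map $\Phi=(\lQ{i}{j},\bP{j}{i},S_k)\colon\Mat_{m\times n}(\CC^*)\dashrightarrow\CC^{mn}$, and since the coordinates of $\Phi$ are $G$-invariant, $G$ permutes each fibre. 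The goal thus becomes the geometric statement that a generic fibre of $\Phi$ is a single free $G$-orbit --- equivalently, that a generic matrix can be reconstructed, up to the $S_n\times S_m$-action, from the values of all $Q$-invariants, barred $P$-invariants, and shape invariants. This is the same mechanism (a dense orbit together with a fibre-dimension count) used to determine $\Inv_e$ in Theorem~\ref{thm:e}, now carried out for the finite group $G$. \emph{This reconstruction is where I expect the real difficulty, and it is why the statement is posed as a conjecture.} The paper's two main results are the natural tools: the determinantal formula (Theorem~\ref{thm:new det formula}) recovers pseudo-energies from the $\lQ{i}{j}$ and $S_k$, while Theorem~\ref{thm:unfolded sum of minors} bridges $\LSym$-data and $\ov{\LSym}$-data, which is what one needs to combine the $Q$- and barred-$P$-information. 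A cleaner-looking but not obviously easier route is to assume Conjecture~\ref{conj:QS} for both $\Inv_{R\ov{e}}$ and $\Inv_{\ov{R}e}$, giving $F=\Inv_{R\ov{e}}\cdot\Inv_{\ov{R}e}$, and then reduce part (1) to the single claim $\Inv_{R\ov{R}}=\Inv_{R\ov{e}}\cdot\Inv_{\ov{R}e}$; but this merely trades the orbit-separation problem for the equally delicate problem of showing that the compositum of the two smaller invariant fields leaves no surviving finite extension. Controlling this finite extension is the crux.

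Finally, for part (2) I would reduce to part (1) together with an integrality/positivity argument. Granting part (1), one intersects with $\CC[x_i^j]$ and uses that the $Q$-invariants are positive Laurent polynomials in the $Q$-pattern entries (as developed in \S\ref{sec:back to tableau}), so that the natural degree-zero coordinates are the reduced invariants $\lrQ{i}{j}$, $\brP{j}{i}$ and the ratios $S_i/S_{i+1}$; modelled on the fundamental theorem of loop symmetric functions (Theorem~\ref{thm:LSym}(2)), one would then show that $\LSym\cap\ov{\LSym}=\CC[x_i^j]\cap\Inv_{R\ov{R}}$ is exactly the polynomial algebra these generate. I expect part (2) to be strictly harder than part (1) and to depend on it.
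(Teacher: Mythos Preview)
The statement you are addressing is a \emph{conjecture} in the paper, not a theorem; the paper offers no proof of it. What the paper does prove is the supporting Proposition~\ref{prop:QPS ind}: the proposed $mn$ generators are algebraically independent, so $\Inv_{R\ov{R}}$ is at most a finite extension of $F=\CC(\lQ{i}{j},\bP{j}{i},S_k)$. Your proposal recognizes this correctly --- you explicitly flag the orbit-separation step as ``where I expect the real difficulty, and it is why the statement is posed as a conjecture.'' So what you have written is not a proof but a strategic outline, together with a correct argument for the part that is actually known.

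On that known part, your route to algebraic independence differs from the paper's. The paper isolates a general matroid-style lemma (Lemma~\ref{lem:alg ind}): if $X\cup Y\cup Z$ is independent, $F\subset K(X,Z)$, $G\subset K(Y,Z)$, and both $F\cup Z$ and $G\cup Z$ are independent of the right size, then $F\cup G\cup Z$ is independent. It then applies this with $X,Y$ the non-shape entries of the $P$- and $Q$-patterns, $Z$ the shape, and $F,G$ the barred $P$-invariants and $Q$-invariants. Your argument instead observes that $\Inv_e$ and $\Inv_{\ov{e}}$ are each algebraic over $F$, so their compositum $K=\CC(x_i^j)$ (using birationality of $\gRSK$) is algebraic over $F$, and then counts transcendence degrees. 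Both arguments are valid; yours is slicker for this particular situation, while the paper's lemma is a reusable tool.

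For the conjecture itself, your reduction to $[K:F]=m!\,n!$ via Artin's theorem and the generic-fibre reformulation are correct and natural. But the step you flag --- showing that a generic fibre of $\Phi=(\lQ{i}{j},\bP{j}{i},S_k)$ is a single free $S_n\times S_m$-orbit --- is genuinely open, and neither Theorem~\ref{thm:new det formula} nor Theorem~\ref{thm:unfolded sum of minors} resolves it; the paper makes no such claim. Your alternative route through Conjecture~\ref{conj:QS} is equally open. So your proposal is an accurate summary of the state of affairs rather than a proof, and there is no proof in the paper to compare it against.
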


\begin{prop}
\label{prop:QPS ind}
The collection of $Q$-invariants, barred $P$-invariants, and shape invariants is algebraically independent.
\end{prop}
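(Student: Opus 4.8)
The plan is to build a transcendence basis of $\CC(x_i^j)$ adapted to the three families and to verify algebraic independence through the Jacobian criterion, which is valid in characteristic zero. Throughout write $p=\min(m,n)$.

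First I would record where each family lives. By Theorem~\ref{thm:gRSK props}, $\gRSK$ identifies $\CC(x_i^j)$ with $\CC\bigl(\{z_{i,j}\}\cup\{z'_{j,i}\}\bigr)$, the $P$-pattern entries and $Q$-pattern entries sharing their common shape $z_{i,n}=z'_{i,m}$. Set
\[
U=\{z_{i,j}\mid j<n\},\qquad V=\{z'_{j,i}\mid i<m\},\qquad W=\{S_1,\dots,S_p\}.
\]
Using~\eqref{eq:z shape} to express the shape coordinates through the $S_k$, the set $U\cup V\cup W$ generates $\CC(x_i^j)$ and has size $(|P|-p)+(|Q|-p)+p=mn$, so it is a transcendence basis; note this equals the size of the target collection (the number of $Q$-invariants is $|Q|-p=|V|$, the number of barred $P$-invariants is $|P|-p=|U|$, plus $p$ shape invariants). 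Now the $Q$-invariants are $\ov{e}$-invariant (Definition~\ref{defn:Q invariant} and the verification following it), so by Corollary~\ref{cor:ov e} they lie in $\Inv_{\ov{e}}=\CC(z'_{j,i})$; since the shape coordinates $z'_{j,m}=z_{j,n}$ are rational in the $S_k$, in fact $\lQ{i}{j}\in\CC(V\cup W)$. Symmetrically, the barred $P$-invariants are $e$-invariant, so by Theorem~\ref{thm:e} they lie in $\Inv_e=\CC(z_{i,j})$, hence in $\CC(U\cup W)$. The shape invariants are exactly $W$.

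Next I would apply the Jacobian criterion with respect to the basis $U\cup V\cup W$. Ordering the target set as $\bigl(\{\bP{j}{i}\},\{\lQ{i}{j}\},\{S_k\}\bigr)$ and the basis as $(U,V,W)$, the previous paragraph gives $\partial \bP{j}{i}/\partial V=0$ and $\partial \lQ{i}{j}/\partial U=0$, while the $S_k$-rows vanish on $U,V$ and form the identity on $W$. Hence the Jacobian is block upper-triangular,
\[
J=\begin{pmatrix}
A & 0 & \ast \\
0 & B & \ast \\
0 & 0 & I
\end{pmatrix},
\qquad \det J=\det A\cdot\det B,
\]
with diagonal blocks $A=\bigl(\partial \bP{j}{i}/\partial u\bigr)_{u\in U}$ and $B=\bigl(\partial \lQ{i}{j}/\partial v\bigr)_{v\in V}$. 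It therefore suffices to show each diagonal block is nonsingular. Reading the Jacobian criterion over the ground field $\CC(W)=\CC(S_k)$, we have $\det B\neq0$ precisely when the $\lQ{i}{j}$ (which lie in $\CC(V\cup W)$ and number $|V|$) are algebraically independent over $\CC(W)$; this follows from the fact, recorded in~\S\ref{sec:conj gen sets}, that the $Q$-invariants together with the shape invariants are algebraically independent over $\CC$. Applying the same argument in the transpose picture ($m\leftrightarrow n$, $\LSym\leftrightarrow\ov{\LSym}$, $P\leftrightarrow Q$), and using that the barred shape invariants coincide with the $S_k$ since both express the common shape via~\eqref{eq:z shape}, shows the $\bP{j}{i}$ are algebraically independent over $\CC(W)$, so $\det A\neq0$. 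Thus $\det J\neq0$ and the collection is algebraically independent.

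The main obstacle is conceptual rather than computational: one must recognize that the $\ov{e}$- and $e$-invariance localizes the two families into complementary halves of the $\gRSK$ coordinates, which is exactly what produces the vanishing off-diagonal blocks and reduces everything to the two already-established ``independence modulo the shape'' statements (one from~\S\ref{sec:conj gen sets}, the other its transpose). If one prefers to avoid Jacobians, the identical conclusion follows field-theoretically: $\CC(U,W)$ and $\CC(V,W)$ are algebraically disjoint over $\CC(W)$, so the barred $P$-invariants—independent over $\CC(W)$ inside $\CC(U,W)$—remain independent over $\CC(V,W)$, whence adjoining them to $\CC\bigl(\{\lQ{i}{j}\}\cup\{S_k\}\bigr)$ raises the transcendence degree by $|P|-p$, reaching the full $mn$.
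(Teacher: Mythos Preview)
Your proof is correct and follows essentially the same strategy as the paper: partition the $\gRSK$ coordinates into the non-shape $P$-pattern entries, the non-shape $Q$-pattern entries, and the shape; observe that the barred $P$-invariants live in the first and third pieces while the $Q$-invariants live in the second and third; and then reduce to the two ``independence together with the shape'' statements (one recorded in~\S\ref{sec:conj gen sets}, the other its transpose). The only difference is the technical device used at the end: the paper packages the final step as a field-theoretic lemma proved via the matroid augmentation axiom (your closing paragraph is exactly that lemma), whereas you use the Jacobian criterion to obtain the block upper-triangular structure directly. Both arguments are equivalent here; your Jacobian version has the minor advantage of making the ``complementary halves'' structure visually explicit, while the paper's lemma is stated in a form that could be reused elsewhere without differentiability. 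Your remark that the barred shape invariants coincide with the $S_k$ is in fact correct as an equality of polynomials (see Example~\ref{ex:unfolded sum of minors}(a)), though for the argument you only need that they generate the same field.
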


The size of the collection of $Q$-invariants, barred $P$-invariants, and shape invariants is equal to the number of variables $x_i^j$, so Proposition~\ref{prop:QPS ind} implies that $\Inv_{R \ov{R}}$ is at most a finite extension of the field generated by this collection. The proof of Proposition~\ref{prop:QPS ind} relies on a technical lemma.

\begin{lemma}
\label{lem:alg ind}
Let $K \subset L$ be fields, and let $X = \{x_1, \ldots, x_a\}, Y = \{y_1, \ldots, y_b\}, Z = \{z_1, \ldots, z_c\}$ be disjoint subsets of $L$ such that $X \cup Y \cup Z$ is algebraically independent over $K$. Let $F = \{f_1(X,Z), \ldots, f_a(X,Z)\}$ and $G = \{g_1(Y,Z), \ldots, g_b(Y,Z)\}$, where $f_i \in K(X,Z)$, and $g_j \in K(Y,Z)$. If $F \cup Z$ and $G \cup Z$ are algebraically independent over $K$, then $F \cup Z \cup G$ is algebraically independent over $K$.
\end{lemma}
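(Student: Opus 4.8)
The plan is to pass to the intermediate field $E = K(Z)$ and argue purely with transcendence degrees. Since $X \cup Y \cup Z$ is algebraically independent over $K$, the set $X \cup Y$ is algebraically independent over $E = K(Z)$; in particular $\operatorname{trdeg}_E E(X) = a$, $\operatorname{trdeg}_E E(Y) = b$, and $\operatorname{trdeg}_E E(X,Y) = a+b$. I would also translate the two hypotheses into statements over $E$: because $F \cup Z$ is algebraically independent over $K$ and $Z$ is a transcendence basis of $E$ over $K$, the set $F$ is algebraically independent over $E$; likewise $G$ is algebraically independent over $E$.

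Next I would use these facts to produce algebraicity relations. Since $F \subseteq E(X)$ is algebraically independent over $E$ and $\abs{F} = a = \operatorname{trdeg}_E E(X)$, the set $F$ is a transcendence basis of $E(X)$ over $E$, so $E(X)$ is algebraic over $E(F)$. Symmetrically, $E(Y)$ is algebraic over $E(G)$. Now consider the compositum $E(X,Y) = E(X)(Y)$, which contains $E(F \cup G)$. Every element of $X$ is algebraic over $E(F) \subseteq E(F \cup G)$, and every element of $Y$ is algebraic over $E(G) \subseteq E(F \cup G)$; since $X \cup Y$ generates $E(X,Y)$ over $E$, it follows that $E(X,Y)$ is algebraic over $E(F \cup G)$. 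Hence $\operatorname{trdeg}_E E(F \cup G) = \operatorname{trdeg}_E E(X,Y) = a + b$.

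The algebraic independence over $E$ then follows from a counting argument. The generating set $F \cup G$ has at most $a + b$ elements, yet it generates a field of transcendence degree exactly $a+b$ over $E$; since the transcendence degree of a field never exceeds the size of a generating set, $F \cup G$ must consist of exactly $a+b$ elements and be a transcendence basis of $E(F \cup G)$ over $E$, i.e.\ algebraically independent over $E$. (This simultaneously forces $F \cap G = \emptyset$.) Finally I would combine this with $Z$: no element of $F \cup G$ lies in $E = K(Z)$, since $F$ and $G$ are each algebraically independent over $E$, so $F \cup G$ and $Z$ are disjoint, and additivity of transcendence degree in the tower $K \subseteq E \subseteq E(F\cup G)$ gives $\operatorname{trdeg}_K K(F \cup G \cup Z) = \operatorname{trdeg}_E E(F \cup G) + \operatorname{trdeg}_K E = (a+b) + c$, which equals $\abs{F \cup G \cup Z}$. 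Thus $F \cup G \cup Z$ is algebraically independent over $K$.

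I expect the only genuine subtlety to be the middle step, namely showing that $E(X,Y)$ is algebraic over $E(F \cup G)$; this is the point at which the disjointness of the ``coordinate directions'' $X$ and $Y$ over the common base $E$ is actually used, and it is what prevents the relations defining $F$ and $G$ from interfering. The remaining steps are routine bookkeeping with the additivity and monotonicity of transcendence degree.
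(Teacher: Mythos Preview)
Your argument is correct. The key insight---that $F$ is a transcendence basis of $E(X)$ over $E=K(Z)$ and $G$ is a transcendence basis of $E(Y)$ over $E$, so that $E(X,Y)$ is algebraic over $E(F\cup G)$---is sound, and the transcendence-degree bookkeeping that follows is routine.

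The paper takes a different route: it invokes the matroid structure of algebraic independence and applies the augmentation (exchange) axiom twice. First it extends $G\cup Z$ to an independent set of size $a+b+c$ using elements of $X\cup Y\cup Z$; since each $y_i$ is already dependent on $G\cup Z$ (as $Y\cup Z$ is a maximal independent set in $K(Y,Z)$), only elements of $X$ can be added, giving independence of $G\cup Z\cup X$. A second application, extending $F\cup Z$ by elements of $G\cup Z\cup X$, yields independence of $F\cup Z\cup G$. Your approach instead passes to the intermediate field $K(Z)$ and uses additivity and monotonicity of transcendence degree directly; it is arguably more self-contained since it avoids the matroid formalism, while the paper's version makes the combinatorial exchange structure explicit. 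Both hinge on the same underlying fact---that $F$ (resp.\ $G$) already accounts for all the transcendence in the $X$-direction (resp.\ $Y$-direction) over $K(Z)$---but package it differently.
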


\begin{proof}
It is a standard result in field theory that if $A$ is a finite subset of $L$, then the collection of subsets of $A$ which are algebraically independent over $K$ form the independent sets of a matroid (see, e.g.,~\cite[Thm. 6.7.1]{Oxley}). (In the remainder of the proof we drop ``algebraically'' and just say ``independent'' and ``dependent.'') This means that if $I_1,I_2 \subseteq A$ are independent and $\abs{I_1} < \abs{I_2}$, then there is an element $a \in I_2 \setminus I_1$ such that $I_1 \cup \{a\}$ is independent; this the \defn{augmentation axiom} for matroids. In particular, the augmentation axiom implies that all maximal independent subsets of $A$ have the same size.

We first show that $G \cup Z \cup X$ is independent. Take $I_1 = G \cup Z$ and $I_2 = X \cup Y \cup Z$. Using the augmentation axiom $a$ times, we may extend $I_1$ to an independent set of size $a+b+c$ by adding elements of $I_2$. Since $Y \cup Z$ is a maximal independent subset of $K(Y,Z)$, $G \cup Z \cup \{y_i\}$ must be dependent for all $y_i \in Y$. Thus, the only elements of $I_2$ which are independent of $I_1$ are those in $X$, so $G \cup Z \cup X$ must be independent. Now take $I_1 = F \cup Z$ and $I_2 = G \cup Z \cup X$. By the same reasoning, the only elements of $I_2$ which are independent of $I_1$ are those of $G$, so the augmentation axiom implies that $F \cup Z \cup G$ is independent.
\end{proof}

\begin{proof}[Proof of Proposition~\ref{prop:QPS ind}]
Let $X$ be the entries of the $P$-pattern which are not part of the shape, $Y$ the entries of the $Q$-pattern which are not part of the shape, and $Z$ the entries of the common shape of the two patterns. (We may freely interchange $Z$ with the collection of shape invariants, since both sets generate the same field). The geometric RSK correspondence induces an automorphism of $\CC(x_i^j)$ which sends the variables $x_i^j$ to the elements of $X \cup Y \cup Z$, so this set is algebraically independent. Corollary~\ref{cor:ov e} implies that the $Q$-invariants are in the field generated by $Y \cup Z$, and it was observed after Conjecture~\ref{conj:QS} that the $Q$-invariants and shape invariants are algebraically independent. By the symmetry of gRSK, the barred $P$-invariants are in the field generated by $X \cup Z$, and the barred $P$-invariants and shape invariants are algebraically independent. We may therefore apply Lemma~\ref{lem:alg ind}.
\end{proof}

\subsection{Determinantal formula for pseudo-energies}
\label{sec:det formula}

Conjecture~\ref{conj:QS} predicts that if $\rho^{(r)}$ satisfies the corner color condition, then $s^{(r)}_{\rho}$ can be written as a polynomial in reduced $Q$-invariants $\lrQ{i}{j}$ and ratios of consecutive shape invariants $S_i/S_{i+1}$. In this section, we confirm that prediction by giving a new determinantal formula for these loop skew Schur functions.

Define an $n \times n$ matrix $\wh{M}'(t)$ by
\[
\wh{M}'(t)_{ij} =
M'_{ij} + \sum_{d > 0} t^d \lrQ{m+j-i-nd}{i}
\qquad \text{ where } \qquad
M'_{ij} = \begin{cases}
(-1)^{n-i} \dfrac{S_i}{S_{i+1}} & \text{ if } i \leq m \text{ and } i+j = n+1 \\
1 & \text{ if } i-j = m \\
0 & \text{ otherwise}
\end{cases}
\]
(by convention, $S_k = 1$ if $k = \min(m,n)+1$, $\lrQ{0}{i} = 1$, and $\lrQ{k}{i} = 0$ if $k < 0$). Let $\widetilde{M}'$ be the unfolded version of $\wh{M}'(t)$. In words, $\widetilde{M}'$ is obtained from $\widetilde{M}$ by replacing the ``$P$-type block'' $M = M_0$ with the matrix $M'$, and each $Q$-type loop elementary symmetric function $\lE{i}{j}$ with the reduced $Q$-invariant $\lrQ{i}{j}$. See Figure~\ref{fig:ex M'} for some examples.

\begin{figure}
\[
\left( \begin{array}{ccc|ccc|c}
0 & 0 & \frac{S_1}{S_2}     & 0 & 0 & 0 & \hdots \\
0 & -\frac{S_2}{S_3} & 0     & 0 & 0 & 0 \\
S_3 & 0 & 0                       & 0 & 0 & 0 \\ \hline
\lrQ{2}{1} & \lrQ{3}{1}   & \lrQ{4}{1} & 0 & 0 & \frac{S_1}{S_2}  \\ 
\lrQ{1}{2} & \lrQ{2}{2}   & \lrQ{3}{2} & 0 & -\frac{S_2}{S_3} & 0  \\
1              & \lrQ{1}{3}   & \lrQ{2}{3} & S_3 & 0 & 0   \\ \hline
0              & 1                & \lrQ{1}{1} & \lrQ{2}{1} & \lrQ{3}{1} & \lrQ{4}{1} \\
0              & 0                & 1              & \lrQ{1}{2} & \lrQ{2}{2} & \lrQ{3}{2} \\
0              & 0                & 0              & 1              & \lrQ{1}{3} & \lrQ{2}{3} \\ \hline
\vdots &  &  & & & & \ddots
\end{array} \right)
\qquad
\left( \begin{array}{ccc@{\;}c@{\;}c|ccc@{\;}c@{\;}c|c}
0 & 0 & 0     & 0 & \frac{S_1}{S_2}  & 0 & 0 & 0 & 0 & 0 & \hdots \\
0 & 0 & 0     & -\frac{S_2}{S_3} & 0 & 0 & 0 & 0 & 0 & 0 \\
0 & 0 & S_3 & 0                        & 0 & 0 & 0 & 0 & 0 & 0 \\
1 & 0 & 0 & 0 & 0                             & 0 & 0 & 0 & 0 & 0 \\
0 & 1 & 0 & 0 & 0                             & 0 & 0 & 0 & 0 & 0\\ \hline
0 & 0 & 1 & \lrQ{1}{1} & \lrQ{2}{1} & 0 & 0 & 0 & 0 & \frac{S_1}{S_2} \\
0 & 0 & 0 & 1 & \lrQ{1}{2}          & 0            & 0 & 0 & -\frac{S_2}{S_3} & 0 \\
0 & 0 & 0 & 0 & 1           & 0 & 0 & S_3 & 0 & 0 \\
0 & 0 & 0 & 0 & 0 & 1 & 0 & 0 & 0 & 0 \\
0 & 0 & 0 & 0 & 0 & 0 & 1 & 0 & 0 & 0 \\ \hline
\vdots &  &  & & & & & & & & \ddots
\end{array} \right)
\]
\caption{The matrix $\widetilde{M}'$ for $m=5,n=3$ (left) and $m=3,n=5$ (right).}
\label{fig:ex M'}
\end{figure}

\begin{thm}
\label{thm:new det formula}
Suppose $\mu \subseteq \la$ and $\la'_i - \mu'_i > 0$ for $i = 1, \ldots, \ell$, where $\ell$ is the length of $\la'$. If $(\la/\mu)^{(r)}$ satisfies the corner color condition, then
\begin{equation}
\label{eq:new det formula}
s_{\la/\mu}^{(r)} = \Delta_{I(\mu,r), J(\la,r)}(\widetilde{M}'),
\end{equation}
where $I(\mu,r), J(\la,r)$ are defined by~\eqref{eq:subsets JT matrix version}.
\end{thm}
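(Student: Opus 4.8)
The plan is to deduce the theorem from a single minor-by-minor identity together with the invariance property that already powered the proof of Proposition~\ref{prop:corner color condition}. By Proposition~\ref{prop:JT matrix version} we have $s_{\la/\mu}^{(r)} = \Delta_{I(\mu,r),J(\la,r)}(\widetilde M)$, and by Lemma~\ref{lem:initial final condition} the corner color hypothesis (together with $\la'_i-\mu'_i>0$) is exactly what makes $I(\mu,r)$ an $n$-final set of rows and $J(\la,r)$ an $n$-initial set of columns. Hence the theorem reduces to the following claim for arbitrary index sets: for every $n$-final $I$ and every $n$-initial $J$ with $\abs{I}=\abs{J}$,
\[
\Delta_{I,J}(\widetilde M) = \Delta_{I,J}(\widetilde M').
\]

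First I would construct the comparison explicitly. Working with the folded $n\times n$ matrices, I would produce upper-unitriangular matrices $u,v$ over $\Frac(\LSym)$ such that $u\,\wh M(0)\,v = M'$; that is, $u,v$ carry out the Gaussian elimination that clears the $P$-block $M_0=\wh M(0)$ to the antidiagonal matrix $M'$. The pivots of this elimination are precisely the ratios $S_i/S_{i+1}$ of shape invariants, and the signs $(-1)^{n-i}$ in the definition of $M'$ are exactly those produced by the reduction (as one can check already in the $n=2$ case). Since $u,v$ are products of the elementary factors $x_j(z)=I+zE_{j,j+1}$ for $j\in[n-1]$, their $n$-periodic unfoldings $U,V$ are products of the matrices $\wh x_j(z)$, so left multiplication by $U$ and right multiplication by $V$ realize, blockwise, the operations ``add a multiple of row $dn+j+1$ to row $dn+j$'' and ``add a multiple of column $dn+j$ to column $dn+j+1$.''

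The heart of the argument is to verify that these \emph{same} $U,V$ transform the lower blocks of $\widetilde M$ into reduced $Q$-invariants, i.e.\ that $U\widetilde M V=\widetilde M'$ as full $n$-periodic matrices (or at least that the two sides agree on all $n$-final/$n$-initial minors). In folded terms this means $u\,\wh M(t)\,v=\wh M'(t)$, whose order-$t^d$ part reads $u\,M_d\,v = (\lrQ{m+j-i-nd}{i})_{i,j}$. Because the order-$t^0$ equation already fixes $u,v$, this is a nontrivial compatibility, and checking it is the main obstacle. I expect to establish it via the block computation in the proof of Proposition~\ref{prop:Q}: there the relevant minor of $\widetilde M$ was expanded along a block decomposition whose off-diagonal parts contribute the $S$-ratios and whose core entry is the $Q$-type $\lE{i}{j}$, giving $\lQ{i}{j}=\lE{i}{j}\,S_{n+1-K}S_{j+1}+\xi$. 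One must see that the elimination determined by the $P$-block acts on the $Q$-type entries in exactly this pattern, so that after dividing by the surrounding shape invariants the lower-block entries become the reduced $Q$-invariants $\lrQ{i}{j}$, with all signs matching.

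Finally, with the identity $\widetilde M'=U\widetilde M V$ in hand, the displayed minor claim is immediate: each generator $\wh x_j(z)$ acting on the left or right leaves unchanged every minor whose row set is $n$-final and whose column set is $n$-initial, exactly as shown in the proof of Proposition~\ref{prop:corner color condition}; applying this to the product $U$ on the left and $V$ on the right gives $\Delta_{I,J}(\widetilde M')=\Delta_{I,J}(U\widetilde M V)=\Delta_{I,J}(\widetilde M)$ for all such $I,J$. Combined with the first step this yields $s_{\la/\mu}^{(r)}=\Delta_{I(\mu,r),J(\la,r)}(\widetilde M')$, which is~\eqref{eq:new det formula}.
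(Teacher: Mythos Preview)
Your outline is exactly the paper's strategy: reduce to $\Delta_{I,J}(\widetilde M)=\Delta_{I,J}(\widetilde M')$ for $n$-final $I$ and $n$-initial $J$, construct upper-unitriangular $U,V$ with $UMV=M'$, unfold them, and use that such row/column operations preserve the relevant minors. The structure and the invocation of Proposition~\ref{prop:JT matrix version} and Lemma~\ref{lem:initial final condition} are identical.

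The step you correctly flag as the main obstacle---showing that the \emph{same} $U,V$ transform each lower block $M_d$ into the matrix of reduced $Q$-invariants---is where the paper does the actual work, and your sketch does not yet supply enough to close it. Merely knowing that $U,V$ arise from Gaussian elimination (or are products of $x_j(z)$'s) is not sufficient to identify $(UM_dV)_{ij}$; one needs the entries of $U$ and $V$ explicitly. The paper writes them in closed form as signed ratios of almost--anti-diagonal minors of $M$ (Lemma~\ref{lem:det stuff}\eqref{item:UAU},\eqref{item:UAU m<n}), namely
\[
U_{ia}=(-1)^{i+a}\frac{\Delta_{[i,n]\setminus\{a\},[1,n-i]}(M)}{\Delta_{[i+1,n],[1,n-i]}(M)},\qquad
V_{bj}=(-1)^{b+j}\frac{\Delta_{[n-j+2,n],[1,j]\setminus\{b\}}(M)}{\Delta_{[n-j+2,n],[1,j-1]}(M)}.
\]
Substituting these into $\sum_{a,b}U_{ia}(M_d)_{ab}V_{bj}$ and applying a block Laplace expansion (Lemma~\ref{lem:det stuff}\eqref{item:ABC}) identifies the numerator with the very minor of $\widetilde M$ that defines $\lQ{m+j-i-nd}{i}$ (this is the block matrix from the proof of Proposition~\ref{prop:Q}, so your instinct to look there was right), and the denominator with $S_{i+1}S_{n-j+2}$, yielding $\lrQ{m+j-i-nd}{i}$ on the nose. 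Without these explicit formulas for $U,V$ and the block-expansion identity, the compatibility you need remains an assertion rather than a verification.
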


As explained in Remark~\ref{rem:no empty columns}, all pseudo-energies can be written in the form required by the theorem.

\begin{ex}
\
\begin{enumerate}
\ytableausetup{smalltableaux}
\item[(a)]
Suppose $m = 5, n=3, \la = (4,3,3,1), \mu = (2), r = 2$. The colored skew shape $(\la/\mu)^{(r)}$ satisfies the corner color condition, so according to Theorem~\ref{thm:new det formula}, we have
\[
s^{(2)}_{\ytableaushort{\none\none\empty\empty,\empty\empty\empty,\empty\empty\empty,\empty}} = \Delta_{5689, 1457}(\widetilde{M}') = \det
\begin{pmatrix}
\lrQ{1}{2} & 0 & -\frac{S_2}{S_3} & 0 \\
1 & S_3 & 0 & 0 \\
0 & \lrQ{1}{2} & \lrQ{2}{2} & 0 \\
0 & 1 & \lrQ{1}{3} & S_3
\end{pmatrix}
= \lrQ{1}{2} \lrQ{2}{2} S_3^2 - \lrQ{1}{2} S_2.
\]
Note that this is not a positive formula in terms of the reduced $Q$-invariants and shape invariants.
For comparison, the Jacobi--Trudi formula (in the form given by Proposition~\ref{prop:JT matrix version}) says that
\[
s^{(2)}_{\ytableaushort{\none\none\empty\empty,\empty\empty\empty,\empty\empty\empty,\empty}} = \Delta_{5689, 1457}(\widetilde{M}) = \det
\begin{pmatrix}
\lE{1}{2} & \lE{4}{2} & \lE{5}{2} & 0 \\
1 & \lE{3}{3} & \lE{4}{3} & 0 \\
0 & \lE{1}{2} & \lE{2}{2} & \lE{4}{2} \\
0 & 1 & \lE{1}{3} & \lE{3}{3}
\end{pmatrix}.
\]

\item[(b)]
It is easy to see that Theorem~\ref{thm:new det formula} holds for $Q$-invariants. For example, consider the $Q$-invariant $\lQ{1}{2}$ in the case $m = 3, n=5$. By definition,
\[
\lQ{1}{2} = s^{(4)}_{(8,8)/(4)} \qquad \text{ and } \qquad \lrQ{1}{2} = \dfrac{s^{(4)}_{(8,8)/(4)}}{S_3 S_2}.
\]
On the other hand, the colored skew shape $((8,8)/(4))^{(4)}$ satisfies the corner color condition, so Theorem~\ref{thm:new det formula} asserts that
\[
s^{(4)}_{(8,8)/(4)} = \Delta_{\{2,3,4,5,7,8,9,10\},\{1,2,3,4,5,6,7,8\}}(\widetilde{M}').
\]
The reader may confirm this by inspecting the matrix $\widetilde{M}'$, which is shown on the right in Figure~\ref{fig:ex M'}.
\end{enumerate}
\end{ex}

To prove Theorem~\ref{thm:new det formula}, we will write down explicit upper uni-triangular matrices $U$ and $V$ such that $UMV = M'$, and then we will show that multiplying the lower blocks $M_d$ by $U$ and $V$ has the effect of turning $Q$-type loop elementary symmetric functions into reduced $Q$-invariants. The following technical lemma contains the tools needed to carry out this argument.

\begin{lemma}
\label{lem:det stuff}
\
\begin{enumerate}
\item
\label{item:UAU}
Let $N$ be an $n \times n$ matrix such that the principal anti-diagonal minors $\Delta_i = \Delta_{[i,n],[1,n+1-i]}(N)$ are nonzero for $i = 2, \ldots, n$. Define upper uni-triangular $n \times n$ matrices $U = U(N), V = V(N)$ by
\[
U_{ij} = \begin{cases}
(-1)^{i+j} \dfrac{\Delta_{[i,n] \setminus \{j\}, [1,n-i]}(N)}{\Delta_{[i+1,n],[1,n-i]}(N)} & \text{ if } i \leq j \medskip \\
0 & \text{ if } i > j,
\end{cases}
\qquad
V_{ij} = \begin{cases}
(-1)^{i+j} \dfrac{\Delta_{[n-j+2,n],[1,j] \setminus \{i\}}(N)}{\Delta_{[n-j+2,n],[1,j-1]}(N)} & \text{ if } i \leq j \medskip \\
0 & \text{ if } i > j.
\end{cases}
\]
Then $UNV$ is the anti-diagonal matrix whose entry in position $(i,n+1-i)$ is $(-1)^{n-i} \frac{\Delta_i}{\Delta_{i+1}}$ (with $\Delta_{n+1} = 1$).

\item
\label{item:UAU m<n}
Suppose $1 \leq m < n$, and let $N$ be an $n \times n$ matrix of block form
\[
\begin{tikzpicture}
\draw (-2.1,1) node{$N =$};
\draw (0,0) rectangle (2,2);
\draw (0,1.2) -- (2,1.2);
\draw (1.2,0) -- (1.2,2);
\draw (0.6,0.6) node{$N_3$};
\draw (0.6,1.6) node{$N_1$};
\draw (1.6,0.6) node{$N_4$};
\draw (1.6,1.6) node{$N_2$};
\draw (-0.65,0.6) node{$n-m \Bigg\{$};
\draw (-0.35,1.6) node{$m \Bigl\{$};
\draw (0.6,-0.3) node{$\bracebelow{1.1}{n-m}$};
\draw (1.6,-0.3) node{$\bracebelow{0.7}{m}$};
\end{tikzpicture}
\]
where $N_3$ is upper uni-triangular, and the principal anti-diagonal minors $\Delta_i = \Delta_{[i,n],[1,n+1-i]}(N)$ are nonzero for $i = 2, \ldots, m$ (note that $\Delta_{m+1} = \det(N_3) = 1$ is also nonzero). Define upper uni-triangular $n \times n$ matrices $U = U(N), V = V(N)$ by
\[
\begin{tikzpicture}
\draw (-2.1,1) node{$U =$};
\draw (0,0) rectangle (2,2);
\draw (0,1.2) -- (2,1.2);
\draw (0.8,0) -- (0.8,1.2);
\draw (0.4,0.6) node{$0$};
\draw (1,1.6) node{$U'$};
\draw (1.4,0.6) node{$N_3^{-1}$};
\draw (-0.65,0.6) node{$n-m \Bigg\{$};
\draw (-0.35,1.6) node{$m \Bigl\{$};
\draw (0.4,-0.3) node{$\bracebelow{0.7}{m}$};
\draw (1.4,-0.3) node{$\bracebelow{1.1}{n-m}$};

\begin{scope}[xshift=6cm]
\draw (-2.1,1) node{$V =$};
\draw (0,0) rectangle (2,2);
\draw (0,0.8) -- (1.2,0.8);
\draw (1.2,0) -- (1.2,2);
\draw (0.6,0.4) node{$0$};
\draw (0.6,1.4) node{$I$};
\draw (1.6,1) node{$V'$};
\draw (0.6,-0.3) node{$\bracebelow{1.1}{n-m}$};
\draw (1.6,-0.3) node{$\bracebelow{0.7}{m}$};
\draw (-0.65,1.4) node{$n-m \Bigg\{$};
\draw (-0.35,0.4) node{$m \Bigl\{$};
\end{scope}
\end{tikzpicture}
\]
where
\begin{align*}
U_{ij} = U'_{ij} &= \begin{cases}
(-1)^{i+j} \dfrac{\Delta_{[i,n] \setminus \{j\}, [1,n-i]}(N)}{\Delta_{[i+1,n],[1,n-i]}(N)} & \text{ if } i \in [m], j \in [n], \text{ and } i \leq j \medskip \\
0 & \text{ if } i \in [m], j \in [n], \text{ and } i > j,
\end{cases}
\\
V_{ij} = V'_{i,j-(n-m)} &= \begin{cases}
(-1)^{i+j} \dfrac{\Delta_{[n-j+2,n],[1,j] \setminus \{i\}}(N)}{\Delta_{[n-j+2,n],[1,j-1]}(N)} & \text{ if } i \in [n], j \in [n-m+1,n], \text{ and } i \leq j \medskip \\
0 & \text{ if } i \in [n], j \in [n-m+1,n], \text{ and } i > j.
\end{cases}
\end{align*}
Then $UNV$ has block form
\[
\begin{tikzpicture}
\draw (-2.3,1) node{$UNV =$};
\draw (0,0) rectangle (2,2);
\draw (0,1.2) -- (2,1.2);
\draw (1.2,0) -- (1.2,2);
\draw (0.6,0.6) node{$I$};
\draw (0.6,1.6) node{$0$};
\draw (1.6,0.6) node{$0$};
\draw (1.6,1.6) node{$A$};
\draw (-0.65,0.6) node{$n-m \Bigg\{$};
\draw (-0.35,1.6) node{$m \Bigl\{$};
\draw (0.6,-0.3) node{$\bracebelow{1.1}{n-m}$};
\draw (1.6,-0.3) node{$\bracebelow{0.7}{m}$};
\end{tikzpicture}
\]
where $A$ is the anti-diagonal $m \times m$ matrix whose entry in position $(i,m+1-i)$ is $(-1)^{n-i} \frac{\Delta_i}{\Delta_{i+1}}$.

\item
\label{item:ABC}
Let $N$ be a $(p+q+1) \times (p+q+1)$ matrix of block form
\[
\begin{tikzpicture}
\draw (-2.1,1) node{$N =$};
\draw (0,0) rectangle (2,2);
\draw (0,1.1) -- (2,1.1);
\draw (0.95,0) -- (0.95,2);
\draw (0.475,0.55) node{$C$};
\draw (0.475,1.55) node{$B$};
\draw (1.475,0.55) node{$A$};
\draw (1.475,1.55) node{$0$};
\draw (-0.55,0.55) node{$p+1 \Bigg\{$};
\draw (-0.25,1.55) node{$q \bigg\{$};
\draw (0.475,2.3) node{$\braceabove{0.85}{q+1}$};
\draw (1.475,2.3) node{$\braceabove{0.95}{p}$};
\draw (2.3,0.9) node{.};
\end{tikzpicture}
\]
We have
\[
\det(N) = \sum_{a = 1}^{p+1} \sum_{b=1}^{q+1} (-1)^{q+a+b} C_{ab} \det(A_{\wh{a}}) \det(B^{\wh{b}}),
\]
where $A_{\wh{a}}$ is the matrix $A$ with row $a$ removed, and $B^{\wh{b}}$ is the matrix $B$ with column $b$ removed.
\end{enumerate}
\end{lemma}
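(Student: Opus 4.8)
The plan is to prove all three identities by suitable cofactor and Laplace expansions; none requires anything beyond multilinearity of the determinant, but each needs careful sign bookkeeping.

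For part (1), I would first note that $U$ and $V$ are upper uni-triangular: their diagonal entries reduce to $\Delta_{[i+1,n],[1,n-i]}(N)/\Delta_{[i+1,n],[1,n-i]}(N) = 1$, and by convention the entries below the diagonal vanish. I would then compute $UN$ one entry at a time. Fixing a row $i$ and a target column $k$, the sum $\sum_{j\ge i} U_{ij} N_{jk}$ is, up to the global factor $(-1)^{i+n}/\Delta_{i+1}$, exactly the expansion along its last column of the determinant of the matrix with rows $[i,n]$ and columns $[1,n-i]$ together with column $k$ adjoined at the right. Hence $(UN)_{ik} = (-1)^{i+n}\Delta_{[i,n],[1,n-i]\cup\{k\}}(N)/\Delta_{i+1}$. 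When $k\le n-i$ this determinant has a repeated column and vanishes, so $UN$ is zero strictly above the anti-diagonal, and when $k=n+1-i$ it equals $\Delta_i$, giving the claimed anti-diagonal entry $(-1)^{n-i}\Delta_i/\Delta_{i+1}$. An entirely parallel computation---expanding along an adjoined row rather than an adjoined column, using the minors defining $V$---shows that $NV$ vanishes strictly below the anti-diagonal. Since $UNV = (UN)V$ is then supported on and below the anti-diagonal while $UNV = U(NV)$ is supported on and above it, $UNV$ must be anti-diagonal, and its anti-diagonal entries are read off from $UN$ using that $V$ is uni-triangular.

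For part (2), the same two cofactor computations apply verbatim to the rows $i\in[m]$ and the columns $j\in[n-m+1,n]$ that carry the nontrivial parts $U'$ and $V'$. The new feature is the lower-left block of $U$: because its bottom $n-m$ rows are $[\,0\mid N_3^{-1}\,]$ and the bottom rows of $N$ are $[\,N_3\mid N_4\,]$, the bottom $n-m$ rows of $UN$ equal $[\,I\mid N_3^{-1}N_4\,]$, which after right multiplication by $V$ produce the identity block in the lower-left of $UNV$ and zeros in the lower-right. The hypothesis that $N_3$ is upper uni-triangular, so that $\Delta_{m+1}=\det N_3 = 1$, is exactly what is needed to run the anti-diagonalization only down to index $m$ without assuming that the lower minors $\Delta_{m+2},\dots,\Delta_n$ (which may well vanish) are invertible. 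I expect this to be the main obstacle: one must check that the surviving cofactor expansions never call on those degenerate minors, and that the block bookkeeping for $U'$, $V'$, and $N_3^{-1}$ assembles into the stated $\left(\begin{smallmatrix}0&A\\ I&0\end{smallmatrix}\right)$ form with $A$ carrying the same anti-diagonal entries $(-1)^{n-i}\Delta_i/\Delta_{i+1}$ as in part (1).

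For part (3), I would apply the generalized Laplace expansion of $\det N$ along its top $q$ rows $[\,B\mid 0\,]$. Because those rows are zero in the last $p$ columns, the only column sets contributing are the $q$-subsets of $[1,q+1]$, i.e.\ $S=[1,q+1]\setminus\{b\}$, for which the corresponding minor of $N$ on rows $[1,q]$ is $\det B^{\wh{b}}$. The complementary minor uses rows $[q+1,p+q+1]$ and columns $\{b\}\cup[q+2,p+q+1]$, i.e.\ the $(p+1)\times(p+1)$ matrix $[\,C_{\cdot,b}\mid A\,]$; expanding its determinant along the first column gives $\sum_{a}(-1)^{a+1}C_{ab}\det A_{\wh{a}}$. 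It then remains to combine the Laplace sign $(-1)^{\sum_{i\le q} i + \sum_{j\in S} j}=(-1)^{(q+1)^2 - b}$ with the cofactor sign $(-1)^{a+1}$; since $(q+1)^2\equiv q+1 \pmod 2$, these collapse to the advertised $(-1)^{q+a+b}$, completing the proof. This last part is routine once the sign is tracked with care.
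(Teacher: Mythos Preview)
Your proposal is correct and follows essentially the same approach as the paper: the cofactor interpretation of $(UN)_{ik}$ as a determinant with column $k$ adjoined (vanishing when $k\le n-i$, equal to $(-1)^{n-i}\Delta_i/\Delta_{i+1}$ on the anti-diagonal), the symmetric argument for $NV$, the block-multiplication reduction in part~(2), and the Laplace expansion along the top $q$ rows followed by a first-column expansion in part~(3) all match the paper's proof. Your sign bookkeeping in part~(3) via $(q+1)^2\equiv q+1\pmod 2$ is a clean equivalent of the paper's $\binom{q+1}{2}+\sum_{j\in J}j$ computation.
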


\begin{proof}
We first prove~\eqref{item:UAU}. By definition,
\[
(UN)_{ij} = \dfrac{1}{\Delta_{[i+1,n], [1,n-i]}(N)} \sum_{a=i}^n (-1)^{i+a} \Delta_{[i,n] \setminus \{a\}, [1,n-i]}(N) N_{aj}.
\]
The sum on the right-hand side is (up to sign) the determinant of the $(n-i+1) \times (n-i+1)$ matrix $A$ obtained by appending a column consisting of entries $(N_{ij}, \ldots, N_{nj})$ to the submatrix $N_{[i,n], [1,n-i]}$. If $j \leq n-i$, then $A$ has a repeated column, so $(UN)_{ij} = 0$. If $j = n+1-i$, then $A$ is the principal anti-diagonal submatrix $N_{[i,n],[1,n+1-i]}$, so we have
\[
(UN)_{i,n+1-i} = (-1)^{n-i} \frac{\Delta_i}{\Delta_{i+1}}.
\]
Multiplying $UN$ by $V$ on the right adds scalar multiples of each column of $UN$ to the columns strictly to their right, so $(UNV)_{ij} = (UN)_{ij}$ for $i+j \leq n+1$. A similar argument shows that if $i+j \geq n+2$, then $(UNV)_{ij} = (NV)_{ij} = 0$. This proves~\eqref{item:UAU}.

The proof of~\eqref{item:UAU m<n} is similar. Using block-multiplication, we see that the bottom-left $(n-m) \times (n-m)$ block of $UNV$ is the identity matrix. For the entries outside of this block, the argument used to prove~\eqref{item:UAU} goes through without modification.

To prove~\eqref{item:ABC}, we expand the determinant of $N$ along the first $q$ rows:
\begin{align*}
\det(N) &= \sum_{J \in \binom{[p+q+1]}{q}} (-1)^{\binom{q+1}{2} + \sum_{j \in J} j} \det(N_{[q],J}) \det(N_{[p+q+1] \setminus [q], [p+q+1] \setminus J}) \\
&= \sum_{b=1}^{q+1} (-1)^{q+1+b} \det(B^{\wh{b}}) \det(C^b \sqcup A) \\
& = \sum_{b=1}^{q+1} (-1)^{q+1+b} \det(B^{\wh{b}}) \sum_{a=1}^{p+1} (-1)^{a+1} C_{ab} \det(A_{\wh{a}}).
\end{align*}
In the second line, $C^b \sqcup A$ is the matrix consisting of the $b$-th column of $C$ followed by $A$.
\end{proof}

\begin{proof}[Proof of Theorem~\ref{thm:new det formula}]
Define upper uni-triangular $n \times n$ matrices $U = U(M)$ and $V = V(M)$ as in Lemma~\ref{lem:det stuff}\eqref{item:UAU} (if $m \geq n$) or Lemma~\ref{lem:det stuff}\eqref{item:UAU m<n} (if $m < n$), and let $\widetilde{U}, \widetilde{V}$ be the unfolded versions of $U,V$. We will show that
\begin{equation}
\label{eq:UAU tilde decomp}
\widetilde{M}' = \widetilde{U} \widetilde{M} \widetilde{V}.
\end{equation}
Multiplying $\widetilde{M}$ by $\widetilde{U}$ on the left (resp., $\widetilde{V}$ on the right) has the effect of adding a multiple of row $nd+j$ (resp., column $nd+i$) to row $nd+i$ (resp., column $nd+j$) for each integer $d$, and each pair $1 \leq i \leq j \leq n$. These operations do not affect minors of the form $\Delta_{I,J}(\widetilde{M})$ where $I$ is $n$-final and $J$ is $n$-initial, so the assertion of the theorem follows from Proposition~\ref{prop:JT matrix version}, Lemma~\ref{lem:initial final condition}, and~\eqref{eq:UAU tilde decomp}. Furthermore, we see from this description that~\eqref{eq:UAU tilde decomp} is equivalent to the equations
\begin{equation}
\label{eq:UAU decomp}
M'_d = U M_d V
\end{equation}
for all $d \geq 0$, where $M'_d$ is the $n \times n$ matrix consisting of the coefficients of $t^d$ in the entries of $\wh{M}'(t)$.

For $d = 0$,~\eqref{eq:UAU decomp} follows immediately from Lemma~\ref{lem:det stuff}\eqref{item:UAU} or~\eqref{item:UAU m<n}. Now suppose $d > 0$. By definition,
\[
(M'_d)_{ij} = \lrQ{m+j-i-nd}{i} = \begin{cases}
\dfrac{\lQ{m+j-i-nd}{i}}{S_{i+1}S_{n+1-K}} & \text{ if } m+j-i-nd > 0 \\
1 & \text{ if } m+j-i-nd = 0 \\
0 & \text{ if } m+j-i-nd < 0,
\end{cases}
\]
where $K = j-1$. Since $U$ and $V$ are upper triangular, we have
\begin{equation}
\label{eq:UM_dV}
(UM_dV)_{ij} = \sum_{a = i}^n \sum_{b = 1}^j U_{ia} (M_d)_{ab} V_{bj}.
\end{equation}
If $m+j-i-nd = 0$, then $M_{ij} = 1$ and $M_{ab} = 0$ for $a > i$ or $b < j$, so $(UM_dV)_{ij} = U_{ii}M_{ij}V_{jj} = 1$. If $m+j-i-nd < 0$, then all entries of $M_d$ appearing in~\eqref{eq:UM_dV} are zero, so $(UM_dV)_{ij} = 0$.

If $m+j-i-nd > 0$, then we have
\begin{equation}
\label{eq:M entry}
\sum_{a = i}^n \sum_{b = 1}^j U_{ia} (M_d)_{ab} V_{bj} = \sum_{a = i}^n \sum_{b = 1}^j (-1)^{i+j+a+b} (M_d)_{ab} \dfrac{\Delta_{[i,n] \setminus \{a\}, [1,n-i]}(M)}{S_{i+1}} \dfrac{\Delta_{[n-j+2,n],[1,j] \setminus \{b\}}(M)}{S_{n-j+2}}.
\end{equation}
(This is clear if $m \geq n$. If $m < n$, then the inequality $m+j-i-nd > 0$ implies that $d = 1, i \in [m],$ and $j \in [n-m+1,n]$, so the entries $U_{ia}$ and $V_{bj}$ are in the first $m$ rows of $U$ and the last $m$ columns of $V$, which are defined in the same way in both the $m \geq n$ and $m < n$ cases.) We saw in the proof of Proposition~\ref{prop:Q} that $\lQ{m+j-i-nd}{i}$ is the determinant of the matrix
\[
\begin{tikzpicture}
\draw (0,0) rectangle (2,2);
\draw (0,1.1) -- (2,1.1);
\draw (0.95,0) -- (0.95,2);
\draw (0.475,0.55) node{$C$};
\draw (0.475,1.55) node{$B$};
\draw (1.475,0.55) node{$A$};
\draw (1.475,1.55) node{$0$};
\draw (-0.85,0.55) node{$n-i+1 \Bigg\{$};
\draw (-0.55,1.55) node{$j-1 \bigg\{$};
\draw (0.475,2.3) node{$\braceabove{0.85}{j}$};
\draw (1.475,2.3) node{$\braceabove{0.95}{n-i}$};
\end{tikzpicture}
\]
where $A,B,C$ are the following bottom-left justified submatrices of $M$ and $M_d$:
\[
A = M_{[i,n],[1,n-i]}, \qquad B = M_{[n-j+2,n],[1,j]}, \qquad C = (M_d)_{[i,n],[1,j]}.
\]
It follows from Lemma~\ref{lem:det stuff}\eqref{item:ABC} that the right-hand side of~\eqref{eq:M entry} is equal to $(M'_d)_{ij}$.
\end{proof}

%%%%%%%%%%%%%%%%%%%%%%%%%%%%%%%%%%%%%%%%%%%%%%%%%%%%%%%%%%%%%%%%%%%%%%%%%%%

\section{Pseudo-energies and the \texorpdfstring{$Q$}{Q}-pattern}
\label{sec:Q tab}

Corollary~\ref{cor:ov e} implies that every pseudo-energy is a polynomial in the $Q$-type barred loop elementary symmetric functions, which are the entries of the $m \times m$ matrix $\ov{M} = M(\mb{x}^1, \ldots, \mb{x}^n)$. In \S\ref{sec:unfolded sum of minors}, we give an explicit formula for each pseudo-energy as a positive polynomial in minors of the matrix $\ov{M}$. Since minors of $\ov{M}$ are barred loop skew Schur functions, and the product of loop skew Schur functions is again a loop skew Schur function, this result says that every $\ov{e}$-invariant (unbarred) loop skew Schur function is a sum of barred loop skew Schur functions.

By applying the Lindstr\"om/Gessel--Viennot Lemma to the network $\Gamma_m^{\leq n}$ introduced in \S\ref{sec:GT}, one sees that every nonzero minor of $\ov{M}$ is a positive Laurent polynomial in the entries $z'_{j,i}$ of the gRSK $Q$-pattern. Thus, the above-mentioned result shows that every pseudo-energy can be viewed as a positive function on the $Q$-pattern that is invariant under the birational $S_m$-action on the Gelfand--Tsetlin geometric crystal $\GT_m^{\leq n}$. We discuss these functions and their tropicalizations in \S\ref{sec:back to tableau}.

\subsection{A positive formula for pseudo-energies}
\label{sec:unfolded sum of minors}

Our formula for psuedo-energies comes from a bijection between two classes of lattice paths in a vertex-weighted planar network. Following~\cite{LP13II}, we interpret loop skew Schur functions as generating functions for non-intersecting families of ``highway paths,'' and minors of the matrix $\ov{M}$ as generating functions for non-intersecting families of ``underway paths." In the case of a pseudo-energy, these two types of families are complementary to each other, yielding the desired bijection.

We begin by reviewing several notions from~\cite{LP13II}. Let $\Net_{m,n}$ be the infinite directed network with vertex set $\ZZ \times [0,m+1]$ and edge set
\[
\{(i,j) \rightarrow (i-1,j) \mid i \in \ZZ, j \in [1,m]\} \cup \{(i,j) \rightarrow (i,j+1) \mid i \in \ZZ, j \in [0,m]\}.
\]
The source vertex $(i,0)$ is labeled $i$, the sink vertex $(i,m+1)$ is labeled $i'$, and for $j \in [1,m]$, the interior vertex $(i,j)$ has weight $\lx{i}{j+i-1}$. An example of $\Net_{m,n}$ is shown in Figure~\ref{fig:Network ex}. We view $(i,j)$ as matrix coordinates (i.e., the first coordinate increases downward, the second from left to right), and as a visual reminder of the $n$-periodicity of the vertex weights, we draw a dashed red line between rows $nd$ and $nd+1$ for each $d$.

Each interior vertex of $\Net_{m,n}$ is a simple crossing; that is, there are two incoming edges and two outgoing edges, with the incoming edges adjacent in cyclic order around the vertex. A \defn{highway path} (resp., \defn{underway path} in $\Net_{m,n}$ is a directed path which turns \emph{right} (resp., \emph{left}) at each interior vertex for which the other incoming edge is directly to its \emph{left} (resp., \emph{right}). The three ways a highway path (resp., underway path) can enter and exit an interior vertex are shown below as solid blue (resp., dashed purple) lines:

\begin{center}
\begin{tikzpicture}[scale=0.6]
% highway paths
\filldraw (0,0) circle [radius=0.08] node[above right] {$z$};
\draw (-1,0) -- (0,0) -- (1,0);
\draw (0,-1) -- (0,0) -- (0,1);
\draw[thick,->] (-0.501,0) -- (-0.499,0);
\draw[thick,<-] (0.601,0) -- (0.599,0);
\draw[thick,<-] (0,0.601) -- (0,0.599);
\draw[thick,->] (0,-0.501) -- (0,-0.499);
\draw[ultra thick, rounded corners, blue] (0,-1) -- (0,0) -- (1,0);

\begin{scope}[xshift=3cm]
\filldraw (0,0) circle [radius=0.08] node[above right] {$z$};
\draw (-1,0) -- (0,0) -- (1,0);
\draw (0,-1) -- (0,0) -- (0,1);
\draw[thick,->] (-0.501,0) -- (-0.499,0);
\draw[thick,<-] (0.601,0) -- (0.599,0);
\draw[thick,<-] (0,0.601) -- (0,0.599);
\draw[thick,->] (0,-0.501) -- (0,-0.499);
\draw[ultra thick, rounded corners, blue] (-1,0) -- (0,0) -- (0,1);
\end{scope}

\begin{scope}[xshift=6cm]
\filldraw (0,0) circle [radius=0.08] node[above right] {$z$};
\draw (0,-1) -- (0,0) -- (0,1);
\draw[thick,->] (-0.501,0) -- (-0.499,0);
\draw[thick,<-] (0.601,0) -- (0.599,0);
\draw[thick,<-] (0,0.601) -- (0,0.599);
\draw[thick,->] (0,-0.501) -- (0,-0.499);
\draw[ultra thick, rounded corners, blue] (-1,0) -- (0,0) -- (1,0);
\end{scope}

% underway paths
\begin{scope}[xshift=12cm]
\filldraw (0,0) circle [radius=0.08] node[above right] {$z$};
\draw (-1,0) -- (0,0) -- (1,0);
\draw (0,-1) -- (0,0) -- (0,1);
\draw[thick,->] (-0.501,0) -- (-0.499,0);
\draw[thick,<-] (0.601,0) -- (0.599,0);
\draw[thick,<-] (0,0.601) -- (0,0.599);
\draw[thick,->] (0,-0.501) -- (0,-0.499);
\draw[ultra thick, rounded corners, dashed, UQpurple] (0,-1) -- (0,0) -- (1,0);

\begin{scope}[xshift=3cm]
\filldraw (0,0) circle [radius=0.08] node[above right] {$z$};
\draw (-1,0) -- (0,0) -- (1,0);
\draw (0,-1) -- (0,0) -- (0,1);
\draw[thick,->] (-0.501,0) -- (-0.499,0);
\draw[thick,<-] (0.601,0) -- (0.599,0);
\draw[thick,<-] (0,0.601) -- (0,0.599);
\draw[thick,->] (0,-0.501) -- (0,-0.499);
\draw[ultra thick, rounded corners, dashed, UQpurple] (-1,0) -- (0,0) -- (0,1);
\end{scope}

\begin{scope}[xshift=6cm]
\filldraw (0,0) circle [radius=0.08] node[above right] {$z$};
\draw (-1,0) -- (0,0) -- (1,0);
\draw[thick,->] (-0.501,0) -- (-0.499,0);
\draw[thick,<-] (0.601,0) -- (0.599,0);
\draw[thick,<-] (0,0.601) -- (0,0.599);
\draw[thick,->] (0,-0.501) -- (0,-0.499);
\draw[ultra thick, rounded corners, dashed, UQpurple] (0,-1) -- (0,0) -- (0,1);
\end{scope}
\end{scope}
\end{tikzpicture}
\end{center}

The \defn{weight} of a highway or underway path $p$, denoted $\wt(p)$, is the product of the weights of the interior vertices that $p$ passes through \emph{without turning}. In each of the sets of three paths above, only the right-most path picks up the weight $z$. For a family $\mc{F} = (p_1, \ldots, p_a)$ of highway or underway paths, the weight $\wt(\mc{F})$ is the product of the weights of $p_1, \ldots, p_a$. We say that $\mc{F}$ is \defn{non-intersecting} if no two of its paths share an edge (non-intersecting paths are allowed to ``kiss'' at a vertex).

Given a non-intersecting family of highway paths $\mc{F}$, define $\ov{\mc{F}}$ to be the set of edges not in $\mc{F}$. If we draw the edges of $\mc{F}$ and $\ov{\mc{F}}$ simultaneously, each interior vertex has one of the following five configurations:
\[
\begin{array}{c@{\qquad\qquad}c@{\qquad\qquad}c@{\qquad\qquad}c@{\qquad\qquad}c}
\begin{tikzpicture}[scale=0.6]
\filldraw (0,0) circle [radius=0.08] node[above right] {$z$};
\draw (-1,0) -- (0,0) -- (1,0);
\draw (0,-1) -- (0,0) -- (0,1);
\draw[thick,->] (-0.501,0) -- (-0.499,0);
\draw[thick,<-] (0.601,0) -- (0.599,0);
\draw[thick,<-] (0,0.601) -- (0,0.599);
\draw[thick,->] (0,-0.501) -- (0,-0.499);
\draw[blue,ultra thick,rounded corners] (-1,0) -- (0,0) -- (0,1);
\draw[blue,ultra thick,rounded corners] (0,-1) -- (0,0) -- (1,0);
\end{tikzpicture}
&
\begin{tikzpicture}[scale=0.6]
\filldraw (0,0) circle [radius=0.08] node[above right] {$z$};
\draw (-1,0) -- (0,0) -- (1,0);
\draw (0,-1) -- (0,0) -- (0,1);
\draw[thick,->] (-0.501,0) -- (-0.499,0);
\draw[thick,<-] (0.601,0) -- (0.599,0);
\draw[thick,<-] (0,0.601) -- (0,0.599);
\draw[thick,->] (0,-0.501) -- (0,-0.499);
\draw[UQpurple,dashed,ultra thick,rounded corners] (-1,0) -- (0,0) -- (0,1);
\draw[UQpurple,dashed,ultra thick,rounded corners] (0,-1) -- (0,0) -- (1,0);
\end{tikzpicture}
&
\begin{tikzpicture}[scale=0.6]
\filldraw (0,0) circle [radius=0.08] node[above right] {$z$};
\draw (-1,0) -- (0,0) -- (1,0);
\draw (0,-1) -- (0,0) -- (0,1);
\draw[thick,->] (-0.501,0) -- (-0.499,0);
\draw[thick,<-] (0.601,0) -- (0.599,0);
\draw[thick,<-] (0,0.601) -- (0,0.599);
\draw[thick,->] (0,-0.501) -- (0,-0.499);
\draw[UQpurple,dashed,ultra thick,rounded corners] (-1,0) -- (0,0) -- (0,1);
\draw[blue,ultra thick,rounded corners] (0,-1) -- (0,0) -- (1,0);
\end{tikzpicture}
&
\begin{tikzpicture}[scale=0.6]
\filldraw (0,0) circle [radius=0.08] node[above right] {$z$};
\draw (-1,0) -- (0,0) -- (1,0);
\draw (0,-1) -- (0,0) -- (0,1);
\draw[thick,->] (-0.501,0) -- (-0.499,0);
\draw[thick,<-] (0.601,0) -- (0.599,0);
\draw[thick,<-] (0,0.601) -- (0,0.599);
\draw[thick,->] (0,-0.501) -- (0,-0.499);
\draw[blue,ultra thick,rounded corners] (-1,0) -- (0,0) -- (0,1);
\draw[UQpurple,dashed,ultra thick,rounded corners] (0,-1) -- (0,0) -- (1,0);
\end{tikzpicture}
&
\begin{tikzpicture}[scale=0.6]
\filldraw (0,0) circle [radius=0.08] node[above right] {$z$};
\draw (-1,0) -- (0,0) -- (1,0);
\draw (0,-1) -- (0,0) -- (0,1);
\draw[thick,->] (-0.501,0) -- (-0.499,0);
\draw[thick,<-] (0.601,0) -- (0.599,0);
\draw[thick,<-] (0,0.601) -- (0,0.599);
\draw[thick,->] (0,-0.501) -- (0,-0.499);
\draw[blue,ultra thick,rounded corners] (-1,0) -- (0,0) -- (1,0);
\draw[UQpurple,dashed,ultra thick,rounded corners] (0,-1) -- (0,0) -- (0,1);
\end{tikzpicture}
\\
1 & 1 & 1 & 1 & z
\end{array}
\]
It is clear that the edges in $\ov{\mc{F}}$ determine a unique non-intersecting family of underway paths. Moreover, each of the five configurations contributes the same weight (shown beneath the configuration) to both $\mc{F}$ and $\ov{\mc{F}}$. Thus, the correspondence $\mc{F} \longleftrightarrow \ov{\mc{F}}$ is a weight-preserving bijection between non-intersecting families of highway paths and non-intersecting families of underway paths.

\begin{remark}
If we view highway (resp., underway) paths as the positions of particles (resp., holes) in a five-vertex model, the correspondence just described is the now classical particle-hole duality.
\end{remark}

We now explain the connection between highway paths in $\Lambda_{m,n}$ and loop skew Schur functions. Associate to $\Net_{m,n}$ the $\ZZ \times \ZZ$ matrix $A(\Net_{m,n})$ with entries
\[
A(\Net_{m,n})_{ij} = \sum_{p \colon i \rightarrow j'} \wt(p),
\]
where the sum is over highway paths in $\Net_{m,n}$ from source $i$ to sink $j'$.

\begin{ex}
In the network $\Net_{5,3}$, there are ten highway paths from source $5$ to sink $2'$. These paths have weights $\lx{a}{2}\lx{b}{3}$ for $1 \leq a < b \leq 5$; the path of weight $\lx{2}{2}\lx{4}{3}$ is shown in Figure~\ref{fig:Network ex}. Thus, we have
\[
A(\Net_{5,3})_{5,2} = \lE{2}{2}(\mb{x}_1,\mb{x}_2,\mb{x}_3,\mb{x}_4,\mb{x}_5).
\]
\end{ex}

Recall the $n$-periodic matrix $\widetilde{M} = \widetilde{M}(\mb{x}_1, \ldots, \mb{x}_m)$ introduced in~\S \ref{sec:LSym matrix}. The $(i,j)$-entry of $\widetilde{M}$ is the loop elementary symmetric function $E_{m+j-i}^{(i)}(\mb{x}_1, \ldots, \mb{x}_m)$, and the minors of $\widetilde{M}$ are the loop skew Schur functions.

\begin{lemma}
\label{lem:A=M}
$A(\Net_{m,n})$ is equal to $\widetilde{M}$.
\end{lemma}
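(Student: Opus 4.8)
The plan is to prove the identity entrywise by exhibiting, for each pair of indices $i,j$, a weight-preserving bijection between the highway paths in $\Net_{m,n}$ from source $i$ to sink $j'$ and the monomials of $\lE{m+j-i}{i}$. Since a single matrix entry of $A(\Net_{m,n})$ is a sum over \emph{individual} paths (no non-intersecting families enter here), the Lindstr\"om/Gessel--Viennot machinery is not needed; it suffices to enumerate single highway paths and track their weights. By~\eqref{eq:tilde M entries} we have $\widetilde M_{ij} = \lE{m+j-i}{i}$, so matching the path generating function with this loop elementary symmetric function will give $A(\Net_{m,n}) = \widetilde M$.

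First I would record the local structure of highway paths forced by the turning rule. A path leaving source $(i,0)$ can only move east into column $1$, and more generally it enters each interior column from the west. Upon entering a vertex from the west it may either continue straight east---collecting that vertex's weight---or turn north; but after a north turn it enters the next vertex from the south, where the turning rule permits only an eastward exit. Hence within a single column the row index drops by at most one, and each column is either a \emph{straight} column (one weight collected, row unchanged) or a \emph{turning} column (no weight, row decreased by one). Consequently a highway path from $(i,0)$ to $(j,m+1)$ has exactly $i-j$ turning columns and $k := m-(i-j) = m+j-i$ straight columns, and it is completely determined by the set $\{i_1 < \cdots < i_k\} \subseteq [m]$ of its straight columns. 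This gives a bijection between highway paths $i \to j'$ and $k$-element subsets of $[m]$, and it automatically accounts for the vanishing cases: if $i-j<0$ or $i-j>m$ there are no such paths, matching $\lE{m+j-i}{i}=0$.

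The heart of the argument is the weight bookkeeping. At the $l$-th straight column $c=i_l$, the path has already executed $i_l-l$ turns (one for each of the $i_l-1$ earlier columns that is not among the $l-1$ earlier straight columns), so it occupies row $i-(i_l-l)$. Reading off the loop variable placed at that vertex---whose variable index is the column $i_l$ and whose color is $(\text{row})+i_l-1$---and simplifying $(i-i_l+l)+i_l-1 = i+l-1$, the collected factor is exactly $\lx{i_l}{\,i+l-1}$, with colors read modulo $n$. Multiplying over the $k$ straight columns, the weight of the path is $\lx{i_1}{i}\lx{i_2}{i+1}\cdots\lx{i_k}{i+k-1}$, which is precisely the monomial of $\lE{m+j-i}{i}$ indexed by $\{i_1<\cdots<i_k\}$; summing over all such subsets yields $\lE{m+j-i}{i}=\widetilde M_{ij}$. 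The step I expect to require the most care is this color computation: it is exactly the staggering of the vertex colors along the network (the shift of the color by the column index, which is the reason for the dashed $n$-periodicity lines) that converts ``current row plus column'' into the clean arithmetic progression of colors $i,i+1,\ldots,i+k-1$. An essentially equivalent route is to slice $\Net_{m,n}$ into its $m$ columns, observe that the transfer matrix of column $c$ is the whirl $\widetilde W(\mb{x}_c)$ in its appropriately color-shifted placement, and invoke the multiplicativity of path-weight matrices under concatenation together with $\widetilde M = \widetilde W(\mb{x}_1)\cdots\widetilde W(\mb{x}_m)$; in that formulation the same bookkeeping reappears as the verification that consecutive column transfers compose to $\widetilde M$, which is the induction behind~\eqref{eq:tilde M entries}.
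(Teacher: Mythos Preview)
Your proof is correct and follows essentially the same approach as the paper's: analyze a single highway path $i \to j'$ via its column-by-column behavior (each column is either ``straight'' or ``turning''), deduce that such paths are in bijection with $(m+j-i)$-subsets of $[m]$, and match the weight to the corresponding monomial of $\lE{m+j-i}{i}$. The paper's proof is terser---it stops at ``it is easy to see that the possible weights of such a path are in bijection with the monomials of $\lE{m+j-i}{i}$''---whereas you explicitly carry out the row and color bookkeeping that justifies this claim; your alternative transfer-matrix remark is also consonant with the paper's factorization $\widetilde M = \widetilde W(\mb{x}_1)\cdots\widetilde W(\mb{x}_m)$.
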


\begin{proof}
Suppose $p$ is a highway path from source $i$ to sink $j'$. When $p$ enters an interior vertex from the bottom, it must turn right, and when $p$ enters an interior vertex from the left, it may either turn left or pass straight through. To get from $i$ to $j'$, $p$ must enter $m$ interior vertices from the left, and turn left at exactly $i-j$ of these vertices. Thus, $p$ passes straight through $m+j-i$ interior vertices, so the weight of $p$ is a monomial of degree $m+j-i$. It is easy to see that the possible weights of such a path are in bijection with the monomials of $\lE{m+j-i}{i}(\mb{x}_1, \ldots, \mb{x}_m)$.
\end{proof}

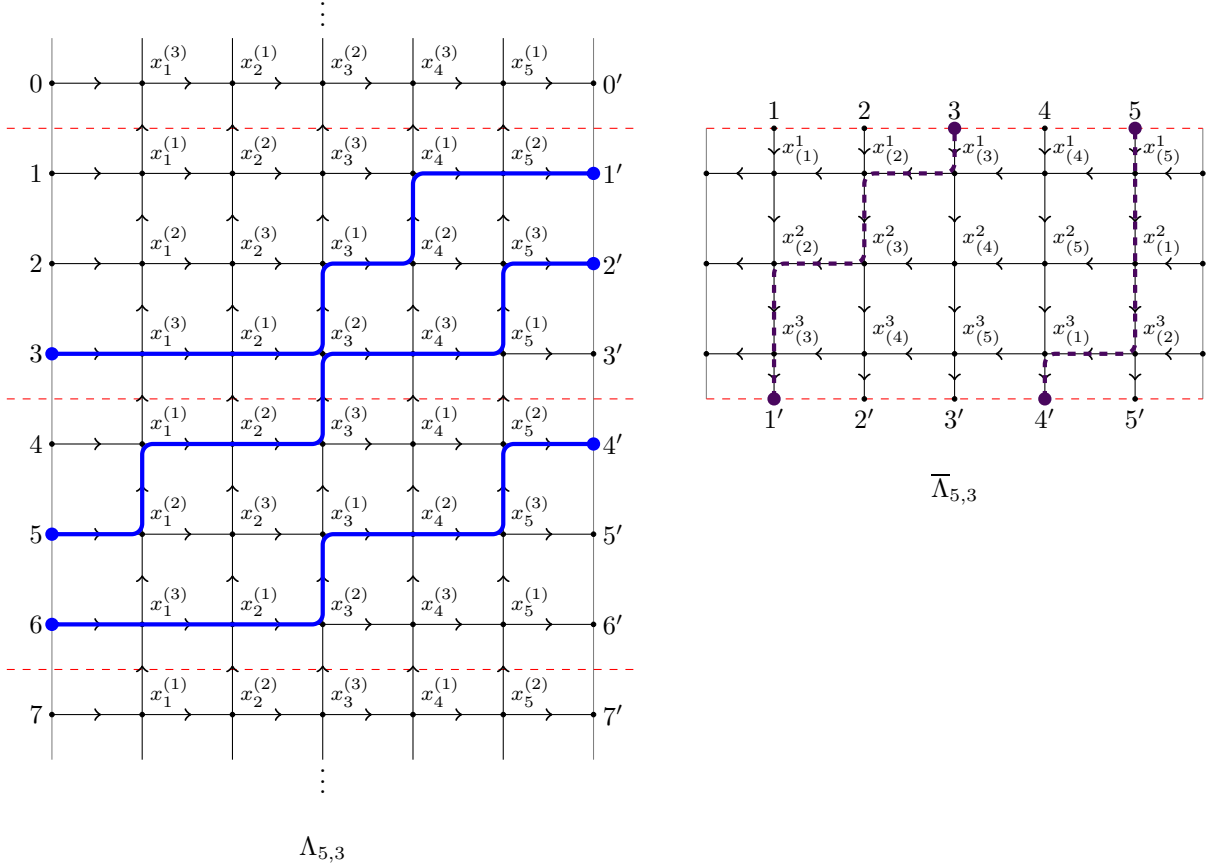
\begin{figure}
\begin{tikzpicture}[scale=0.6]
\pgfmathtruncatemacro{\m}{5};
\pgfmathtruncatemacro{\n}{3};
\pgfmathtruncatemacro{\toprow}{0};
\pgfmathtruncatemacro{\bottomrow}{2*\n+1};
\pgfmathtruncatemacro{\numrows}{\bottomrow -\toprow+1};

% network for $\widetilde{M}$
\draw (\m+1,-2) node {$\Net_{5,3}$};

% boundary and dashed lines
\draw[gray] (0,2*\numrows) -- (0,0);
\draw[gray] (2*\m+2,2*\numrows) -- (2*\m+2,0);
\foreach \y in {1,1+2*\n,1+4*\n} {
\draw[dashed,red] (-1,2*\y) -- (2*\m+3,2*\y);}
\draw (\m+1,-0.3) node{$\vdots$};
\draw (\m+1,2*\numrows+0.7) node{$\vdots$};

% horizontal lines, sources, sinks
\foreach \y in {1,...,\numrows} {
\pgfmathtruncatemacro{\row}{\bottomrow-\y+1};
\filldraw (0,2*\y-1) circle [radius = .05] node[left] {$\row$};
\filldraw (2*\m+2,2*\y-1) circle [radius = .05] node[right] {$\row'$};
\draw (2*\m+2,2*\y-1) -- (0,2*\y-1);
\draw[thick,<-] (2*\m+1.09,2*\y-1) -- (2*\m+1.07,2*\y-1);}

% vertical lines and vertex weights
\foreach \x in {1,...,\m} {
\draw (2*\x,0) -- (2*\x,2*\numrows);
\foreach \y in {1,...,\numrows} {
\draw[thick,<-] (2*\x-0.91,2*\y-1) -- (2*\x-0.93,2*\y-1);
\ifnum \y < \numrows \draw[thick,<-] (2*\x,2*\y+0.09) -- (2*\x,2*\y+0.07); \fi
\filldraw (2*\x,2*\y-1) circle [radius = .05];
\pgfmathtruncatemacro{\var}{\x};
\pgfmathtruncatemacro{\color}{mod(\x-\y+\bottomrow,\n)};
\ifnum \color < 1 \pgfmathtruncatemacro{\color}{\color+\n}; \fi
\footnotesize
\draw (2*\x,2*\y-1) node[above right]{$\lx{\var}{\color}$};}}
\normalsize

% highway paths
\foreach \x/\y in {5/3,9/5,    1/5,5/7,9/9,    5/9,7/11} {\draw[ultra thick,rounded corners, blue] (\x,\y) -- (\x+1,\y) -- (\x+1,\y+1) -- (\x+1,\y+2) -- (\x+2,\y+2);}
\foreach \x/\y in {1/3,3/3,7/5,    3/7,7/9,    1/9,3/9,9/13} {\draw[ultra thick, blue] (\x,\y) -- (\x+2,\y);}
\foreach \x/\y in {0/3,    0/5,    0/9} {\filldraw[ultra thick, blue] (\x,\y) circle [radius = 0.1] -- (\x+1,\y);}
\foreach \x/\y in {11/7,    11/11,    11/13} {\filldraw[ultra thick, blue] (\x,\y) -- (\x+1,\y) circle [radius = 0.1];}

% network for $\ov{M}$
\begin{scope}[yshift = 8cm,xshift=14cm]
\draw (\m+1,-2) node {$\ov{\Net}_{5,3}$};

% boundary
\draw[gray] (1/2,0) -- (1/2,2*\n);
\draw[gray] (2*\m+3/2,0) -- (2*\m+3/2,2*\n);
\draw[dashed,red] (1/2,0) -- (2*\m+3/2,0);
\draw[dashed,red] (1/2,2*\n) -- (2*\m+3/2,2*\n);

% horizontal lines
\foreach \y in {1,...,\n} {
\draw (2*\m+3/2,2*\y-1) -- (1/2,2*\y-1);
\filldraw (1/2,2*\y-1) circle [radius = .05];
\filldraw (2*\m+3/2,2*\y-1) circle [radius = .05];
\draw[thick,->] (2*\m+3/4-0.07,2*\y-1) -- (2*\m+3/4-0.09,2*\y-1);}

% vertical lines and vertex weights
\foreach \x in {1,...,\m} {
\filldraw (2*\x,2*\n) circle [radius = .05] node[above] {$\x$};
\filldraw (2*\x,0) circle [radius = .05] node[below] {$\x'$};
\draw[thick,->] (2*\x,2*\n-0.5-0.07) -- (2*\x,2*\n-0.5-0.09);
\draw[thick,->] (2*\x,0.5-0.07) -- (2*\x,0.5-0.09);
\draw (2*\x,0) -- (2*\x,2*\n);
\foreach \y in {1,...,\n} {
\ifnum \x > 1 \draw[thick,->] (2*\x-1.07,2*\y-1) -- (2*\x-1.09,2*\y-1); \fi
\ifnum \x = 1 \draw[thick,->] (2*\x-3/4-0.07,2*\y-1) -- (2*\x-3/4-0.09,2*\y-1); \fi
\ifnum \y < \n \draw[thick,->] (2*\x,2*\y-0.07) -- (2*\x,2*\y-0.09); \fi
\filldraw (2*\x,2*\y-1) circle [radius = .05];
\pgfmathtruncatemacro{\var}{\n-\y+1};
\pgfmathtruncatemacro{\color}{mod(\n-\y+\x,\m)};
\ifnum \color < 1 \pgfmathtruncatemacro{\color}{\color+\m}; \fi
\footnotesize
\draw (2*\x,2*\y-1) node[above right]{$\bx{\color}{\var}$};}}
\normalsize

% underway paths
\foreach \x\y in {2/2,4/4,    8/0} {\draw[ultra thick, UQpurple, rounded corners, dashed] (\x,\y) -- (\x,\y+1) -- (\x+2,\y+1) -- (\x+2,\y+2);}
\foreach \x\y in {2/0,    10/2,10/4} {\draw[ultra thick, UQpurple, dashed] (\x,\y) -- (\x,\y+2);}
\foreach \x\y in {2/0,8/0,10/6,6/6} {\filldraw[ultra thick, UQpurple] (\x,\y) circle [radius = 0.1];}
\end{scope}

\end{tikzpicture}
\caption{(Left) The network $\Net_{5,3}$, which gives rise to the $3$-periodic matrix $\widetilde{M} = \widetilde{M}(\mb{x}_1, \ldots, \mb{x}_5)$. A non-intersecting family of highway paths is shown in blue. (Right) The network $\ov{\Net}_{5,3}$, which gives rise to the $5 \times 5$ matrix $\ov{M} = M(\mb{x}^1, \mb{x}^2, \mb{x}^3)$. The dashed purple lines show a non-intersecting family of underway paths.}
\label{fig:Network ex}
\end{figure}

For the next result, we assume the reader is familiar with the Lindstr\"om/Gessel--Viennot Lemma \cite{Lindstrom, GesselViennot}, which expresses minors of matrices associated to planar networks as weighted sums of non-intersecting families of paths in the network.

\begin{lemma}
\label{lem:highway Lind}
For $k$-element subsets $I,J \subset \ZZ$, we have
\[
\Delta_{I,J}(\widetilde{M}) = \sum_{\mc{F} \in N^h_{I,J}} \wt(\mc{F}),
\]
where $N^h_{I,J}$ is the set of non-intersecting families of highway paths in $\Net_{m,n}$ from sources $I$ to sinks $J$.
\end{lemma}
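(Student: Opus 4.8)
The plan is to obtain this as an instance of the Lindstr\"om/Gessel--Viennot (LGV) Lemma for the planar network $\Net_{m,n}$, using Lemma~\ref{lem:A=M} to pass between matrix entries and path generating functions. First I would note that the portion of $\Net_{m,n}$ carrying highway paths is acyclic: every horizontal step strictly increases the column coordinate while the row coordinate only weakly decreases, so no directed cycle through the interior vertices is possible, and any fixed minor $\Delta_{I,J}$ involves only finitely many of the (infinitely many) sources and sinks. By Lemma~\ref{lem:A=M}, $\widetilde{M}_{ij} = A(\Net_{m,n})_{ij} = \sum_{p : i \to j'} \wt(p)$, so $\Delta_{I,J}(\widetilde{M})$ is literally a minor of a path matrix and the LGV machinery is available.

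Expanding the determinant and grouping $k$-tuples of highway paths by the permutation $\sigma$ that matches sources to sinks gives
\[
\Delta_{I,J}(\widetilde{M}) = \sum_{\sigma} \operatorname{sgn}(\sigma) \sum_{\mc{F}} \wt(\mc{F}),
\]
the inner sum ranging over all families of highway paths realizing $\sigma$. The heart of the argument is a sign-reversing involution cancelling every family in which two paths overlap. The one point I would treat with care is that ``non-intersecting'' here means \emph{edge}-disjoint, not vertex-disjoint: two highway paths may ``kiss'' at an interior vertex (one tracing the SE corner from the south to the east, the other the NW corner from the west to the north) without sharing an edge, and such a kiss must not trigger cancellation. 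I would therefore invoke the edge-disjoint form of LGV developed for precisely these highway networks in~\cite{LP13II}, where one fixes a total order on edges and swaps the tails of two paths beyond their first common edge; this is a weight-preserving involution that transposes the two endpoints and hence flips $\operatorname{sgn}(\sigma)$, so all families containing a shared edge cancel in pairs.

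It then remains to identify the surviving terms and pin down their sign. Because the sources lie on the left boundary and the sinks on the right boundary in compatible (row-)order and highway paths are monotone, a kiss does not exchange the vertical order of the two strands (in the configuration above the NW strand stays above the SE strand throughout); hence every edge-disjoint family is non-crossing and connects $I = \{i_1 < \cdots < i_k\}$ to $J = \{j_1 < \cdots < j_k\}$ in increasing order, i.e.\ realizes $\sigma = \id$ with $\operatorname{sgn}(\sigma) = +1$. Thus only edge-disjoint families survive, each contributing $+\wt(\mc{F})$, which yields $\Delta_{I,J}(\widetilde{M}) = \sum_{\mc{F} \in N^h_{I,J}} \wt(\mc{F})$ as claimed. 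The main obstacle is purely this edge-disjoint refinement of LGV --- checking that the kissing configuration is compatible with the first-common-edge tail swap and that the swap is a genuine fixed-point-free involution on non-edge-disjoint families --- which I would import from~\cite{LP13II} rather than reprove here.
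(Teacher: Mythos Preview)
Your argument is correct, but the paper's proof takes a cleaner route that sidesteps the need for an edge-disjoint variant of LGV. Rather than working directly with highway paths and a tail-swap involution on shared edges, the paper converts $\Net_{m,n}$ into an ordinary edge-weighted network $\Gamma$ by splitting each interior vertex into two vertices joined by a diagonal edge of weight $z$ (the four surrounding edges getting weight $1$). Under this transformation, a highway path that passes straight through a vertex uses the diagonal and picks up the weight $z$, a highway path that turns uses only one of the two new vertices, and---crucially---two highway paths that kiss at a vertex become two \emph{vertex}-disjoint ordinary paths in $\Gamma$. Thus edge-disjoint highway families in $\Net_{m,n}$ correspond bijectively and weight-preservingly to vertex-disjoint ordinary families in $\Gamma$, and the classical Lindstr\"om/Gessel--Viennot Lemma applies verbatim to $\Gamma$.

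What this buys the paper is brevity: the subtleties you carefully address (that a kiss must not trigger cancellation, that the first-common-edge swap is a genuine involution, that surviving families realize the identity permutation) are all absorbed into the standard LGV statement once the network is rewritten. Your approach has the virtue of being self-contained at the level of highway paths and making the edge-disjoint mechanism explicit, but it requires either reproving or importing the edge-disjoint refinement, whereas the vertex-splitting trick reduces everything to the textbook lemma in one move.
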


\begin{proof}
As in~\cite{LP13II}, convert $\Net_{m,n}$ into an edge-weighted network $\Gamma$ by replacing the configuration around each interior vertex as follows:
\begin{center}
\begin{tikzpicture}[scale=0.7]
\filldraw (0,0) circle [radius=0.08] node[above right] {$z$};
\draw (-1,0) -- (0,0) -- (1,0);
\draw (0,-1) -- (0,0) -- (0,1);
\draw[thick,->] (-0.501,0) -- (-0.499,0);
\draw[thick,<-] (0.601,0) -- (0.599,0);
\draw[thick,<-] (0,0.601) -- (0,0.599);
\draw[thick,->] (0,-0.501) -- (0,-0.499);

\draw (2.5,0) node {$\longrightarrow$};
\draw (7,0) node{.};

\begin{scope}[xshift=5cm,yshift=0.25cm]
\filldraw (0,0) circle [radius=0.08];
\filldraw (0.6,-0.6) circle [radius=0.08];
\draw (-1,0) -- node[below] {$1$} (0,0) -- node[right] {$1$} (0,1);
\draw (0.6,-1.6) -- node[left] {$1$} (0.6,-0.6) -- node[above] {$1$} (1.6,-0.6);
\draw (0,0) -- (0.6,-0.6);
\draw (0.55,-0.05) node {$z$};
\draw[thick,->] (-0.501,0) -- (-0.499,0);
\draw[thick,<-] (1.201,-0.6) -- (1.199,-0.6);
\draw[thick,<-] (0,0.601) -- (0,0.599);
\draw[thick,->] (0.6,-1.101) -- (0.6,-1.099);
\draw[thick,->] (0.349,-0.349) -- (0.351,-0.351);
\end{scope}
\end{tikzpicture}
\end{center}
It is clear that there is a weight-preserving correspondence between non-intersecting families of highway paths in $\Net_{m,n}$ and non-intersecting families of directed paths in $\Gamma$, so the result follows from the Lindstr\"om/Gessel--Viennot Lemma applied to $\Gamma$.
\end{proof}

Now consider a pseudo-energy $s_{\la/\mu}^{(r)}$. As explained in Remark~\ref{rem:no empty columns}, we may assume that $\mu_i' < \la_i'$ for $i = 1, \ldots, \ell(\la')$. Let $I = I(\mu,r), J = J(\la,r)$ be the subsets associated to $(\la/\mu)^{(r)}$ by~\eqref{eq:subsets JT matrix version}, translated by a multiple of $n$ so that they consist of positive integers. By Lemma~\ref{lem:initial final condition}, there exist $d \geq 0$ and $a_0, \ldots, a_d, b_0, \ldots, b_d \in [0,n]$ such that
\begin{equation}
\label{eq_def_I_J}
\begin{matrix}
I = & [n+1-a_0,n] & \cup & [2n+1-a_1,2n] & \cup & \cdots & \cup & [(d+1)n+1-a_d, (d+1)n], \\
J = & [1,b_0] & \cup & [n+1,n+b_1] & \cup & \cdots & \cup & [dn + 1, dn+b_d].
\end{matrix}
\end{equation}
Note that $\sum a_i = \abs{I} = \abs{J} = \sum b_i$. Proposition~\ref{prop:JT matrix version} and Lemma~\ref{lem:highway Lind} imply that $s_{\la/\mu}^{(r)}$ is the generating function for non-intersecting families of highway paths in $\Lambda_{m,n}$ from sources $I$ to sinks $J$.

Let $\mc{F}$ be a non-intersecting family of highway paths from $I$ to $J$, and let $\ov{\mc{F}}$ be the complementary family of underway paths. Let $\Upsilon_k \subseteq \Net_{m,n}$ denote the \defn{band} between the dashed red lines at heights $kn+1/2$ and $(k+1)n+1/2$. For $k \in [0,d]$, $a_k$ of the sources in $\Upsilon_k$ are in $I$, and $b_k$ of the sinks in $\Upsilon_k$ are in $J$. This means there are $n-a_k$ underway paths in $\ov{\mc{F}}$ starting at the left boundary of $\Upsilon_k$, and $n-b_k$ underway paths ending at the right boundary of $\Upsilon_k$. Since these paths cannot pass horizontally through an interior vertex, they are constrained to move by alternating turns within $\Upsilon_k$, so the portions of these paths in $\Upsilon_k$ do not contribute to the weight of $\ov{\mc{F}}$. Hence, we can ignore the edges of $\ov{\mc{F}}$ contained within the triangular regions of $\Upsilon_k$ above (resp., below) the slope $1$ lines that intersect source $(k+1)n+1-a_k$ (resp., sink $(kn+b_k)'$). We call these triangular regions \defn{fixed}. (The edges of $\ov{\mc{F}}$ outside of $\Upsilon_0 \cup \cdots \cup \Upsilon_d$ also do not contribute to the weight, so we ignore these as well.) The remaining edges of $\ov{\mc{F}}$ form families of non-intersecting paths between the bottom and top boundaries of each $\Upsilon_k$ (see Figure~\ref{fig:unfolded complementary paths} for an example). These families contribute to minors of $\ov{M}$. This observation leads to the main result of this section.

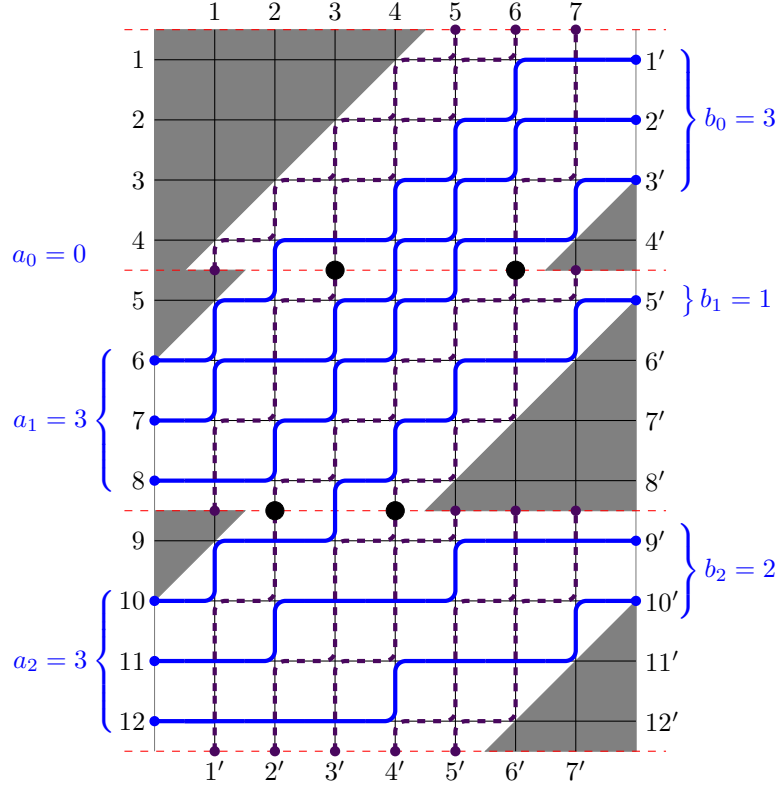
\begin{figure}
\[
\begin{tikzpicture}[scale=0.4]
\pgfmathtruncatemacro{\m}{7};
\pgfmathtruncatemacro{\n}{4};
\pgfmathtruncatemacro{\toprow}{1};
\pgfmathtruncatemacro{\bottomrow}{3*\n};
\pgfmathtruncatemacro{\numrows}{\bottomrow -\toprow+1};

% fixed regions
\filldraw[gray] (0,15) -- (9,24) -- (0,24) -- (0,15);
\filldraw[gray] (0,13) -- (3,16) -- (0,16) -- (0,13);
\filldraw[gray] (0,5) -- (3,8) -- (0,8) -- (0,5);
\filldraw[gray] (16,5) -- (11,0) -- (16,0) -- (16,5);
\filldraw[gray] (16,15) -- (9,8) -- (16,8) -- (16,15);
\filldraw[gray] (16,19) -- (13,16) -- (16,16) -- (16,19);

% auxiliary lines
\draw[gray] (0,2*\numrows) -- (0,0);
\draw[gray] (2*\m+2,2*\numrows) -- (2*\m+2,0);
\foreach \y in {0,4,8,12} {\draw[red,dashed] (-1,2*\y) -- (2*\m+3,2*\y);}

% horizontal lines
\foreach \y in {1,...,\numrows} {
\pgfmathtruncatemacro{\row}{\bottomrow-\y+1};
\draw (0,2*\y-1) node[left] {$\row$} -- (2*\m+2,2*\y-1) node[right] {$\row'$};}

% vertical lines
\foreach \x in {1,...,\m} {
\draw (2*\x,0) -- (2*\x,2*\numrows);
\draw (2*\x,2*\numrows) node[above] {$\x$};
\draw (2*\x,0) node[below] {$\x'$};}

% blue highway paths
\foreach \x/\y in {7/1,13/3,    3/3,9/5,    1/5,5/7,7/9,9/11,13/13,    3/9,5/11,7/13,9/15,13/17,    1/11,5/13,7/15,9/17,11/19,    1/13,3/15,7/17,9/19,11/21} {
	\draw[ultra thick,rounded corners, blue] (\x,\y) -- (\x+1,\y) -- (\x+1,\y+1) -- (\x+1,\y+2) -- (\x+2,\y+2);}
\foreach \x/\y in {1/1,3/1,5/1,9/3,11/3,    1/3,5/5,7/5,11/7,13/7,    3/7,11/13,    1/9,11/17,    3/13,13/21,    5/17,13/23,} {\draw[ultra thick, blue] (\x,\y) -- (\x+2,\y);}
\foreach \x/\y in {0/1,    0/3,    0/5,    0/9,    0/11,    0/13} {\filldraw[ultra thick, blue] (\x,\y) circle [radius = 0.1] -- (\x+1,\y);}
\foreach \x/\y in {15/5,    15/7,    15/15,    15/19,    15/21,    15/23} {\filldraw[ultra thick, blue] (\x,\y) -- (\x+1,\y) circle [radius = 0.1];}

% UQpurple underway paths
\foreach \x/\y in {2/10,4/14,6/18,8/20,10/22,    2/16,4/18,6/20,8/22,    2/4,4/8,6/10,8/12,10/14,12/18,    4/2,6/6,8/8,10/10,12/14,    6/2,8/6,    8/0,10/4,    10/0,12/4} {
	\draw[ultra thick,rounded corners, dashed,UQpurple] (\x,\y) -- (\x,\y+1) -- (\x+1,\y+1) -- (\x+2,\y+1) -- (\x+2,\y+2);}
\foreach \x/\y in {2/8,4/12,6/16,    2/0,2/2,4/6,    4/0,6/4,12/12,12/16,14/20,14/22,    6/0,8/4,    10/2,12/6,    12/2,14/6} {\draw[ultra thick, dashed,UQpurple] (\x,\y) -- (\x,\y+2);}
\foreach \x/\y in {2/0,4/0,6/0,8/0,10/0,    2/8,4/8,8/8,10/8,12/8,14/8,    2/16,6/16,12/16,14/16,    10/24,12/24,14/24} {\filldraw[ultra thick, UQpurple] (\x,\y) circle [radius = 0.1];}

% elements of X_1, X_2
\foreach \x\y in {4/8,8/8,6/16,12/16} {\filldraw (\x,\y) circle [radius=0.3];}

% values of a_i, b_i
\draw[blue] (-3.5,16.5) node {$a_0 = 0$};
\draw[blue] (-3,11) node {$a_1 = 3 \, \Vast\{$};
\draw[blue] (-3,3) node {$a_2 = 3 \, \Vast\{$};
\draw[blue] (19,21) node {$\Vast\} \, b_0 = 3$};
\draw[blue] (19,15) node {$\big\} \, b_1 = 1$};
\draw[blue] (19,6) node {$\vast\} \, b_2 = 2$};
\end{tikzpicture}
\]
\caption{An example of the bijection between complementary families of lattice paths used to prove Theorem~\ref{thm:unfolded sum of minors}. The parameters for this example are $m = 7, n = 4, d = 2$, and the indicated $a_i,b_i$. The relevant bands $\Upsilon_0, \Upsilon_1, \Upsilon_2$ are shown, with fixed regions shaded in gray. The highway paths are drawn as solid blue lines, and the portions of the underway paths outside the fixed regions as dashed purple lines. The sets $X_1 = \{3,6\}, X_2 = \{2,4\}$ are represented by the large black vertices.}
\label{fig:unfolded complementary paths}
\end{figure}

\begin{thm}
\label{thm:unfolded sum of minors}
Suppose $d \geq 0$, and $a_0, \ldots, a_d, b_0, \ldots, b_d \in [0,n]$ satisfy
\[
\sum_{k=0}^d a_k = \sum_{k=0}^d b_k
\qquad \text{ and } \qquad
a_k, b_k \geq n-m
\]
for $k = 0, \ldots, d$. Define $I,J$ by~\eqref{eq_def_I_J}. Then
\begin{equation}
\label{eq:unfolded sum of minors}
\Delta_{I,J}(\widetilde{M}) = \sum_{\substack{X = (X_1, \ldots, X_d) \colon \\ X_k \subseteq [n-a_k+1,m-(n-b_{k-1})]}}
  \prod_{k=0}^d \Delta_{X_k \cup [m-(n-b_{k-1})+1,m], [1,n-a_{k+1}] \cup X_{k+1}}(\ov{M}),
\end{equation}
where $a_{d+1}, b_{-1} = n$, $X_0 = [n-a_0+1,m]$, and $X_{d+1} = [1,m-(n-b_d)]$.
\end{thm}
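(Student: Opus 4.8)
The plan is to prove the identity bijectively within the highway/underway path framework just developed. First I would interpret the left-hand side: by Lemma~\ref{lem:highway Lind}, the minor $\Delta_{I,J}(\widetilde{M})$ equals $\sum_{\mc{F}} \wt(\mc{F})$, the sum over non-intersecting families $\mc{F}$ of highway paths in $\Net_{m,n}$ from the sources $I$ to the sinks $J$. The particle--hole duality recalled before the theorem (the weight-preserving bijection $\mc{F} \leftrightarrow \ov{\mc{F}}$) then rewrites this as a sum of weights of the complementary non-intersecting families $\ov{\mc{F}}$ of underway paths. The remaining task is to organize these underway families band-by-band and recognize the resulting weight sums as products of minors of $\ov{M}$. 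For the latter I would record the analogues of Lemmas~\ref{lem:A=M} and~\ref{lem:highway Lind} for the single-band network $\ov{\Net}_{m,n}$: applying the Lindstr\"om/Gessel--Viennot Lemma to $\ov{\Net}_{m,n}$ (whose associated matrix is $\ov{M}$) shows that every minor $\Delta_{S,T}(\ov{M})$ is the generating function for non-intersecting families of underway paths from the sources $S$ to the sinks $T$.

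The core of the argument is the geometric analysis of $\ov{\mc{F}}$ inside each band $\Upsilon_k$, $k \in [0,d]$. Because $I$ is $n$-final and $J$ is $n$-initial (which is precisely the content of the input data~\eqref{eq_def_I_J}), within $\Upsilon_k$ there are $a_k$ sources of $I$ clustered at the bottom and $b_k$ sinks of $J$ clustered at the top, which forces $n-a_k$ underway paths to enter at the left boundary and $n-b_k$ to leave at the right boundary. I would then argue, as indicated in the discussion preceding the theorem, that an underway path cannot pass horizontally through an interior vertex; hence inside the triangular \emph{fixed} regions (above the diagonal through source $(k+1)n+1-a_k$ and below the diagonal through sink $(kn+b_k)'$) the underway edges are completely determined, consist only of turns, and therefore contribute no weight. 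Discarding these fixed-region edges together with the edges outside $\Upsilon_0 \cup \cdots \cup \Upsilon_d$ leaves, in each band, a non-intersecting family of underway paths running between its bottom and top boundaries; these are exactly the families counted by minors of $\ov{M}$.

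Next I would introduce the crossing data. For $k \in [1,d]$ let $X_k$ denote the set of column positions at which a weight-carrying underway edge of $\ov{\mc{F}}$ crosses the boundary between $\Upsilon_{k-1}$ and $\Upsilon_k$; the range $X_k \subseteq [n-a_k+1,\, m-(n-b_{k-1})]$ is forced because such a crossing must lie strictly below the fixed region of $\Upsilon_k$ and strictly above that of $\Upsilon_{k-1}$. With the boundary conventions $X_0 = [n-a_0+1,m]$, $X_{d+1} = [1,\, m-(n-b_d)]$, and $a_{d+1}=b_{-1}=n$, the portion of $\ov{\mc{F}}$ in band $\Upsilon_k$ becomes a non-intersecting underway family from the top-boundary sources $X_k \cup [m-(n-b_{k-1})+1,\, m]$ to the bottom-boundary sinks $[1,\, n-a_{k+1}] \cup X_{k+1}$, which after identifying $\Upsilon_k$ with a copy of $\ov{\Net}_{m,n}$ has weight generating function $\Delta_{X_k \cup [m-(n-b_{k-1})+1,m],\, [1,n-a_{k+1}] \cup X_{k+1}}(\ov{M})$ by the single-band Lindstr\"om/Gessel--Viennot statement. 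Since distinct bands share no edges, the total weight factors as the product over $k \in [0,d]$, and summing over all admissible tuples $X = (X_1, \ldots, X_d)$ yields the right-hand side of~\eqref{eq:unfolded sum of minors}.

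The main obstacle is the bookkeeping in the fixed-region analysis and the precise identification of the source/sink index sets for each band. In particular I expect the delicate points to be: (i) verifying that the diagonals bounding the fixed regions land at the stated vertices $(k+1)n+1-a_k$ and $(kn+b_k)'$, which relies on the clustering of $I$ and $J$ guaranteed by~\eqref{eq_def_I_J}; (ii) checking that the hypothesis $a_k, b_k \geq n-m$ is exactly what makes the index intervals $[n-a_k+1,\, m-(n-b_{k-1})]$ nonempty and the boundary sets $X_0, X_{d+1}$ well-formed in the regime $m<n$; and (iii) confirming that the $X_k$ are unconstrained beyond their stated ranges, so that the correspondence $\mc{F} \leftrightarrow (X;\, \text{band families})$ is a genuine bijection with no hidden compatibility conditions linking adjacent bands. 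Once these are settled, weight-preservation is immediate from the particle--hole duality and the product decomposition follows from the band structure, completing the proof.
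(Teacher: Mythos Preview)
Your proposal is correct and follows essentially the same approach as the paper: interpret $\Delta_{I,J}(\widetilde{M})$ via Lemma~\ref{lem:highway Lind} as a sum over non-intersecting highway families, apply the particle--hole duality $\mc{F}\leftrightarrow\ov{\mc{F}}$, discard the forced underway edges in the triangular fixed regions of each band $\Upsilon_k$, record the crossing positions as the sets $X_k$, and identify the remaining underway family in each band with a minor of $\ov{M}$ via the Lindstr\"om/Gessel--Viennot Lemma applied to the single-band network $\ov{\Net}_{m,n}$. The paper's written proof is terse because the band analysis and fixed-region discussion appear in the paragraphs immediately preceding the theorem statement, but you have reconstructed all of it accurately, including the role of the hypothesis $a_k,b_k\ge n-m$.
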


Let us decipher the theorem statement briefly. For $k = 1, \ldots, d$, the set $X_k$ represents the places where an underway path has chosen to cross over from $\Upsilon_k$ to $\Upsilon_{k-1}$. The interval $[1,n-a_k]$ corresponds to the vertices at the top of the fixed region in the northwest corner of $\Upsilon_k$, and the interval $[m-(n-b_{k-1})+1,m]$ corresponds to the vertices at the bottom of the fixed region in the southeast corner of $\Upsilon_{k-1}$. By construction, $X_k$ is disjoint from these intervals. Note that the $X_k$ must satisfy $\abs{X_k} + (n - b_{k-1}) = (n - a_{k+1}) + \abs{X_{k+1}}$ for all $0 \leq k \leq d$; thus, the sizes of the $X_k$ are determined by the numbers $a_i$ and $b_i$.

\begin{remark}
If $a_k < n-m$ (resp., $b_k < n-m$) for some $k \in [0,d]$, then the interval $[1,n-a_k]$ (resp., $[m-(n-b_k)+1,m]$) is not contained in $[1,m]$, so the right-hand side of~\eqref{eq:unfolded sum of minors} does not make sense. One can show, however, that every non-zero pseudo-energy can be expressed as $\Delta_{I,J}(\widetilde{M})$ for some choice of parameters $a_k, b_k$ which satisfy $a_k,b_k \geq n-m$ for all $k$.
% Here is the argument: if $0 < a_k < n-m$ for some $k$, then $\widetilde{M}_{I,J}$ contains a zero row or column (and similarly if $0 < b_k < n-m$ for some $k$). The same is true if $a_k = 0$ and $b_k \neq 0$, or vice versa. The one complication is that if $a_k = b_k = 0$, then $\Delta_{I,J}(\widetilde{M})$ is not necessarily zero. But in this case, one can replace $a_i, b_i$ with $a_{i+1},b_{i+1}$ for $i \geq k$, and this does not change the determinant.
\end{remark}

\begin{proof}[Proof of Theorem~\ref{thm:unfolded sum of minors}]
Let $\ov{\Net}_{m,n}$ be the network obtained from $\Net_{m,n}$ by restricting to a single band $\Upsilon_k$, reversing all arrows, relabeling the vertex weights using the identifications $\lx{i}{j+i-1} = x_i^j = \bx{j+i-1}{j}$, removing the source and sink labels, and placing $m$ new source vertices (resp., sink vertices) labeled $1, \ldots, m$ (resp., $1', \ldots, m'$) along the top (resp., bottom) boundary, as in Figure~\ref{fig:Network ex}. Arguing as in the proofs of Lemmas~\ref{lem:A=M} and~\ref{lem:highway Lind}, one shows that for $A,B \subset [m]$,
\begin{equation}
\label{eq:underway Lind}
\Delta_{A,B}(\ov{M}) = \sum_{\ov{\mc{F}} \in N^u_{A,B}} \wt(\ov{\mc{F}}),
\end{equation}
where $N^u_{A,B}$ is the set of non-intersecting families of underway paths in $\ov{\Net}_{m,n}$ from $A$ to $B$.

Given an underway path in $\Net_{m,n}$ from the bottom to the top edge of $\Upsilon_k$, one obtains an underway path in $\ov{\Net}_{m,n}$ of the same weight by reversing all the arrows. The theorem now follows from \eqref{eq:underway Lind} and the above discussion.
\end{proof}

\begin{ex}
\label{ex:unfolded sum of minors}
\
\begin{enumerate}
\item[(a)]
When $d = 0$, Theorem~\ref{thm:unfolded sum of minors} says that
\[
\Delta_{[n+1-a,n], [1,a]}(\widetilde{M}) = \Delta_{[n+1-a,m],[1,m-n+a]}(\ov{M}).
\]
In this case, we already know that both of these minors are equal to the shape invariant $S_{n+1-a}$.
\item[(b)]
When $m = n = 3$, we have
\[
\ov{M} =
\begin{pmatrix}
\bE{3}{1} & 0 & 0 \\
\bE{2}{2} & \bE{3}{2} & 0 \\
\bE{1}{3} & \bE{2}{3} & \bE{3}{3}
\end{pmatrix},
\]
\ytableausetup{smalltableaux}
and Theorem~\ref{thm:unfolded sum of minors} gives the following formulas for the $Q$-invariants:
\begin{align*}
\lQ{1}{1} = s_{\ytableaushort{\empty\empty\empty\empty, \empty\empty}}^{(3)} &= \Delta_{3456,1245}(\widetilde{M}) = \sum_{x \in \{1,2\}} \Delta_{3,x}(\ov{M}) \Delta_{x3,12}(\ov{M}) \\
&= \Delta_{3,1}(\ov{M}) \Delta_{13,12}(\ov{M}) + \Delta_{3,2}(\ov{M}) \Delta_{23,12}(\ov{M}) \\
&= \bs{\ytableaushort{\empty}}{3} \bs{\ytableaushort{\none\empty, \empty\empty, \empty\empty}}{2} + \bs{\ytableaushort{\empty, \empty}}{3} \bs{\ytableaushort{\empty\empty, \empty\empty}}{3} \,,
\allowdisplaybreaks \\
\lQ{1}{2} = s_{\ytableaushort{\none\none\empty\empty,\empty\empty\empty\empty}}^{(2)} &= \Delta_{2356,1234}(\widetilde{M}) = \sum_{x \in \{2,3\}} \Delta_{23,1x}(\ov{M}) \Delta_{x,1}(\ov{M}) \\
&= \Delta_{23,12}(\ov{M}) \Delta_{2,1}(\ov{M}) + \Delta_{23,13}(\ov{M}) \Delta_{3,1}(\ov{M}) \\
&= \bs{\ytableaushort{\empty\empty, \empty\empty}}{3} \bs{\ytableaushort{\empty, \empty}}{2} + \bs{\ytableaushort{\empty\empty, \empty\empty, \empty}}{3} \bs{\ytableaushort{\empty}}{3} \,,
\allowdisplaybreaks \\
\lQ{2}{1} = s_{\ytableaushort{\empty\empty\empty\empty\empty,\empty\empty\empty\empty\empty}}^{(3)} & = \Delta_{23456,12345}(\widetilde{M}) = \sum_{X \in \binom{[3]}{2}} \Delta_{23,X}(\ov{M}) \Delta_{X,12}(\ov{M}) \\
&= \Delta_{23,12}(\ov{M}) \Delta_{12,12}(\ov{M}) + \Delta_{23,13}(\ov{M}) \Delta_{13,12}(\ov{M}) + \Delta_{23,23}(\ov{M}) \Delta_{23,12}(\ov{M}) \\
&= \bs{\ytableaushort{\empty\empty, \empty\empty}}{3} \bs{\ytableaushort{\empty\empty, \empty\empty, \empty\empty}}{2} + \bs{\ytableaushort{\empty\empty, \empty\empty, \empty}}{3} \bs{\ytableaushort{\none\empty, \empty\empty, \empty\empty}}{2} + \bs{\ytableaushort{\empty\empty, \empty\empty, \empty\empty}}{3} \bs{\ytableaushort{\empty\empty, \empty\empty}}{3} \,.
\end{align*}
\end{enumerate}
\end{ex}

Additional examples and consequences of Theorem~\ref{thm:unfolded sum of minors} appear in \S\ref{sec:back to tableau}.

\subsection{\texorpdfstring{$Q$}{Q}-invariants and the \texorpdfstring{$Q$}{Q}-pattern}
\label{sec:back to tableau}

Let $(z'_{j,i})_{1 \leq j \leq n, j \leq i \leq m}$ denote the entries of the gRSK $Q$-pattern. By definition, the $z'_{j,i}$ are ratios of flag minors of $\ov{M}$.

\begin{lemma}
\label{lem:back to tableau}
Every pseduo-energy is a positive Laurent polynomial in the $z'_{j,i}$. In particular, the $Q$-invariants (and reduced $Q$-invariants) are positive Laurent polynomials in the $z'_{j,i}$.
\end{lemma}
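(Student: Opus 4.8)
The plan is to bootstrap off the positivity result established immediately above (Theorem~\ref{thm:unfolded sum of minors}), combining it with a Lindstr\"om/Gessel--Viennot analysis of the network $\Gamma_m^{\leq n}$. The key intermediate claim is that every minor of $\ov{M} = M(\mb{x}^1, \ldots, \mb{x}^n)$ is a positive (subtraction-free) Laurent polynomial in the $z'_{j,i}$. To see this, I would first note that $Q = \Psi_m^{\leq n}(\ov{M})$ is the gRSK $Q$-pattern by Definition~\ref{defn:gRSK} and Theorem~\ref{thm:gRSK props}(3), so Lemma~\ref{lem:Phi inverse} gives $\ov{M} = \Phi_m^{\leq n}(Q) = A(\Gamma_m^{\leq n})$, the path matrix of the planar network $\Gamma_m^{\leq n}$ (Figure~\ref{fig:network}, with the roles of $m,n$ and the variables $z$ replaced by $n,m$ and $z'$). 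Every edge weight of this network is a ratio of consecutive $Q$-pattern entries, hence a Laurent monomial in the $z'_{j,i}$. Since the sources $1, \ldots, m$ and sinks $1', \ldots, m'$ lie on the boundary in compatible linear order, the Lindstr\"om/Gessel--Viennot lemma expresses each minor $\Delta_{I,J}(\ov{M})$ as the generating function $\sum_{\mc{F}} \wt(\mc{F})$ over non-intersecting families of directed paths from $I$ to $J$, with all coefficients $+1$; each $\wt(\mc{F})$ is a product of edge weights, so every minor of $\ov{M}$ is a positive Laurent polynomial in the $z'_{j,i}$.

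Next I would invoke Theorem~\ref{thm:unfolded sum of minors}. By Remark~\ref{rem:no empty columns} and the remark following the theorem statement, every nonzero pseudo-energy $s^{(r)}_{\la/\mu}$ can be written as $\Delta_{I,J}(\widetilde{M})$ for parameters satisfying $a_k, b_k \geq n-m$, so that \eqref{eq:unfolded sum of minors} applies and presents it as a sum, over the index families $X$, of products of minors of $\ov{M}$, all with coefficient $+1$. Substituting the expressions from the previous step for these minors and using that sums and products of subtraction-free Laurent polynomials are again subtraction-free, I conclude that every pseudo-energy is a positive Laurent polynomial in the $z'_{j,i}$.

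For the ``in particular'' clause, the $Q$-invariants $\lQ{i}{j}$ are pseudo-energies, since the colored shape $(\lambda_i^j/\mu_i^j)^{(n-j+1)}$ satisfies the corner color condition (noted after Definition~\ref{defn:Q invariant}); hence the previous paragraph applies to them verbatim. For the reduced $Q$-invariants $\lrQ{i}{j} = \lQ{i}{j}/(S_{j+1} S_{n+1-K})$, I would read off from \eqref{eq:z shape} that each shape invariant is in fact a Laurent \emph{monomial} in the shape coordinates, namely $S_k = z'_{k,m} z'_{k+1,m} \cdots z'_{p,m}$ with $p = \min(m,n)$ (using $S_{p+1} = 1$). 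Dividing a subtraction-free Laurent polynomial by a Laurent monomial preserves subtraction-freeness, so $\lrQ{i}{j}$ is again a positive Laurent polynomial in the $z'_{j,i}$.

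The main obstacle is really just the sign/positivity bookkeeping in the Gessel--Viennot step: the crucial point is that $\Gamma_m^{\leq n}$ is planar with its sources and sinks in compatible order, which forces every minor to equal its non-intersecting-path generating function with coefficients $+1$ and no cancellation. A second small but essential observation is that the shape invariants are \emph{monomials}, not merely positive polynomials, in the shape coordinates $z'_{\bullet,m}$; without this, the division defining the reduced $Q$-invariants would not obviously preserve positivity.
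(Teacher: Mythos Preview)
Your proof is correct and follows essentially the same approach as the paper: both use Theorem~\ref{thm:unfolded sum of minors} to express a pseudo-energy as a positive sum of products of minors of $\ov{M}$, then invoke Lemma~\ref{lem:Phi inverse} and Theorem~\ref{thm:gRSK props}(3) to identify $\ov{M}$ with the matrix of the planar network $\Gamma_m^{\leq n}$ and apply Lindstr\"om/Gessel--Viennot to see that each minor is a positive Laurent polynomial in the $z'_{j,i}$. Your explicit observation that each $S_k = z'_{k,m} z'_{k+1,m} \cdots z'_{p,m}$ is a Laurent \emph{monomial} (needed for the reduced $Q$-invariants) matches the paper's appeal to~\eqref{eq:z shape}.
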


\begin{proof}
Theorem~\ref{thm:unfolded sum of minors} shows that every pseudo-energy is a positive polynomial in the minors of $\ov{M}$. Theorem~\ref{thm:gRSK props}(3) says that $Q = \Psi_m^{\leq n}(\ov{M})$, so $\ov{M} = \Phi_m^{\leq n}(Q)$ by Lemma~\ref{lem:Phi inverse}. It follows from the discussion in \S\ref{sec:GT} that $\ov{M}$ is the matrix associated to a planar network $\Gamma_m^{\leq n}$, whose edge weights are ratios of the $z'_{j,i}$. By the Lindstr\"om/Gessel--Viennot Lemma, each nonzero minor of $\ov{M}$ is a positive polynomial in the edge weights, and thus a Laurent polynomial in the $z'_{j,i}$. The statement about reduced $Q$-invariants follows from~\eqref{eq:z shape}, which says that the shape invariants are products of $z'_{j,m}$, the entries of the shape of the $Q$-pattern.
\end{proof}

\begin{remark}
The flag minors form a cluster in the coordinate ring of the space of lower triangular matrices, so the fact that all nonzero minors of a lower triangular matrix are positive Laurent polynomials in the flag minors is an example of the (positive) Laurent phenomenon in the theory of cluster algebras~\cite{FZ}. In fact, this is one of the examples that led Fomin and Zelevinsky to introduce the notion of cluster algebras.
\end{remark}

The planar network $\Gamma_m^{\leq n}$ provides a very convenient computational method for obtaining explicit formulas for reduced $Q$-invariants in terms of the $z'_{j,i}$.

\begin{figure}
\begin{center}
\begin{tikzpicture}

\foreach \a/\b/\c/\d in {0/1/1/0, 0/2/2/0, 0/3/3/0, 1/2/1/0, 2/1/2/0} {\draw (\a,\b) -- (\c,\d);}

\foreach \a/\b in {0/1,0/2,0/3} {
\filldraw (\a,\b) circle[radius=.04cm] node[left]{$\b$};
\filldraw (\b,\a) circle[radius=.04cm] node[below]{$\b'$};}

\foreach \a in {1,2,3} {\draw (0.6,\a-0.2) node{$z'_{\a\a}$};}
\foreach \a/\b in {1/2,2/3} {\draw (1.6,\a-0.1) node{$\frac{z'_{\a\b}}{z'_{\a\a}}$};}
\foreach \a/\b/\c in {1/2/3} {\draw (2.6,\a-0.1) node{$\frac{z'_{\a\c}}{z'_{\a\b}}$};}

\end{tikzpicture}
\end{center}
\caption{The network $\Gamma_3$.}
\label{fig:gamma 3}
\end{figure}
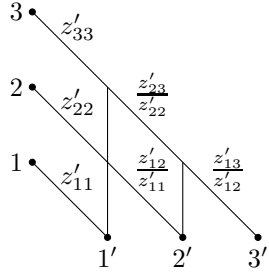

\begin{ex}
\label{ex:Q dec}
Let $m = n = 3$. The network $\Gamma_3 = \Gamma_3^{\leq 3}$ is shown in Figure~\ref{fig:gamma 3}. By Lindstr\"om/Gessel--Viennot and Example~\ref{ex:unfolded sum of minors}(b), we have
\begin{align*}
\lrQ{1}{1} = \dfrac{\lQ{1}{1}}{S_2S_3} &= \dfrac{\Delta_{13,12}(\ov{M})}{\Delta_{23,12}(\ov{M})} + \dfrac{\Delta_{3,2}(\ov{M})}{\Delta_{3,1}(\ov{M})} \bigskip\bigskip \\
&= \dfrac{z'_{11}z'_{33} \left(\frac{z'_{12}}{z'_{11}} + \frac{z'_{23}}{z'_{22}}\right)}{z'_{23}z'_{33}} + \dfrac{z'_{33} \left(\frac{z'_{12}}{z'_{11}} + \frac{z'_{23}}{z'_{22}}\right)}{z'_{33}} = \dfrac{z'_{12}}{z'_{23}} + \dfrac{z'_{11}}{z'_{22}} + \dfrac{z'_{12}}{z'_{11}} + \dfrac{z'_{23}}{z'_{22}}.
\end{align*}
In the same way, we compute
\begin{align*}
\lrQ{1}{2} & = \dfrac{\lQ{1}{2}}{S_2S_3} = \dfrac{\Delta_{2,1}(\ov{M})}{\Delta_{3,1}(\ov{M})} + \dfrac{\Delta_{23,13}(\ov{M})}{\Delta_{23,12}(\ov{M})} = \dfrac{z'_{22}}{z'_{33}} + \dfrac{z'_{13}}{z'_{12}},
\\
\lrQ{2}{1} & = \dfrac{\lQ{2}{1}}{S_2S_2} = \dfrac{\Delta_{12,12}(\ov{M})}{\Delta_{23,12}(\ov{M})} + \dfrac{\Delta_{23,13}(\ov{M}) \Delta_{13,12}(\ov{M})}{\Delta_{23,12}(\ov{M})\Delta_{23,12}(\ov{M})} + \dfrac{\Delta_{23,23}(\ov{M})}{\Delta_{23,12}(\ov{M})} = \dfrac{z'_{12}z'_{22}}{z'_{23}z'_{33}} + \dfrac{z'_{13}}{z'_{23}} + \dfrac{z'_{11}z'_{13}}{z'_{12}z'_{22}} + \dfrac{z'_{13}}{z'_{11}}.
\end{align*}
\end{ex}

\begin{remark}
In \S\ref{sec:central charge}, we will generalize Example~\ref{ex:Q dec} by showing that the sum of reduced $Q$-invariants $\lrQ{1}{j}$ is equal to the decoration of the $Q$-pattern.
\end{remark}

\begin{ex}
\label{ex:Q inv d=1}
Let $\lE{i}{j}$ be a $Q$-type loop elementary symmetric function which appears in the block $M_1$ of the unfolded matrix $\widetilde{M}$. In other words, $i \in [m]$ and $j \in [n]$ must satisfy $m-i-j \in [0,n-1]$; in particular, all $Q$-type loop elementary symmetric functions appear in this block when $n \geq m$. By Proposition~\ref{prop:JT matrix version}, the corresponding $Q$-invariant $\lQ{i}{j}$ is equal to $\Delta_{I,J}(\widetilde{M})$, where
\[
I = [n-K+1,n] \cup [n+j,2n], \qquad J = [1,K+1] \cup [n+1,2n-j],
\]
and $K = i+j-m-1+n$. Theorem~\ref{thm:unfolded sum of minors} gives the formulas
\begin{align*}
\lQ{i}{j} &= \sum_{X_1 \in \binom{[j,i+j]}{i}} \Delta_{[m-i-j+2,m], [1,j-1] \cup X_1}(\ov{M}) \Delta_{X_1 \cup [i+j+1,m], [1,m-j]}(\ov{M}) \\
&= \sum_{a=0}^{i} \Delta_{[m-i-j+2,m], [1,j+i] \setminus \{j+a\}}(\ov{M}) \Delta_{[j,m] \setminus \{j+a\}, [1,m-j]}(\ov{M})
\end{align*}
and
\[
\lrQ{i}{j} = \dfrac{\lQ{i}{j}}{S_{j+1}S_{n+1-K}} = \sum_{a=0}^{i} \dfrac{\Delta_{[m-i-j+2,m], [1,j+i] \setminus \{j+a\}}(\ov{M}) \Delta_{[j,m] \setminus \{j+a\}, [1,m-j]}(\ov{M})}{\Delta_{[m-i-j+2,m], [1,j+i-1]}(\ov{M}) \Delta_{[j+1,m], [1,m-j]}(\ov{M})}.
\]

Observe that these formulas are stable in $n$, in the sense that their only dependence on $n$ comes from the fact that the entries of $\ov{M}$ are polynomials in $n$ sets of variables. This has the following interpretation: if one fixes $m$ and increases $n$ from 1, new reduced $Q$-invariants appear until $n = m$, at which point all reduced $Q$-invariants are present, and they have stabilized to the above expression. This corresponds to the fact that the $Q$-pattern reaches its full size when $n = m$.
\end{ex}

%%% an example of a $Q$-invariant which is not in the block $M_1$.
%\begin{ex}
%For $m=5$ and $n=3$, we have
%\begin{align*}
%\lrQ{1}{1} &= \dfrac{1}{S_2^2} \Delta_{\{2,3\} \cup \{7,8,9\}, \{1,2,3\} \cup \{7,8\}}(\widetilde{M}) \\
%&= \dfrac{1}{\Delta_{2345,1234}(\ov{M})^2} \sum_{x \in \{4,5\}, y \in \{1,2\}} \Delta_{2345,123x}(\ov{M})\Delta_{x,y}(\ov{M})\Delta_{y345,1234}(\ov{M}) \\
%&= \dfrac{\Delta_{1345,1234}(\ov{M})}{\Delta_{2345,1234}(\ov{M})} + \dfrac{\Delta_{2345,1235}(\ov{M})}{\Delta_{2345,1234}(\ov{M})} + \Delta_{4,2}(\ov{M}),
%\end{align*}
%where the last equality uses the fact that $\ov{M}_{4,1} = \ov{M}_{5,2} = 1$ and $\ov{M}_{5,1} = 0$.
%\end{ex}

We saw in Proposition~\ref{prop:R=Weyl} that the action of the geometric $R$-matrix $R_i$ on $\Mat_{m \times n}(\CC^*)$ agrees with the action of the $\GL_m$-geometric crystal reflection operator $s_i$. Since geometric RSK is an isomorphism of geometric crystals, Corollary~\ref{cor:ov e} implies that the $Q$-invariants and shape invariants, when viewed as functions on the $Q$-pattern, are invariants for the birational action of the Weyl group $S_m$ on the Gelfand--Tsetlin geometric crystal $\GT_m^{\leq n}$. Moreover, Conjecture~\ref{conj:QS}(1) is equivalent to the following conjecture.

\begin{conj}
\label{conj:QQ}
The subfield of $\CC(z'_{j,i})$ fixed by the birational action of $S_m$ on $\GT_m^{\leq n}$ is generated by the (reduced) $Q$-invariants and shape invariants.
\end{conj}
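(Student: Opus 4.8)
The plan is to prove the statement in its equivalent form, Conjecture~\ref{conj:QS}(1), and transport it back to the $Q$-pattern via the equivalence recorded just before Conjecture~\ref{conj:QQ} (which rests on Theorem~\ref{thm:gRSK isom} and Proposition~\ref{prop:R=Weyl}). Write $L = \Inv_{\ov{e}}$ and let $F = \CC(\lQ{i}{j}, S_k)$ be the subfield generated by the $Q$-invariants and shape invariants. Since the geometric $R$-matrices $R_i = s_i$ commute with the $\ov{e}_j$ (Theorem~\ref{thm:bicrystal}), they preserve $L$, and $\Inv_{R\ov{e}} = L^{S_m}$ is the fixed field of the resulting birational $S_m$-action on $\GT_m^{\leq n}$. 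Granting that this action is faithful---a routine check, as the $R_i$ lift a faithful combinatorial action and move a generic point---Artin's theorem gives $[L : \Inv_{R\ov{e}}] = m!$, and faithfulness of a finite group action forces generic freeness. We have $F \subseteq \Inv_{R\ov{e}}$ because the $Q$-invariants and shape invariants are both $\ov{e}$- and $R$-invariant, and by the algebraic independence recorded after Conjecture~\ref{conj:QS} together with the transcendence-degree count at the start of \S\ref{sec:double}, $F$ and $\Inv_{R\ov{e}}$ have the same transcendence degree $\abs{Q}$. Thus $[\Inv_{R\ov{e}} : F] < \infty$, and the problem reduces to showing it equals $1$; equivalently, $[L : F] = m!$.

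Next I would reformulate this degree geometrically. By Corollary~\ref{cor:ov e}, $L = \CC(z'_{j,i})$ is the function field of the torus $\GT_m^{\leq n}$, and the generators of $F$ assemble into a dominant rational map
\[
\Theta = (\lQ{i}{j}, S_k) \colon \GT_m^{\leq n} \dashrightarrow \CC^{\abs{Q}},
\]
with $F = \CC(\operatorname{im}\Theta)$; dominance and equality of dimensions follow from the algebraic independence above. Over $\CC$ the map $\Theta$ is generically finite and separable, so $[L : F] = \deg \Theta = \#\,\Theta^{-1}(y)$ for a generic point $y$ of the image. As $\Theta$ is constant on $S_m$-orbits and the action is generically free, each generic fiber contains a full orbit of size $m!$, so $\deg \Theta \geq m!$ is automatic. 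Hence the entire content of Conjecture~\ref{conj:QQ} is the reverse inequality: \emph{a generic fiber of $\Theta$ is a single $S_m$-orbit}, i.e. the $Q$-invariants and shape invariants separate generic $S_m$-orbits.

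To establish orbit-separation, I would seek an explicit reconstruction: from the values of all $Q$-invariants and shape invariants at a generic point, recover the $S_m$-orbit. On the $\LSym$ side this amounts to recovering the factorization $\widetilde{M} = \widetilde{W}(\mb{x}_1)\cdots\widetilde{W}(\mb{x}_m)$ into whirls, up to reordering by the birational $R$-matrix, from $\ov{e}$-invariant data alone---a loop analogue of recovering the roots of a polynomial from its symmetric functions. The determinantal formula of Theorem~\ref{thm:new det formula} and the positive formula of Theorem~\ref{thm:unfolded sum of minors} supply the dictionary: they express every pseudo-energy (and, after clearing shape invariants via Proposition~\ref{prop:Q}, every $Q$-type $\lE{i}{j}$) in terms of $F$, and conversely identify the relevant minors of $\ov{M} = M(\mb{x}^1,\ldots,\mb{x}^n)$. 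I would use these to build, at a generic point, an algebraic procedure producing exactly the $m!$ reorderings of the whirls and no further solutions. A complementary route is to exhibit one explicit value $y_0$ lying in the \'etale locus of $\Theta$, compute $\Theta^{-1}(y_0)$ directly, and show it has precisely $m!$ points; by irreducibility of $\GT_m^{\leq n}$ this pins down $\deg \Theta = m!$.

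The main obstacle is exactly this upper bound $\deg \Theta \leq m!$: the reduction, the lower bound, and the expressibility of pseudo-energies in $F$ are formal or already established, but ruling out ``accidental'' coincidences---distinct $S_m$-orbits sharing all $Q$-invariant and shape-invariant values---is the genuine difficulty. The non-commutativity of the whirls obstructs any naive root-recovery, and $\ov{e}$-invariance means one reconstructs from a strict subset of the loop symmetric data. As a fallback I would first attack the tropical shadow, in the spirit of Kirillov and Berenstein's question: prove that the tropicalized $Q$-invariants and shape invariants separate generic $S_m$-orbits of Gelfand--Tsetlin patterns, then lift the count to the birational level using that these invariants are positive (Lemma~\ref{lem:back to tableau}), so that a semicontinuity argument controls the generic fiber size of $\Theta$.
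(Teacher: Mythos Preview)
The statement you are attempting to prove is Conjecture~\ref{conj:QQ}, which the paper explicitly leaves open; there is no proof in the paper to compare against. The text surrounding the conjecture records only the equivalence with Conjecture~\ref{conj:QS}(1), the transcendence-degree count, and the remark that it suffices to treat $n\le m$. Everything you write up through ``the problem reduces to showing $[L:F]=m!$'' is correct and matches exactly what the paper already establishes: algebraic independence of the $Q$-invariants and shape invariants (after Conjecture~\ref{conj:QS}), the Galois/Artin degree argument at the start of \S\ref{sec:double}, and the containment $F\subseteq\Inv_{R\ov e}$.

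The genuine gap is that your proposal does not actually carry out the orbit-separation step; it only names it. You correctly identify that the entire content of the conjecture is the inequality $\deg\Theta\le m!$, and you explicitly flag this as ``the main obstacle'' and ``the genuine difficulty,'' but then offer only programmatic suggestions (an unspecified ``algebraic procedure producing exactly the $m!$ reorderings,'' an \'etale-fiber computation at some unexhibited point $y_0$, or a tropical-shadow argument plus semicontinuity). None of these is executed, and each has serious issues: the whirl-factorization-up-to-$R$-matrix problem is precisely the open question; producing a single explicit $y_0$ with fiber of size exactly $m!$ is not obviously easier than the general statement; and the tropical fallback does not lift in the direction you need---tropicalization can only make fibers larger, so bounding tropical fiber sizes does not bound geometric ones. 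In short, your write-up is an accurate summary of why the statement is plausible and what remains to be done, but it is not a proof, and the paper does not claim one either.
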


\begin{remark}
In Conjecture~\ref{conj:QQ}, we view the $z'_{j,i}$ as indeterminates, rather than functions of the $x_i^j$. Since $\GT_m^{\leq n}$ stabilizes to $\GT_m$ when $n \geq m$, it is enough to prove Conjecture~\ref{conj:QQ} for $n \leq m$. It is therefore also enough to prove Conjecture~\ref{conj:QS}(1) for $n \leq m$.
\end{remark}

In~\cite{KB95}, Kirillov and Berenstein posed the problem of describing all piecewise-linear functions in the entries of a Gelfand--Tsetlin pattern that are invariant under the action of the crystal reflection operators. Conjecture~\ref{conj:QQ} proposes a solution to the ``geometric lifting'' of this problem. As for the original problem, Lemma~\ref{lem:back to tableau} implies that every pseudo-energy gives rise, by tropicalization, to an $S_m$-invariant piecewise-linear function on Gelfand--Tsetlin patterns, so we have obtained a large class of such functions.

\begin{question}
\label{q:QQ trop}
Do the tropicalizations of the (reduced) $Q$-invariants and shape invariants generate the semi-field of $S_m$-invariant piecewise-linear functions on a Gelfand--Tsetlin pattern? If not, is there a finite set of pseudo-energies whose tropicalizations have this property?
\end{question}

There does not seem to be an a priori implication---in either direction---between Conjecture~\ref{conj:QQ} and the corresponding conjecture at the piecewise-linear level.

%%%%%%%%%%%%%%%%%%%%%%%%%%%%%%%%%%%%%%%%%%%%%%%%%%%%%%%%%%%%%%%%%%%%%%%%%%

\section{Cylindric pseudo-energies}
\label{sec:cylindric}

The main actors in the previous two sections were the skew loop Schur functions, which are the minors of the infinite matrix $\widetilde{M}$. This section is devoted to cylindric loop Schur functions, a class of loop symmetric functions coming from minors of the folded matrix $\wh{M}(t)$. Cylindric Schur functions in the ring of symmetric functions were introduced independently by Postnikov and McNamara~\cite{Postnikov05,McNamara}, and generalized to the loop setting by Lam, Pylyavskyy, and Sakamoto~\cite{LPS15}.

In \S\ref{sec:cyl loop Schur}, we define cylindric loop Schur functions as generating functions for cylindric tableaux. We then prove a cylindric Jacobi--Trudi formula, which expresses each cylindric loop Schur function as the coefficient (up to sign) of a power of $t$ in a minor of $\wh{M}(t)$, and vice versa. The proof uses a network obtained by wrapping the infinite network introduced in~\S\ref{sec:unfolded sum of minors} around a cylinder. In \S\ref{sec:folded sum of minors}, we identify a class of \defn{cylindric pseudo-energies} and prove cylindric analogues of the two main theorems about pseduo-energies (Theorems~\ref{thm:new det formula} and~\ref{thm:unfolded sum of minors}). In~\S\ref{sec:applications}, we use the cylindric analogue of Theorem~\ref{thm:unfolded sum of minors} to obtain formulas for energy and cocharge.

\subsection{Cylindric loop Schur functions}
\label{sec:cyl loop Schur}

Fix $k \in [n]$. Say that a partition $\la$ is \defn{$k$-cylindric} if
\[
\la_1 \leq k, \qquad \text{ and } \qquad \la'_1 - \la'_k \leq n-k.
\]
If $\la$ is $k$-cylindric, the union of all translates of the Young diagram of $\la$ by integer multiples of the vector $(-n+k,k)$ is an infinite skew shape, which we denote by $\tw{\la}$.\footnote{We index cells of Young diagrams with matrix-style coordinates, so the translation consists of $k$ steps to the right and $n-k$ steps up.} More generally, a \defn{$k$-cylindric shape} is a skew shape of the form $\la/\mu$, where $\mu \subseteq \la$ are $k$-cylindric partitions. (When we refer to a $k$-cylindric shape $\la/\mu$, we will always assume that the partitions $\mu$ and $\la$ are themselves $k$-cylindric.) We say that a semistandard Young tableau $T$ of shape $\la/\mu$ is a \defn{k-cylindric tableau} if the filling of $\tw{\la}/\tw{\mu}$ obtained by placing $T$ in each translate of $\la/\mu$ has weakly increasing rows and strictly increasing columns. See Figure~\ref{fig:cyl tabs} for an example and a non-example. We write $\Cyl_{\leq m}^k(\la/\mu)$ for the set of $k$-cylindric tableaux of shape $\la/\mu$ with entries at most $m$.

\begin{figure}
\begin{center}
\begin{tikzpicture}[scale=1.5]

\draw (5,1/2) node{$T=$};

\draw (6-1/3,1/2) rectangle node{$2$} (6+0/3,5/6);
\draw (6-2/3,1/2) rectangle node{$1$} (6-1/3,5/6);
\draw (6-2/3,1/6) rectangle node{$3$} (6-1/3,1/2);

\begin{scope}[xshift=1.3cm, yshift=-0.5cm]
\draw (5.15,0.1) node{$\iddots$};
\draw (7.5,1.7) node{$\iddots$};

\draw (6-1/3,1/2) rectangle node{$2$} (6+0/3,5/6);
\draw (6-2/3,1/2) rectangle node{$1$} (6-1/3,5/6);
\draw (6-2/3,1/6) rectangle node{$3$} (6-1/3,1/2);

\begin{scope}[xshift=0.66667cm, yshift=0.33333cm]
\draw (6-1/3,1/2) rectangle node{$2$} (6+0/3,5/6);
\draw (6-2/3,1/2) rectangle node{$1$} (6-1/3,5/6);
\draw (6-2/3,1/6) rectangle node{$3$} (6-1/3,1/2);
\end{scope}

\begin{scope}[xshift=1.33333cm, yshift=0.66667cm]
\draw (6-1/3,1/2) rectangle node{$2$} (6+0/3,5/6);
\draw (6-2/3,1/2) rectangle node{$1$} (6-1/3,5/6);
\draw (6-2/3,1/6) rectangle node{$3$} (6-1/3,1/2);
\end{scope}
\end{scope}

\begin{scope}[xshift=5.5cm]
\draw (5,1/2) node{$U=$};

\draw (6-1/3,1/2) rectangle node{$3$} (6+0/3,5/6);
\draw (6-2/3,1/2) rectangle node{$1$} (6-1/3,5/6);
\draw (6-2/3,1/6) rectangle node{$2$} (6-1/3,1/2);

\begin{scope}[xshift=1.3cm, yshift=-0.5cm]
\draw (5.15,0.1) node{$\iddots$};
\draw (7.5,1.7) node{$\iddots$};

\draw (6-1/3,1/2) rectangle node{$3$} (6+0/3,5/6);
\draw (6-2/3,1/2) rectangle node{$1$} (6-1/3,5/6);
\draw (6-2/3,1/6) rectangle node{$2$} (6-1/3,1/2);

\begin{scope}[xshift=0.66667cm, yshift=0.33333cm]
\draw (6-1/3,1/2) rectangle node{$3$} (6+0/3,5/6);
\draw (6-2/3,1/2) rectangle node{$1$} (6-1/3,5/6);
\draw (6-2/3,1/6) rectangle node{$2$} (6-1/3,1/2);
\end{scope}

\begin{scope}[xshift=1.33333cm, yshift=0.66667cm]
\draw (6-1/3,1/2) rectangle node{$3$} (6+0/3,5/6);
\draw (6-2/3,1/2) rectangle node{$1$} (6-1/3,5/6);
\draw (6-2/3,1/6) rectangle node{$2$} (6-1/3,1/2);
\end{scope}
\end{scope}
\end{scope}

\end{tikzpicture}
\end{center}

\caption{Suppose $k=2$ and $n=3$. Two semistandard Young tableaux of shape $\la = (2,1)$ are shown, together with the corresponding fillings of the infinite skew shape $\tw{\la}$. $T$ is $2$-cylindric, but $U$ is not.}
\label{fig:cyl tabs}
\end{figure}
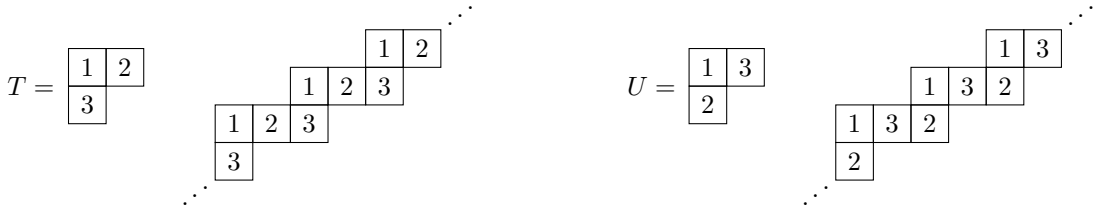

Given a $k$-cylindric shape $\la/\mu$ and a color $r \in \ZZ/n\ZZ$, define the \defn{cylindric loop Schur function}
\[
cs_{\la/\mu;k}^{(r)} = cs_{\la/\mu;k}^{(r)}(\mb{x}_1, \ldots, \mb{x}_m) = \sum_{T \in \Cyl^k_{\leq m}(\la/\mu)} \xx^T,
\]
where $\xx^T = \prod_{s \in \lambda/\mu} \lx{T(s)}{c(s)+r}$ as in the definition of loop Schur functions. Note that translation by $(-n+k,k)$ does not change the value of $c(s)$ modulo $n$, so we may use any fundamental domain of $\tw{\la}/\tw{\mu}$ to compute the weight. Also, different choices of $\la$ and $\mu$ may give rise to translations of the same infinite skew shape $\tw{\la}/\tw{\mu}$, leading to multiple expressions for the same cylindric loop Schur function. For example, when $n=3$, we have
\[
cs^{(r)}_{(2,1);2} = cs^{(r+1)}_{(2,2)/(1);2}.
\]

\begin{ex}
\label{ex:special cyl loop Schurs}
\
\begin{enumerate}
\item[(a)] A $1$-cylindric shape is a single column, and the corresponding cylindric loop Schur function is just a loop elementary symmetric function.

\item[(b)] An $(n-1)$-cylindric partition is uniquely determined by its number of boxes $N$, since each of the first $\lfloor N / (n-1) \rfloor$ rows must have length $n-1$, and the remaining boxes lie in the last row. We denote this unique partition by $\la(N)$. By \cite[Lem.~6.5]{LPS15}, we have
\begin{equation}
\label{eq_tau_def}
cs^{(r)}_{\la(N); n-1}(\mb{x}_1, \ldots, \mb{x}_m) = \sum_{\substack{1 \leq i_1 \leq i_2 \leq \cdots \leq i_N \leq m, \\ \#\{i_j = a\} \leq n-1}} \lx{i_1}{r} \lx{i_2}{r-1} \cdots \lx{i_N}{r-N+1} =: \tau^{(r)}_N(\mb{x}_1, \ldots, \mb{x}_m),
\end{equation}
where the sum is over weakly increasing sequences in which no number appears more than $n-1$ times.\footnote{For $k=n-1$, no further generality is gained by considering $s^{(r)}_{{\la/\mu}; n-1}$, since in this case every infinite skew shape $\tw{\la}/\tw{\mu}$ is a translation of $\tw{\nu}$ for some $(n-1)$-cylindric partition $\nu$.} The cylindric loop Schur functions $\tau^{(r)}_N$ play an important role in \S\ref{sec:energy}.

\item[(c)] The case $k = n$ is rather degenerate, since there is no vertical shift. The only $n$-cylindric shapes are the rectangles with $n$ columns, and a semistandard tableau is $n$-cylindric if and only if each row is constant. Thus, the cylindric loop Schur functions for $k=n$ are the (ordinary) elementary symmetric polynomials in $\pi_1, \ldots, \pi_m$, where $\pi_i = \lx{i}{1} \cdots \lx{i}{n}$. These polynomials were not considered to be cylindric loop Schur functions in~\cite{LPS15}, but we feel it is natural to include them in the definition.
\end{enumerate}
\end{ex}

\begin{dfn}
Suppose $\la$ is a $k$-cylindric partition with conjugate $\la' = (\la_1', \ldots, \la_k')$. If $\la_1' \geq n-k+1$, let $\la^\flat$ be the $k$-cylindric partition whose conjugate is given by
\begin{equation}
\label{eq_remove_border_strip}
(\la^\flat)' = (\la_2' - 1, \la_3'-1, \ldots, \la_k'-1,\la_1'-n+k-1).
\end{equation}
This is the partition obtained from $\la$ by removing the border strip of size $n$ which contains all boxes in the bottom row. If $\la_1' \leq n-k$, then there is no such border strip, and $\la^\flat$ is undefined.

Let $\la/\mu$ be a $k$-cylindric shape. If $\la^\flat$ is defined and $\mu \subseteq \la^\flat$, let $R(\la/\mu) = \la^\flat/\mu$; otherwise $R(\la/\mu)$ is undefined (this is not the same as saying that $R(\la/\mu)$ is the empty shape).
\end{dfn}

\begin{ex}
\label{ex:cyl shapes R}
Let $\la = (5,5,5,5,2,1), \mu = (2), k = 5, n = 7$. The $k$-cylindric shapes $\la/\mu, R(\la/\mu), R^2(\la/\mu)$ are shown below; $R^3(\la/\mu)$ is undefined.
\ytableausetup{smalltableaux, nocentertableaux}
\[
\ydiagram{2+3,5,5,5,2,1}
\qquad\qquad
\ydiagram{2+3,5,5,1}
\qquad\qquad
\ydiagram{2+3,4}
\]
\end{ex}

\begin{remark}
\label{rem:Postnikov}
The operation $R(\la/\mu)$ is defined if and only if the infinite skew shape $\tw{\la}/\tw{\mu}$ is connected. In this case, the infinite skew shape associated to $R(\la/\mu)$ is obtained from $\tw{\la}/\tw{\mu}$ by removing the infinite border strip consisting of all boxes $(i,j) \in \la/\mu$ such that $(i+1,j+1) \not \in \la/\mu$. This corresponds to decreasing the parameter $d$ by 1 in Postnikov's notation $\kappa/d/\rho$ for cylindric shapes~\cite{Postnikov05}.
\end{remark}

Given $k$-cylindric partitions $\mu \subseteq \la$ with conjugates $\mu' = (\mu_1', \ldots, \mu_k')$ and $\la' = (\la_1', \ldots, \la_k')$, and a color $r \in [n]$, we define $k$-element subsets
\begin{align*}
I(\mu,r) &= \{\mu'_k-k+1+r, \mu'_{k-1}-k+2 + r, \ldots, \mu'_1+r\}, \\
J(\la,r) &= \{\la'_k-k+1+r-m, \la'_{k-1}-k+2 + r-m, \ldots, \la'_1+r-m\}
\end{align*}
as in~\eqref{eq:subsets JT matrix version}. The $k$-cylindric condition guarantees that $I(\mu,r)$ and $J(\la,r)$ are each contained in an interval of length $n$, so by reducing modulo $n$, we obtain $k$-element subsets $\wh{I}(\mu,r), \wh{J}(\la,r) \subseteq [n]$. For a $k$-element subset $S = \{s_1, \ldots, s_k\} \subset \ZZ$, define $d_*(S) = d_1 + \cdots + d_k$, where $d_a$ is the unique integer such that $s_a + d_a n \in [n]$.

Given $k$-element subsets $I = \{i_k < \cdots < i_1\}, J = \{j_k < \cdots < j_1\} \subseteq [n]$, let $\mu_I$ and $\la_J$ be the $k$-cylindric partitions whose conjugates are given by
\begin{align*}
(\mu_I)' &= (i_1-k, \: i_2-(k-1), \: \ldots, \: i_k-1), \\
(\la_J)' &= (m+j_1-k, \: m+j_2-(k-1), \: \ldots, \: m+j_k-1).
\end{align*}

\begin{lemma}
\label{lem:subsets and partitions}
\
\begin{enumerate}
\item If $I$ and $J$ are $k$-element subsets of $[n]$, then $I(\mu_I,k) = I$ and $J(\la_J,k) = J$.
\item Suppose $\la$ is a $k$-cylindric partition and $J(\la,r) = \{j_k < \cdots < j_1\}$. If $\la^\flat$ is defined, then
\[
J(\la^\flat,r) = \{j_1-n < j_k < \cdots < j_2\}.
\]
This implies that $\wh{J}(\la^\flat,r) = \wh{J}(\la,r)$ and $d_*(J(\la^\flat,r)) = d_*(J(\la,r))+1$.
\end{enumerate}
\end{lemma}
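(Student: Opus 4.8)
The plan is to prove both parts by directly unwinding the definitions of the conjugate partitions $\mu_I, \la_J$ and the subsets $I(\mu,r), J(\la,r)$, treating part (2) as the substantive case since part (1) is essentially a bookkeeping verification. First I would handle part (1). Writing $I = \{i_k < \cdots < i_1\}$, by definition $(\mu_I)'_a = i_{k-a+1} - a$ for $a \in [k]$ (so that $(\mu_I)'_1 = i_1 - k \geq \cdots \geq (\mu_I)'_k = i_k - 1$, which one checks is weakly decreasing precisely because the $i_a$ are distinct integers). Substituting into the formula
\[
I(\mu_I, k) = \{(\mu_I)'_k - k + 1 + k, \: (\mu_I)'_{k-1} - k + 2 + k, \: \ldots, \: (\mu_I)'_1 + k\},
\]
the $a$-th entry from the right is $(\mu_I)'_a - a + 1 + k = (i_{k-a+1} - a) - a + 1 + k$; I would verify this simplifies back to $i_{k-a+1}$ after correctly tracking the reindexing in the definition of $I(\mu,r)$ (the exponents $-k+1+r, \ldots, +r$ are attached to $\mu'_k, \ldots, \mu'_1$ respectively). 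The computation for $J(\la_J, k) = J$ is identical after accounting for the uniform shift by $m$ that appears in both $(\la_J)'$ and in the definition of $J(\la,r)$; these two shifts by $m$ cancel exactly.

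The core of the lemma is part (2), describing what happens to $J(\la,r)$ under the border-strip removal operation $\la \mapsto \la^\flat$. The key input is the explicit formula~\eqref{eq_remove_border_strip}: $(\la^\flat)' = (\la_2'-1, \la_3'-1, \ldots, \la_k'-1, \la_1'-n+k-1)$. First I would record the entries of $J(\la,r)$: writing $J(\la,r) = \{j_k < \cdots < j_1\}$, the definition gives $j_a = \la'_a + r - m - (a-1)$ for $a \in [k]$ (the entry paired with $\la'_a$ carries offset $-(a-1)$, i.e. $\la'_1$ gets $+r-m$, $\la'_2$ gets $-1+r-m$, etc., reversed in the listing). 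Then I would compute $J(\la^\flat, r)$ by feeding $(\la^\flat)'$ through the same formula. For $a = 1, \ldots, k-1$ the $a$-th conjugate part of $\la^\flat$ is $\la'_{a+1} - 1$, and one checks the resulting subset entry equals $\la'_{a+1} + r - m - a = j_{a+1}$; for the last part $(\la^\flat)'_k = \la'_1 - n + k - 1$, the corresponding entry is $(\la'_1 - n + k - 1) + r - m - (k-1) = \la'_1 + r - m - n = j_1 - n$. This yields exactly the claimed set $\{j_1 - n < j_k < \cdots < j_2\}$, once I confirm the ordering: since $j_k < \cdots < j_2 < j_1$ and the $k$-cylindric bound $\la'_1 - \la'_k \leq n - k$ forces $j_1 - n < j_k$, the listing is genuinely increasing.

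The two consequences then follow immediately. Since $\{j_1 - n, j_k, \ldots, j_2\}$ is obtained from $\{j_1, \ldots, j_k\}$ by replacing the single element $j_1$ with $j_1 - n$, and reduction modulo $n$ is insensitive to subtracting $n$, the reduced sets satisfy $\wh{J}(\la^\flat, r) = \wh{J}(\la, r)$. For the $d_*$-statistic, recall $d_*(S) = \sum_a d_a$ where $d_a$ is the shift placing $s_a$ into $[n]$; replacing $j_1$ by $j_1 - n$ increases its individual shift $d_a$ by exactly $1$ and leaves all other shifts unchanged, so $d_*(J(\la^\flat,r)) = d_*(J(\la,r)) + 1$.

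I do not expect a serious obstacle here; the only real risk is an off-by-one or a reversal error in matching the indexing conventions of $I(\mu,r), J(\la,r)$ in~\eqref{eq:subsets JT matrix version} against the conjugate-partition definitions, so the main care will be in fixing the correspondence $a \leftrightarrow$ (position in the listed subset) once and verifying the $k$-cylindric inequality $\la'_1 - \la'_k \leq n-k$ is exactly what guarantees $J(\la,r) \subset$ an interval of length $n$ and hence that the reduction and the ordering claim both make sense. I would state that inequality check explicitly as the one place the cylindric hypothesis is used.
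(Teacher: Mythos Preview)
Your approach is correct and is exactly what the paper expects: it explicitly leaves this proof to the reader, so direct verification via the definitions is the intended argument. One small slip: your displayed formula $(\mu_I)'_a = i_{k-a+1} - a$ is inconsistent with your own parenthetical (and with the paper's definition); the correct expression is $(\mu_I)'_a = i_a - (k-a+1)$, which then gives $(\mu_I)'_a - a + 1 + k = i_a$ as needed.
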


The proof of Lemma~\ref{lem:subsets and partitions} is left to the reader. We come to the main result of this section.

\begin{thm}[Cylindric Jacobi--Trudi formula]
\label{thm:cyl JT}
Let $\wh{M}(t)$ be the folded version of $\widetilde{M}(\mb{x}_1, \ldots, \mb{x}_m)$.
\begin{enumerate}
\item Let $\mu \subseteq \la$ be $k$-cylindric partitions, and $r \in \ZZ/n\ZZ$ a color. We have
\[
cs^{(r)}_{\la/\mu; k}(\mb{x}_1, \ldots, \mb{x}_m) = (-1)^{(k-1)d_*} [t^{d_*}] \Delta_{\wh{I}(\mu,r), \wh{J}(\mu,r)}(\wh{M}(t)),
\]
where $[t^c]$ denotes the coefficient of $t^c$ and $d_* = d_*(J(\la,r)) - d_*(I(\mu,r))$.

\item Let $I$ and $J$ be $k$-element subsets of $[n]$. We have
\begin{equation}
\label{eq:cyl JT}
\Delta_{I,J}(\wh{M}(t)) = \sum_{d = 0}^{d_{\max}} ((-1)^{k-1}t)^d \cdot cs^{(k)}_{R^d(\la_J/\mu_I); k}(\mb{x}_1, \ldots, \mb{x}_m),
\end{equation}
where $d_{\max}$ is the largest $d$ such that $R^d(\la_J/\mu_I)$ is defined (if $\mu_I \not \subseteq \la_J$, we set $d_{\max} = -1$).
\end{enumerate}
\end{thm}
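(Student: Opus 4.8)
The plan is to prove both parts of Theorem~\ref{thm:cyl JT} by relating cylindric loop Schur functions to minors of $\wh{M}(t)$ through the folding correspondence $A \mapsto \wh{A}(t)$ introduced in \S\ref{sec:LSym matrix}, using the planar network interpretation of the unfolded matrix $\widetilde{M}$. The underlying principle is that cylindric tableaux of a $k$-cylindric shape correspond to \emph{families of non-intersecting highway paths on a cylinder}, obtained by wrapping the network $\Net_{m,n}$ around so that rows are identified modulo $n$ (with a twist recording the vertical shift $(-n+k,k)$). I would first establish a cylindric analogue of Lemma~\ref{lem:A=M} and Lemma~\ref{lem:highway Lind}: a minor $\Delta_{I,J}(\wh{M}(t))$, expanded as a power series in $t$, has the coefficient of $t^d$ computed by the Lindstr\"om/Gessel--Viennot Lemma applied to the cylindric network, where the power of $t$ records the total winding number (equivalently, $d_*(J)-d_*(I)$ shifted appropriately). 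The sign $(-1)^{(k-1)d}$ arises because when a family of $k$ non-intersecting paths winds once around the cylinder, the induced permutation of endpoints is a $k$-cycle, contributing sign $(-1)^{k-1}$ to the determinant.

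For part (1), I would fix $k$-cylindric partitions $\mu \subseteq \la$ and color $r$, and show directly that the weighted count of $k$-cylindric tableaux $\Cyl^k_{\leq m}(\la/\mu)$ equals $(-1)^{(k-1)d_*}$ times the coefficient of $t^{d_*}$ in $\Delta_{\wh I(\mu,r),\wh J(\la,r)}(\wh M(t))$, where $d_* = d_*(J(\la,r)) - d_*(I(\mu,r))$. The key observation is that the sets $I(\mu,r), J(\la,r)$ defined here are exactly the Maya-diagram subsets of Proposition~\ref{prop:JT matrix version}, so a $k$-cylindric tableau lifts to an infinite skew tableau on $\tw\la/\tw\mu$ whose fundamental domain corresponds to a non-intersecting highway family in the cylindric network from sources $\wh I(\mu,r)$ to sinks $\wh J(\la,r)$. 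The reduction modulo $n$ that produces $\wh I, \wh J$ from $I,J$ is precisely the folding operation, and the winding number of the family is $d_*$; I would verify that the sign and power of $t$ match by tracking how each vertical wrap of a path contributes a factor of $t$ and how the cyclic reindexing of sinks contributes the sign.

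For part (2), I would run the same computation in reverse: given $k$-element subsets $I,J \subseteq [n]$, Lemma~\ref{lem:subsets and partitions}(1) identifies them as $\wh I(\mu_I,k), \wh J(\la_J,k)$, so expanding $\Delta_{I,J}(\wh M(t))$ as a power series, the coefficient of $t^d$ is $(-1)^{(k-1)d}$ times the count of highway families with winding number $d$. Such families correspond to cylindric tableaux of shape $R^d(\la_J/\mu_I)$: each increment of the winding number removes one infinite border strip of size $n$, which is exactly the operation $R$ described in Remark~\ref{rem:Postnikov}, and Lemma~\ref{lem:subsets and partitions}(2) confirms that $R$ shifts $d_*(J)$ by one while fixing $\wh J$. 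The summation terminates at $d_{\max}$ because $R^d(\la_J/\mu_I)$ becomes undefined precisely when the shape is no longer connected, i.e.\ when no further border strip can be removed, and the convention $d_{\max}=-1$ for $\mu_I \not\subseteq \la_J$ matches the vanishing of the minor.

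The main obstacle will be the careful bookkeeping of the winding number, the power of $t$, and the sign simultaneously on the cylindric network. In the non-cylindric setting the Lindstr\"om/Gessel--Viennot determinant has a single permutation (forced to be the identity by planarity and the ordering of sources/sinks), but on a cylinder the endpoint permutation can be any power of the long $k$-cycle, and I must show that a family with winding number $d$ induces exactly the permutation whose sign is $(-1)^{(k-1)d}$ and picks up exactly $t^d$. Establishing this requires a clean definition of the cylindric network (wrapping $\Net_{m,n}$ with the correct $(-n+k,k)$ identification so that vertex weights remain well-defined and each full wrap contributes one factor of $t$), together with a verification that non-intersecting highway families on the cylinder biject with $k$-cylindric tableaux in a weight-preserving way. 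Once that infrastructure is in place, both parts follow from the folding isomorphism $A \mapsto \wh A(t)$ and the identity $\wh M(t)_{ij} = \sum_{d \geq 0} t^d \lE{m+j-i-nd}{i}$ of~\eqref{eq:hat M entries}, with Lemma~\ref{lem:subsets and partitions} supplying the combinatorial translation between subsets and cylindric shapes.
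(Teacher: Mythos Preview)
Your proposal is correct and follows essentially the same route as the paper: fold $\Net_{m,n}$ into a cylindric network, invoke a cylindric Lindstr\"om/Gessel--Viennot lemma (the paper cites \cite[Thm.~3.5]{LP12}) so that the coefficient of $t^d$ in $\Delta_{I,J}(\wh{M}(t))$ counts non-intersecting highway families of winding number $d$ with sign $(-1)^{(k-1)d}$, and then biject such families with $k$-cylindric tableaux (via lifting to periodic families in the universal cover $\Net_{m,n}$), using Lemma~\ref{lem:subsets and partitions} to translate between subsets and shapes for part~(2). One small correction: the cylindric network $\wh{\Net}_{m,n}$ is obtained by identifying rows $i$ and $i+n$ with \emph{no} $k$-dependent twist---the vertex weights are already $n$-periodic, and $k$ enters only through the choice of $k$-element source and sink sets; the $(-n+k,k)$ shift lives on the tableau side, not the network side.
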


Note that if $\la_J/\mu_I$ is defined, Remark~\ref{rem:Postnikov} implies that $d_{\max}$ is equal to the length of the shortest diagonal in $\widetilde{\la_J}/\widetilde{\mu_I}$.

\begin{ex}
\label{ex:cyl JT}
\
\begin{enumerate}
\item[(a)] Let $m=7$ and $n=5$, and consider the $3$-cylindric shape $\la/\mu = (3,3,3,3,2,1)/(2)$. We have
\[
I(\mu,4) = \{2,4,5\}, \qquad J(\la,4) = \{-1,1,3\}, \qquad d_*(I(\mu,4)) = 0, \qquad d_*(J(\la,4)) = 1,
\]
so part (1) of Theorem~\ref{thm:cyl JT} asserts that $cs^{(4)}_{\la/\mu; 3}(\mb{x}_1, \ldots, \mb{x}_7)$ is the coefficient of $t$ in $\Delta_{245,134}(\wh{M}(t))$.

\item[(b)] Suppose $m=4,n=7$, and let $I = \{1,2,3,5,6\}, J= \{1,2,3,5,7\}$. Then $\la := \la_J$ and $\mu := \mu_I$ are the partitions from Example~\ref{ex:cyl shapes R}. Since $\la/\mu$ has a column of length 5, the cylindric loop Schur function $cs^{(5)}_{\la/\mu; 5}(\mb{x}_1, \mb{x}_2, \mb{x}_3, \mb{x}_4)$ vanishes, and part (2) of Theorem~\ref{thm:cyl JT} says that
\[
\Delta_{I,J}(\wh{M}(t)) = t \cdot cs^{(5)}_{R(\la/\mu); 5}(\mb{x}_1, \mb{x}_2, \mb{x}_3, \mb{x}_4) + t^2 \cdot cs^{(5)}_{R^2(\la/\mu); 5}(\mb{x}_1, \mb{x}_2, \mb{x}_3, \mb{x}_4).
\]
(Alternatively, the lower triangularity of $\wh{M}(0)$ implies that $\Delta_{I,J}(\wh{M}(t))$ has no constant term.)
\end{enumerate}
\end{ex}

To prove Theorem~\ref{thm:cyl JT}, we use a version of the Lindstr\"om/Gessel--Viennot Lemma for networks on a cylinder. Recall the infinite planar network $\Net_{m,n}$ introduced in \S\ref{sec:unfolded sum of minors}. We may ``fold'' or ``project'' this network onto a cylinder by identifying rows $i$ and $i+n$ for all $i$. We call the resulting network $\wh{\Net}_{m,n}$, and we depict it as in Figure~\ref{fig:cyl JT}. In this depiction, the dashed red lines are identified, and we call this line the \defn{distinguished chord}. Given a highway path $p$ in $\wh{\Net}_{m,n}$, we define its \defn{winding number}, denoted $\wind(p)$, to be the number of times that $p$ crosses the distinguished chord.\footnote{Paths in $\wh{\Net}_{m,n}$ can only cross the distinguished chord in one direction; for more general cylindric networks, one would count each crossing with a sign to account for its direction.} The winding number of a family of highway paths is the sum of the winding numbers of each of its paths. For example, the family of highway paths shown in Figure~\ref{fig:cyl JT} has winding number one.

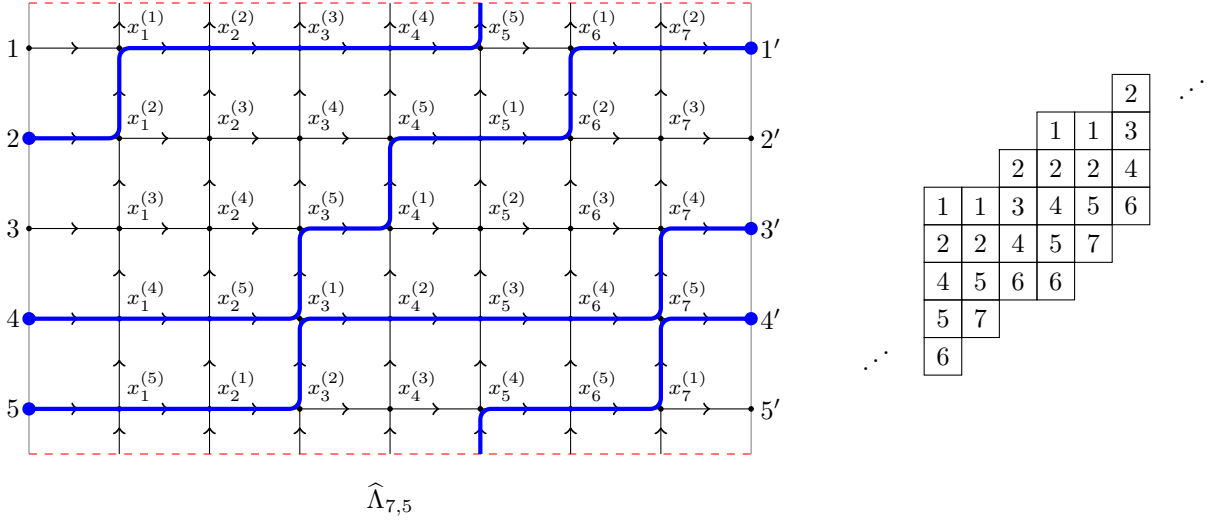
\begin{figure}
\begin{tikzpicture}[scale=0.6]

\pgfmathtruncatemacro{\m}{7};
\pgfmathtruncatemacro{\n}{5};

% cylindric network for $\wh{M}(t)$
\draw (\m+1,-1) node{$\wh{\Net}_{7,5}$};

% boundary
\draw[gray] (0,2*\n) -- (0,0);
\draw[gray] (2*\m+2,2*\n) -- (2*\m+2,0);
\draw[red,dashed] (0, 2*\n) -- (2*\m+2,2*\n);
\draw[red,dashed] (0,0) -- (2*\m+2,0);

% horizontal lines
\foreach \x in {1,...,\m} {
\draw (2*\x,0) -- (2*\x,2*\n);
\draw[thick,->] (2*\x,0.57) -- (2*\x,0.59);
\draw[thick,->] (2*\x,2*\n-0.5+0.07) -- (2*\x,2*\n-0.5+0.09);}

% vertical lines, sources, sinks
\foreach \y in {1,...,\n} {
\pgfmathtruncatemacro{\row}{\n-\y+1};
\filldraw (0,2*\y-1) circle [radius = .05] node[left] {$\row$};
\filldraw (2*\m+2,2*\y-1) circle [radius = .05] node[right] {$\row'$};
\draw (2*\m+2,2*\y-1) -- (0,2*\y-1);
\draw[thick,->] (2*\m+1.07,2*\y-1) -- (2*\m+1.09,2*\y-1);}

% vertex weights
\foreach \x in {1,...,\m} {
\foreach \y in {1,...,\n} {
\draw[thick,->] (2*\x-0.93,2*\y-1) -- (2*\x-0.91,2*\y-1);
\ifnum \y < \n \draw[thick,->] (2*\x,2*\y+0.07) -- (2*\x,2*\y+0.09); \fi
\filldraw (2*\x,2*\y-1) circle [radius = .05];
\pgfmathtruncatemacro{\var}{\x};
\pgfmathtruncatemacro{\color}{mod(\n-\y+\x,\n)};
\ifnum \color < 1 \pgfmathtruncatemacro{\color}{\color+\n}; \fi
\footnotesize
\draw (2*\x,2*\y-1) node[above right]{$\lx{\var}{\color}$};}}
\normalsize

% highway paths
\foreach \x/\y in {5/1,13/3,    5/3,7/5,11/7,    1/7,9/9,13/1} {\draw[ultra thick,rounded corners, blue] (\x,\y) -- (\x+1,\y) -- (\x+1,\y+1);}
\foreach \x/\y in {6/2,14/4,    6/4,8/6,12/8,    2/8,10/0,14/2} {\draw[ultra thick,rounded corners, blue] (\x,\y) -- (\x,\y+1) -- (\x+1,\y+1);}
\foreach \x/\y in {1/1,3/1,7/3,9/3,11/3,    1/3,3/3,9/7,13/9,    3/9,5/9,7/9,11/1} {\draw[ultra thick, blue] (\x,\y) -- (\x+2,\y);}
\foreach \x/\y in {0/1,0/3,0/7} {\filldraw[ultra thick, blue] (\x,\y) circle [radius = 0.1] -- (\x+1,\y);}
\foreach \x/\y in {15/3,15/5,15/9} {\filldraw[ultra thick, blue] (\x,\y) -- (\x+1,\y) circle [radius = 0.1];}

\begin{scope}[xshift=9cm,yshift=5.5cm,scale=2.5]
\draw (3.9,-1.3) node{$\iddots$};
\draw (6.7,1.1) node{$\iddots$};

\draw (6,5/6) rectangle node{$2$} (6+1/3,7/6);

\draw (6-2/3,1/2) rectangle node{$1$} (6-1/3,5/6);
\draw (6-1/3,1/2) rectangle node{$1$} (6+0/3,5/6);
\draw (6,1/2) rectangle node{$3$} (6+1/3,5/6);

\draw (6-2/3,1/6) rectangle node{$2$} (6-1/3,1/2);
\draw (6-1/3,1/6) rectangle node{$2$} (6,1/2);
\draw (6,1/6) rectangle node{$4$} (6+1/3,1/2);

\draw (6-2/3,-1/6) rectangle node{$4$} (6-1/3,1/6);
\draw (6-1/3,-1/6) rectangle node{$5$} (6,1/6);
\draw (6,-1/6) rectangle node{$6$} (6+1/3,1/6);

\draw (6-2/3,-1/2) rectangle node{$5$} (6-1/3,-1/6);
\draw (6-1/3,-1/2) rectangle node{$7$} (6,-1/6);

\draw (6-2/3,-5/6) rectangle node{$6$} (6-1/3,-1/2);

\begin{scope}[xshift=-1cm,yshift=-0.667cm]
\draw (6,5/6) rectangle node{$2$} (6+1/3,7/6);

\draw (6-2/3,1/2) rectangle node{$1$} (6-1/3,5/6);
\draw (6-1/3,1/2) rectangle node{$1$} (6+0/3,5/6);
\draw (6,1/2) rectangle node{$3$} (6+1/3,5/6);

\draw (6-2/3,1/6) rectangle node{$2$} (6-1/3,1/2);
\draw (6-1/3,1/6) rectangle node{$2$} (6,1/2);
\draw (6,1/6) rectangle node{$4$} (6+1/3,1/2);

\draw (6-2/3,-1/6) rectangle node{$4$} (6-1/3,1/6);
\draw (6-1/3,-1/6) rectangle node{$5$} (6,1/6);
\draw (6,-1/6) rectangle node{$6$} (6+1/3,1/6);

\draw (6-2/3,-1/2) rectangle node{$5$} (6-1/3,-1/6);
\draw (6-1/3,-1/2) rectangle node{$7$} (6,-1/6);

\draw (6-2/3,-5/6) rectangle node{$6$} (6-1/3,-1/2);
\end{scope}
\end{scope}

\end{tikzpicture}
\caption{(Left) A non-intersecting family of highway paths of winding number 1 in the cylindric network $\wh{\Net}_{7,5}$. (Right) The periodic extension of the $3$-cylindric tableau of shape $(3,3,3,3,2,1)/(2)$ associated to the family of paths on the left.}
\label{fig:cyl JT}
\end{figure}

Associate to $\wh{\Net}_{m,n}$ the $n \times n$ matrix $A(\wh{\Net}_{m,n})$ with entries
\[
A(\wh{\Net}_{m,n})_{ij} = \sum_{p \colon i \to j'} t^{\wind(p)} \wt(p),
\]
where the sum is over highway paths in $\wh{\Net}_{m,n}$ from source $i$ to sink $j'$. This matrix is clearly the folded version of the $n$-periodic matrix associated to $\Net_{m,n}$, so $A(\wh{\Net}_{m,n}) = \wh{M}(t)$.

\begin{lemma}
\label{lem:cyl Lind}
For $k$-element subsets $I,J \subset [n]$, we have
\[
\Delta_{I,J}(\wh{M}(t)) = \sum_{d \geq 0} ((-1)^{k-1}t)^d \sum_{\mc{F} \in N^h(I,J;d)} \wt(\mc{F}),
\]
where $N^h(I,J;d)$ is the set of non-intersecting families of highway paths in $\wh{\Net}_{m,n}$ from sources $I$ to sinks $J$, of winding number $d$.
\end{lemma}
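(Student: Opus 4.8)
The plan is to deduce this cylindric Lindström/Gessel--Viennot lemma from its planar counterpart, Lemma~\ref{lem:highway Lind}, by analyzing how the folding map $\Net_{m,n} \to \wh{\Net}_{m,n}$ interacts with non-intersecting families of paths. The starting point is the identity $A(\wh{\Net}_{m,n}) = \wh{M}(t)$, already established, which expresses each entry of $\wh{M}(t)$ as a $t$-weighted generating function over single highway paths graded by winding number. The goal is to upgrade this single-path statement to a statement about $k\times k$ minors, which is exactly the content of a Lindström-type lemma.

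First I would recall the general principle behind the Lindström/Gessel--Viennot Lemma: for a planar acyclic network, the determinant $\Delta_{I,J}(A(\Gamma))$ equals the signed sum over \emph{all} families of paths from $I$ to $J$, where a family of weight $\wt(\mc{F})$ carries the sign of the permutation matching its sources to sinks, and the contributions of all intersecting families cancel in signed pairs via the standard tail-swapping involution, leaving only non-intersecting families (each of which, for a planar network, is forced to connect $i_k<\cdots<i_1$ to $j_k<\cdots<j_1$ order-preservingly, hence with sign $+1$). On a cylinder the situation is the same locally but the global order of sinks relative to sources is twisted by the winding. The key combinatorial point I would isolate is that a non-intersecting family $\mc{F}$ of total winding number $d$ induces the cyclic permutation raised to the $d$-th power on the index $\{1,\dots,k\}$ of the occupied sources/sinks: winding once shifts the cyclic matching by one position. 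Since a $k$-cycle has sign $(-1)^{k-1}$, a family of winding number $d$ contributes the sign $((-1)^{k-1})^d$, which is precisely the factor $((-1)^{k-1}t)^d$ appearing in the statement (the $t^d$ tracking winding by definition of $A(\wh{\Net}_{m,n})$).

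The cleanest way to make this rigorous is to lift everything back to the universal cover $\Net_{m,n}$. Given sources $I = \{i_k<\cdots<i_1\} \subset [n]$ and sinks $J=\{j_k<\cdots<j_1\}\subset[n]$, a non-intersecting family in $\wh{\Net}_{m,n}$ of winding number $d$ lifts to a non-intersecting family in $\Net_{m,n}$ from the $n$-lifted source set to the $n$-lifted sink set in which the total vertical displacement across the $k$ paths is $dn$. I would expand $\Delta_{I,J}(\wh{M}(t))$ using $\wh{M}(t)_{ij} = \sum_d t^d A(\Net_{m,n})_{i+dn,\,j}$ (the defining folding relation), so that each term in the determinant expansion is a product of single-path generating functions with a prescribed distribution of windings $d_1,\dots,d_k$ summing to $d$; applying Lemma~\ref{lem:highway Lind} to the lifted network converts the signed sum over these products into a signed sum over non-intersecting lifted families, the sign cancellation of intersecting families being inherited directly from the planar case. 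Collecting terms by total winding $d$ and identifying the overall sign of an order-$d$-twisted non-intersecting family as $(-1)^{(k-1)d}$ yields the claimed formula.

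I expect the main obstacle to be the sign bookkeeping: one must verify carefully that the only families surviving cancellation are the non-intersecting ones and that each such family of winding $d$ genuinely carries sign $(-1)^{(k-1)d}$ and no extra sign from the lift. The subtlety is that on the cylinder ``non-intersecting'' is defined by edge-disjointness (paths may kiss at vertices), and one must confirm that the tail-swapping involution used for cancellation respects winding number and the cylindric structure, so that it pairs intersecting lifted families of equal total winding with opposite permutation sign. I would handle this by noting that any edge-crossing of two highway paths in $\wh{\Net}_{m,n}$ lifts to an honest edge-crossing in $\Net_{m,n}$, where the classical involution applies verbatim and preserves the total vertical displacement (hence the winding) because swapping tails past a shared edge redistributes displacement between the two paths without changing the sum. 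Once the involution is seen to preserve the winding-graded pieces, the planar Lemma~\ref{lem:highway Lind} applies level-by-level in $d$, and the sign computation reduces to the observation that the induced source-to-sink matching of a surviving family is the order-preserving matching composed with the $d$-th power of the cyclic shift, completing the proof.
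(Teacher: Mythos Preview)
Your argument is essentially correct but takes a different route from the paper. The paper's proof is two lines: it converts the vertex-weighted cylinder $\wh{\Net}_{m,n}$ into an ordinary edge-weighted cylindric network $\wh{\Gamma}$ (exactly as in the planar Lemma~\ref{lem:highway Lind}) and then simply cites the cylindric Lindstr\"om/Gessel--Viennot lemma of \cite[Thm.~3.5]{LP12}, which already packages the sign $((-1)^{k-1})^d$ for winding number $d$. Your proposal instead \emph{reproves} that cited theorem from scratch, via the universal cover and the tail-swapping involution. This buys self-containment at the cost of length; the paper's approach buys brevity at the cost of an external reference.

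One point in your write-up is slightly off and worth tightening. You say you will ``apply Lemma~\ref{lem:highway Lind} to the lifted network,'' but that lemma concerns a single minor $\Delta_{I,J}(\widetilde{M})$ with \emph{fixed} row and column sets, whereas the lifted families you are summing over have sink sets that depend on the individual windings $(d_1,\dots,d_k)$, not just their sum. What you really need is the raw tail-swapping involution applied directly to the expanded sum $\sum_{\sigma}\operatorname{sgn}(\sigma)\prod_a \wh{M}(t)_{i_{\sigma(a)},j_a}$, checking (as you do) that a swap at a shared edge preserves $\sum_a d_a$ and flips $\operatorname{sgn}(\sigma)$. After cancellation, the surviving non-intersecting families of total winding $d$ realize the permutation equal to the $d$-th power of the $k$-cycle, giving the sign $(-1)^{(k-1)d}$; this last step is exactly the topological fact the paper records later (in the proof of Theorem~\ref{thm:cyl JT}) that in $N^h(I,J;d)$ the path from $i_a$ must end at $j_{a+d \bmod k}'$.
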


\begin{proof}
Convert $\wh{\Net}_{m,n}$ into an edge-weighted network $\wh{\Gamma}$ as in the proof of Lemma~\ref{lem:highway Lind}. It is clear that there is a weight and winding number preserving correspondence between non-intersecting families of highway paths in $\wh{\Net}_{m,n}$ and non-intersecting families of directed paths in $\wh{\Gamma}$. Therefore, the result follows by applying \cite[Thm.~3.5]{LP12} (a version of the Lindstr\"om/Gessel--Viennot Lemma for networks on a cylinder) to the network~$\wh{\Gamma}$.
\end{proof}

\begin{proof}[Proof of Theorem~\ref{thm:cyl JT}]
We first prove part (1), that
\[
cs^{(r)}_{\la/\mu; k}(\mb{x}_1, \ldots, \mb{x}_m) = (-1)^{(k-1)d_*} [t^{d_*}] \Delta_{\wh{I}(\mu,r), \wh{J}(\mu,r)}(\wh{M}(t)).
\]
Write $I(\mu,r) = \{i_k < \cdots < i_1\}$ and $J(\la,r) = \{j_k < \cdots < j_1\}$. Let $\mc{F} = (\ldots, p_{-1}, p_0, p_1, \ldots)$ be a family of highway paths in $\Net_{m,n}$, such that for each $a \in [k]$ and $N \in \ZZ$, the path $p_{a+Nk}$ starts at source $i_a + Nn$ and ends at sink $(j_a+Nn)'$. We know from the proof of Lemma~\ref{lem:A=M} that the highway paths in $\Net_{m,n}$ from source $i$ to sink $j'$ are in weight-preserving bijection with the strictly increasing fillings of a column of length $m+j-i$, whose top cell has color $i$. In the colored infinite skew shape $(\widetilde{\la}/\widetilde{\mu})^{(r)}$, column $a+Nn$ has length $\la_a' - \mu_a' = m+j_a-i_a$, and top cell of color $r-a+1+\mu_a' = i_a$, so there is a weight-preserving correspondence between fillings of this skew shape with strictly increasing columns and families of highway paths from $\{i_a+Nn\}$ to $\{(j_a+Nn)'\}$. It is straightforward to verify that the family $\mc{F}$ is non-intersecting if and only if the corresponding filling has weakly increasing rows.

Let $d_a,d'_a$ be the unique integers such that $i_a+d_an, j_a+d'_an \in [n]$. Let $F_1$ be the set of non-intersecting families $(p_1, \ldots, p_k)$ of highway paths in $\wh{\Net}_{m,n}$, such that $p_a$ starts at source $i_a+d_an$, ends at sink $(j_a + d'_an)'$, and has winding number $d'_a-d_a$. Let $F_2$ be the set of non-intersecting families of highway paths in $\Net_{m,n}$ which connect source $i_a+Nn$ to sink $(j_a+Nn)'$, and which are periodic, in the sense that the path out of source $i_a+Nn$ is a vertical translation of the path out of source $i_a+(N+1)n$. Viewing $\Net_{m,n}$ as the universal cover of $\wh{\Net}_{m,n}$, we get a simple bijection between $F_1$ and $F_2$ (it is easy to see that $d'_a - d_a$ is the number of times that a highway path in $\Net_{m,n}$ from source $i_a$ to sink $j'_a$ crosses a red dashed line). The monomials appearing in $cs^{(r)}_{\la/\mu; k}(\mb{x}_1, \ldots, \mb{x}_m)$ correspond to periodic semistandard fillings of $(\la/\mu)^{(r)}$, and it follows from the preceding paragraph that these monomials are in weight-preserving bijection with the elements of $F_1$. (See Figure~\ref{fig:cyl JT} for an example of this bijection in the case of the cylindric loop Schur function from Example~\ref{ex:cyl JT}(a).)

By definition, all families in $F_1$ have winding number
\[
(d_1'-d_1) + \cdots + (d_k'-d_k) = d_*.
\]
If $B = \{b_k < \cdots < b_1\}, C = \{c_k < \cdots < c_1\} \subseteq [n]$ and $\mc{F} \in N^h(B,C;d)$, then the topology of $\wh{\Net}_{m,n}$ requires that the path which starts at source $b_a$ end at sink $c'_{a+d \mod k}$; thus, the winding number and the sets of sources and sinks determine which sources and sinks are connected. We conclude that $F_1 = N^h(\wh{I}(\mu,r),\wh{J}(\la,r);d_*)$, and the desired result follows from Lemma~\ref{lem:cyl Lind}.

We now prove part (2), that
\[
\Delta_{I,J}(\wh{M}(t)) = \sum_{d = 0}^{d_{\max}} ((-1)^{k-1}t)^d \cdot cs^{(k)}_{R^d(\la_J/\mu_I); k}(\mb{x}_1, \ldots, \mb{x}_m),
\]
It follows immediately from part (1) and Lemma~\ref{lem:subsets and partitions} that for $d \leq d_{\max}$, $cs^{(k)}_{R^d(\la_J/\mu_I); k}(\mb{x}_1, \ldots, \mb{x}_m)$ is equal to the (appropriately signed) coefficient of $t^d$ in $\Delta_{I,J}(\wh{M}(t))$. It remains to show that the coefficient of $t^d$ vanishes for $d > d_{\max}$. Set $J^0 = J$, and for $d > 0$, let $J^d$ be the subset obtained from $J^{d-1}$ by subtracting $n$ from the largest element. Write $J^d = \{j^d_1 < \cdots < j^d_k\}$. The argument used to prove part (1) shows that the coefficient of $t^d$ vanishes if
\begin{equation}
\label{eq:no paths}
i_a - j_a^d > m \quad \text{ for some } a,
\end{equation}
since in this case there is no highway path in $\Net_{m,n}$ from source $i_a$ to sink $(j^d_a)'$. The reader may easily verify that~\eqref{eq:no paths} holds if and only if $d > d_{\max}$ (for example, $0 > d_{\max}$ if and only if $\mu_I \not \subseteq \la_J$, which means that $i_a-k+a-1 = (\mu'_I)_a > (\la'_J)_a = m+j_a-k+a-1$ for some $a$).
\end{proof}

\subsection{Formulas for cylindric pseudo-energies}
\label{sec:folded sum of minors}

Say that a colored $k$-cylindric shape $(\la/\mu)^{(r)}$ satisfies the \defn{cylindric corner color condition} if the colored infinite skew shape $(\widetilde{\la}/\widetilde{\mu})^{(r)}$ satisfies the usual corner color condition, that is, if its NW corners have color $n$ and its SE corners have color $m$. We define a \defn{cylindric pseudo-energy} to be a cylindric loop Schur function $s_{\la/\mu; k}^{(r)}$ such that $(\la/\mu)^{(r)}$ satisfies the cylindric corner color condition.

\begin{lemma}
\label{lem:cyl corner color}
Let $\mu \subseteq \la$ be $k$-cylindric partitions, $r \in \ZZ/n\ZZ$ a color, and $\wh{I}(\mu,r), \wh{J}(\la,r)$ the associated $k$-element subsets of $[n]$ (defined after Remark~\ref{rem:Postnikov}).
\begin{enumerate}
\item If $\widetilde{\la}/\widetilde{\mu}$ has no empty columns, then $(\la/\mu)^{(r)}$ satisfies the cylindric corner color condition if and only if $\wh{I}(\mu,r) = [n-k+1,n]$ and $\wh{J}(\la,r) = [k]$.
\item If $(\la/\mu)^{(r)}$ satisfies the cylindric corner color condition, then $cs_{\la/\mu; k}^{(r)}(\mb{x}_1, \ldots, \mb{x}_m)$ is $\ov{e}$-invariant.
\end{enumerate}
\end{lemma}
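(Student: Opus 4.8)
The plan is to reduce both parts of Lemma~\ref{lem:cyl corner color} to statements about the ordinary (non-cylindric) setting that have already been established, using the correspondence between a $k$-cylindric shape $\la/\mu$ and its infinite skew shape $\widetilde{\la}/\widetilde{\mu}$, together with the folded/unfolded dictionary of \S\ref{sec:LSym matrix}.

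For part (1), the key observation is that the cylindric corner color condition on $(\la/\mu)^{(r)}$ is \emph{defined} to be the ordinary corner color condition on the infinite skew shape $(\widetilde{\la}/\widetilde{\mu})^{(r)}$, so I can invoke Lemma~\ref{lem:initial final condition} directly. That lemma says (under the no-empty-columns hypothesis, which is exactly what we have assumed) that the corner color condition holds if and only if the row set $I(\mu,r)$ is $n$-final and the column set $J(\la,r)$ is $n$-initial. The task is therefore to translate ``$I(\mu,r)$ is $n$-final'' into ``$\wh{I}(\mu,r) = [n-k+1,n]$'' and ``$J(\la,r)$ is $n$-initial'' into ``$\wh{J}(\la,r) = [k]$.'' The $k$-cylindric condition $\la'_1 - \la'_k \leq n-k$ guarantees that $I(\mu,r)$ and $J(\la,r)$ each lie in a single window of length $n$ (this is noted in the text just before the lemma), so each reduces mod $n$ to a genuine $k$-subset of $[n]$ without wraparound collisions. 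For such a set living in one window, being $n$-final forces the reduced set to be the top $k$ residues $\{n-k+1,\dots,n\}$, and being $n$-initial forces the reduced set to be the bottom $k$ residues $[k]$; conversely these reduced forms force the $n$-final/$n$-initial property. First I would write $I(\mu,r)=\{i_k<\cdots<i_1\}$ and $J(\la,r)=\{j_k<\cdots<j_1\}$ explicitly from~\eqref{eq:subsets JT matrix version}, record that they sit in windows of length $n$, and then check the elementary combinatorial equivalences; this is essentially a specialization of Lemma~\ref{lem:initial final condition} to the single-window cylindric case.

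For part (2), the plan mirrors the proof of Proposition~\ref{prop:corner color condition}, but at the level of the folded matrix $\wh{M}(t)$. By part (1) (applied after reducing to the no-empty-columns case via Remark~\ref{rem:no empty columns}, since sliding connected components to the northwest does not change $\widetilde{\la}/\widetilde{\mu}$), the condition gives $\wh{I}(\mu,r)=[n-k+1,n]$ and $\wh{J}(\la,r)=[k]$. By the cylindric Jacobi--Trudi formula (Theorem~\ref{thm:cyl JT}(1)), $cs^{(r)}_{\la/\mu;k}$ is, up to sign, the coefficient of a fixed power of $t$ in the minor $\Delta_{\wh{I}(\mu,r),\wh{J}(\la,r)}(\wh{M}(t))$. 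Now Lemma~\ref{lem:affine unipotent} describes the action of $\ov{e}_j^c$ on the unfolded matrix $\widetilde{M}$ as adding multiples of row $dn+j+1$ to row $dn+j$ and of column $dn+j$ to column $dn+j+1$, uniformly over all $d$; folding this up, the same operation on $\wh{M}(t)$ adds a multiple of row $j+1$ to row $j$ and of column $j$ to column $j+1$ (the periodicity is exactly what makes the folded action a genuine row/column operation on the $n\times n$ matrix). Since $\wh{I}(\mu,r)$ is a final interval $[n-k+1,n]$ and $\wh{J}(\la,r)$ is an initial interval $[k]$, these row and column operations leave the minor $\Delta_{\wh{I}(\mu,r),\wh{J}(\la,r)}(\wh{M}(t))$ unchanged as a polynomial in $t$ (adding row $j+1$ to row $j$ is harmless when both or neither index is in a final interval, and likewise for the column operation on an initial interval). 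Hence each coefficient of $t$ in that minor is $\ov{e}_j$-invariant, and in particular $cs^{(r)}_{\la/\mu;k}$ is $\ov{e}_j$-invariant for every $j\in[n-1]$.

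The main obstacle I anticipate is the bookkeeping in part (2) when verifying that the folded row/column operations genuinely preserve the minor. The subtle point is that, unlike in the unfolded picture where $n$-final and $n$-initial are the clean invariance conditions, in the folded $n\times n$ matrix one must check that adding row $j+1$ to row $j$ does not disturb $\Delta_{[n-k+1,n],[k]}$ and that adding column $j$ to column $j+1$ does not disturb it, for the specific final interval of rows and initial interval of columns—and one must confirm this simultaneously for all $j\in[n-1]$ while tracking the $t$-grading, since the operations mix different powers of $t$ through the folding. I would handle this by checking the two boundary cases ($j$ adjacent to the edge of the interval) carefully: adding a row indexed outside a final row-interval to a row inside it, or vice versa, either reproduces a row already present (giving no change) or lands outside the selected rows; the analogous dichotomy holds for the initial column-interval. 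Once these boundary checks are done, the invariance of every $t$-coefficient follows, completing the argument.
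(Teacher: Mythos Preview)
Your approach matches the paper's almost exactly: part (1) by adapting Lemma~\ref{lem:initial final condition} to a single window, and part (2) by combining part (1), Theorem~\ref{thm:cyl JT}(1), and the folded version of Lemma~\ref{lem:affine unipotent} to see that $\Delta_{[n-k+1,n],[k]}(\wh{M}(t))$ is fixed by $\ov{e}_j$. Two small clarifications are worth making. First, your reduction of the empty-column case via Remark~\ref{rem:no empty columns} is not quite the right mechanism in the cylindric setting: when $\widetilde{\la}/\widetilde{\mu}$ has an empty column the periodic shape is disconnected, its fundamental domain is an ordinary (non-cylindric) skew shape $(\kappa/\rho)^{(s)}$, and $cs_{\la/\mu;k}^{(r)} = s_{\kappa/\rho}^{(s)}$ outright, so Proposition~\ref{prop:corner color condition} applies directly; this is how the paper handles that case. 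Second, your anticipated obstacle about the row/column operations ``mixing different powers of $t$'' is a non-issue: for $j\in[n-1]$ the folded matrices $x_j(z)$ are $t$-independent, so multiplying $\wh{M}(t)$ by them on either side acts coefficientwise in $t$, and the invariance of $\Delta_{[n-k+1,n],[k]}(\wh{M}(t))$ is immediate once you observe that row $j+1$ already belongs to the final interval whenever row $j$ does (and similarly for columns).
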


\begin{proof}
The proof of part (1) is essentially the same as that of Lemma~\ref{lem:initial final condition}. For part (2), first suppose $\widetilde{\la}/\widetilde{\mu}$ has an empty column. In this case, $(\widetilde{\la}/\widetilde{\mu})^{(r)}$ consists of disjoint copies of a colored skew shape $(\kappa/\rho)^{(s)}$, and $cs_{\la/\mu;k}^{(r)} = s_{\kappa/\rho}^{(s)}$. Clearly $(\kappa/\rho)^{(s)}$ satisfies the corner color condition, so $s_{\kappa/\rho}^{(s)}$ is $\ov{e}$-invariant by Proposition~\ref{prop:corner color condition}.

Now suppose $\widetilde{\la}/\widetilde{\mu}$ has no empty columns. By part (1) and Theorem~\ref{thm:cyl JT}(1), $\pm cs_{\la/\mu;k}^{(r)}$ is equal to a coefficient of a power of $t$ in $\Delta_{[n-k+1,n],[k]}\bigl(\wh{M}(t)\bigr)$. Lemma~\ref{lem:affine unipotent} implies that $\ov{e}_j$ acts on $\wh{M}(t)$ by adding a multiple of row $j+1$ to row $j$, and a multiple of column $j$ to column $j+1$, so $\Delta_{[n-k+1,n],[k]}\bigl(\wh{M}(t)\bigr)$---and each of its coefficients---is $\ov{e}$-invariant.
\end{proof}

We now give cylindric analogues of Theorems~\ref{thm:new det formula} and~\ref{thm:unfolded sum of minors}. Taking $I = [n-k+1,n]$ and $J = [k]$ in Theorem~\ref{thm:cyl JT}(2), we find $\mu_I = (k^{n-k})$ and $\la_J = (k^m)$, so $(\la_J/\mu_I)^{(k)}$ is undefined if $n-k > m$, and equal (as a colored shape) to $(k^{m-n+k})^{(n)}$ if $n-k \leq m$. In the latter case, $d_{\max}$ is the length of the shortest diagonal in $\widetilde{\la_J}/\widetilde{\mu_J}$, which is easily seen to be $\max\bigl(0,m-2(n-k)\bigr)$.

For $k \in [\min(1,n-m),n]$, set $i = n-k+1$, so that $i \in [1,\min(m,n)+1]$, and let
\[
\nu^i = (k^{m-n+k}) = ((n-i+1)^{m-i+1}).
\]
This partition is the shape of the shape invariant $S_i$. It follows from Theorem~\ref{thm:cyl JT}(2) and the preceding paragraph that
\begin{equation}
\label{eq:bottom left folded}
\Delta_{[i,n],[1,n-i+1]}\bigl(\wh{M}(t)\bigr) = \sum_{d = 0}^{\max(0,m-2i+2)} \bigl( (-1)^{n-i}t \bigr)^d \cdot cs_{R^d(\nu^i); n-i+1}^{(n)}(\mb{x}_1, \ldots, \mb{x}_m).
\end{equation}
In particular, since the coefficient of $t^0$ in $\Delta_{[i,n],[1,n-i+1]}(\wh{M}(t))$ is equal to $\Delta_{[i,n],[1,n-i+1]}(M(\mb{x}_1, \ldots, \mb{x}_m))$, we have
\[
cs_{\nu^i; n-i+1}^{(n)}(\mb{x}_1, \ldots, \mb{x}_m) = s_{\nu^i}^{(n)}(\mb{x}_1, \ldots, \mb{x}_m).
\]

\begin{thm}
\label{thm:new folded det formula}
Let $\wh{M}'(t)$ be the $n \times n$ matrix consisting of reduced $Q$-invariants and ratios of shape invariants, which was defined at the beginning of~\S\ref{sec:det formula}. For $i \leq \min(m,n)+1$, Equation~\eqref{eq:bottom left folded} holds with $\wh{M}(t)$ replaced by $\wh{M}'(t)$.
\end{thm}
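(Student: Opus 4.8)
The plan is to show that the left-hand side of~\eqref{eq:bottom left folded} is \emph{literally unchanged} when $\wh{M}(t)$ is replaced by $\wh{M}'(t)$. Since the right-hand side of~\eqref{eq:bottom left folded} does not involve the matrix at all, the theorem will then follow at once. Concretely, I would reduce the statement to the single identity
\[
\Delta_{[i,n],[1,n-i+1]}\bigl(\wh{M}'(t)\bigr) = \Delta_{[i,n],[1,n-i+1]}\bigl(\wh{M}(t)\bigr),
\]
after which the result is immediate from~\eqref{eq:bottom left folded}.

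The main input is already available from the proof of Theorem~\ref{thm:new det formula}. There we constructed upper uni-triangular matrices $U = U(M)$ and $V = V(M)$ satisfying the block identities~\eqref{eq:UAU decomp}, namely $M'_d = U M_d V$ for every $d \geq 0$, where $M_d$ (resp.\ $M'_d$) is the coefficient of $t^d$ in $\wh{M}(t)$ (resp.\ $\wh{M}'(t)$). Multiplying the $d$-th identity by $t^d$ and summing over $d \geq 0$ gives at once the single factorization
\[
\wh{M}'(t) = U \, \wh{M}(t) \, V.
\]
Equivalently, one folds the relation $\widetilde{M}' = \widetilde{U}\widetilde{M}\widetilde{V}$ and observes that the block-diagonal matrices $\widetilde{U},\widetilde{V}$ fold to the constant matrices $U,V$, which carry no powers of $t$. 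I expect this folding step to be essentially bookkeeping; the only point needing care is the verification that $\widetilde{U}$ and $\widetilde{V}$ have no off-block entries, so that their folded versions are genuinely $t$-free.

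Finally, because $U$ and $V$ are upper uni-triangular, left-multiplication by $U$ adds to each row only combinations of rows strictly below it, and right-multiplication by $V$ adds to each column only combinations of columns strictly to its left. Such operations leave unchanged any minor whose row set is a final interval $[i,n]$ and whose column set is an initial interval $[1,n-i+1]$; this is precisely the folded ($n \times n$, single-block) incarnation of the ``$n$-final rows and $n$-initial columns'' invariance already exploited in the proof of Theorem~\ref{thm:new det formula}, and it can be checked directly by Cauchy--Binet, where triangularity forces the only surviving index set in each expansion to be $[i,n]$, resp.\ $[1,n-i+1]$. Applying this to $\wh{M}'(t) = U\wh{M}(t)V$ yields the displayed equality of minors and completes the argument. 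The one genuine (if mild) obstacle is confirming that this minor-preservation argument, phrased in the proof of Theorem~\ref{thm:new det formula} for the unfolded $n$-periodic matrix over $\CC(x_i^j)$, transfers verbatim to the folded matrix whose entries lie in the Laurent series ring; since Cauchy--Binet holds over any commutative ring, this is harmless, but it is the place where one must be slightly careful.
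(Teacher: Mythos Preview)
Your proposal is correct and follows essentially the same route as the paper: one deduces $\wh{M}'(t) = U\,\wh{M}(t)\,V$ from the block identities $M'_d = U M_d V$ established in the proof of Theorem~\ref{thm:new det formula}, and then notes that upper uni-triangular $U,V$ preserve bottom-left justified minors. The paper's proof is simply the two-sentence version of what you wrote.
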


\begin{proof}
It follows from the proof of Theorem~\ref{thm:new det formula} that $\wh{M}'(t) = U\wh{M}(t)V$, where $U,V$ are upper uni-triangular matrices. This implies that $\wh{M}(t)$ and $\wh{M}'(t)$ have the same bottom-left justified minors.
\end{proof}

\begin{ex}
Let $m=5,n=3$. Theorem~\ref{thm:new folded det formula}, together with Example~\ref{ex:special cyl loop Schurs}, gives the formula
\[
\det
\begin{pmatrix}
\lrQ{2}{1}t & \lrQ{3}{1}t + t^2 & \frac{S_1}{S_2} + \lrQ{4}{1}t + \lrQ{1}{1}t^2 \medskip \\
\lrQ{1}{2}t & -\frac{S_2}{S_3} + \lrQ{2}{2}t & \lrQ{3}{2}t + t^2 \medskip \\
S_3 + t & \lrQ{1}{3}t & \lrQ{2}{3}t
\end{pmatrix}
= \sum_{d = 0}^5 t^d e_{5-d}(\pi_1, \ldots, \pi_5),
\]
where $\pi_i = \lx{i}{1}\lx{i}{2}\lx{i}{3}$, and $e_a$ is the $a$-th elementary symmetric function. We can compute, for example,
\[
e_4(\pi_1, \ldots, \pi_5) = S_2 \lrQ{4}{1} - \dfrac{S_1S_3}{S_2} \lrQ{2}{2} + \dfrac{S_1}{S_3}.
\]
\end{ex}

\begin{thm}
\label{thm:folded sum of minors}
Suppose $1 \leq i \leq \min(m,n)$. For $0 \leq d \leq m-2i+2$, we have
\[
cs_{R^d(\nu^i); n-i+1}^{(n)}(\mb{x}_1, \ldots, \mb{x}_m) = \sum_{X \in \binom{[i,m-i+1]}{m-2i+2-d}} \Delta_{X \cup [m-i+2,m], [1,i-1] \cup X}(\ov{M}),
\]
where $\ov{M} = M(\mb{x}^1, \ldots, \mb{x}^n)$, and $\binom{S}{k}$ denotes the set of $k$-element subsets of the set $S$.

More generally, if $1 \leq a \leq b \leq m$, $1 \leq i \leq \min(b-a+1,n)$, and $0 \leq d \leq b-a-2i+3$, then we have
\[
cs_{R^d((n-i+1)^{b-a-i+2}); n-i+1}^{(n+a-1)}(\mb{x}_a, \ldots, \mb{x}_b) = \sum_{X \in \binom{[a+i-1,b-i+1]}{b-a-2i+3-d}} \Delta_{X \cup [b-i+2,b], [a,a+i-2] \cup X}(\ov{M}).
\]
\end{thm}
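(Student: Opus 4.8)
The plan is to reprove Theorem~\ref{thm:unfolded sum of minors} in a cylindric setting, replacing the infinite strip $\Net_{m,n}$ by the cylinder $\wh{\Net}_{m,n}$ and the family of bands $\Upsilon_k$ by a single fundamental domain, with the winding number taking over the role formerly played by the band index. I will prove the first displayed identity (the case $a=1$, $b=m$) and then reduce the general statement to it.

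First I would identify the cylindric pseudo-energy with a highway-path generating function. Since $\nu^i = ((n-i+1)^{m-i+1})$ is the shape of $S_i$, the colored shape $(\nu^i)^{(n)}$ satisfies the cylindric corner color condition, so by Lemma~\ref{lem:cyl corner color}(1) its associated subsets are $\wh{I} = [i,n]$ and $\wh{J} = [1,n-i+1]$ (here $k = n-i+1$). Comparing \eqref{eq:bottom left folded} with Lemma~\ref{lem:cyl Lind}, whose sign $(-1)^{(k-1)d}$ cancels the sign $(-1)^{(n-i)d}$ in \eqref{eq:bottom left folded}, I obtain that $cs_{R^d(\nu^i); n-i+1}^{(n)}$ equals the positive generating function $\sum_{\mc{F}} \wt(\mc{F})$ over non-intersecting families $\mc{F}$ of highway paths in $\wh{\Net}_{m,n}$ from sources $[i,n]$ to sinks $[1,n-i+1]$ with $\wind(\mc{F}) = d$.

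Next I would apply the particle--hole (complementary path) bijection $\mc{F} \leftrightarrow \ov{\mc{F}}$ described before Theorem~\ref{thm:unfolded sum of minors}; being weight-preserving and local, it transfers verbatim to the cylinder. The complementary underway family $\ov{\mc{F}}$ is anchored at the sources $[1,i-1]$ and sinks $[n-i+2,n]$ not used by $\mc{F}$. Near the two ``missing corners'' of this offset source/sink pattern the underway paths are forced to move by alternating turns and pick up no weight; these are the fixed triangular regions coming from the corners of the rectangle $\nu^i$, and they account for exactly $2(i-1)$ forced crossings of the distinguished chord. After discarding the fixed regions, cutting the remaining underway paths along the distinguished chord converts the winding configuration into a genuine non-winding underway family in the reversed single-band network $\ov{\Net}_{m,n}$, from sources $X \cup [m-i+2,m]$ to sinks $[1,i-1]\cup X$, where $X \subseteq [i,m-i+1]$ records the columns in which an underway path crosses the chord. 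The key bookkeeping is then the winding count: each of the $m$ vertical edges on the chord lies in exactly one of $\mc{F}, \ov{\mc{F}}$, so $\wind(\mc{F}) + \wind(\ov{\mc{F}}) = m$, and since $\wind(\ov{\mc{F}})$ splits as the $2(i-1)$ forced corner crossings plus $|X|$ free crossings, one gets $|X| = m - 2i + 2 - d$, exactly the index range in the theorem. Summing \eqref{eq:underway Lind} over such $X$ yields $\sum_{X} \Delta_{X\cup[m-i+2,m],\,[1,i-1]\cup X}(\ov{M})$. I expect this correspondence---pinning down the fixed regions, verifying the forced-crossing count $2(i-1)$, and checking that cutting along $X$ is a weight-preserving bijection onto the flat-band underway families counted by \eqref{eq:underway Lind}---to be the main obstacle; everything else is a transcription of the proof of Theorem~\ref{thm:unfolded sum of minors} to the cylinder.

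Finally, for the general statement I would apply the first identity to the contiguous block of variables $\mb{x}_a, \ldots, \mb{x}_b$, that is, with $m$ replaced by $m' = b-a+1$. Relabeling rows $a, \ldots, b$ as $1, \ldots, m'$ shifts every cell color by $a-1$, turning color $n$ into color $n+a-1$ and sending $\nu^i$ to $((n-i+1)^{b-a-i+2})$; moreover, because $\ov{M}$ is a product of lower-bidiagonal matrices, its principal submatrix on the contiguous rows and columns $[a,b]$ equals the matrix $M((\mb{x}')^1,\ldots,(\mb{x}')^n)$ built from the truncated columns, so the minors on the right-hand side are unchanged. This gives the general identity.
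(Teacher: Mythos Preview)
Your proposal is correct and follows essentially the same approach as the paper: highway-path interpretation via Lemma~\ref{lem:cyl Lind}, complementation bijection with two fixed triangular regions, and the winding-number accounting $|X| = m-2i+2-d$. Your formulation of that accounting via $\wind(\mc{F})+\wind(\ov{\mc{F}})=m$ is slightly more explicit than the paper's (which phrases $X$ dually as the columns where a \emph{highway} path does \emph{not} cross the chord), and your reduction of the general case via the principal $[a,b]$-submatrix of $\ov{M}$ is an equivalent rephrasing of the paper's ``delete columns $[1,a-1]\cup[b+1,m]$ from the network'' argument.
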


\begin{ex}
\label{ex:folded sum of minors}
Let $m=4, n=5$, and $i=2$. The shapes $\nu^i, R(\nu^i), R^2(\nu^i)$ are
\[
\ydiagram{4,4,4}
\qquad
\ydiagram{4,3}
\qquad
\ydiagram{2} \; ,
\]
and the matrix $\ov{M}$ looks like
\[
\ov{M} = M(\mb{x}^1, \ldots, \mb{x}^5) =
\begin{pmatrix}
\bE{5}{1} & 0 & 0 & 0 \\
\bE{4}{2} & \bE{5}{2} & 0 & 0 \\
\bE{3}{3} & \bE{4}{3} & \bE{5}{3} & 0 \\
\bE{2}{4} & \bE{3}{4} & \bE{4}{4} & \bE{5}{4}
\end{pmatrix}.
\]
By Theorem~\ref{thm:folded sum of minors} and the Jacobi--Trudi formula, we have
\ytableausetup{centertableaux}
\[
\begin{array}{r@{\;=\;}l@{\;=\;}l}
cs_{\ydiagram{4,4,4} \;;\, 4}^{(5)}(\mb{x}_1, \ldots, \mb{x}_4) & \Delta_{234,123}(\ov{M}) & \ov{s}_{\ydiagram{3,3,3,3}}^{(4)}(\mb{x}^1, \ldots, \mb{x}^5) \medskip \\
cs_{\ydiagram{4,3} \;;\, 4}^{(5)}(\mb{x}_1, \ldots, \mb{x}_4) & \Delta_{24,12}(\ov{M}) + \Delta_{34,13}(\ov{M}) & \ov{s}_{\ydiagram{1+1,2,2,2}}^{(3)}(\mb{x}^1, \ldots, \mb{x}^5) + \ov{s}_{\ydiagram{2,2,2,1}}^{(4)}(\mb{x}^1, \ldots, \mb{x}^5) \medskip \\
cs_{\ydiagram{2} \;;\, 4}^{(5)}(\mb{x}_1, \ldots, \mb{x}_4) & \Delta_{4,1}(\ov{M}) & \ov{s}_{\ydiagram{1,1}}^{(4)}(\mb{x}^1, \ldots, \mb{x}^5).
\end{array}
%\begin{matrix*}[l]
%cs_{(4,4,4); 4}^{(5)}(\mb{x}_1, \ldots, \mb{x}_4) &=& \Delta_{234,123}(\ov{M}) &=& \ov{s}_{(3,3,3,3)}^{(4)}(\mb{x}^1, \ldots, \mb{x}^5) \medskip \\
%cs_{(4,3); 4}^{(5)}(\mb{x}_1, \ldots, \mb{x}_4) &=& \Delta_{34,13}(\ov{M}) + \Delta_{24,12}(\ov{M}) &=& \ov{s}_{(2,2,2,1)}^{(4)}(\mb{x}^1, \ldots, \mb{x}^5) + \ov{s}_{(2,2,2,2)/(1)}^{(3)}(\mb{x}^1, \ldots, \mb{x}^5) \medskip \\
%cs_{(2); 4}^{(5)}(\mb{x}_1, \ldots, \mb{x}_4) &=& \Delta_{4,1}(\ov{M}) &=& \ov{s}_{(1,1)}^{(4)}(\mb{x}^1, \ldots, \mb{x}^5).
%\end{matrix*}
\]
\end{ex}

\begin{figure}
\begin{tikzpicture}[scale=0.4]

\pgfmathtruncatemacro{\m}{9};
\pgfmathtruncatemacro{\n}{6};

\filldraw[gray] (0,7) -- (5,12) -- (0,12) -- (0,7);
\filldraw[gray] (15,0) -- (20,5) -- (20,0) -- (15,0);

\draw[gray] (0,2*\n) -- (0,0);
\draw[gray] (2*\m+2,2*\n) -- (2*\m+2,0);
\draw[red,dashed] (0, 2*\n) -- (2*\m+2,2*\n);
\draw[red,dashed] (0,0) -- (2*\m+2,0);

\foreach \x in {1,...,\m} {
\draw (2*\x,0) -- (2*\x,2*\n);}

\foreach \y in {1,...,\n} {
\draw (2*\m+2,2*\y-1) -- (0,2*\y-1);}

%% blue vertices
\foreach \y in {3,4,5,6} {
\pgfmathtruncatemacro{\row}{\n-\y};
\filldraw[ultra thick, blue] (0,2*\row+1) circle [radius = .13];}

\foreach \y in {1,2,3,4} {
\pgfmathtruncatemacro{\row}{\n-\y};
\filldraw[ultra thick, blue] (2*\m+2,2*\row+1) circle [radius = .13];}

\foreach \y in {1,2,3,4,5,6} {
\pgfmathtruncatemacro{\row}{\n-\y};
\draw (0,2*\row+1) node[left] {$\y$};
\draw (2*\m+2,2*\row+1) node[right] {$\y'$};}

%% UQpurple vertices
\foreach \x in {8,9} {\filldraw[ultra thick, UQpurple] (2*\x,2*\n) circle [radius = .13];}
\foreach \x in {1,2} {\filldraw[ultra thick, UQpurple] (2*\x,0) circle [radius = .13];}

\foreach \x in {1,2,3,5,6,8,9} {
\draw (2*\x,2*\n+0.2) node[above] {$\x$};
\draw (2*\x,-0.2) node[below] {$\x'$};}

%% highway paths
\foreach \x/\y in {7/1,11/3,13/5,17/7,    5/3,7/5,9/7,15/9,    1/5,7/7,9/9,13/11,15/1,17/3,    1/7,5/9,7/11,11/1,13/3,17/5} {\draw[ultra thick,rounded corners, blue] (\x,\y) -- (\x+1,\y) -- (\x+1,\y+1);}
\foreach \x/\y in {8/2,12/4,14/6,18/8,    6/4,8/6,10/8,16/10,    2/6,8/8,10/10,14/0,16/2,18/4,    2/8,6/10,8/0,12/2,14/4,18/6} {\draw[ultra thick,rounded corners, blue] (\x,\y) -- (\x,\y+1) -- (\x+1,\y+1);}
\foreach \x\y in {1/1,3/1,5/1,9/3,15/7,    1/3,3/3,11/9,13/9,17/11,    3/7,5/7,11/11,    3/9,9/1,15/5} {\draw[ultra thick, blue] (\x,\y) -- (\x+2,\y);}
\foreach \x\y in {0/1,0/3,0/5,0/7,19/9,19/11,19/5,19/7} {\draw[ultra thick, blue] (\x,\y) -- (\x+1,\y);}

%% underway paths
\foreach \x\y in {2/4,4/10,    4/4,6/8,8/10,    6/2,8/4,10/6,    10/4,12/6,14/10,    12/0,14/2,16/8} {\draw[ultra thick, UQpurple, dashed, rounded corners] (\x,\y) -- (\x,\y+1) -- (\x+2,\y+1) -- (\x+2,\y+2);}
\foreach \x\y in {2/0,2/2,4/6,4/8,    4/0,4/2,6/6,    6/0,12/8,12/10,    10/0,10/2,14/8,    16/4,16/6,18/10} {\draw[ultra thick, UQpurple, dashed] (\x,\y) -- (\x,\y+2);}

%% elements of S
\foreach \x in {3,5,6} {
\filldraw (2*\x,2*\n) circle [radius=0.3];
\filldraw (2*\x,0) circle [radius=0.3];}

\end{tikzpicture}
\caption{An example of the bijection used to prove Theorem~\ref{thm:folded sum of minors}. The parameters for this example are $m=9, n=6, i=3, d=2$. The solid blue lines show the highway paths in $\mc{F}$, the dashed purple lines show the underway paths in $\ov{\mc{F}}$, and the large black vertices represent the elements of $X$.}
\label{fig:folded sum of minors}
\end{figure}
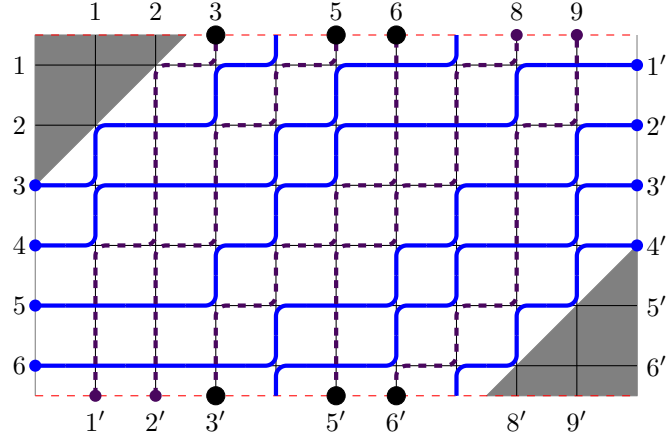

\begin{proof}[Proof of Theorem~\ref{thm:folded sum of minors}]
We know from \S\ref{sec:cyl loop Schur} and the above discussion that
\begin{equation}
\label{eq:highway side}
cs_{R^d(\nu^i); n-i+1}^{(n)}(\mb{x}_1, \ldots, \mb{x}_m) = \sum_{\mc{F} = (p_1, \ldots, p_{n-i+1})} \wt(\mc{F}),
\end{equation}
where the sum runs over non-intersecting families of highway paths in the cylindric network $\wh{\Net}_{m,n}$, of total winding number $d$, from sources $[i,n]$ to sinks $[1,n-i+1]$. We know from the proof of Theorem~\ref{thm:unfolded sum of minors} that
\begin{equation}
\label{eq:underway side}
\sum_{X \in \binom{[i,m-i+1]}{m-2i+2-d}} \Delta_{X \cup [m-i+2,m], [1,i-1] \cup X}(\ov{M}) = \sum_{\ov{\mc{F}} = (p_1, \ldots, p_{m-i+1-d})} \wt(\ov{\mc{F}}),
\end{equation}
where the sum on the left is over all non-intersecting families of underway paths in $\ov{\Net}_{m,n}$ which connect sources $X \cup [m-i+2,m]$ to sinks $[1,i-1] \cup X$, for some $X \in \binom{[i,m-i+1]}{m-2i+2-d}$.

The equality of~\eqref{eq:highway side} and~\eqref{eq:underway side} is proved by the same complementation bijection that was used to prove Theorem~\ref{thm:unfolded sum of minors}; see Figure~\ref{fig:folded sum of minors} for an example. In the cylindric setting, there are two triangular regions of fixed underway paths, which contain the first (resp., last) $i-1$ vertices on the north and west (resp., south and east) boundaries. The elements of $X$ correspond to the columns in the interval $[i,m-i+1]$ where a highway path does not cross the distinguished chord (i.e., does not ``wrap around''). This is why the number of elements of $X$ is equal to $m-2i+2$ minus the winding number $d$.

To prove the more general formula involving the sets of variables $\mb{x}_a, \ldots, \mb{x}_b$, remove columns $[1,a-1]$ and $[b+1,m]$ from the network and run the same argument. The indexing color of the cylindric loop Schur function is $n+a-1$ because that is the color of the first weight picked up by the highway path starting at source $n$.
\end{proof}

%%%%%%%%%%%%%%%%%%%%%%%%%%%%%%%%%%%%%%%%%%%%%%%%%%%%%%%%%%%%%%%%%%%%%%%%%%

\section{Formulas for decoration, central charge, energy, and cocharge}
\label{sec:applications}

\subsection{Decoration and central charge}
\label{sec:central charge}

In Berenstein and Kazhdan's theory of geometric crystals, a \defn{decoration} is a positive function whose tropicalization cuts out the points of a combinatorial crystal from an integer lattice (for more details, see~\cite[\S\S 2.5, 3.3]{BFPS}).

The decoration of $\zz \in \GT_n^{\leq m}$ is defined by
\[
F(\zz) = \sum_{\substack{1 \leq i \leq m \\ \ i \leq j \leq n-1}} \frac{z_{i,j+1}}{z_{i,j}} \quad + \quad \sum_{\substack{1 \leq i \leq m-1 \\ \ i \leq j \leq n-1}} \frac{z_{i,j}}{z_{i+1,j+1}} \quad + \quad \mathbbm{1}_{m<n} z_{m,m},
\]
and the decoration of $\xx \in X^{\Mat}$ is defined by
\[
F(\xx) = \sum_{i \in [m], j \in [n]} \lx{i}{j} = \sum_{j \in [n]} \lE{1}{j}
\]
(this is the decoration on $X^{\Mat}$ with respect to both the $\GL_m$- and $\GL_n$-geometric crystal structures).

\begin{prop}[{\cite[\LemGTDec]{BFPS}}]
\label{prop:dec formula}
The decoration on $\GT_n^{\leq m}$ is given by
\begin{multline*}
F(\zz) = \sum_{k = 1}^{\min(m-1,n-1)} \dfrac{\Delta_{\{k\} \cup [k+2,n],[1,n-k]}(M) + \Delta_{[k+1,n],[1,n-k-1] \cup \{n-k+1\}}(M)}{\Delta_{[k+1,n],[1,n-k]}(M)} \\
+ \mathbbm{1}_{m<n} \left(\dfrac{\Delta_{\{m\} \cup [m+2,n],[1,n-m]}(M)}{\Delta_{[m+1,n],[1,n-m]}(M)} + \sum_{j = 1}^{n-m} \frac{\Delta_{[m+1,m+j],[1,j-1] \cup \{j+1\}}(M)}{\Delta_{[m+1,m+j],[1,j]}(M)}\right),
\end{multline*}
where $M = \Phi_n^{\leq m}(\zz)$.
\end{prop}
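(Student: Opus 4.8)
The plan is to reduce everything to an identity among the flag minors of $M = \Phi_n^{\leq m}(\zz)$ and then prove that identity by a telescoping argument. First I would use the explicit inverse $\Psi_n^{\leq m}$ from~\eqref{eq_Psi_def} (valid by Lemma~\ref{lem:Phi inverse}) to write every Gelfand--Tsetlin coordinate as a ratio of consecutive flag minors,
\[
z_{i,j} = \frac{\Delta_{[i,j]}(M)}{\Delta_{[i+1,j]}(M)},
\]
where $\Delta_{[i,j]}(M) = \Delta_{[i,j],[1,j-i+1]}(M)$ and $\Delta_{[i+1,j]}$ is read as $1$ when $i=j$ (using $\Delta_\emptyset = 1$). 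Substituting this into the definition of $F(\zz)$ turns each summand into a ratio of products of flag minors; in particular $\frac{z_{i,j+1}}{z_{i,j}} = \frac{\Delta_{[i,j+1]}\Delta_{[i+1,j]}}{\Delta_{[i,j]}\Delta_{[i+1,j+1]}}$ and $\frac{z_{i,j}}{z_{i+1,j+1}} = \frac{\Delta_{[i,j]}\Delta_{[i+2,j+1]}}{\Delta_{[i+1,j]}\Delta_{[i+1,j+1]}}$. I would name the two resulting sums $\Sigma_1$ (from the first family) and $\Sigma_2$ (from the second), and record that the governing denominators $\Delta_{[k+1,n],[1,n-k]}(M)$ are exactly the shape invariants $S_{k+1}$.

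Next I would isolate the two aggregate identities that the proposition really asserts: that $\Sigma_1$ equals the sum over $k$ of the column-modified ratios $\Delta_{[k+1,n],[1,n-k-1]\cup\{n-k+1\}}(M)/S_{k+1}$, and that $\Sigma_2$ equals the sum over $k$ of the row-modified ratios $\Delta_{\{k\}\cup[k+2,n],[1,n-k]}(M)/S_{k+1}$. (A direct computation for $n \le 3$ confirms this split and shows the correspondence is not term-by-term: several of the $z$-ratios telescope into a single modified minor.) I would prove each identity by grouping the summands according to the size of the smallest flag minor involved and applying a three-term relation—a Desnanot--Jacobi / short Plücker identity for these ``connected'' flag minors—that relates the product $\Delta_{[i,j+1]}\Delta_{[i+1,j]}$ to $\Delta_{[i,j]}\Delta_{[i+1,j+1]}$ and a minor with a modified column; summing over $i$ along each diagonal then collapses the chain, leaving only the extreme column-modified term, and symmetrically for $\Sigma_2$ with row modifications. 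An attractive alternative, given the setup of the paper, is a combinatorial argument: since $M = A(\Gamma_n^{\leq m})$ is the path matrix of the network of Figure~\ref{fig:network}, whose edge weights are precisely the ratios $z_{i,j}/z_{i,j-1}$, both the decoration terms and the modified minors can be read off by the Lindstr\"om/Gessel--Viennot lemma, so the identities become statements about non-intersecting path families.

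Finally I would treat the trapezoidal case $m<n$ separately, since it is where the indicator $\mathbbm{1}_{m<n}$, the extra term $z_{m,m}$, and the additional minors in the formula all appear. Here the ranges of $i$ in $\Sigma_1,\Sigma_2$ are truncated at $i=m$, and one must use the defining structure of $(B^-)^{\leq m}$ (entries equal to $1$ on the diagonal $i-j=m$ and $0$ below it) to evaluate the minors $\Delta_{[i+1,j]}$ once $i$ reaches the bottom of the pattern. The boundary of the telescoping then produces the single row-modified term $\Delta_{\{m\}\cup[m+2,n],[1,n-m]}/\Delta_{[m+1,n],[1,n-m]}$ and the column-modified sum $\sum_{j=1}^{n-m}\Delta_{[m+1,m+j],[1,j-1]\cup\{j+1\}}/\Delta_{[m+1,m+j],[1,j]}$, with the stray term $z_{m,m}$ accounting for the $j=1$ endpoint.

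The hard part will be the bookkeeping in the telescoping: tracking exactly which $z$-ratios feed into each modified minor, choosing the Plücker/Desnanot--Jacobi relation whose non-flag minors cancel in pairs, and—above all—getting the $m<n$ boundary right, where the truncation of the pattern and the normalization of $(B^-)^{\leq m}$ interact with the placement of the $z_{m,m}$ term. The $m \ge n$ case, by contrast, should reduce to a clean diagonal-by-diagonal telescoping.
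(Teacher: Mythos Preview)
The paper does not actually prove this proposition. It is stated with a citation to \cite[\LemGTDec]{BFPS}, and immediately after the statement the paper remarks that ``the formula in Proposition~\ref{prop:dec formula} is a special case of Berenstein and Kazhdan's general formula for the decoration of a unipotent bicrystal of parabolic type~\cite[Cor~1.25]{BK07}.'' So there is no proof here to compare against: the paper treats the result as background, imported from the earlier paper and ultimately from the general Berenstein--Kazhdan machinery.

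Your plan is therefore a genuinely different route: a direct, self-contained computation using the inversion formula~\eqref{eq_Psi_def}, together with short Pl\"ucker / Desnanot--Jacobi relations and a telescoping over diagonals, rather than an appeal to the general theory of unipotent bicrystals. What your approach buys is an elementary argument that stays entirely inside the combinatorics of the network $\Gamma_n^{\leq m}$ and the flag minors of $M$; what the Berenstein--Kazhdan route buys is a structural explanation (the formula is the specialization of a uniform decoration formula valid for any parabolic unipotent bicrystal). Your outline looks sound, including the split of the two sums and the identification of the $m<n$ boundary terms with the truncation of the pattern; the only real work, as you note, is organizing the three-term relations so that the non-flag minors cancel in pairs, and your Lindstr\"om/Gessel--Viennot alternative via $\Gamma_n^{\leq m}$ is a natural fallback if the algebraic telescoping becomes unwieldy.
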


The formula in Proposition~\ref{prop:dec formula} is a special case of Berenstein and Kazhdan's general formula for the decoration of a unipotent bicrystal of parabolic type~\cite[Cor~1.25]{BK07}.

\begin{prop}[{\cite[Thm.~3.2]{OSZ}, \cite[\GRSKAndDecorations]{BFPS}}]
\label{prop:gRSK decorations}
Suppose $\xx \in \Mat_{m \times n}(\CC^*)$. If $\gRSK(\xx) = (P,Q)$, then
\[
F(\xx) = F(P) + F(Q) + \delta_{m,n} z_{n,n},
\]
where $\delta_{m,n}$ is the Kronecker delta, and $z_{n,n}$ is the last part of the shape of $P$ and $Q$ (when $m = n$).
\end{prop}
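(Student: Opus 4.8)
The plan is to prove the identity by induction on the number of rows $m$, using the row-by-row structure of the geometric insertion (Definition~\ref{defn:gRSK} and the discussion following it). Let $\xx^-$ be the $(m-1)\times n$ matrix consisting of the first $m-1$ rows of $\xx$, and write $\gRSK(\xx^-) = (P^-, Q^-)$. Here $P^- = \Psi_n^{\leq m-1}(M(\mb{x}_1, \ldots, \mb{x}_{m-1}))$, where the relevant matrices are related by the single factor $M(\mb{x}_1, \ldots, \mb{x}_m) = M(\mb{x}_1, \ldots, \mb{x}_{m-1})\,W(\mb{x}_m)$; and $Q^-$ is obtained from $Q$ simply by deleting its bottom diagonal $(z'_{1,m}, \ldots, z'_{\min(m,n),m})$, since $z'_{j,i}$ depends only on $\mb{x}_1, \ldots, \mb{x}_i$. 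As $F(\xx) = F(\xx^-) + \sum_{j=1}^n x_m^j$ is immediate from the definition of the decoration on $X^{\Mat}$, the inductive step reduces to the single identity
\[
\sum_{j=1}^n x_m^j = \bigl(F(P) - F(P^-)\bigr) + \bigl(F(Q) - F(Q^-)\bigr) + \bigl(\delta_{m,n}\,z_{n,n} - \delta_{m-1,n}\,z^-_{n,n}\bigr),
\]
where $z^-_{n,n}$ denotes the final shape coordinate of $(P^-,Q^-)$.

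To evaluate the right-hand side I would substitute the explicit decoration formula of Proposition~\ref{prop:dec formula}, taking $M = M(\mb{x}_1, \ldots, \mb{x}_m)$ for $F(P)$ (recall $\Phi_n^{\leq m}(P) = M$ by Lemma~\ref{lem:Phi inverse}) and, with the roles of $m$ and $n$ interchanged, $\ov{M} = M(\mb{x}^1, \ldots, \mb{x}^n)$ for $F(Q)$ (here $\Phi_m^{\leq n}(Q) = \ov{M}$ by Theorem~\ref{thm:gRSK props}(3)). The two contributions behave differently: passing from $P^-$ to $P$ is the right-multiplication of an $n\times n$ matrix by the bidiagonal factor $W(\mb{x}_m)$, a controlled update whose effect on the flag minors in the decoration formula can be written out; whereas passing from $Q^-$ to $Q$ enlarges the $m\times m$ matrix $\ov{M}$ by appending one row (and the new bottom diagonal of $Q$). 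The target is to show that all the minor ratios telescope so that only $\sum_j x_m^j$ survives, with the indicator terms matching the $\delta_{m,n}$ correction exactly at the transition $m-1=n \to m=n$.

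The main obstacle is precisely this telescoping computation. To organize it I would interpret each minor of $M$ and $\ov{M}$ as a weighted sum over non-intersecting families of paths in the planar networks $\Gamma_n^{\leq m}$ and $\Gamma_m^{\leq n}$ of \S\ref{sec:GT} via the Lindstr\"om/Gessel--Viennot Lemma, so that adding a row to $\xx$ corresponds to appending one layer to a network and the decoration differences become explicit, manifestly positive path-weight identities. The bookkeeping of which paths are created or destroyed by the new layer, and the verification that the boundary terms conspire to produce the Kronecker-delta correction, is where the real work lies.

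A conceptually cleaner alternative, which avoids the induction, is to use the O'Connell--Sepp\"al\"ainen--Zygouras realization of $\gRSK$ through local moves (Remark~\ref{rem:gRSK}). One checks that under each elementary local transformation the difference between $F$ of the current array and the accumulated $P$- and $Q$-decoration is preserved, and that the terminal boundary contributions supply the $\delta_{m,n}\,z_{n,n}$ term; this is essentially the route of \cite[Thm.~3.2]{OSZ}. Its advantage is that each local move involves only four entries, reducing the verification to a short rational-function identity, while the symmetry $\gRSK(\xx^t) = (Q,P)$ of Theorem~\ref{thm:gRSK props}(3) makes the interchangeability of $P$ and $Q$ transparent.
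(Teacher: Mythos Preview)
The paper does not supply its own proof of this proposition: it is stated with citations to \cite[Thm.~3.2]{OSZ} and \cite[Thm.~4.13]{BFPS} and used as a black box, so there is no in-paper argument to compare against.

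Your second alternative---checking invariance of the decoration identity under the local moves of the O'Connell--Sepp\"al\"ainen--Zygouras description of $\gRSK$---is exactly the argument in the cited sources, as you yourself note. That is the standard and cleanest route.

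Your primary plan (induction on $m$ via right-multiplication by $W(\mb{x}_m)$) is plausible in principle but remains only a sketch: the ``telescoping computation'' you flag as the main obstacle is genuinely the entire content of the proof, and you have not indicated how the cross-terms between the $P$-side minor ratios (which live in an $n\times n$ matrix) and the $Q$-side minor ratios (which live in an $m\times m$ matrix of changing size) interact. In particular, the $Q$-side passage from $Q^-$ to $Q$ is not simply appending a row to $\ov{M}$: the matrix $\ov{M}$ has fixed size $m\times m$ throughout, and what changes is the parameter in $\Phi_m^{\leq n}$ versus $\Phi_{m-1}^{\leq n}$, i.e., the ambient matrix size itself shifts from $(m-1)\times(m-1)$ to $m\times m$. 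Reconciling these two different ``growth'' directions directly is awkward, which is precisely why the local-moves formulation is preferred. If you want a complete proof, follow your second route.
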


The difference $\Delta(\xx) = F(\xx) - F\bigl( P(\xx) \bigr)$ is called the {\defn{central charge}} of the $\GL_n$-geometric crystal $X^{\Mat} = (X_n)^m$ in~\cite{BK07}. In most cases, it is an open question whether the central charge of a product of geometric crystals is a positive rational function. In the case of $(X_n)^m$, we have
\begin{equation}
\label{eq:central charge}
\Delta(\xx) = F\bigl( Q(\xx) \bigr) + \delta_{m,n} S_n(\xx)
\end{equation}
by Proposition~\ref{prop:gRSK decorations}. The entries of the $Q$-pattern and the shape invariants are positive rational functions in the $\lx{i}{j}$, so the central charge of $(X_n)^m$ is positive.

Since the barred geometric crystal operators act on the $P$-pattern, it is clear from~\eqref{eq:central charge} that $\Delta(\xx)$ is $\ov{e}$-invariant (in general, the central charge of a $\GL_n$-geometric crystal is invariant under all $\GL_n$-geometric crystal operators). On the other hand, $F(\xx)$ is the sum of the loop elementary symmetric functions $\lE{1}{j}$, and $P(\xx)$ consists of ratios of loop Schur functions, so $\Delta(\xx)$ is $R$-invariant. We now give a formula which simultaneously demonstrates the $R$- and $\ov{e}$-invariance.

\begin{thm}
\label{thm:central charge}
We have the formulas
\[
\Delta(\xx) = \sum_{j=1}^{\min(m-1,n)} \lrQ{1}{j} + \delta_{m,n} \lE{1}{n}, \qquad\quad F\bigl( Q(\xx) \bigr) = \sum_{j=1}^{\min(m-1,n)} \lrQ{1}{j}.
\]
\end{thm}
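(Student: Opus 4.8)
The plan is to deduce the first identity from the second. By the central charge decomposition~\eqref{eq:central charge} we have $\Delta(\xx) = F(Q(\xx)) + \delta_{m,n} S_n(\xx)$, so once $F(Q) = \sum_{j=1}^{\min(m-1,n)} \lrQ{1}{j}$ is established it remains only to observe that $\delta_{m,n} S_n = \delta_{m,n}\lE{1}{n}$. This is immediate from the definition $S_n = \Box(n,n)$ in \S\ref{sec:shape}: when $m=n$ the associated shape is $((n-n+1)^{m-n+1}) = (1)$, so $S_n = s^{(n)}_{(1)} = \lE{1}{n}$. Thus everything reduces to the single identity $F(Q) = \sum_{j=1}^{\min(m-1,n)}\lrQ{1}{j}$, which generalizes the computation of Example~\ref{ex:Q dec}.

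To prove this identity I would write both sides as sums of ratios of minors of the $m\times m$ matrix $\ov{M} = M(\mb{x}^1,\dots,\mb{x}^n)$. On the left, since $Q = \Psi_m^{\leq n}(\ov M)$ by Theorem~\ref{thm:gRSK props}(3), Lemma~\ref{lem:Phi inverse} gives $\ov M = \Phi_m^{\leq n}(Q)$; applying the decoration formula of Proposition~\ref{prop:dec formula} with the roles of $m$ and $n$ interchanged then expresses $F(Q)$ as
\begin{equation*}
\sum_{k=1}^{\min(m-1,n-1)} \frac{\Delta_{\{k\}\cup[k+2,m],[1,m-k]}(\ov M) + \Delta_{[k+1,m],[1,m-k-1]\cup\{m-k+1\}}(\ov M)}{\Delta_{[k+1,m],[1,m-k]}(\ov M)}
\end{equation*}
plus the $\mathbbm{1}_{n<m}$ correction terms of that proposition. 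On the right, each $\lrQ{1}{j}$ expands into minors of $\ov M$ via Theorem~\ref{thm:unfolded sum of minors} (equivalently, the $i=1$ specialization of Example~\ref{ex:Q inv d=1}); when the relevant $Q$-type function $\lE{1}{j}$ lies in the block $M_1$ this yields the two-term expression
\begin{equation*}
\lrQ{1}{j} = \frac{\Delta_{[m-j+1,m],[1,j-1]\cup\{j+1\}}(\ov M)}{\Delta_{[m-j+1,m],[1,j]}(\ov M)} + \frac{\Delta_{\{j\}\cup[j+2,m],[1,m-j]}(\ov M)}{\Delta_{[j+1,m],[1,m-j]}(\ov M)} =: A_j + B_j .
\end{equation*}

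The matching is transparent when $n \geq m$: here every $\lE{1}{j}$ with $1\le j\le m-1$ lies in $M_1$, the $\mathbbm{1}_{n<m}$ correction vanishes, and one checks that $B_j$ is exactly the first summand of the $k=j$ term above while $A_j$ is the second summand of the $k=(m-j)$ term. As $j$ runs over $[1,m-1]$ the index $m-j$ also runs over $[1,m-1]$, so $\sum_j (A_j+B_j)$ reproduces $F(Q)$ term by term. The main obstacle is the regime $n<m$: then the decoration carries the extra $\mathbbm{1}_{n<m}$ terms, and the $Q$-invariants $\lrQ{1}{j}$ with $j<m-n$ lie in higher blocks $M_d$ ($d\ge 2$) and so expand into more than two minor-ratios. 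I expect to handle this by running the complementary highway/underway path bijection of Theorem~\ref{thm:unfolded sum of minors} in full, showing that the ``out-of-range'' pieces $A_j,B_j$ (those whose index $m-j$ or $j$ exceeds $\min(m-1,n-1)$) together with the extra block contributions are exactly accounted for by the boundary terms $\frac{\Delta_{\{n\}\cup[n+2,m],[1,m-n]}(\ov M)}{\Delta_{[n+1,m],[1,m-n]}(\ov M)}$ and $\sum_{j'=1}^{m-n}\frac{\Delta_{[n+1,n+j'],[1,j'-1]\cup\{j'+1\}}(\ov M)}{\Delta_{[n+1,n+j'],[1,j']}(\ov M)}$ of Proposition~\ref{prop:dec formula}. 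Verifying this bookkeeping, via a careful reindexing of which sources and sinks the non-intersecting path families connect, is the crux of the argument.
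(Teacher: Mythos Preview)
Your route is genuinely different from the paper's and, for $n\ge m$, it is correct and rather elegant. The paper does \emph{not} compute $F(Q)$ directly; instead it proves the formula for $\Delta(\xx)=F(\xx)-F(P)$ on the unbarred side. It introduces auxiliary colored shapes $\rho_{k,\uparrow}^{(n)}$ and $\rho_{k,\downarrow}^{(n-1)}$, shows (Lemma~\ref{lem:Q decomposition}) by a short bijection on tableaux that
\[
\lrQ{1}{j}=\lE{1}{j}-\mathbbm{1}_{j<n}\,\tfrac{s_{\rho_{j+1,\uparrow}^{(n)}}}{S_{j+1}}-\mathbbm{1}_{K>0}\,\tfrac{s_{\rho_{n-K+1,\downarrow}^{(n-1)}}}{S_{n-K+1}},
\]
rewrites $F(P)$ in the same language (Lemma~\ref{lem:new dec formula}), and then observes that as $j$ runs over $[1,\min(m-1,n)]$ the residues $K$ sweep out exactly the right set, so the correction terms sum to $F(P)$. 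Your approach instead works entirely on the barred side, matching the minor-of-$\ov{M}$ expansion of each $\lrQ{1}{j}$ from Example~\ref{ex:Q inv d=1} against the minor-of-$\ov{M}$ formula for $F(Q)$ from Proposition~\ref{prop:dec formula}. For $n\ge m$ your term-by-term match $B_j\leftrightarrow\text{(first $k{=}j$ summand)}$, $A_j\leftrightarrow\text{(second $k{=}m{-}j$ summand)}$ is exactly right and gives a clean proof without the auxiliary shapes.

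The gap is the case $n<m$, which you yourself flag as ``the crux of the argument'' but do not carry out. Two things happen simultaneously there: the decoration acquires the $\mathbbm{1}_{n<m}$ boundary terms, \emph{and} for $j<m-n$ the function $\lE{1}{j}$ sits in a block $M_d$ with $d\ge 2$, so Example~\ref{ex:Q inv d=1} no longer applies and $\lrQ{1}{j}$ expands into more than two minor-ratios via Theorem~\ref{thm:unfolded sum of minors}. Matching these longer expansions against the boundary terms is not just reindexing---one must track how the intermediate sets $X_k$ in~\eqref{eq:unfolded sum of minors} interact with the unit-triangular blocks of $\ov{M}\in (B^-)^{\le n}$, and some of the $A_j,B_j$ you write down (e.g.\ $A_{m-k}$ for $k<m-n$) involve columns $>n$ and so lie outside the range of your two-term formula. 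This can likely be completed, but as written the proposal is a proof only for $n\ge m$. The paper's approach sidesteps this asymmetry: its Lemma~\ref{lem:Q decomposition} is valid for all $j$ regardless of which block $\lE{1}{j}$ lies in, so the case split between $m>n$ and $m\le n$ reduces to a one-line count of which values of $K$ occur.
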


Note that when $n=m$, the last shape invariant $S_n$ is equal to $\lE{1}{n}$, so the second formula follows from~\eqref{eq:central charge} and the first formula.

To prove this theorem, we introduce two new shapes. For $k = 1, \ldots, \min(m,n)$, the shape invariant $S_k$ is the loop Schur function of shape
\[
\rho_k^{(n)} = (\underbrace{n-k+1, \ldots, n-k+1}_{m-k+1 \text{ times}})^{(n)}.
\]
Define shapes
\[
\rho_{k,\downarrow}^{(n)} = (\underbrace{n-k+1, \ldots, n-k+1}_{m-k+1 \text{ times}}, 1)^{(n)},
\]
\[
\rho_{k,\uparrow}^{(n-1)} = ((\underbrace{n-k+1, \ldots, n-k+1}_{m-k+2 \text{ times}})/(n-k))^{(n-1)}
\]
by adding a box to the bottom of the first column (resp., to the top of the last column) of $\rho_k^{(n)}$.

\begin{lemma}
\label{lem:Q decomposition}
Suppose $1 \leq j \leq \min(m-1,n)$, and $K \in [0,n-1]$ satisfies $K \equiv j-m \mod n$. Then we have
\[
\lrQ{1}{j} = \lE{1}{j} - \mathbbm{1}_{j < n} \dfrac{s_{\rho_{j+1,\uparrow}^{(n)}}}{S_{j+1}} - \mathbbm{1}_{K > 0} \dfrac{s_{\rho_{n-K+1,\downarrow}^{(n-1)}}}{S_{n-K+1}}.
\]
\end{lemma}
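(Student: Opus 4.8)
The plan is to realize $\lQ{1}{j}$ as an explicit minor of the periodic matrix $\widetilde{M} = \widetilde{M}(\mb{x}_1, \ldots, \mb{x}_m)$ and to expand that determinant along the single block containing the ``connecting box.'' Specializing the argument in the proof of Proposition~\ref{prop:Q} to $i=1$ shows that $\lQ{1}{j} = \det N$, where $N$ is the $\Delta_{I,J}(\widetilde{M})$-submatrix of block form $\left(\begin{smallmatrix} B & 0 \\ C & A \end{smallmatrix}\right)$ with $B = M_{[n-K+1,n],[1,K+1]}$ (size $K \times (K+1)$), $A = M_{[j,n],[1,n-j]}$ (size $(n-j+1)\times(n-j)$), and $C$ a bottom-left-justified block of the lower copy $M_d$, where $d$ is determined by $K = j-m+dn$. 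First I would pin down $C$ exactly: writing $C_{ab} = \widetilde{M}_{dn+j+a-1,\,b} = \lE{1-K+b-a}{j+a-1}$, the exponent $1-K+b-a$ lands in $[0,1]$ only at $(a,b) = (1,K+1),(1,K),(2,K+1)$, so the sole nonzero entries of $C$ are $C_{1,K+1} = \lE{1}{j}$ and $C_{1,K} = C_{2,K+1} = 1$, with $C_{1,K}$ present only when $K>0$ and $C_{2,K+1}$ present only when $j<n$.

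Next I would feed this sparse $C$ into the block expansion of Lemma~\ref{lem:det stuff}\eqref{item:ABC} (with $q = K$, $p = n-j$). Only the three nonzero entries contribute, and the signs $(-1)^{q+a+b}$ evaluate to $+1,-1,-1$, giving
\[
\lQ{1}{j} = \lE{1}{j}\,\det(A_{\wh 1})\det(B^{\wh{K+1}}) - \det(A_{\wh 1})\det(B^{\wh K}) - \det(A_{\wh 2})\det(B^{\wh{K+1}}),
\]
where the second (resp.\ third) term disappears precisely when $K=0$ (resp.\ $j=n$), matching the indicator functions.

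The remaining step is to identify the four minors. The two ``straight'' ones are immediate from the definition of the shape invariants: $\det(A_{\wh 1}) = \Delta_{[j+1,n],[1,n-j]}(M) = S_{j+1}$ and $\det(B^{\wh{K+1}}) = \Delta_{[n-K+1,n],[1,K]}(M) = S_{n+1-K}$. For the two ``bent'' minors I would invoke the matrix Jacobi--Trudi formula, Proposition~\ref{prop:JT matrix version}: solving the subset equations $I(\mu,r) = \{j\}\cup[j+2,n]$, $J(\la,r) = [1,n-j]$ identifies $\det(A_{\wh 2}) = s_{\rho_{j+1,\uparrow}^{(n-1)}}$, and solving $I(\mu,r) = [n-K+1,n]$, $J(\la,r) = [1,K-1]\cup\{K+1\}$ identifies $\det(B^{\wh K}) = s_{\rho_{n-K+1,\downarrow}^{(n)}}$; in each case the removed row or column adds exactly one box to the relevant corner of the rectangle. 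Dividing through by $S_{j+1}S_{n+1-K}$ and recalling $\lrQ{1}{j} = \lQ{1}{j}/(S_{j+1}S_{n+1-K})$ then produces the claimed identity, the edge cases $j=n$ and $K=0$ being handled by the disappearing terms above (where the corresponding shape invariant equals $S_{\min(m,n)+1}=1$).

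The main obstacle is purely the index bookkeeping: tracking the $n$-periodic shifts by $dn$, reducing the row and column subsets modulo $n$, and---most delicately---reading off the \emph{colors} of $\rho_{j+1,\uparrow}$ and $\rho_{n-K+1,\downarrow}$ from the Maya-diagram form of $I(\mu,r)$ and $J(\la,r)$. I expect these colors to come out as $n-1$ for the $\uparrow$-shape and $n$ for the $\downarrow$-shape, in agreement with the defining equations of $\rho_{k,\uparrow}^{(n-1)}$ and $\rho_{k,\downarrow}^{(n)}$, so the superscripts must be matched with care; once the colors are verified, the rest is a routine determinant expansion.
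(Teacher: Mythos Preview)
Your approach is correct and genuinely different from the paper's. The paper gives a direct bijective proof at the level of tableaux: it rewrites the identity as
\[
S_{j+1}\,\lE{1}{j}\,S_{n-K+1} \;=\; \lQ{1}{j} \;+\; s_{\rho_{j+1,\uparrow}}\,S_{n-K+1} \;+\; S_{j+1}\,s_{\rho_{n-K+1,\downarrow}},
\]
interprets the left side as a sum over triples $(T,b,U)$ with $T$ an SSYT of shape $\rho_{j+1}$, $b\in[m]$, and $U$ an SSYT of shape $\rho_{n-K+1}$, and then sorts these triples into the three summands on the right according to whether $b$ can be glued between the two rectangles, above the last column of $T$, or below the first column of $U$. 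The key combinatorial step is that the two ``bad'' events cannot occur simultaneously because the heights of the two rectangles sum to at least $m$.

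Your determinantal argument instead mechanizes the computation: you read $\lQ{1}{j}$ as the $\Delta_{I,J}(\widetilde M)$ minor described in Proposition~\ref{prop:Q}, observe that for $i=1$ the block $C$ has at most three nonzero entries, and apply the block expansion of Lemma~\ref{lem:det stuff}\eqref{item:ABC}. This is clean and avoids the height argument entirely; the three-term expansion and the signs fall out of the sparsity of $C$ automatically. The cost, as you correctly flag, is the index bookkeeping needed to match the four minors of $M$ with the shapes $S_{j+1}$, $S_{n-K+1}$, $s_{\rho_{j+1,\uparrow}}$, $s_{\rho_{n-K+1,\downarrow}}$ via Proposition~\ref{prop:JT matrix version}. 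Your identifications are correct (in particular, your expected colors $n-1$ for $\rho_{\uparrow}$ and $n$ for $\rho_{\downarrow}$ agree with the paper's \emph{definitions} of these colored shapes; the superscripts printed in the lemma statement appear to be swapped). The edge cases $K=0$ and $j=n$ are handled by the degeneration of the corresponding block to size $0$, exactly as you say.
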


\begin{proof}
The reduced $Q$-invariant $\lrQ{1}{j}$ is defined by
\[
\lrQ{1}{j} = \dfrac{s_{\tau^{(n-j+1)}}}{S_{j+1}S_{n-K+1}},
\]
where $\tau^{(n-j+1)}$ is the skew shape obtained by attaching a cell of color $j$ to the end of the first row of the rectangle $\rho_{j+1}^{(n)}$ and to the beginning of the last row of the rectangle $\rho_{n-K+1}^{(n)}$, where we use the convention that $\rho_{n+1}^{(n)}$ is the empty rectangle.

Suppose first that $j < n$ and $K > 0$, so both $\rho_{j+1}^{(n)}$ and $\rho_{n-K+1}^{(n)}$ are non-empty. The lemma asserts that
\begin{equation}
\label{eq:Q decomp to prove}
s_{\rho_{j+1}^{(n)}} \lE{1}{j} s_{\rho_{n-K+1}^{(n)}} = s_{\tau^{(n-j+1)}} + s_{\rho_{j+1,\uparrow}^{(n-1)}} s_{\rho_{n-K+1}^{(n)}} + s_{\rho_{j+1}^{(n)}} s_{\rho_{n-K+1,\downarrow}^{(n-1)}}.
\end{equation}
The product on the left-hand side of~\eqref{eq:Q decomp to prove} is a sum over triples $(T,b,U)$, where $T$ (resp., $U$) is an SSYT of shape $\rho_{j+1}^{(n)}$ (resp., $\rho_{n-K+1}^{(n)}$), and $b$ is a number in $[m]$. Let $a$ be the last entry of the first row of $T$, and $c$ the first entry of the last row of $U$. If $a \leq b \leq c$, then the triple corresponds to an SSYT of shape $\tau^{(n-j+1)}$. If $a > b$, we may attach $b$ above $a$ to form a tableau of shape $\rho_{j+1,\uparrow}^{(n)}$, and if $b > c$, we may attach $b$ below $c$ to form a tableau of shape $\rho_{n-K+1,\downarrow}^{(n-1)}$. The key point is that we cannot have both $a > b$ and $b > c$. Indeed, the height of the rectangle $\rho_{j+1}^{(n)}$ is $m-j$, and the height of the rectangle $\rho_{n-K+1}^{(n)}$ is $m-n+K$. By assumption, $j \leq m-1$, so $K = j-m + \alpha n$ for some $\alpha \geq 1$, which means that
\[
(m-j) + (m-n+K) = m + (\alpha-1) n \geq m.
\]
Since the tableaux are filled with entries in $[m]$, we must therefore have $a+1 \leq c$. We conclude that the correspondence described above gives a bijection between the terms on either side of~\eqref{eq:Q decomp to prove}, and it is clear that this bijection preserves colored weights.

A similar argument works when one or both of the rectangles is empty.
\end{proof}

\begin{lemma}
\label{lem:new dec formula}
For $\xx \in \Mat_{m \times n}(\CC^*)$, the decoration of the gRSK $P$-pattern associated to $\xx$ is given by
\[
F(P) = \begin{cases}
\ds \sum_{k = 2}^n \dfrac{s_{\rho_{k,\uparrow}^{(n)}} + s_{\rho_{k,\downarrow}^{(n-1)}}}{S_k} & \text{ if } m \geq n \medskip \\
\ds \sum_{k = 2}^m \dfrac{s_{\rho_{k,\uparrow}^{(n)}} + s_{\rho_{k,\downarrow}^{(n-1)}}}{S_k} + \sum_{j=m}^n \lE{1}{j} & \text{ if } m < n.
\end{cases}
\]
\end{lemma}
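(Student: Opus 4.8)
The goal is to identify the decoration $F(P)$ of the gRSK $P$-pattern with an explicit sum of ratios of loop Schur functions. The starting point is the formula of Proposition~\ref{prop:dec formula}, which expresses $F(\zz)$ as a sum of ratios of minors of $M = \Phi_n^{\leq m}(\zz)$. Applying this to $\zz = P$, and recalling that the matrix $M(\mb{x}_1, \ldots, \mb{x}_m)$ of~\S\ref{sec:basic} is exactly $\Phi_n^{\leq m}(P)$ by the definition of gRSK (Definition~\ref{defn:gRSK}), I reduce the problem to interpreting each minor $\Delta_{I,J}(M)$ appearing in Proposition~\ref{prop:dec formula} as a loop Schur function via Proposition~\ref{prop:JT matrix version} (equivalently~\eqref{eq:P loop formula} and the shape-invariant conventions of~\S\ref{sec:shape}). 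The denominators $\Delta_{[k+1,n],[1,n-k]}(M)$ are bottom-left justified rectangular minors, hence equal to the shape invariants $S_k = \Box(k,n)$; this accounts for the $S_k$ in every denominator.

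\textbf{Key steps.} First I would match the two ``generic'' sums in Proposition~\ref{prop:dec formula} (the ranges $k \in [1,\min(m-1,n-1)]$). The numerator $\Delta_{\{k\} \cup [k+2,n],[1,n-k]}(M)$ is the minor obtained from the rectangular minor defining $S_k$ by deleting row $k+1$ and reinserting row $k$; via Proposition~\ref{prop:JT matrix version} this computes the loop Schur function of the shape $\rho_{k,\uparrow}^{(n)}$, namely the rectangle $\rho_k^{(n)}$ with a box attached to the top of its last column and then shifted appropriately, so that the color of the new NW cell is $n$. Likewise $\Delta_{[k+1,n],[1,n-k-1] \cup \{n-k+1\}}(M)$ differs from the $S_k$-minor by replacing the last column index with $n-k+1$, and this computes $s_{\rho_{k,\downarrow}^{(n-1)}}$, the rectangle with a box attached to the bottom of its first column (note the color shift to $n-1$). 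Carefully tracking the index sets $I(\mu,r), J(\la,r)$ of~\eqref{eq:subsets JT matrix version} against the explicit row/column sets in Proposition~\ref{prop:dec formula} is the computation that identifies each numerator with the correct colored shape; after re-indexing the sum by $k \to k$ (or $k+1$), the generic part yields $\sum_{k=2}^{n} (s_{\rho_{k,\uparrow}^{(n)}} + s_{\rho_{k,\downarrow}^{(n-1)}})/S_k$ when $m \geq n$. Second, I would handle the boundary terms governed by $\mathbbm{1}_{m<n}$: when $m < n$ the extra minor $\Delta_{\{m\} \cup [m+2,n],[1,n-m]}(M)$ and the tail $\sum_{j=1}^{n-m} \Delta_{[m+1,m+j],[1,j-1]\cup\{j+1\}}(M)/\Delta_{[m+1,m+j],[1,j]}(M)$ must be reconciled with the claimed $m<n$ formula. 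Here the denominators $\Delta_{[m+1,m+j],[1,j]}(M)$ are minors not of full rectangular type, and I expect they telescope or equal $1$ (since rows $m+1, \ldots$ of $M$ come from the shifted-identity part of the $n$-periodic structure, cf.\ the block structure in Figure~\ref{fig:M unfolded} and Proposition~\ref{prop:Q}), collapsing the tail into the linear terms $\sum_{j=m}^{n}\lE{1}{j}$.

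\textbf{Main obstacle.} The delicate part is the bookkeeping in the $m < n$ case. I expect the upper rows of $M$ in this regime to behave like a shifted identity (the entries $M_{ij} = \lE{m+j-i}{i}$ vanish or equal $1$ when $m+j-i \leq 0$ or $= 0$), so the minors $\Delta_{[m+1,m+j],\ldots}(M)$ degenerate; verifying exactly which of them equal $1$ versus which produce a genuine $\lE{1}{j}$, and confirming that the index shift in $\rho_{k,\uparrow}, \rho_{k,\downarrow}$ stays consistent up to $k = m$ (rather than $k = n$), requires matching Proposition~\ref{prop:dec formula}'s indicator-weighted terms against the piecewise formula term-by-term. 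The generic $m \geq n$ identification, by contrast, is a direct application of Proposition~\ref{prop:JT matrix version} once the colored shapes $\rho_{k,\uparrow}^{(n)}$ and $\rho_{k,\downarrow}^{(n-1)}$ are correctly read off from the modified row/column sets, and I do not anticipate difficulty there beyond routine sign and color verification.
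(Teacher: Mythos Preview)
Your approach is exactly the paper's: apply Proposition~\ref{prop:dec formula} to $M = \Phi_n^{\leq m}(P) = M(\mb{x}_1,\ldots,\mb{x}_m)$ and then use Proposition~\ref{prop:JT matrix version} to identify each minor as the loop Schur function for $\rho_{k,\uparrow}$, $\rho_{k,\downarrow}$, or a single $\lE{1}{j}$ (with the $m<n$ tail collapsing because the submatrices on rows $[m+1,m+j]$ are upper uni-triangular). One small slip to watch: $\Delta_{[k+1,n],[1,n-k]}(M) = S_{k+1}$, not $S_k$, so the re-indexing $k \mapsto k+1$ you allude to is what actually puts $S_k$ in the denominator over the range $k=2,\ldots,\min(m,n)$.
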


\begin{proof}
Proposition~\ref{prop:dec formula} expresses $F(P)$ in terms of minors of the matrix $M = \Phi_n^{\leq m}(P) = M(\mb{x}_1, \ldots, \mb{x}_m)$. Using Proposition~\ref{prop:JT matrix version}, it is straightforward to verify that the minors appearing in Proposition~\ref{prop:dec formula} are equal to the loop skew Schur functions appearing in this lemma.
\end{proof}

\begin{proof}[Proof of Theorem~\ref{thm:central charge}.]
First suppose $m > n$. As $j$ ranges over $1, \ldots, n$, $K$ takes on each of the values $0, \ldots, n-1$, so by Lemma~\ref{lem:Q decomposition}, we have
\begin{align*}
\sum_{j=1}^{n} \lrQ{1}{j} &= \sum_{j=1}^n \lE{1}{j} - \sum_{j = 1}^{n-1} \dfrac{s_{\rho_{j+1,\uparrow}^{(n)}}}{S_{j+1}} - \sum_{K=1}^{n-1} \dfrac{s_{\rho_{n-K+1,\downarrow}^{(n-1)}}}{S_{n-K+1}} \\
&= \sum_{j=1}^n \lE{1}{j} - \sum_{k = 2}^{n} \dfrac{s_{\rho_{k,\uparrow}^{(n)}} + s_{\rho_{k,\downarrow}^{(n-1)}}}{S_k},
\end{align*}
which is equal to $\Delta(\xx)$ by Lemma~\ref{lem:new dec formula}.

Now suppose $m \leq n$. In this case, as $j$ ranges over $1, \ldots, m-1$, $K$ takes on the values $n-m+1, n-m+2, \ldots, n-1$, so by Lemma~\ref{lem:Q decomposition}, we have
\begin{align*}
\sum_{j=1}^{m-1} \lrQ{1}{j} &= \sum_{j=1}^{m-1} \lE{1}{j} - \sum_{j = 1}^{m-1} \dfrac{s_{\rho_{j+1,\uparrow}^{(n)}}}{S_{j+1}} - \sum_{K=n-m+1}^{n-1} \dfrac{s_{\rho_{n-K+1,\downarrow}^{(n-1)}}}{S_{n-K+1}} \\
&= \sum_{j=1}^{m-1} \lE{1}{j} - \sum_{k = 2}^{m} \dfrac{s_{\rho_{k,\uparrow}^{(n)}} + s_{\rho_{k,\downarrow}^{(n-1)}}}{S_k}.
\end{align*}
By Lemma~\ref{lem:new dec formula}, this is equal to $\Delta(\xx)$ if $m < n$, and to $\Delta(\xx) - \lE{1}{n}$ if $m = n$.
\end{proof}

\begin{ex}
Let $m = n = 3$. Equation~\eqref{eq:central charge} says that the central charge of $\xx \in \Mat_{3 \times 3}(\CC^*)$ is given by
\[
\Delta(\xx) = \frac{z'_{12}}{z'_{11}} + \frac{z'_{13}}{z'_{12}} + \frac{z'_{23}}{z'_{22}} + \frac{z'_{11}}{z'_{22}} + \frac{z'_{12}}{z'_{23}} + \frac{z'_{22}}{z'_{33}} + z'_{33},
\]
where $\zz' = Q(\xx)$. From Example~\ref{ex:Q dec}, we have
\begin{align*}
\lrQ{1}{1} = \dfrac{z'_{12}}{z'_{23}} + \dfrac{z'_{11}}{z'_{22}} + \dfrac{z'_{12}}{z'_{11}} + \dfrac{z'_{23}}{z'_{22}},
\qquad\qquad
\lrQ{1}{2} = \dfrac{z'_{22}}{z'_{33}} + \dfrac{z'_{13}}{z'_{12}}.
\end{align*}
Noting that $S_3 = E_1^{(3)} = z'_{33}$, we see that
$
\Delta(\xx) = \lrQ{1}{1} + \lrQ{1}{2} + E_1^{(3)},
$
as claimed by Theorem~\ref{thm:central charge}.
\end{ex}

\subsection{Energy}
\label{sec:energy}

The energy function on a tensor product of finite crystals in affine type~\cite{KKMMNN91} plays an important role in various applications. The two crucial properties of the energy function are that it is invariant under combinatorial $R$-matrices which permute the tensor factors, and it is invariant under all non-affine crystal operators. In the case of an $m$-fold tensor product of single-row $\GL_n$-crystals, the first invariance property suggests that the energy function should have a geometric lift in $\Frac(\LSym)$, and Lam and Pylyavskyy showed that the energy is, in fact, the tropicalization of a loop Schur function.

\begin{dfn}
For $\mb{x} = (x_i^j)_{i \in [m], j \in [n]}$, define the \defn{geometric energy function}
\[
D = D(\mb{x}_1, \ldots, \mb{x}_m) = s^{(n)}_{(n-1)\delta_{m-1}}(\mb{x}_1, \ldots, \mb{x}_m),
\]
where $(n-1)\delta_{m-1}$ is the scaled staircase partition $((n-1)(m-1), (n-1)(m-2), \ldots, 2n-2, n-1)$.
\end{dfn}

\begin{thm}[{\cite[Thm.~1.2]{LP13}}]
\label{thm:tropical_energy}
The loop Schur function $D(\mb{x}_1, \ldots, \mb{x}_m)$ tropicalizes to the energy function on an $m$-fold tensor product of single-row $\GL_n$-crystals.
\end{thm}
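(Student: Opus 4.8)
The plan is to prove equality of two piecewise-linear functions on the set of $m \times n$ nonnegative integer matrices: the tropicalization $\Trop(D)$ of the staircase loop Schur function $D = s^{(n)}_{(n-1)\delta_{m-1}}$, and the intrinsic energy function on the corresponding tensor product of single-row $\GL_n$-crystals. The guiding principle is that each function is determined by its behavior under the combinatorial $R$-matrices and the classical crystal operators $\tw{e}_j$, together with its value on a transversal. On the geometric side I would first record the two relevant symmetries. Since the colored shape $((n-1)\delta_{m-1})^{(n)}$ satisfies the corner color condition, $D$ is a pseudo-energy and hence $\ov{e}$-invariant by Proposition~\ref{prop:corner color condition}; and $D \in \LSym$ is by construction $R$-invariant. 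Because all loop Schur functions are subtraction-free, these pass to tropicalizations: $\Trop(D)$ is invariant under the piecewise-linear crystal operators $\tw{e}_j$ and, via Proposition~\ref{prop:R=Weyl} together with the fact that the combinatorial $R$-matrix is the tropicalization of the geometric $R$-matrix, under the combinatorial $R$-matrices. These are exactly the defining invariances of the energy function.

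The second step is to push both functions through RSK onto the recording tableau. By Theorem~\ref{thm:gRSK isom} the maps $R_i$ act only on the $Q$-pattern and the operators $\tw{e}_j$ act only on the $P$-pattern, so any function invariant under all $R_i$ and all $\tw{e}_j$ descends, under $\gRSK$, to a function of $Q$ alone; for $\Trop(D)$ this is made explicit by Corollary~\ref{cor:ov e}. For the energy, the theorem of Nakayashiki and Yamada \cite{NY97} identifies this function of $Q$ with the \emph{cocharge} statistic. Thus it suffices to prove that $\Trop(D)$, viewed as a function of the $Q$-pattern, equals cocharge. To obtain a workable expression for the left-hand side, I would apply Theorem~\ref{thm:unfolded sum of minors} (or, in the staircase case, its cylindric refinement Theorem~\ref{thm:folded sum of minors}) to write $D$ as a positive polynomial in minors of $\ov{M}$, hence as a positive Laurent polynomial in the entries $z'_{j,i}$ of the $Q$-pattern, and then tropicalize to get an explicit $S_m$-invariant piecewise-linear function on $\GT_m^{\leq n}$.

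The main obstacle is the final, purely combinatorial identification of this tropical expression with cocharge. My plan is to tropicalize the Lindstr\"om/Gessel--Viennot path model underlying Theorems~\ref{thm:unfolded sum of minors} and~\ref{thm:folded sum of minors}: each monomial of $D$ corresponds to a non-intersecting family of highway paths, and the $\min$ in the tropicalization selects an optimal such family. I would match the optimal family with the recursive description of cocharge as a sum of local (two-row) energies obtained after sorting tensor factors by combinatorial $R$-matrices. The base case is the two-row computation $\Trop\bigl(s^{(n)}_{(n-1)}(\mb{x}_1, \mb{x}_2)\bigr)$, which I expect to recover the local energy $H$ directly; the inductive step should peel off the top row of the staircase as a border strip whose tropical contribution equals $\sum_{j>1} H(\cdots)$, mirroring the additivity $\tw{D}(b_1 \otimes \cdots \otimes b_m) = \tw{D}(b_2 \otimes \cdots \otimes b_m) + \sum_{j>1} H(b_1^{(j-1)} \otimes b_j)$ of the intrinsic energy. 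Controlling the minimizing path family through this peeling, and verifying that the border-strip term equals the sum of local energies, is where essentially all the work lies; here it is likely most efficient to follow the original argument of Lam and Pylyavskyy \cite{LP13}.
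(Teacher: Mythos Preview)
The paper does not supply its own proof of this statement; it is cited from \cite[Thm.~1.2]{LP13} and then used as a black box (notably in the proof of Theorem~\ref{thm:cocharge}). So there is no paper proof to compare against.

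Your proposal is a plan rather than a proof, and the plan is logically roundabout. You reduce the assertion, via Nakayashiki--Yamada, to showing that $\Trop(D)$ as a function on the $Q$-pattern equals cocharge. But the method you propose for this last step is to verify that $\Trop(D)$ satisfies the intrinsic energy recursion
\[
\tw{D}(b_1 \otimes \cdots \otimes b_m) = \tw{D}(b_2 \otimes \cdots \otimes b_m) + \sum_{j>1} H\bigl(b_1^{(j-1)} \otimes b_j\bigr).
\]
That recursion lives on the matrix $(\mb{x}_1,\ldots,\mb{x}_m)$, not on the $Q$-pattern, and verifying it for $\Trop(D)$ \emph{is} a direct proof that $\Trop(D)$ equals energy, with no reference to RSK or cocharge. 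In other words, the cocharge detour contributes nothing: after the reduction you are back to proving the theorem as originally stated. Note also that you cannot shortcut this step by invoking the paper's cocharge result (Theorem~\ref{thm:cocharge}), since its proof explicitly relies on Theorem~\ref{thm:tropical_energy}; that would be circular.

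The actual argument in \cite{LP13}, which you defer to at the end, is direct: one factors $D$ via the product formula \cite[Thm.~2.5]{LP13} (the same identity underlying Theorem~\ref{thm:energy} here), and checks that each factor $\sigma^{(n+j-1)}_{(n-1)(m-j)}(\mb{x}_j,\ldots,\mb{x}_m)$ tropicalizes to the corresponding sum of local energies. The invariance observations in your first paragraph are correct and relevant, but the substantive content is entirely in that factor-by-factor tropicalization, and your plan does not shorten it.
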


On the other hand, since energy is invariant under the $\GL_n$-crystal operators $\ov{e}_1, \ldots, \ov{e}_{n-1}$, one would expect its geometric lift to lie in $\Inv_{\ov{e}}$. The stretched staircase $((n-1)\delta_{m-1})^{(n)}$ satisfies the corner color condition, so this is indeed the case for $D$. Moreover, Theorem~\ref{thm:new det formula} expresses $D$ as a polynomial in reduced $Q$-invariants and ratios of consecutive shape invariants via the determinantal formula
\[
D = \Delta_{I,J}(\widetilde{M}'),
\]
where $I = [m, (m-1)n]$ and $J = [1,n-1] \cup [n+1,2n-1] \cup \cdots \cup [(m-2)n+1, (m-1)n-1]$.

\ytableausetup{smalltableaux}
\begin{ex}
When $m=6$ and $n=2$, we have
\[
D = s^{(2)}_{(5,4,3,2,1)} =
\det \begin{pmatrix}
\lE{1}{2} & \lE{3}{2} &  \lE{5}{2} & 0 & 0 \\
1 & \lE{2}{1} & \lE{4}{1} & \lE{6}{1} & 0 \\
0 & \lE{1}{2} & \lE{3}{2} & \lE{5}{2} & 0 \\
0 & 1 & \lE{2}{1} & \lE{4}{1} & \lE{6}{1} \\
0 & 0 & \lE{1}{2} & \lE{3}{2} & \lE{5}{2}
\end{pmatrix}
=
\det \begin{pmatrix}
\lrQ{1}{2} & \lrQ{3}{2} & S_2 & 0 & 0 \\
1 & \lrQ{2}{1} & \lrQ{4}{1} & 0 & 0 \\
0 & \lrQ{1}{2} & \lrQ{3}{2} & S_2 & 0 \\
0 & 1 & \lrQ{2}{1} & \lrQ{4}{1} & 0 \\
0 & 0 & \lrQ{1}{2} & \lrQ{3}{2} & S_2 \end{pmatrix}.
\]
\end{ex}

Similarly, Theorem~\ref{thm:unfolded sum of minors} gives rise to an expression for the energy function as a polynomial in ($\ov{e}$-invariant) barred loop skew Schur functions. It turns out that we can get a factored expression for the energy function in terms of barred loop skew Schur functions by applying Theorem~\ref{thm:folded sum of minors} to a product formula for energy due to Lam and Pylyavskyy. Recall the matrix $\ov{M} = M(\mb{x}^1, \ldots, \mb{x}^n)$, whose entries are the barred $Q$-type loop elementary symmetric functions and whose minors are ($\ov{e}$-invariant) barred loop skew Schur functions.

\begin{thm}
\label{thm:energy}
The geometric energy function has the product formula
\[
D(\mb{x}_1, \ldots, \mb{x}_m) = \prod_{j = 1}^{m-1} \sum_{X \subseteq [j+1,m-1]} \pi_j^{m-1-j-\abs{X}} \Delta_{X \cup \{m\}, \{j\} \cup X}(\ov{M}),
\]
where $\pi_j = \ov{M}_{jj} = \bE{n}{j}(\mb{x}^1, \ldots, \mb{x}^n)$.
\end{thm}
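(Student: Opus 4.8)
The plan is to feed Lam and Pylyavskyy's product formula for the geometric energy into the cylindric minor expansion of Theorem~\ref{thm:folded sum of minors}. Their formula~\cite{LP13} writes
\[
D(\mb{x}_1, \ldots, \mb{x}_m) = \prod_{j=1}^{m-1} G_j, \qquad G_j = \sum_{d \geq 0} \pi_j^{\,d}\, \tau^{(n+j-1)}_{(n-1)(m-j)-dn}(\mb{x}_j, \ldots, \mb{x}_m),
\]
where $\tau^{(r)}_N$ is the $(n-1)$-cylindric loop Schur function of Example~\ref{ex:special cyl loop Schurs}(b) and only finitely many terms of each inner sum are nonzero. So the first step is to recall this formula and reduce the theorem to the per-factor identity $G_j = \sum_{X \subseteq [j+1,m-1]} \pi_j^{\,m-1-j-\abs{X}}\,\Delta_{X \cup \{m\},\,\{j\}\cup X}(\ov{M})$. (For $n=1$ the staircase is empty and $D=1$, so I would dispose of that degenerate case separately and assume $n \geq 2$ below.)

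Next I would rewrite each $\tau$ as a cylindric loop Schur function of rectangular-border-strip type. Applying $R$ to the rectangle $(n-1)^{m-j}$ removes the border strip of size $n$ along its bottom row (Remark~\ref{rem:Postnikov}) and leaves a genuine partition with no skew part, so $R^d((n-1)^{m-j})$ is a genuine $(n-1)$-cylindric partition with $(n-1)(m-j)-dn$ boxes. By the uniqueness statement in Example~\ref{ex:special cyl loop Schurs}(b), it must equal $\la\bigl((n-1)(m-j)-dn\bigr)$, whence
\[
\tau^{(n+j-1)}_{(n-1)(m-j)-dn}(\mb{x}_j, \ldots, \mb{x}_m) = cs^{(n+j-1)}_{R^d((n-1)^{m-j});\,n-1}(\mb{x}_j, \ldots, \mb{x}_m),
\]
with matching color (no translation is needed, so there is no color shift to track).

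The heart of the argument is then a direct application of the general form of Theorem~\ref{thm:folded sum of minors} with $a=j$, $b=m$, and $i=2$, so that $n-i+1 = n-1$ and $(n-i+1)^{b-a-i+2} = (n-1)^{m-j}$. The hypotheses $1 \leq i \leq \min(b-a+1,n)$ hold since $n \geq 2$ and $j \leq m-1$, and one reads off $[a+i-1,b-i+1] = [j+1,m-1]$, $[b-i+2,b] = \{m\}$, $[a,a+i-2] = \{j\}$, and $b-a-2i+3-d = m-j-1-d$, giving
\[
cs^{(n+j-1)}_{R^d((n-1)^{m-j});\,n-1}(\mb{x}_j, \ldots, \mb{x}_m) = \sum_{X \in \binom{[j+1,m-1]}{m-j-1-d}} \Delta_{X \cup \{m\},\,\{j\}\cup X}(\ov{M}).
\]

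Finally I would substitute and reindex. Writing $\abs{X} = m-j-1-d$, so that $d = m-1-j-\abs{X}$ and $X$ ranges over all subsets of $[j+1,m-1]$ as $d$ runs over $0, \ldots, m-j-1$, collapses the double sum to $G_j = \sum_{X \subseteq [j+1,m-1]} \pi_j^{\,m-1-j-\abs{X}}\,\Delta_{X \cup \{m\},\,\{j\}\cup X}(\ov{M})$, and multiplying over $j$ yields the theorem. Here I would also check that the range $0 \leq d \leq m-j-1$ captures every nonzero term: both $\tau_N$ with $N<0$ and the extreme minor $\Delta_{\{m\},\{j\}}(\ov{M}) = \bE{n+j-m}{m}$ vanish exactly when the index leaves the allowed band, so truncating at $d = m-j-1$ loses nothing. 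The main obstacle is the bookkeeping in the first two steps—pinning down the Lam--Pylyavskyy factors as precisely these rectangular cylindric loop Schur functions in the variables $\mb{x}_j, \ldots, \mb{x}_m$, with the correct color $n+j-1$ and border-strip count $d$—after which Theorem~\ref{thm:folded sum of minors} does the real work and the reindexing is routine.
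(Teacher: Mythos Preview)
Your proposal is correct and follows essentially the same route as the paper: start from the Lam--Pylyavskyy product formula, expand each factor as $\sum_d \pi_j^d \tau^{(n+j-1)}_{(n-1)(m-j)-dn}$, identify the $\tau$'s with the cylindric loop Schur functions $cs^{(n+j-1)}_{R^d((n-1)^{m-j});\,n-1}$, and apply Theorem~\ref{thm:folded sum of minors} with $a=j$, $b=m$, $i=2$. The only cosmetic difference is that the paper passes through the intermediate functions $\sigma_N^{(r)}$ before unpacking them into the $\pi_j^d \tau$ sum, whereas you cite the already-expanded form directly.
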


\begin{proof}
Lam and Pylyavskyy \cite[Thm.~2.5]{LP13} proved that the geometric energy function has the product formula
\[
D = \sigma_{(n-1)(m-1)}^{(n)}(\mb{x}_1, \ldots, \mb{x}_m) \sigma_{(n-1)(m-2)}^{(n+1)}(\mb{x}_2, \ldots, \mb{x}_m) \cdots \sigma_{n-1}^{(n+m-2)}(\mb{x}_{m-1}, \mb{x}_m),
\]
where
\[
\sigma_N^{(r)}(\mb{x}_a, \ldots, \mb{x}_b) = \sum_{\substack{a \leq i_1 \leq i_2 \leq \cdots \leq i_N \leq b, \\ \#\{i_j = c\} \leq n-1 \text{ for } c > a}} \lx{i_1}{r} \lx{i_2}{r-1} \cdots \lx{i_N}{r-N+1}
\]
(the sum is over weakly increasing sequences of numbers in the interval $[a,b]$, such that each number other than $a$ appears at most $n-1$ times). The polynomials $\sigma_N^{(r)}(\mb{x}_a, \ldots, \mb{x}_b)$ are closely related to the cylindric loop Schur functions
\[
\tau_N^{(r)}(\mb{x}_a, \ldots, \mb{x}_b) = \sum_{\substack{a \leq i_1 \leq i_2 \leq \cdots \leq i_N \leq b, \\ \#\{i_j = c\} \leq n-1}} \lx{i_1}{r} \lx{i_2}{r-1} \cdots \lx{i_N}{r-N+1}
\]
mentioned in Example~\ref{ex:special cyl loop Schurs}(b). Indeed, by grouping together the terms of $\sigma_N^{(r)}$ in which the subscript $a$ occurs between $nd$ and $nd+n-1$ times, we obtain
\[
\sigma_N^{(r)}(\mb{x}_a, \ldots, \mb{x}_b) = \sum_{d \geq 0} (\lx{a}{r} \cdots \lx{a}{r-n+1})^d \cdot \tau_{N-nd}^{(r)}(\mb{x}_a, \ldots, \mb{x}_b).
\]
Using the identifications $\lx{i}{j+i-1} = x_i^j = \bx{j+i-1}{j}$ for $i \in [m], j \in [n]$, we find
\[
\lx{a}{r} \cdots \lx{a}{r-n+1} = x_a^1 \cdots x_a^n = \bx{a}{1} \cdots \bx{a+n-1}{n} = \bE{n}{a}(\mb{x}^1, \ldots, \mb{x}^n) = \pi_a.
\]

We know from Example~\ref{ex:special cyl loop Schurs}(b) that $\tau_N^{(r)}(\mb{x}_j, \ldots, \mb{x}_m) = cs_{\la(N);n-1}^{(r)}(\mb{x}_j, \ldots, \mb{x}_m)$, where $\la(N)$ is the unique $(n-1)$-cylindric partition with $N$ boxes. Thus, we have
\begin{align*}
\sigma_{(n-1)(m-j)}^{(n+j-1)}(\mb{x}_j, \ldots, \mb{x}_m) &= \sum_{d \geq 0} \pi_j^d \cdot \tau_{(n-1)(m-j)-nd}^{(n+j-1)}(\mb{x}_j, \ldots, \mb{x}_m) \\
&= \sum_{d \geq 0} \pi_j^d \cdot cs_{R^d((n-1)^{m-j});n-1}^{(n+j-1)}(\mb{x}_j, \ldots, \mb{x}_m) \\
&= \sum_{d \geq 0} \pi_j^d \sum_{X \in \binom{[j+1,m-1]}{m-1-j-d}} \Delta_{X \cup \{m\}, \{j\} \cup X}(\ov{M}),
\end{align*}
where the final equality comes from Theorem~\ref{thm:folded sum of minors}. This completes the proof.
\end{proof}

\ytableausetup{smalltableaux}
\begin{ex}
Suppose $m=4$ and $n=5$, as in Example~\ref{ex:folded sum of minors}. By definition,
\[
D = s_{\ytableaushort{\empty\empty\empty\empty\empty\empty\empty\empty\empty\empty\empty\empty,\empty\empty\empty\empty\empty\empty\empty\empty,\empty\empty\empty\empty}}^{(5)} \, .
\]
By Theorem~\ref{thm:energy}, we have $D = D_1 D_2 D_3$, where (writing $\Delta_{I,J}$ for $\Delta_{I,J}(\ov{M})$)
\begin{gather*}
D_1 = \Delta_{234,123} + \pi_1\Delta_{24,12} + \pi_1\Delta_{34,13} + \pi_1^2 \Delta_{4,1}
 = \ov{s}^{(4)}_{\ydiagram{3,3,3,3}} + \ov{s}^{(5)}_{\ydiagram{1,1,1,1,1}} \ov{s}^{(3)}_{\ydiagram{1+1,2,2,2}} + \ov{s}^{(1)}_{\ydiagram{1,1,1,1,1}} \ov{s}^{(4)}_{\ydiagram{2,2,2,1}} + \ov{s}^{(1)}_{\ydiagram{1,1,1,1,1}} \ov{s}^{(1)}_{\ydiagram{1,1,1,1,1}} \ov{s}^{(4)}_{\ydiagram{1,1}},
\\
D_2 = \Delta_{34,23} + \pi_2 \Delta_{4,2}
= \ov{s}^{(4)}_{\ydiagram{2,2,2,2}} + \ov{s}^{(2)}_{\ydiagram{1,1,1,1,1}} \ov{s}^{(4)}_{\ydiagram{1,1,1}},
\qquad\qquad
D_3 = \Delta_{4,3}
= \ov{s}^{(4)}_{\ydiagram{1,1,1,1}}. \qquad\qquad
\end{gather*}
%\begin{align*}
%D &= \left(\Delta_{234,123} + \pi_1\Delta_{24,12} + \pi_1\Delta_{34,13} + \pi_1^2 \Delta_{4,1}\right) \left(\Delta_{34,23} + \pi_2 \Delta_{4,2}\right) \Delta_{4,3} \\
%&= \left(\ov{s}_{\ytableaushort{\empty\empty\empty,\empty\empty\empty,\empty\empty\empty,\empty\empty\empty}}^{(4)} + \ov{s}_{\ytableaushort{\empty,\empty,\empty,\empty,\empty}}^{(1)}\ov{s}_{\ytableaushort{\none\empty,\empty\empty,\empty\empty,\empty\empty}}^{(3)} + \ov{s}_{\ytableaushort{\empty,\empty,\empty,\empty,\empty}}^{(1)}\ov{s}_{\ytableaushort{\empty\empty,\empty\empty,\empty\empty,\empty}}^{(4)} + \ov{s}_{\ytableaushort{\empty,\empty,\empty,\empty,\empty}}^{(1)}\ov{s}_{\ytableaushort{\empty,\empty,\empty,\empty,\empty}}^{(1)}\ov{s}_{\ytableaushort{\empty,\empty}}^{(4)}\right) \left(\ov{s}_{\ytableaushort{\empty\empty,\empty\empty,\empty\empty,\empty\empty}}^{(4)} + \ov{s}_{\ytableaushort{\empty,\empty,\empty,\empty,\empty}}^{(2)}\ov{s}_{\ytableaushort{\empty,\empty,\empty}}^{(4)}\right) \ov{s}_{\ytableaushort{\empty,\empty,\empty,\empty}}^{(4)} \, .
%%& \left(\ov{s}_{(3,3,3,3)}^{(4)} + \bE{5}{1}(\ov{s}_{(2,2,2,1)}^{(4)} + \ov{s}_{(2,2,2,2)/(1)}^{(3)}) + (\bE{5}{1})^2 \bE{2}{4}\right) \left(\ov{s}_{(2,2,2,2)}^{(4)} + \bE{5}{2} \bE{3}{4}\right) \bE{4}{4}.
%\end{align*}
\end{ex}

\begin{remark}
\
\begin{enumerate}
\item
It seems difficult to deduce Theorem~\ref{thm:energy} from Theorem~\ref{thm:unfolded sum of minors}, as it is not clear that the expression for energy coming from Theorem~\ref{thm:unfolded sum of minors} factors.
\item The individual factors appearing in Theorem~\ref{thm:energy} do not generally lie in $\LSym$, so they cannot be expressed in terms of $Q$-invariants and shape invariants.
\end{enumerate}
\end{remark}

\subsection{Cocharge}
\label{sec:cocharge}

Lascoux and Sch\"utzenberger introduced the \defn{cocharge} statistic on semistandard tableaux to give a combinatorial rule for the transition coefficients between Schur functions and Hall--Littlewood symmetric functions~\cite{LS78}. An important property of cocharge is its invariance under the symmetric group action on tableaux. Nakayashiki and Yamada proved that cocharge is closely related to the energy of a tensor product of one-row tableaux~\cite{NY97}. In this section, we use Theorem~\ref{thm:energy} and the connection between energy and cocharge to obtain a geometric lift of the cocharge statistic. We then show that our geometric lift is a (thinly disguised) version of a formula due to Kirillov and Berenstein~\cite{KB95}.

Let $\mb{z} = (z_{j,i}) \in \GT_m$ be a pattern of height $m-1$. For $i = 1, \ldots, m$, let
\[
\beta_i = \beta_i(\zz) = \dfrac{\prod_{a=1}^i z_{a,i}}{\prod_{a=1}^{i-1} z_{a,i-1}}
\]
denote the geometric lift of the number of $i$'s in the tableau associated to a GT pattern.

\begin{dfn}
Suppose $\mb{z} \in \GT_m$. For $i = 2, \ldots, m$, define
\[
\sigma_i(\mb{z}) = \sum_{X \subseteq [2,i-1]} \beta_i^{i-2-\abs{X}} \Delta_{X \cup \{i\}, \{1\} \cup X}(\Phi_m(\mb{z})),
\]
where $\Phi_m(\mb{z})$ is the $m \times m$ matrix introduced in \S\ref{sec:GT}. Define the \defn{geometric cocharge function}
\[
c_m(\mb{z}) = \sigma_2(\zz) \sigma_3(\zz) \cdots \sigma_m(\zz).
\]
(By convention, $c_1(\zz) = 1$.)
\end{dfn}

Recall from \S\ref{sec:GT} that the minors of $\Phi_m(\mb{z})$ are sums over non-intersecting families of paths in the triangle-shaped network $\Gamma_m$.

\begin{ex}
Suppose $\mb{z} \in \GT_4$. Using the network $\Gamma_4$ (shown in Figure~\ref{fig:network}), we compute
\[
\sigma_2(\mb{z}) = \Delta_{2,1} = z_{22}, \qquad\qquad
\sigma_3(\mb{z}) = \Delta_{23,12} + \beta_3\Delta_{3,1} = \dfrac{z_{23}(z_{33})^2}{z_{22}} \left(\dfrac{z_{22}}{z_{33}} + \dfrac{z_{13}}{z_{12}}\right),
\]
\begin{align*}
\sigma_4(\mb{z}) &= \Delta_{234,123} + \beta_4(\Delta_{24,12} + \Delta_{34,13}) + \beta_4^2 \Delta_{4,1} \medskip \\
&= \dfrac{z_{24}(z_{34})^2(z_{44})^3}{z_{23}(z_{33})^2} \left(\dfrac{z_{23}(z_{33})^2}{z_{34}(z_{44})^2} + \dfrac{z_{14}z_{23}z_{33}}{z_{13}z_{34}z_{44}} + \dfrac{z_{14}z_{22}}{z_{13}z_{44}} + \dfrac{z_{14}z_{33}}{z_{12}z_{44}} + \dfrac{z_{14}z_{24}z_{33}}{z_{13}z_{23}z_{44}} + \dfrac{(z_{14})^2z_{24}}{(z_{13})^2z_{23}} \right).
\end{align*}
\end{ex}

\begin{thm}
\label{thm:cocharge}
The geometric cocharge function tropicalizes to a piecewise-linear formula for cocharge.
\end{thm}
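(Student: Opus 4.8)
The plan is to recognize the geometric cocharge function as the geometric energy function $D$ transported onto the $Q$-pattern, and then to read off its tropicalization from Theorems~\ref{thm:tropical_energy} and~\ref{thm:energy} together with Nakayashiki and Yamada's theorem. First I would take $n \geq m$, so that $\GT_m^{\leq n} = \GT_m$ and $\Phi_m^{\leq n} = \Phi_m$; this loses no generality, since the right-hand side of the cocharge formula is independent of $n$ and the $Q$-pattern stabilizes once $n \geq m$ (cf.\ the remark after Example~\ref{ex:Q inv d=1}). For a generic $\mb{x} \in \Mat_{m \times n}(\CC^*)$, Theorem~\ref{thm:gRSK props}(3) and Lemma~\ref{lem:Phi inverse} give $\ov{M} = M(\mb{x}^1, \ldots, \mb{x}^n) = \Phi_m(\mb{z})$, where $\mb{z} = Q(\mb{x})$ is the $Q$-pattern; as $\mb{x}$ varies, $\mb{z}$ ranges over all of $\GT_m$ since $\gRSK$ is a birational isomorphism.

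The crux is an algebraic identity realizing $c_m(\mb{z})$ as a value of the product formula for $D$ in Theorem~\ref{thm:energy}. I would rerun the computation in the proof of that theorem: Lam--Pylyavskyy's product formula factors $D$ as a product over $j$ of sums $\sigma^{(n+j-1)}_{(n-1)(m-j)}(\mb{x}_j, \ldots, \mb{x}_m)$, and Theorem~\ref{thm:folded sum of minors} converts the $j$th factor into $\sum_{X}(\ov{M}_{jj})^{m-1-j-|X|}\Delta_{X \cup \{m\}, \{j\} \cup X}(\ov{M})$. Comparing with the cocharge factors $\sigma_i(\mb{z})$, I would use two observations: the weight $\beta_i$ equals the diagonal entry $\Phi_m(\mb{z})_{ii}$ (it is the geometric weight $\ov{\gamma}(\mb{z})_i$, whose tropicalization is the number of $i$'s, exactly as $\beta_i$ was defined), and the minors match after the row-and-column reversal $A \mapsto J_m A^{\top} J_m$ that interchanges the bottom-anchored minors $\Delta_{X\cup\{m\},\{j\}\cup X}$ appearing in $D$ with the top-left-anchored minors $\Delta_{X\cup\{i\},\{1\}\cup X}$ appearing in $\sigma_i$, with $i = m+1-j$. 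The reversal is not spurious: $J_m \Phi_m(\mb{z})^{\top} J_m$ is again a matrix of the form $\Phi_m$, namely the one attached to the $Q$-pattern of the $180^\circ$ rotation $\mb{x}^{\circ}$ of $\mb{x}$, so the identity reads $c_m(Q(\mb{x})) = D(\mb{x}^{\circ})$.

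Granting this identity, the tropicalization is immediate in outline: reading off from $c_m(Q(\mb{x})) = D(\mb{x}^\circ)$, the value $\Trop(c_m)(\mb{z})$ equals the energy of the rotated matrix by Theorem~\ref{thm:tropical_energy}, which by Nakayashiki and Yamada~\cite{NY97} is the cocharge of the recording tableau $Q(\mb{x}^{\circ})$. Since $\gRSK$ tropicalizes to $\RSK$ and the $Q$-pattern tropicalizes to the Gelfand--Tsetlin pattern of the recording tableau, and since the $180^\circ$ rotation of a matrix corresponds under $\RSK$ to Sch\"utzenberger evacuation of the tableaux, I would identify $Q(\mb{x}^{\circ})$ with the evacuation of the tableau of $\mb{z}$; the $R$-matrix invariance of energy (which, via~\cite{NY97}, is precisely the evacuation-invariance of cocharge) then gives $\mathrm{cocharge}(Q(\mb{x}^\circ)) = \mathrm{cocharge}(\mathrm{tab}(\mb{z}))$, so that $\Trop(c_m) = \mathrm{cocharge}$ on $\GT_m$.

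The main obstacle is the precise matching in the second paragraph: getting the index bookkeeping in Theorem~\ref{thm:folded sum of minors} exactly right (the parameters $a,b,i,d$ and the color shifts), and --- most delicately --- handling the reversal symmetry so that the tropical statement lands on cocharge rather than on cocharge of an evacuated tableau. I expect the cleanest route is to package the reversal as the evacuation involution on $\GT_m$ and to deduce its compatibility with cocharge from the $R$-matrix invariance of energy, rather than importing an external evacuation-invariance statement. Finally, to complete the proof of Kirillov and Berenstein's conjecture as stated in the introduction, I would expand both $c_m(\mb{z})$ and Kirillov and Berenstein's Laurent polynomial from~\cite{KB95} in the entries $z_{j,i}$ using the network $\Gamma_m$ (as in the worked examples) and check that they agree, so that the established tropicalization of $c_m$ transfers verbatim to their formula.
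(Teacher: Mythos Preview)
Your approach is essentially the paper's: both transport the product formula of Theorem~\ref{thm:energy} onto the $Q$-pattern via the anti-diagonal transpose of $\ov{M}$, then invoke Nakayashiki--Yamada. The paper packages the transpose using Noumi--Yamada's result that $\Psi_m(\ov{M}^T)$ is the geometric Sch\"utzenberger involution of $\mb{z}'$, while you package it via the $180^\circ$ rotation $\mb{x}\mapsto\mb{x}^\circ$; these are equivalent, since $W(y_1,\dots,y_m)^T=W(y_m,\dots,y_1)$ gives $\ov{M}(\mb{x})^T=\ov{M}(\mb{x}^\circ)$, and hence your identity $c_m(Q(\mb{x}))=D(\mb{x}^\circ)$ is exactly the paper's ``replace $\ov{M}$ by $\ov{M}^T$''.

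The one substantive divergence is in how the Sch\"utzenberger involution is discharged. The paper derives, via the Burge correspondence, that in its conventions $D(\mb{a})$ equals the cocharge of $S(Q(\mb{a}))$, not of $Q(\mb{a})$. With that form of Nakayashiki--Yamada the evacuations cancel directly:
\[
\Trop(c_m)\bigl(Q(\mb{a})\bigr)=\text{energy}(\mb{a}^\circ)=\text{cocharge}\bigl(S(Q(\mb{a}^\circ))\bigr)=\text{cocharge}\bigl(Q(\mb{a})\bigr),
\]
and no separate evacuation-invariance of cocharge is needed. Your proposed internal derivation of that step---``the $R$-matrix invariance of energy is precisely the evacuation-invariance of cocharge''---does not work: $R$-matrix invariance corresponds, via \cite{NY97}, to invariance of cocharge under the Lascoux--Sch\"utzenberger $S_m$-action on $Q$, which is a different symmetry from evacuation. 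So either cite evacuation-invariance of cocharge as the known standalone fact it is, or (cleaner, and what the paper does) use the Burge form of Nakayashiki--Yamada so that the two evacuations cancel. With that fix your argument is complete and matches the paper's.
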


Before proving this theorem, we make a few remarks. A reader familiar with cocharge may easily verify that the cocharge of a tableau with entries at most 3 is given by
\[
\Trop(\sigma_2) + \Trop(\sigma_3) = z_{23} + 2z_{33}^2 + \min(z_{22}-z_{33}, z_{13} - z_{12}).
\]
An appealing feature of this formula is that $\sigma_i(\mb{z})$ depends only on the entries in rows $\leq i$ (this is clear from inspection of the network $\Gamma_m$). This means that $\Trop(\sigma_i)$ depends only on the part of a tableau consisting of entries less than or equal to $i$.

\begin{proof}[Proof of Theorem~\ref{thm:cocharge}]
Fix $n \geq m$, and let $\mb{a} = (a_i^j)$ be an $m \times n$ matrix with rows $\mb{a}_1, \ldots, \mb{a}_m \in (\ZZ_{\geq 0})^n$. As described in the introduction, we view each $\mb{a}_i$ as an element of a one-row $\GL_n$-crystal. By abuse of notation, let $D(\mb{a}) = D(\mb{a}_1 \otimes \cdots \otimes \mb{a}_m)$ be the energy of the tensor product $\mb{a}_1 \otimes \cdots \otimes \mb{a}_m$. Let $P'$ be the tableau obtained by \emph{column inserting} $\mb{a}_{m-1}$ into $\mb{a}_m$, and then column inserting $\mb{a}_{m-2}$ into the result, etc. Let $Q'$ be the recording tableau for this process, i.e., $Q'$ has $i$'s in the boxes added to $P'$ by the insertion of $\mb{a}_{m+1-i}$. The energy $D(\mb{a})$ is equal to the cocharge of $Q'$~\cite{NY97} (see also~\cite[Prop.~4.25]{ShimozonoDummies}). The map $(\mb{a}_m, \ldots, \mb{a}_1) \mapsto (P',Q')$ is known as the \defn{Burge correspondence}.

Now suppose that $(\mb{a}_1, \ldots, \mb{a}_m) \mapsto (P,Q)$ under RSK, as defined at the beginning of \S\ref{sec_gRSK} (i.e., $P$ is obtained by \emph{row inserting} $\mb{a}_2$ into $\mb{a}_1$, then row inserting $\mb{a}_3$ into the result, etc.). It is well known that $P' = P$ and $Q' = S(Q)$, where $S$ denotes the Sch\"utzenberger involution (see, e.g., \cite[Appendix]{Fulton}). We conclude that $D(\mb{a})$ is equal to the cocharge of $S(Q)$.

Theorem~\ref{thm:tropical_energy} states the geometric energy function $D(\mb{x})$ tropicalizes to a piecewise-linear formula for $D(\mb{a})$. Theorem~\ref{thm:energy} expresses $D(\mb{x})$ in terms of the minors of the matrix $\ov{M} = M(\mb{x}^1, \ldots, \mb{x}^n) = \Phi_m(\mb{z}')$, where $(\mb{z},\mb{z}')$ is the image of $\mb{x}$ under gRSK. Let $M^T$ denote the reflection of $M$ over the anti-diagonal, that is, $(M^T)_{i,j} = M_{n+1-j,n+1-i}$. Noumi and Yamada proved that this reflection is a geometric lift of the Sch\"utzenberger involution, in the sense that the entries of the pattern $\Psi_m(\Phi_m(\mb{z}')^T)$ tropicalize to formulas for the entries of the Sch\"utzenberger involution of a Gelfand--Tsetlin pattern \cite[\S 2.4]{NoumiYamada}.

Our definition of geometric cocharge is obtained from the formula in Theorem~\ref{thm:energy} by replacing $\ov{M}$ with $\ov{M}^T$. Since $\gRSK(\mb{x})$ tropicalizes to $\RSK(\mb{a})$, we conclude that $c(\mb{z}')$ tropicalizes to a formula for the cocharge of $Q$, as claimed.
\end{proof}

We now show that $c_m(\zz)$ agrees (up to a simple multiplicative factor) with the formula for cocharge given by Kirillov and Berenstein~\cite{KB95}.\footnote{Their version of $\sigma_i$ is missing the factor $\frac{z_{2i} (z_{3i})^2 \cdots (z_{ii})^{i-1}}{z_{2,i-1} (z_{3,i-1})^2 \cdots (z_{i-1,i-1})^{i-2}}$, so their version of $c_m$ is missing the factor $z_{2,m} (z_{3,m})^2 \cdots (z_{m,m})^{m-1}$. Thus, the tropicalization of their formula, applied to a tableau of shape $\la$, differs from the actual cocharge by $n(\la) = \sum (i-1)\la_i$.} A proof that this formula tropicalizes to cocharge was outlined in~\cite{Kir01}, but our proof is completely different.

Let $\mathcal{P}_{k-1}$ denote the set of Gelfand--Tsetlin patterns $\mb{p} = (p_{i,j})_{1 \leq i \leq j \leq k-1}$ satisfying the following conditions:
\begin{enumerate}
\item $p_{i,j} - 1 \leq p_{i+1,j}$ for $i < j$
\item $p_{i,k-1} - 1 \leq p_{i,i} \leq p_{i+1,k-1}$ for $i \leq k-2$
\item $p_{k-1,k-1} = 0$.
\end{enumerate}
Given $\zz \in \GT_m$ with $m \geq k$, define the weight of $\mb{p} \in \mathcal{P}_{k-1}$ with respect to $\zz$ by
\[
\wt_\zz(\mb{p}) = \prod_{1 \leq i \leq j \leq k-1} \phi_{k-j,k-i}(\zz)^{p_{i,j}},
\]
where
\[
\phi_{1,j}(\zz) = \dfrac{z_{1,j}z_{j,j}}{z_{1,j+1}z_{j+1,j+1}}, \qquad \text{ and for } i \geq 2, \quad \phi_{i,j}(\zz) = \dfrac{z_{i-1,j} z_{i,j}}{z_{i-1,j-1}z_{i,j+1}}
\]
is the diamond ratio used in \S\ref{sec:GT} to give explicit formulas for the geometric crystal structure. Note that $\wt_\zz(\mb{p})$ depends only on the first $k$ rows of $z$ (i.e., the entries $z_{i,j}$ with $j \leq k$).

\begin{ex}
\label{ex:KB cocharge}
The elements of $\mathcal{P}_3$ are shown below, together with their weights.
\[
\begin{matrix}
\quad
\begin{matrix}
0 \\
0 \quad 0 \\
0 \quad 0 \quad 0
\end{matrix}
\quad & \quad
\begin{matrix}
0 \\
0 \quad 0 \\
1 \quad 0 \quad 0
\end{matrix}
\quad & \quad
\begin{matrix}
0 \\
1 \quad 0 \\
1 \quad 0 \quad 0
\end{matrix}
\quad & \quad
\begin{matrix}
0 \\
1 \quad 0 \\
1 \quad 1 \quad 0
\end{matrix}
\quad & \quad
\begin{matrix}
1 \\
1 \quad 0 \\
1 \quad 1 \quad 0
\end{matrix}
\quad & \quad
\begin{matrix}
1 \\
1 \quad 0 \\
2 \quad 1 \quad 0
\end{matrix} \quad \bigskip \\
1 & \dfrac{z_{13}z_{33}}{z_{14}z_{44}} & \dfrac{z_{13}^2z_{23}z_{33}}{z_{12}z_{14}z_{24}z_{44}} & \dfrac{z_{13}z_{22}z_{23}}{z_{14}z_{24}z_{44}} & \dfrac{z_{14}z_{23}z_{33}}{z_{13}z_{34}z_{44}} & \dfrac{z_{13}^2z_{23}^2z_{33}^2}{z_{14}^2z_{24}z_{34}z_{44}^2}
\end{matrix}
\]
\end{ex}

As observed in~\cite{KB95}, the set $\mathcal{P}_{k-1}$ has cardinality $(k-1)!$. To see this, suppose
\[
\mb{c} = (c_1,c_2,\ldots,c_{k-1}) \in [0,k-2] \times [0,k-3] \times \cdots \times [0,0].
\]
We associate to $\mb{c}$ an element of $\mathcal{P}_{k-1}$ by recursively forming the SW-NE diagonals, from right to left. The right-most diagonal corresponds to $c_{k-1} = 0$, and consists of the single entry $p_{k-1,k-1} = 0$. For $i < k-1$, the diagonal with entries $p_{i,k-1}, \ldots, p_{i,i}$ is chosen so that its first $c_i$ entries from the top are $a$, and its remaining $k-i-c_i$ entries are $a+1$, where $a$ is the unique number such that condition (1) and the second inequality in condition (2) are not violated. For example, the element of $\mc{P}_6$ shown in Figure~\ref{fig:paths GT bijection} corresponds to $\mb{c} = (3,0,3,1,1,0)$.

\begin{prop}
\label{prop:KB cocharge}
For $k = 2, \ldots, m$, we have
\begin{equation}
\label{eq:KB cocharge to prove}
\sigma_k(\zz) = \beta_k(\zz)^{k-2} z_{k,k} \sum_{\mb{p} \in \mathcal{P}_{k-1}} \wt_\zz(\mb{p}).
\end{equation}
\end{prop}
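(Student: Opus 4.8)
The goal is to prove the identity~\eqref{eq:KB cocharge to prove}, which equates the geometric cocharge factor $\sigma_k(\zz)$ with a weighted sum over the set $\mathcal{P}_{k-1}$ of certain Gelfand--Tsetlin-like patterns, up to the explicit prefactor $\beta_k(\zz)^{k-2} z_{k,k}$. Since both sides are rational functions in the entries $z_{i,j}$ with $j \leq k$, my approach is to expand both sides as sums over combinatorial objects indexed by the same underlying set, and to exhibit a weight-matching bijection. On the left, $\sigma_k(\zz)$ is defined as a sum over subsets $X \subseteq [2,k-1]$ of minors $\Delta_{X \cup \{k\}, \{1\} \cup X}(\Phi_m(\zz))$, weighted by powers of $\beta_k$; by the Lindstr\"om/Gessel--Viennot Lemma (as invoked in \S\ref{sec:GT} and \S\ref{sec:back to tableau}), each such minor is a sum over non-intersecting families of paths in the network $\Gamma_m$, whose edge weights are ratios of the $z_{i,j}$. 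On the right, the sum runs over $\mathcal{P}_{k-1}$, whose elements I can encode by the vectors $\mb{c} = (c_1, \ldots, c_{k-1})$ described before the proposition.

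First I would make both sides concrete in terms of the network $\Gamma_m$. The minor $\Delta_{X \cup \{k\}, \{1\} \cup X}$ corresponds to families of $|X|+1$ non-intersecting paths from sources $X \cup \{k\}$ to sinks $\{1\} \cup X$; the presence of a source/sink in $X$ forces a path that enters and exits at the same ``level,'' while the source $k$ and sink $1$ force one path that travels across. The key observation is that as $X$ ranges over subsets of $[2,k-1]$, and the families range over all non-intersecting configurations, the resulting paths sweep out exactly the combinatorial data recorded by a pattern $\mb{p} \in \mathcal{P}_{k-1}$: the entry $p_{i,j}$ should record how many times a path ``uses'' the diagonal edge of $\Gamma_m$ in a specified position, and the defining inequalities (1)--(3) of $\mathcal{P}_{k-1}$ should be precisely the non-intersection and boundary constraints. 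The factor $\beta_k^{k-2-|X|}$ on the left should match the power of $\beta_k$ appearing in $\wt_\zz(\mb{p})$, and the normalization $\beta_k^{k-2} z_{k,k}$ should account for the weight of the ``baseline'' path through source $k$ and the overall shift between the two weightings.

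The main technical step is to verify that the diamond ratios $\phi_{i,j}(\zz)$ appearing in $\wt_\zz(\mb{p})$ are exactly the products of edge weights of $\Gamma_m$ that a path picks up when it occupies the position indexed by $p_{i,j}$. Concretely, I would compute the weight of each edge of $\Gamma_m$ (the horizontal, vertical, and diagonal edges carry weights built from the $z_{i,j}$ as in Figure~\ref{fig:network}), and check that incrementing a single entry $p_{i,j}$ by one multiplies the path weight by precisely $\phi_{k-j,k-i}(\zz)$. The bijection $\mb{c} \leftrightarrow \mb{p}$ and the geometric realization of each $\mb{p}$ as a lattice-path family (as illustrated in Figure~\ref{fig:paths GT bijection}) would then be spelled out diagonal-by-diagonal, matching the recursive construction of $\mb{p}$ from $\mb{c}$ against the left-to-right sweep of paths across the network. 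Once this weight correspondence is established, summing over all $\mb{p}$ (equivalently, over all path families) yields~\eqref{eq:KB cocharge to prove}.

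The hardest part will be setting up the correspondence between the subset-and-minor indexing on the left and the pattern indexing on the right so that the constraints line up cleanly; in particular, matching the inequalities (1)--(3) defining $\mathcal{P}_{k-1}$ with the non-intersection conditions on the path families, and correctly tracking how the choice of $X$ interacts with where paths ``turn'' versus ``pass through,'' will require careful bookkeeping. I expect the cleanest route is to prove the weight-preserving bijection at the level of lattice paths in $\Gamma_m$, treating the pattern $\mb{p}$ as an intermediate combinatorial encoding, rather than trying to manipulate the rational expressions directly. The verification that the prefactor $\beta_k^{k-2} z_{k,k}$ is exactly the discrepancy between the two normalizations is a routine but essential bookkeeping check that I would carry out using the explicit formula for $\beta_k$ and the product of baseline edge weights.
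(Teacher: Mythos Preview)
Your plan is essentially the same as the paper's proof: both reduce to $k=m$, expand $\sigma_m(\zz)$ via Lindstr\"om/Gessel--Viennot as a sum over non-intersecting path families in $\Gamma_m$ from $X\cup\{m\}$ to $\{1'\}\cup X'$, and then exhibit a weight-preserving bijection with $\mathcal{P}_{m-1}$.

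The only notable difference is in how the bijection is framed. You propose reading $p_{i,j}$ as a count of edge uses and verifying weights by checking that an increment of a single $p_{i,j}$ multiplies by $\phi_{k-j,k-i}(\zz)$; the paper instead labels each \emph{region} between paths by (number of paths to its left) $-1$, reverses each row of labels to get $\mb{p}$ directly, and then computes the weight diagonal-by-diagonal. The paper's region-labeling makes the GT inequalities (1)--(3) almost immediate (they become the obvious constraints on how labels can change across adjacent regions), whereas your edge-counting interpretation and the detour through the $\mb{c}$-encoding would require a bit more unpacking to see why exactly those three conditions characterize the image. Conversely, your incremental weight check is arguably cleaner than the paper's somewhat intricate telescoping of the $\phi$'s along NW--SE diagonals. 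Either route works; the paper's is more direct for the bijection, yours is more conceptual for the weight match.
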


\begin{proof}
Both sides of~\eqref{eq:KB cocharge to prove} depend only on the entries in the first $k$ rows of $\zz$, so we may assume that $k=m$. By the Lindstr\"om/Gessel--Viennot Lemma, $\sigma_m(\zz)$ is the sum over non-intersecting families of paths in $\Gamma_m$ which start at sources $X \cup \{m\}$ and end at sinks $\{1'\} \cup X'$, for some $X \subseteq [2,m-1]$. Let $\mc{F}$ be such a family of paths. Label each of the regions formed by the edges of $\Gamma_m$ with the number $a$, where $a+1$ is the number of paths in $\mc{F}$ to the left of that region. Let $\mb{p}$ be the triangular array of height $m-1$ obtained by reversing the order of the labels in each row (except the label to the left of sink $1'$, which is ignored). This is illustrated in Figure~\ref{fig:paths GT bijection}. We will show that this defines a weight-preserving bijection between the objects on either side of~\eqref{eq:KB cocharge to prove}.

\begin{figure}
\begin{center}
\begin{tikzpicture}[xscale=0.8,yscale=0.6]

\pgfmathtruncatemacro{\m}{7};

\foreach \i in {1,...,\m} {
\draw (0,\i) -- (\i,0);
\draw (\i,0) -- (\i,\m-\i);
\draw (-0.5,\i) node[left]{$\i$};
\draw (\i,-0.5) node[below]{$\i'$};}

\draw[blue,ultra thick] (0,2) -- (1,1) -- (1,0);
\draw[blue,ultra thick] (0,4) -- (2,2) -- (2,0);
\draw[blue,ultra thick] (0,6) -- (1,5) -- (1,4) -- (4,1) -- (4,0);
\draw[blue,ultra thick] (0,7) -- (4,3) -- (4,2) -- (6,0);

\foreach \a/\b in {1/-1,2/0,3/0,4/1,5/1,6/2} {\draw (0.5,\a) node{\textcolor{red}{$\b$}};}
\foreach \a/\b in {0/0,1/0,2/0,3/1,4/2,5/2} {\draw (1.5,\a) node{$\b$};}
\foreach \a/\b in {0/1,1/1,2/1,3/2,4/2} {\draw (2.5,\a) node{$\b$};}
\foreach \a/\b in {0/1,1/1,2/2,3/2} {\draw (3.5,\a) node{$\b$};}
\foreach \a/\b in {0/2,1/2,2/3} {\draw (4.5,\a) node{$\b$};}
\foreach \a/\b in {0/2,1/3} {\draw (5.5,\a) node{$\b$};}
\foreach \a/\b in {0/3} {\draw (6.5,\a) node{$\b$};}

\draw (8,3.5) node{$\longleftrightarrow$};
\draw (12,3.5) node{$\begin{matrix}
2 \\
2 \quad 2 \\
2 \quad 2 \quad 1 \\
3 \quad 2 \quad 1 \quad 0 \\
3 \quad 2 \quad 1 \quad 1 \quad 0 \\
3 \quad 2 \quad 2 \quad 1 \quad 1 \quad 0
\end{matrix}$};

\end{tikzpicture}
\end{center}
\caption{Bijection between non-intersecting families of paths in $\Gamma_7$ and GT patterns in $\mathcal{P}_6$. The labels to the left of sink $1'$ (shown in red) are not formally part of the bijection, but they are convenient for the proof.}
\label{fig:paths GT bijection}
\end{figure}
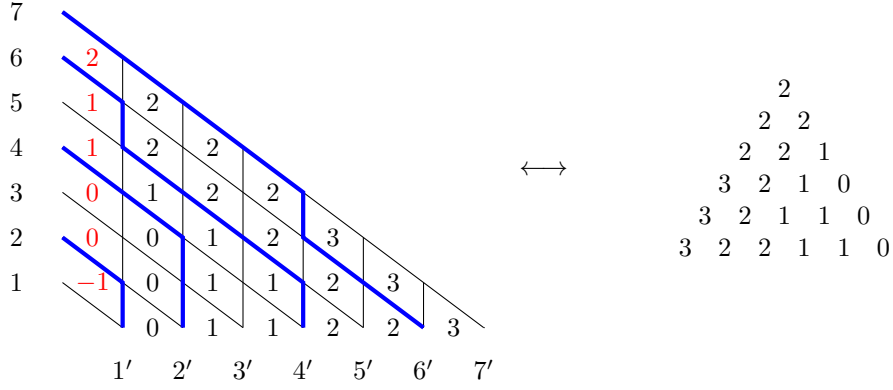

It is obvious that the array $\mb{p}$ associated to $\mc{F}$ is a Gelfand--Tsetlin pattern satisfying condition (1) in the definition of $\mc{P}_{m-1}$. It is also clear that $\mb{p}$ satisfies condition (3), since the vertical edge directly above sink $1'$ must be in $\mc{F}$. To see that condition (2) holds, suppose $X = \{x_1 < \cdots < x_\ell\}$, and let $p_0, p_1, \ldots, p_{\ell}$ be the paths in $\mc{F}$, with $p_i$ starting at source $x_{i+1}$ and ending at sink $x_i'$ (with $x_{\ell+1} = m$ and $x_0' = 1'$). The number of vertical steps in $p_i$ is equal to $x_{i+1}-x_i$. Moreover, there must be exactly one vertical step in $p_i$ on each of the NW-SE diagonals between the lines connecting $x_i$ to $x_i'$, and $x_{i+1}$ to $x_{i+1}'$. This shows that the label at the bottom of each NW-SE diagonal differs from the red label at the top of the diagonal by exactly 1, which is equivalent to condition (2). The reader may easily verify that the map $\mc{F} \mapsto \mb{p}$ is invertible.

It remains to show that the bijection is weight-preserving. Let $\mc{F}$ be a non-intersecting family of paths from $X \cup \{m\}$ to $\{1'\} \cup X'$. For each $i \in [m-1]$, let
\[
a_1 \leq a_2 \leq \cdots \leq a_i
\]
be the labels in the column between sinks $(m-i)'$ and $(m-i+1)'$ (from bottom to top), and let
\[
S_i = \{j \in [i] \mid a_j < a_{j+1}\},
\]
where we set $a_{i+1} := p_{1,m-1} = \abs{X}$ for all $i$. The diagonal edge in $\Gamma_m$ directly above the label $a_j$ has weight $\dfrac{z_{j,m-i+j}}{z_{j,m-i+j-1}}$, and this edge is in $\mc{F}$ and only if $j \in S_i$. The diagonal edge coming out of source $j$ has weight $z_{j,j}$, and is in $\mc{F}$ if and only if $j \in X \cup \{m\}$. Thus, the contribution of $\mc{F}$ to $\sigma_m(\zz)$ is
\begin{equation}
\label{eq:KB cocharge LHS}
\beta_m^{m-2-\abs{X}} \prod_{j \in X \cup \{m\}} z_{j,j} \prod_{i = 1}^{m-1} \prod_{j \in S_i} \dfrac{z_{j,m-i+j}}{z_{j,m-i+j-1}}.
\end{equation}

Let $\mb{p}$ be the GT pattern corresponding to $\mc{F}$. The labels $a_1 \leq a_2 \leq \cdots \leq a_i$ are the entries
\[
p_{i,m-1} \leq p_{i-1,m-2} \leq \cdots \leq p_{1,m-i}
\]
in the $i$th NW-SE diagonal of $\mb{p}$. The contribution of this diagonal to the weight of $\mb{p}$ is
\begin{align*}
\prod_{j = 1}^{m-1} (\phi_{j,m-i+j-1}(\zz))^{p_{i-j+1,m-j}} &= \left(\phi_{1,m-i}(\zz) \cdots \phi_{i,m-1}(\zz)\right)^{p_{i,m-1}} \prod_{j \in S_i \cap [i-1]} \phi_{j+1,m-i+j}(\zz) \cdots \phi_{i,m-1}(\zz) \\
&= \left(\dfrac{z_{m-i,m-i}}{z_{m-i+1,m-i+1}}\dfrac{z_{i,m-1}}{z_{i,m}}\right)^{p_{i,m-1}} \prod_{j \in S_i \cap [i-1]} \dfrac{z_{j,m-i+j}}{z_{j,m-i+j-1}}\dfrac{z_{i,m-1}}{z_{i,m}} \\
&= \left(\dfrac{z_{i,m-1}}{z_{i,m}}\right)^{\abs{X}} \left(\dfrac{z_{m-i,m-i}}{z_{m-i+1,m-i+1}}\right)^{p_{i,m-1}} \prod_{j \in S_i} \dfrac{z_{j,m-i+j}}{z_{j,m-i+j-1}}.
\end{align*}
The last equality comes from the observation that $p_{i,m-1}$ is either equal to $\abs{X}$ or $\abs{X}-1$; in the former case, $i \not \in S_i$, and in the latter case, $i \in S_i$, and contributes $z_{i,m}/z_{i,m-1}$ to the product. The entries $p_{i,m-1}$ in the bottom row of $\mb{p}$ start at $p_{1,m-1} = \abs{X}$ and decrease to $p_{m-1,m-1} = 0$ in increments of 1, with $p_{m-j,m-1} > p_{m-j+1,m-1}$ if and only if $j \in X$, so
\[
\prod_{i=1}^{m-1} \left(\dfrac{z_{m-i,m-i}}{z_{m-i+1,m-i+1}}\right)^{p_{i,m-1}} = \prod_{j \in X} \dfrac{z_{j,j}}{z_{m,m}}.
\]
Thus, the contribution of $\mb{p}$ to the right-hand side of~\eqref{eq:KB cocharge to prove} is
\[
\beta_m^{m-2} z_{m,m} \prod_{j \in X} \dfrac{z_{j,j}}{z_{m,m}} \prod_{i=1}^{m-1} \left(\left(\dfrac{z_{i,m-1}}{z_{i,m}}\right)^{\abs{X}} \prod_{j \in S_i} \dfrac{z_{j,m-i+j}}{z_{j,m-i+j-1}} \right),
\]
which agrees with~\eqref{eq:KB cocharge LHS}.
\end{proof}

%%%%%%%%%%%%%%%%%%%%%%%%%%%%%%%%%%%%%%%%%%%%%%%%%%%%%%%%%%%%%%%%%%%%%%%%%%

\section{Final remarks}
\label{sec:final}

\subsection{Affine actions}
\label{sec:affine actions}
As mentioned in Remark~\ref{rem:affine geometric crystal}, one can extend the geometric crystal structures on $\Mat_{m \times n}(\CC^*)$ by introducing affine geometric crystal operators $e_0$ and $\ov{e}_0$. One can also extend the Weyl group actions to affine Weyl group actions by defining $R_0$ (resp., $\ov{R}_0$) to be the geometric $R$-matrices acting on the first and last row (resp., first and last column) of a matrix. The commutativity of barred and unbarred operators---including the affine operators---still follows from~\cite{KajNouYam} (for $R$-matrices) and~\cite{LP13II} (for crystal operators).

It is natural to extend our study of invariants to all possible combinations of barred and unbarred actions from the set $\{\emptyset, R, R_0, e, e_0\}$. For example $R\ov{e}_0$ denotes the simultaneous invariants of the action of non-affine $R$-matrices, and of all barred crystal operators (including the affine one). Some of the simplest questions are open, such as describing the $R_0$-invariants, which form a subfield of $\Frac(\LSym)$. 

One special case that has been considered in the literature is that of $R_0 \ov{R}_0$-invariants. In~\cite{KajNouYam} it was observed that all coefficients of the characteristic polynomial of the folded version of $\widetilde{M}(\mb{x}_1, \ldots, \mb{x}_m)$ are invariants of both $R_0$ and $\ov{R}_0$. This characteristic polynomial is a polynomial in two variables, one of which is $t$. It was studied under the name of \defn{spectral curve} in~\cite{ILP16}, due to its relation to a certain class of discrete Toda lattices.  The coefficients of the spectral curve were given a combinatorial interpretation in terms of certain paths on a torus. An alternative way to take measurements in toric networks that yields $R_0 \ov{R}_0$-invariants is given in~\cite{LP13II}. 

One can ask if any of these constructions produces enough invariants to generate the whole field (or ring) of $R_0 \ov{R}_0$-invariants. The answer is no. For example, consider the case $n=m=2$. In this case it is easy to see that the above methods give invariants of even degree, such as
\[
x_1^1x_2^1 + x_1^2x_2^2, \qquad x_1^1x_1^2 + x_2^1x_2^2, \qquad x_1^1x_2^1x_1^2x_2^2.
\]
However, the sum of all parameters $x_1^1+x_1^2+x_2^1+x_2^2$ is an $R_0 \ov{R}_0$-invariant.

\subsection{Comparison with classical invariant theory}
\label{sec:classical inv theory}
As mentioned in the abstract, one can consider $S_m$, $\SL_m$, $S_n \times S_m$, $\SL_n \times \, S_m$, and $\SL_n \times \SL_m$ naturally acting on a polynomial ring in $mn$ variables. One can then ask for a description of the invariants of those actions, which should be viewed as the ``classical'' versions of the questions asked in this paper. Surprisingly, while in some cases the invariants are well understood, in other cases very little is known about them.

The best understood case is that of $\SL_m$-invariants. The ring of invariants is trivial if $n < m$, and it is generated by the $m \times m$ minors of the matrix when $n \geq m$. This is the subject of classical invariant theory, and we refer the reader to~\cite{Weyl46} for a classical exposition, and to~\cite{PopVin94, GoodWall00} for more modern ones. In this case both first and second fundamental theorems are known, i.e., both generators of the ring of invariants and relations among these generators are known.

Less understood, but relatively well-studied, is the case of $S_m$-invariants~\cite{Dalbec99, Briand02, Vac05}. The ring of invariants in this case has many names: multisymmetric functions~\cite{Briand04}, diagonally symmetric polynomials~\cite{Allen94}, MacMahon symmetric functions~\cite{Gessel87}, etc. The first fundamental theorem in this case is known; see~\cite{Briand04}, where it is shown that the analogues of elementary symmetric functions generate the ring. The second fundamental theorem describing the relations, however, is not only not known, but is likely to be hard, because of its relation to Foulkes' conjecture~\cite{Briand02}. 

The ring of $\SL_n \times \SL_m$-invariants is non-trivial only if $m=n$, in which case it is generated by the determinant. As far as the authors can tell, almost nothing is known about the cases of $S_n \times S_m$- and $\SL_n \times S_m$-invariants (aside from the $\SL_n \times S_n$-invariants, which are generated by the square of the determinant).

%%%%%%%%%%%%%%%%%%%%%%%%%%%%%%%%%%%%%%%%%%%%%%%%%%%%%%%%%%%%%%%%%%%%%%%%%

\bibliographystyle{alpha}
\bibliography{gRSK_and_invariant_fields}

\end{document}